\newtheorem{thm}{Theorem}
\newtheorem{lem}{Lemma}
\newtheorem{cor}{Corollary}
\newtheorem{defi}{Definition}
\newtheorem{rem}{Remark}
\newtheorem{exa}{Example}
\newtheorem{ass}{Assumption}
\newcommand{\bSigma}{\boldsymbol{\Sigma}}
\newcommand{\bx}{\boldsymbol{x}}
\def\wh{\widehat}
\DeclareMathOperator*{\argmax}{arg\,max}
\DeclareMathOperator*{\argmin}{arg\,min}
\newcommand{\bX}{\boldsymbol{X}}
\newcommand{\bY}{\boldsymbol{Y}}
\newcommand{\bZ}{\boldsymbol{Z}}
\newcommand{\bN}{\boldsymbol{N}}
\newcommand{\bH}{\boldsymbol{H}}
\newcommand{\bL}{\boldsymbol{L}}
\newcommand{\bC}{\boldsymbol{C}}
\newcommand{\bM}{\boldsymbol{M}}
\newcommand{\oX}{\overline{\boldsymbol{X}}}
\newcommand{\oH}{\overline{\boldsymbol{H}}}
\newcommand{\oW}{\overline{\boldsymbol{W}}}
\newcommand{\oSigma}{\overline{\boldsymbol{\Sigma}}}
\newcommand{\bW}{\boldsymbol{W}}
\newcommand{\bTheta}{\boldsymbol{\Theta}}
\newcommand{\bOmega}{\boldsymbol{\Omega}}
\newcommand{\bomega}{\boldsymbol{\omega}}
\newcommand{\bPi}{\boldsymbol{\Pi}}
\newcommand{\bP}{\boldsymbol{P}}
\newcommand{\bDelta}{\boldsymbol{\Delta}}
\newcommand{\bu}{\boldsymbol{u}}
\newcommand{\bv}{\boldsymbol{v}}
\newcommand{\oU}{\overline{\boldsymbol{U}}}
\newcommand{\oV}{\overline{\boldsymbol{V}}}
\newcommand{\bR}{\boldsymbol{R}}
\newcommand{\btheta}{\boldsymbol{\theta}}
\newcommand{\bI}{\boldsymbol{I}}
\newcommand{\oLambda}{\overline{\boldsymbol{\Lambda}}}
\newcommand{\minsigma}{\sigma_{\textbf{min}}}
\newcommand{\maxsigma}{\sigma_{\textbf{max}}}
\newcommand{\bD}{\boldsymbol{D}}
\newcommand{\br}{\boldsymbol{r}}
\newcommand{\bw}{\boldsymbol{w}}
\newcommand{\bpi}{\boldsymbol{\pi}}
\newcommand{\ba}{\boldsymbol{a}}
\newcommand{\bb}{\boldsymbol{b}}
\newcommand{\bc}{\boldsymbol{c}}
\newcommand{\VV}{\mathbb{V}}
\newcommand{\bB}{\boldsymbol{B}}
\begin{document} 
\begin{frontmatter}

\title{Inferences on mixing probabilities and ranking in mixed-membership models\thanks{The research is supported in part by ONR grant N00014-22-1-2340,  NSF grants  DMS-2052926, DMS-2053832 and DMS-2210833.}}

\runtitle{Inferences on Mixing Probabilities and Ranking}
\begin{aug}
\author[A]{\fnms{Sohom}~\snm{Bhattacharya}{}},
\author[B]{\fnms{Jianqing}~\snm{Fan}{}}
\and
\author[B]{\fnms{Jikai}~\snm{Hou}{}}
\address[A]{Department of Statistics,
University of Florida\printead[presep={,\ }]{}}
\address[B]{Department of Operations Research and Financial Engineering,
Princeton University\printead[presep={.\ }]{}}
\end{aug}
\date{}

\begin{abstract}
Network data is prevalent in numerous big data applications including economics and health networks where it is of prime importance to understand the latent structure of network. In this paper, we model the network using the Degree-Corrected Mixed Membership
(DCMM) model. In DCMM model, for each node $i$, there exists a membership vector
$\bpi_ i = (\bpi_i(1), \bpi_i(2),\ldots, \bpi_i(K))$, where $\bpi_i(k)$ denotes the weight that node $i$ puts in community $k$. We derive novel finite-sample expansion for the $\bpi_i(k)$s which allows us to obtain asymptotic distributions and confidence interval of the membership mixing probabilities and other related population quantities.  This fills an important gap on uncertainty quantification on the membership profile.
We further develop a ranking scheme of the vertices based on the membership mixing probabilities on certain communities and perform relevant statistical inferences.  A multiplier bootstrap method is proposed for ranking inference of individual member's profile with respect to a given community.
The validity of our theoretical results is further demonstrated by via numerical experiments in both real and synthetic data examples.
\end{abstract}

\begin{keyword}
\kwd{Network data}
\kwd{Mixed membership models}
\kwd{Asymptotic distributions}
\kwd{Ranking inference}
\end{keyword}
\end{frontmatter}

\maketitle

\section{Introduction} 
In various fields of study, such as citation networks, protein interactions, health, finance, trade, and social networks, we often come across large amounts of data that describe the relationships between objects. There are numerous approaches to understand and analyze such network data. Algorithmic methods are commonly used to optimize specific criteria as shown in \cite{newman2013community,newman2013spectral} and \cite{zhang2014scalable}. Alternatively, model-based methods rely on probabilistic models with specific structures, which are reviewed by \cite{goldenberg2010survey}. One of the earliest models where the nodes (or vertices) of the network belong  to some latent community is Stochastic Block Model (SBM)~\cite{holland1983stochastic,wang1987stochastic,abbe2017community}. Several improvements have been proposed over this model to overcome its limitations, two of them are relevant in our paper. First, \cite{karrer2011stochastic} introduced degree-corrected SBM, where a degree parameter is used for each vertices to make the expected degrees match the observed ones. Second, \cite{airoldi2008mixed, airoldi2014handbook} study mixed membership model where each individual can belong to several communities with a mixing probability profile. In this paper, we study membership profiles in Degree-Corrected Mixed Membership (DCMM) model, which combines both the above benefits. In DCMM, every node $i$ is assumed to have a community membership probability vector $\bpi_i \in \mathbb{R}^K$, where $K$ is the number of communities and the $k$-th entry of $\bpi_i$ specifies the mixture proportion of node $i$ in community $k$ (see \cite{fan2022asymptotic,jin2017estimating}. For example, a newspaper can be $40\%$ conservative and $60\%$ liberal. In addition, each node is allowed to have its own degree. \par
Given such a network, estimation and inference of membership profiles has drawn some attention recently. For example, \cite{jin2017estimating} provides an algorithm to estimate the $\bpi_i$'s and \cite{fan2022simple,fan2022simple-rc} considers the hypothesis testing problem that two nodes have same membership profiles. However, 
they avoid the problems of uncertainty quantification of $\bpi_i(k)$.   In addition, to our best knowledge, none of the prior works concern with the problem of ranking nodes in a particular profile. As an example, one might consider asking the question: Is newspaper A more liberal than newspaper B? Or how many newspapers should I pick to ensure the top-K conservative newspapers are selected? Such a ranking question has applications in finances where one might be interested in knowing whether a particular stock is in top $K$ technology-stocks before investing in it. In our work, we device a framework to perform ranking inference based on $\bpi_i$s.\par 

Our work lies in the intersection of three research directions which we delineate here. \begin{enumerate}
    \item Community detection: Our estimation and inference procedure crucially rely on spectral clustering, which is one of the oldest methods of community detection (cf. \cite{von2007tutorial} for a tutorial). In the last decade, \cite{rohe2011spectral,jin2015score,lei2015clustering} have developed both the theory and methods of spectral clustering. Other line of research related to community detection involves showing optimality of detection boundary \cite{abbe2017community} or link-prediction problem \cite{noweu2007link,li2023link}. Recently, \cite{jin2017estimating} has developed an algorithm to estimate $\pi_i$s in $l_2$ norm, however it lacks any inferential guarantees and asymptotic distributions. Furthermore, there has been significant number of works about hypothesis testing in network data. \cite{castro2014dense,verzelen2015sparse} formulated community detection  as a hypothesis detection problem. \cite{fan2022simple, fan2022simple-rc} studies the testing problem of whether two vertices have same membership profiles in DCMM. Under stochastic block models, detection of the number of blocks has been studied by \cite{bickel2016hypothesis,lei2016goodness,wang2017likelihood} among others.
    
    \item Ranking inference: Most of the literature about ranking problems  deals with pairwise comparisons or multiple partial ranking models, like Bradley-Terry-Luce and other assortative network models. Prominent examples of ranking involves individual choices in economics~\cite{luce2012individual,mcfadden1973conditional}, websites~\cite{dwork2001rank}, ranking of journals~\cite{ji2021meta,stigler1994citation}, alleles in genetics~\cite{sham1995extended}.Hence, the ranking problem has been extensively studied in statistics, machine learning, and operations research, see, for example, \cite{hunter2004mm,chen2015spectral,chen2019spectral,chen2021spectral,gao2023uncertainty,fan2022ranking} for more details. However, none of the above work is concerned with ranking in DCMM model and hence, our work is significantly different from the aforementioned papers.
    
    \item $l_\infty$ and $l_{2,\infty}$ perturbation theory: Often, for uncertainty quantification of unknown parameters, it is not enough to obtain an $l_2$ error bound on the estimators, rather one needs a leave-one-out style analysis to get more refined  $l_\infty$ and $l_{2,\infty}$ error bounds. See \cite[Chapter 4]{chen2021spectral} for an introduction. Such analysis has been used in matrix completion\cite{chen2019inference}, principal component analysis \cite{yan2021inference}, ranking analysis\cite{fan2022ranking,fan2022uncertainty,gao2023uncertainty}. We develop novel subspace perturbation theory to obtain novel finite sample expansions of individual $\bpi_i(k)$s and use it to obtain asymptotic distributions.
\end{enumerate}
To perform inference about ranks and hypothesis testing, we employ an inference framework for ranking items through maximum pairwise difference statistic whose distribution is approximated by a newly proposed multiplier bootstrap method. A similar framework is recently introduced by \cite{fan2022ranking} in the context of Bradley-Terry-Luce model. \par
The rest of the paper is structured as follows. Section \ref{sec:problem} formulates the problem and describes the estimation procedure. Section \ref{sec:vertex_hunting} and Section \ref{sec:membership_reconstruction} delineates the vertex hunting and membership reconstruction steps of our estimation procedure respectively. Using the results we established, we develop some distribution theory and answer inference questions in Section \ref{distributionaltheory}. We complement our theoretical findings with numerical experiments in Section \ref{sec:simulation} where we perform simulations on both synthetic and real datasets.  Section \ref{sec:proof_outline} provides brief outline of the major proofs of our paper. Finally, Section \ref{sec:appendix} contains all the proofs.

\section{Problem Formulation}\label{sec:problem}
\subsection{Model setting}

Consider an undirected graph $G=(V,E)$ on $n$ nodes (or vertices), where $V=[n]:=\{1,2,\ldots,n\}$ is the set of nodes and $E \subseteq [n]\times[n]$ denotes the set of edges or links between the nodes. Given such a graph $G$, consider its symmetric adjacency matrix $X= \mathbb{R}^{n \times n}$ which captures the connectivity structure of $X$, namely $x_{ij}=1$ if there exists a link or edge between the nodes $i$ and $j$, i.e., $(i,j) \in E$ and $x_{ij}=0$ otherwise. We allow the presence of potential self-loops in the graph $G$. If there exists no self-loops, we let $x_{ii}=0$ for $i \in [n]$. Under a probabilistic model, we will assume that $x_{ij}$ is an independent realization from a Bernoulli random variable for all upper triangular entries of random matrix $X$.

 Given a network on $n$ nodes, we assume that there is an underlying latent community structure which contains $K$ communities. Each node $i$ is associated with a community membership probability vector $\bpi_i = (\bpi_i(1),\bpi_i(2),\dots,\bpi_i(K))\in \mathbb{R}^K$ such that 
\begin{align*}
    \mathbb{P}(\text{node } i \text{ belongs to community } k) = \bpi_i(k), \quad k = 1,2,\dots,K.
\end{align*}
A node is $i \in [n]$ is called a \textit{pure node} if there exists $k \in [K]$ such that $\bpi_i(k)=1$. The degree-corrected mixed membership (DCMM) model assumes (cf.\cite{jin2017estimating}) that the probability that an edge exists between node $i$ and node $j$ is given by
\begin{align}\label{eq:dcmm_define}
    \mathbb{P}(\text{edge exists between node }i\text{ and node }j) = \theta_i\theta_j\sum_{k=1}^K\sum_{l=1}^K \bpi_i(k)\bpi_j(l)p_{kl}.
\end{align}
Here, $\theta_i>0$ captures the degree heterogeneity for node $i\in [n]$, and $p_{kl}>0$ can be viewed as the probability of a typical member in community $k$  ($\theta_i=1$, say) connects with a typical member in community $l$ ($\theta_j=1$, say). The mixture probabilities can be written in the following matrix form as follows. Let $\bTheta = \textbf{diag}(\theta_1,\theta_2,\dots,\theta_n)$ be a diagonal matrix that captures the degree heterogeneity, $\bPi = (\bpi_1,\bpi_2,\dots,\bpi_n)^\top\in \mathbb{R}^{n\times K}$ be the matrix of community membership probability vectors, and $\bP = (p_{kl})\in \mathbb{R}^{K\times K}$ be a nonsingular matrix with $p_{kl}\in [0,1]$. Then, the mixing probability matrix can be expressed as
\begin{align}\label{eq:h_define}
    \bH = \bTheta\bPi\bP\bPi^\top\bTheta.
\end{align}

Let $\bX \in \mathbb{R}^{n\times n}$ be the symmetric adjacency matrix of these $n$ nodes, i.e., $X_{ij} = 1$ if there is a link connecting nodes $i$ and $j$, and $X_{ij} = 0$ otherwise.   Note that $\mathbb{E}(X_{ij})=H_{ij}$  for $i \not = j$.  We set $X_{ii}=0$ for the case without self-loop.  We also allow the case with loop.  In that case, we assume that $P(X_{ii}=1) = H_{ii}$.  In both cases, we write

\begin{equation}\label{eq:define_h}
   \bX=\bH+\bW,
\end{equation}
where $\bW$ is a symmetric random matrix with mean zero and independent entries above  the diagonal and on the diagonal for the case with self-loop.
In the following, since our theory applies to both cases, we will not distinguish between them. Therefore, the notation \eqref{eq:define_h} is used throughout the article.

Our goal is to study the community membership probability matrix $\bPi$ and conduct inference on its entries. Based on their uncertainty quantifications, we provide a framework for ranking inference and perform some hypothesis testing problems on the rank of each node's mixing probability on a community. To this end, we need to first estimate $\bPi$ using Mixed-SCORE algorithm \cite{jin2017estimating}. We invoke a slight modification of the algorithm along with the $l_2-l_\infty$ perturbation theory to get desired entry-wise expansion. 

We impose the following identifiability condition for the DCMM model \eqref{eq:dcmm_define}.
 
\begin{ass}\label{assn:pure_node}
    Each community $\mathcal{C}_k$, $1\leq k\leq K$ has at least one pure node, namely, there exists a vertex $i \in [n]$ such that $\bpi_i(k) = 1$.
\end{ass}

\subsection{Estimation procedure}
We describe the version of the Mixed-SCORE algorithm (cf. \cite{jin2017estimating}) which will be used to estimate $\bPi$. The algorithm consists of three key steps. First, we map each node to a $(K-1)$-dimension space using observed $\bX$. Ideally, in absence of noise, i.e., $\bW = 0$, these $n$ points in the $(K-1)$-dimension space would form a simplex with $K$ vertices, with the pure nodes defined by Assumption \ref{assn:pure_node} becoming the vertices. Presence of such a simplex structure has been discussed by Lemma 2.1 of \cite {jin2017estimating}. 

In the presence of noise, as long as the noise level is mild, we can still hope they are approximately a simplex. So, we apply a vertex hunting algorithm to estimate these $K$ vertices. After that, we estimate the membership vector $\bpi_i$ for each node based on the estimated vertices. Below, we describe the procedure mathematically.
\begin{itemize}
    \item \textbf{SCORE step}: Let $\wh \lambda_1,\wh\lambda_2,\dots,\wh \lambda_K$ be the largest $K$ eigenvalues (in magnitude) of $\bX$, sorted in descending order. Let $\wh\bu_1,\wh\bu_2,\dots, \wh\bu_K$ be the corresponding eigenvectors. Calculate the following $n$ vectors
    \begin{align}\label{eq:define_hat_r}
        \wh\br_i := \left[\frac{(\wh\bu_2)_i}{(\wh\bu_1)_i},\frac{(\wh\bu_3)_i}{(\wh\bu_1)_i},\dots, \frac{(\wh\bu_K)_i}{(\wh\bu_1)_i}\right]^\top\in \mathbb{R}^{K-1},\quad\forall i\in [n].
    \end{align}
    \item \textbf{Vertex Hunting step}: Apply vertex hunting algorithm (see Secion \ref{sec:vertex_hunting} for details) to $\wh\br_1,\wh\br_2,\dots,\wh\br_n$ and get estimated vertices $\wh\bb_1,\wh\bb_2,\dots,\wh\bb_K \in \mathbb{R}^{K-1}$.
    
    \item \textbf{Membership Reconstruction step}: 
    For each $i\in [n]$, let $\wh \ba_i= (\wh a_i(1), \ldots, \wh a_i(K))\in \mathbb{R}^K$ be the unique solution of the linear equations: 
    \begin{align}\label{eq:define_ai}
        \wh\br_i = \sum_{k=1}^K\wh a_i(k)\wh\bb_k, \qquad \sum_{k=1}^K\wh a_i(k)=1.
    \end{align}
    Define a vector $\wh \bpi'_i \in \mathbb{R}^K$ with $\bpi'_i(k) =  \wh a_i(k)/\left[\wh\bc\right]_k $, where
    \begin{align}\label{eq:define_hat_c}
        \left[\wh\bc\right]_k =\left[\wh\lambda_1+\wh\bb_k^\top\textbf{diag}(\wh\lambda_2,\wh\lambda_3,\dots,\wh\lambda_K)\wh\bb_k\right]^{-1/2}, \quad 1\leq k \leq K. 
    \end{align}
    Estimate $\bpi_i$ as $\wh\bpi_i = \bpi'_i / \sum_{k=1}^K\bpi_i'(k)$.
\end{itemize}
To analyze the algorithm, we begin by analyzing the SCORE step, i.e., investigate the $\wh\br_i$'s.

We introduce some notations first. Since $\bH$ and $\bX$ are symmetric, we write their eigen-decomposition as
\begin{align*}
    \bH &= \sum_{i=1}^n \lambda_i^*\bu_i^*\bu_i^{*\top} = \sum_{i=1}^n |\lambda_i^*|\bu_i^*\left(\textbf{sgn}\left(\lambda_i^*\right)\bu_i^*\right)^{\top}, \\
    \bX &= \sum_{i=1}^n\wh\lambda_i\wh\bu_i\wh\bu_i^{\top} = \sum_{i=1}^n |\wh\lambda_i|\wh\bu_i\left(\textbf{sgn}\left(\wh\lambda_i\right)\wh\bu_i\right)^{\top}.
\end{align*}
The eigenvalues are sorted in the following way. First, $\lambda_1^*,\lambda_2^*,\dots, \lambda_K^*$ are the largest $K$ eigenvalues of $\bH$ in magnitude, and $\wh\lambda_1,\wh\lambda_2,\dots, \wh\lambda_K$ are the largest $K$ eigenvalues of $\bX$ in magnitude. Second, $\lambda_1^*>\lambda_2^*>\cdots> \lambda_K^*$, $\wh\lambda_1>\wh\lambda_2\cdots>\wh\lambda_K$ are sorted descendingly, while the other eigenvalues can be sorted in any order.
By~Lemma C.3 of \cite{jin2017estimating}, we can choose the sign of $\bu_1^*$ to make sure $(\bu_1^*)_i >0$, $1\leq i\leq n$. The direction of $\wh\bu_1$ are chosen to make sure $\wh\bu_1^\top\bu_1^*\geq 0$. Define
\begin{align}\label{eq:define_vi}
    \sigma_i^* := \left|\lambda_i^*\right|, \qquad \bv_i^* := \textbf{sgn}\left(\lambda_i^*\right)\bu_i^*, \qquad \wh\sigma_i := |\wh\lambda_i|, \qquad \wh\bv_i := \textbf{sgn}(\wh\lambda_i)\wh\bu_i. 
\end{align}
We further define,
\begin{align}\label{eq:define_ubar}
    \oU^* = [\bu_2^*,\bu_3^*,\dots,\bu_K^*]\in \mathbb{R}^{n\times (K-1)} \quad\text{and} \quad \oU = [\wh\bu_2,\wh\bu_3,\dots,\wh\bu_K]\in \mathbb{R}^{n\times (K-1)}.
\end{align} We also denote by 
\begin{align}
    \br_i^* := \left[\frac{(\bu_2^*)_i}{(\bu_1^*)_i},\frac{(\bu_3^*)_i}{(\bu_1^*)_i},\dots, \frac{(\bu_K^*)_i}{(\bu_1^*)_i}\right]^\top\in \mathbb{R}^{K-1},\quad\forall i\in [n].\label{eq:define_r_star}
\end{align}
Note from the expression of $\br_i^*$ and $\wh\br_i$, in order to analyze $\wh\br_i$, we need to study the difference between $\bu_1^*$ and $\wh\bu_1$, and the difference between $\oU^*$ and $\oU$. To this end, we need the following four assumptions for the theoretical analysis, introduced in Section 3 of \cite{jin2017estimating}, which are necessary for the Mixed-SCORE algorithm to work.\par 
Denote by $\boldsymbol{G}: = K \left\|\btheta\right\|_2^{-2}(\bPi^\top\bTheta^2\bPi)\in \mathbb{R}^{K\times K}$ and $\theta_{\max} := \max_{i\in [n]}\theta_i$. Note that, the eigenvalues of $\bP\boldsymbol{G}$ are real, since $\boldsymbol{G}$ is positive-definite.

\begin{ass}\label{assn:theta_order}
There exist constants $C, C', C''>0$ such that 
\begin{align*}
   \theta_{\text{max}}\leq C\min_{i\in [n]}\theta_i\quad \text{and}\quad  C'\sqrt{\frac{\log n}{n}}\leq \theta_{\text{max}}\leq C''.
\end{align*}
\end{ass}
\begin{ass}\label{assn:p_max}
     There exists a constant $C>0$ such that 
    \begin{align*}
        \left\|\bP\right\|_{\text{max}}\leq C,\quad \left\|\boldsymbol{G}\right\|\leq C,\quad \left\|\boldsymbol{G}^{-1}\right\|\leq C.
    \end{align*}
\end{ass}

\begin{ass}\label{assn:lambda_pg}
     There exists $c_1>0$ such that $|\lambda_2(\bP\boldsymbol{G})|\leq (1-c_1) \lambda_1(\bP\boldsymbol{G})$ and $c_1\beta_n \leq |\lambda_K(\bP\boldsymbol{G})|\leq |\lambda_2(\bP\boldsymbol{G})|\leq c_1^{-1}\beta_n$, where $\{\beta_n\}_{n=1}^\infty$ is a sequence of positive real number such that $\beta_n\leq 1$, and $\lambda_i(\bP\boldsymbol{G})$ is the $i$-th largest right eigenvalue of $\bP\boldsymbol{G}$. 
\end{ass}

\begin{ass}\label{assn:eta_ratio}
    There exists a constant $C>0$ such that $\min_{1\leq k\leq K}\boldsymbol{\eta}_1(k) >0$ and 
    \begin{align*}
        \frac{\max_{1\leq k\leq K}\boldsymbol{\eta}_1(k)}{\min_{1\leq k\leq K}\boldsymbol{\eta}_1(k)}\leq C.
    \end{align*}
    Here $\boldsymbol{\eta}_1$ is the right eigenvector corresponding to $\lambda_1(\bP\boldsymbol{G})$.
\end{ass}

 Here, Assumption \ref{assn:theta_order} and Assumption \ref{assn:p_max} ensure that the underlying model is not spiky. In other words, the signal spread across all nodes. Assumption \ref{assn:lambda_pg} is the eigen-gap assumption, and as we will see next, this assumption ensures that there is a sufficient gap between the first eigenvalue and the remaining eigenvalues of $\bH$, and the remaining non-zeros eigenvalues of $\bH$ are of the same order. Assumption \ref{assn:eta_ratio} seems to be less straightforward, but it actually satisfied by a very wide range of models (See \cite{jin2017estimating} for examples). 
We will make Assumption \ref{assn:theta_order}-\ref{assn:eta_ratio} for the remainder of the paper. 

With these assumptions in hand, the following lemma lists some important properties of the eigenvalues of $\bH$ and $\bX$.

\begin{lem}\label{eigenvaluelemma}
(Lemma C.2 of \cite{jin2017estimating}) Let $\btheta = [\theta_1, \ldots, \theta_n]^\top$ and $\beta_n$s are defined by  Assumption \ref{assn:lambda_pg}. 
Under Assumption \ref{assn:theta_order}-\ref{assn:eta_ratio}, we have the following statements
\begin{itemize}
    \item $C_1^{-1}K^{-1}\left\|\btheta\right\|_2^2\leq\lambda_1^*\leq C_1\left\|\btheta\right\|_2^2$. If $\beta_n = o(1)$, then $\lambda_1^*\asymp\left\|\btheta\right\|_2^2$.
    \item $\lambda_1^*-\left|\lambda_i^*\right|\asymp \lambda_1^*$, for $2\leq i\leq K$.
    \item $|\lambda_i^*|\asymp \beta_n K^{-1}\left\|\btheta\right\|_2^2$, for $2\leq i\leq K$.
\end{itemize}
\end{lem}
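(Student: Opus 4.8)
The plan is to reduce every assertion to one about the $K\times K$ matrix $\bP\boldsymbol{G}$, whose spectrum is controlled directly by Assumptions~\ref{assn:p_max}--\ref{assn:eta_ratio}. Write $\bH=\bTheta\bPi\bP\bPi^\top\bTheta=MN$ with $M=\bTheta\bPi$ and $N=\bP\bPi^\top\bTheta$. Since $MN$ and $NM$ have the same nonzero eigenvalues, and $NM=\bP\bPi^\top\bTheta^2\bPi=K^{-1}\|\btheta\|_2^2\,\bP\boldsymbol{G}$ by the definition of $\boldsymbol{G}$, the $K$ nonzero eigenvalues of $\bH$ are exactly $\lambda_i^*=K^{-1}\|\btheta\|_2^2\,\lambda_i(\bP\boldsymbol{G})$, $1\le i\le K$; there are indeed $K$ of them ($\bP$ and $\boldsymbol{G}$ are nonsingular) and they are real, because $\boldsymbol{G}\succ 0$ makes $\bP\boldsymbol{G}$ similar to the symmetric matrix $\boldsymbol{G}^{1/2}\bP\boldsymbol{G}^{1/2}$. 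With this translation the second and third bullets follow at once from Assumption~\ref{assn:lambda_pg}: for $2\le i\le K$, $|\lambda_i^*|=K^{-1}\|\btheta\|_2^2\,|\lambda_i(\bP\boldsymbol{G})|$ with $|\lambda_i(\bP\boldsymbol{G})|\asymp\beta_n$, which is the third bullet; and $|\lambda_i(\bP\boldsymbol{G})|\le|\lambda_2(\bP\boldsymbol{G})|\le(1-c_1)\lambda_1(\bP\boldsymbol{G})$ gives $c_1\lambda_1(\bP\boldsymbol{G})\le\lambda_1(\bP\boldsymbol{G})-|\lambda_i(\bP\boldsymbol{G})|\le\lambda_1(\bP\boldsymbol{G})$, i.e.\ $\lambda_1^*-|\lambda_i^*|\asymp\lambda_1^*$, which is the second bullet.

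For the first bullet it remains to show $C_1^{-1}\le\lambda_1(\bP\boldsymbol{G})\le C_1K$, and $\lambda_1(\bP\boldsymbol{G})\asymp K$ when $\beta_n=o(1)$. The upper bound is crude: $\lambda_1(\bP\boldsymbol{G})\le\|\bP\boldsymbol{G}\|\le\|\bP\|_{\mathrm F}\,\|\boldsymbol{G}\|\le K\,\|\bP\|_{\max}\,\|\boldsymbol{G}\|\lesssim K$ by Assumption~\ref{assn:p_max}. For the lower bound I would use the Perron--Frobenius structure: $\boldsymbol{G}=K\|\btheta\|_2^{-2}\bPi^\top\bTheta^2\bPi$ is entrywise nonnegative with strictly positive diagonal (Assumption~\ref{assn:pure_node} supplies a pure node in each community), $\bP$ is entrywise positive, hence $\bP\boldsymbol{G}$ is entrywise positive, its dominant eigenvalue $\lambda_1(\bP\boldsymbol{G})$ is its Perron root, and the corresponding right eigenvector is the positive, well-balanced $\boldsymbol{\eta}_1$ of Assumption~\ref{assn:eta_ratio}. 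Testing the Rayleigh quotient of $\boldsymbol{G}^{1/2}\bP\boldsymbol{G}^{1/2}$ at $\boldsymbol{G}^{-1/2}\mathbf{1}$ gives $\lambda_1(\bP\boldsymbol{G})\ge\mathbf{1}^\top\bP\mathbf{1}/(\mathbf{1}^\top\boldsymbol{G}^{-1}\mathbf{1})\gtrsim K^{-1}\sum_{k,l}p_{kl}\gtrsim 1$, using $\|\boldsymbol{G}^{-1}\|\le C$ and the lower bound on the entries of $\bP$ that comes with the DCMM set-up of \cite{jin2017estimating}. For the refinement under $\beta_n=o(1)$, combine the trace identity $\operatorname{tr}(\bP\boldsymbol{G})=\lambda_1(\bP\boldsymbol{G})+\sum_{i=2}^K\lambda_i(\bP\boldsymbol{G})$ with the third bullet: $\big|\sum_{i=2}^K\lambda_i(\bP\boldsymbol{G})\big|\le(K-1)c_1^{-1}\beta_n=o(K)$, whereas $\operatorname{tr}(\bP\boldsymbol{G})=K\|\btheta\|_2^{-2}\sum_i\theta_i^2\,\bpi_i^\top\bP\bpi_i\asymp K$ (upper bound $\|\bP\|_{\max}$ on each $\bpi_i^\top\bP\bpi_i$, lower bound from the $\bP$-regularity, and $\sum_i\theta_i^2=\|\btheta\|_2^2$); therefore $\lambda_1(\bP\boldsymbol{G})=\operatorname{tr}(\bP\boldsymbol{G})+o(K)\asymp K$ and $\lambda_1^*\asymp\|\btheta\|_2^2$.

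The only genuine difficulty is the lower bound just sketched, together with the matching $\asymp K$ refinement: this is the one place where the ``non-spiky, evenly spread signal'' content of Assumptions~\ref{assn:theta_order}--\ref{assn:eta_ratio} and~\ref{assn:pure_node} must be used in combination (balancedness of $\btheta$, well-conditioned $\boldsymbol{G}$, regularity of $\bP$, positivity and bounded coordinate ratios of $\boldsymbol{\eta}_1$, existence of pure nodes), rather than any single assumption carrying the argument; everything after the spectral reduction in the first paragraph is bookkeeping.
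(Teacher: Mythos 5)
The paper does not actually prove this lemma: it is imported verbatim as Lemma~C.2 of \cite{jin2017estimating}, so there is no in-paper argument to compare against. Judged on its own merits, your reconstruction follows the same route as the cited source: the similarity of $MN$ and $NM$ reduces the nonzero spectrum of $\bH$ to that of $K^{-1}\|\btheta\|_2^2\,\bP\boldsymbol{G}$, after which the second and third bullets are immediate consequences of Assumption~\ref{assn:lambda_pg} (with the standard reading that \emph{all} of $|\lambda_2(\bP\boldsymbol{G})|,\dots,|\lambda_K(\bP\boldsymbol{G})|$ lie in $[c_1\beta_n,c_1^{-1}\beta_n]$, not just the two endpoints the assumption literally names), and the upper bound in the first bullet is the crude norm estimate you give.

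The one load-bearing step is the pair of lower bounds $\lambda_1(\bP\boldsymbol{G})\gtrsim 1$ and $\operatorname{tr}(\bP\boldsymbol{G})\gtrsim K$. These genuinely do \emph{not} follow from Assumptions~\ref{assn:theta_order}--\ref{assn:eta_ratio} as literally stated in this paper, which only give $p_{kl}>0$ and $\|\bP\|_{\max}\leq C$: rescaling $\bP\to\epsilon\bP$ preserves every stated hypothesis while killing the conclusion $\lambda_1^*\geq C_1^{-1}K^{-1}\|\btheta\|_2^2$, and the unit-diagonal normalization alone only yields $\bpi_i^\top\bP\bpi_i\geq 1/K$, hence $\operatorname{tr}(\bP\boldsymbol{G})\geq 1$ rather than $\gtrsim K$. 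What rescues the argument is the DCMM identifiability convention of \cite{jin2017estimating} (unit diagonal together with a uniform entrywise lower bound on $\bP$, or equivalently the regularity you invoke), under which $\bpi_i^\top\bP\bpi_i\asymp 1$ and both your Rayleigh-quotient bound and your trace computation close. Since you explicitly flag that this is where the unstated $\bP$-regularity enters, I would count the proof as correct relative to the conventions under which the lemma is actually quoted; the only improvement would be to state that normalization as an explicit hypothesis rather than gesturing at the set-up of the source.
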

 Combine this lemma with the fact that $\left\|\bW\right\|\lesssim \sqrt{n}\theta_{\max}$ (will be shown later in Lemma \ref{Wspectral}) with high probability, we know that the eigenvalues of $\bH$ and $\bX$ can be divided into three group as long as $\sqrt{n}\theta_{\max}\ll \beta_n K^{-1}\left\|\btheta\right\|_2^2$.


\begin{align*}
    &\{\lambda_1^*\}\gtrsim \{|\lambda_2^*|,|\lambda_3^*|,\dots, |\lambda_K^*|\}\gg \{\lambda_{K+1}^*,\lambda_{K+2}^*,\dots, \lambda_n^*\} ,\\
    &\{\wh\lambda_1\}\gtrsim \{|\wh\lambda_2|,|\wh\lambda_3|,\dots, |\wh\lambda_K|\}\gg \{\wh\lambda_{K+1},\wh\lambda_{K+2},\dots, \wh\lambda_n\}.
\end{align*}
This also motivates us to derive two results separately. First, we obtain the expansion of $\wh\bu_1-\bu_1^*$. Second, we analyze the difference between $\oU^*$ and $\oU$ by writing the expansion of $\oU\bR - \oU^*$, where 
\begin{align}
    \bR := \argmin_{\boldsymbol{O}\in \mathcal{O}^{(K-1)\times (K-1)}}\left\|\oU\boldsymbol{O}-\oU^*\right\|_F  \label{Rdefinition}
\end{align} is an orthogonal matrix which best ``matches'' $\oU$ and $\oU^*$. Here $\mathcal{O}^{(K-1)\times (K-1)}$ stands for the set of all the $(K-1)\times (K-1)$ orthogonal matrices. \par
The analysis of $\oU\bR - \oU^*$ is similar to a matrix denoising problem, so we define some quantities which are commonly used in matrix denoising literature \cite{yan2021inference}. We define
\begin{align}\label{eq:define_kappa}
    &\maxsigma^* = \max_{2\leq i\leq K}|\lambda_i^*|, \quad \minsigma^* = \min_{2\leq i\leq K}|\lambda_i^*|, \quad \kappa^* = \maxsigma^*/\minsigma^*, \nonumber \\ &\wh\sigma_{\textbf{max}} := \max_{2\leq i\leq K}|\wh\lambda_i|, \quad \wh\sigma_{\textbf{min}} = \min_{2\leq i\leq K}|\wh\lambda_i|.
\end{align}
Our results regarding uncertainty quantification contributes to the literature of estimation of subspace spanned by partial eigenvectors, cf. \cite{abbe2020entrywise,abbe2022}. \par
To state our results, we state an incoherence condition on the matrix $\bH$, which is a natural modification of the standard incoherence assumption \cite[Assumption 2]{yan2021inference} to adapt to our setting by separating the first eigenspace from the second to $K$-th eigenspace.
\begin{defi}\label{incorherenceassumption}
(\textbf{Incoherence}) The symmetric matrix $\bH$ is said to be $\mu^*$-incoherent if
\begin{align}\label{eq:incoherence}
    \left\|\bu_1^*\right\|_{\infty}
      \leq  \sqrt{\frac{\mu^*}{n}}, \quad\mbox{and} \quad \left\|\oU^*\right\|_{2, \infty} \leq  \sqrt{\frac{\mu^*(K-1)}{n}}.
\end{align}
\end{defi}
Note that, unlike \cite{yan2021inference} we do not indeed to assume an incoherence assumption in this article. This is because a combination of Assumption \ref{assn:theta_order} and Assumption \ref{assn:p_max} shows that $\bH$ is $\mu^\star$ incoherent with $\mu^*\asymp 1$, see Remark \ref{rem1} and Section \ref{IncoherenceExplanation} for more details. Also Lemma \ref{eigenvaluelemma} implies $\kappa^*\asymp 1$. However, the quantities $\mu^*$ and $\kappa^*$ are two key parameters in the literature of matrix denoising, and we keep them in our results.

\begin{rem}\label{rem1}
By Lemma C.3 of \cite{jin2017estimating}, we obtain $(\bu_1^*)_i\asymp \theta_i/\|\btheta\|_2$ for $1\leq i\leq [n]$. Therefore, an incoherence condition on $\bu_1^*$ actually can be interpreted as the incoherence condition on $\btheta$, which is implied by Assumption \ref{assn:theta_order}.
\end{rem}

Now, we are ready to state our results. We first state the following bound on $\wh\bu_1-\bu_1^*$ whose proof is deferred. 
\begin{thm}\label{mainthmu1}
If $\max\{\sqrt{n}\theta_{\text{max}}, \log n \}\ll \lambda_1^*$, with probability at least $1-O(n^{-10})$,
\begin{align*}
    \left\|\wh\bu_1-\bu_1^*\right\|_\infty\lesssim \frac{K\log^{0.5} n+K^{1.5}\sqrt{\mu^*}}{n\theta_{\text{max}}}+\frac{K\sqrt{\mu^*}\log n}{n^{1.5}\theta_{\text{max}}^2}+\frac{K^3}{n^{1.5}\theta_{\text{max}}^3},
\end{align*}
where $\mu^\star$ is given by Definition \ref{incorherenceassumption}. Moreover, we have the following expansion
\begin{align*}
    \wh\bu_1-\bu_1^* = \sum_{i=2}^n \frac{\bu_i^{*\top}\bW\bu_1^*}{\lambda_1^*-\lambda_i^*}\bu_i^*+\boldsymbol{\delta},
\end{align*}
where with probability at at least $1-O(n^{-10})$ we have $\|\boldsymbol{\delta}\|_2\lesssim K^2 / n\theta_{\text{max}}^2$ and 
\begin{align*}
\left\|\boldsymbol{\delta}\right\|_\infty\lesssim & \frac{K^{2.5}\sqrt{\mu^*}+K^2\log^{0.5}n}{n^{1.5}\theta_{\text{max}}^2}+\frac{K^3}{n^{1.5}\theta_{\text{max}}^3}+\frac{K^2\log^{1.5} n+K^{2.5}\sqrt{\mu^*}}{n^{2}\theta_{\text{max}}^3} +\frac{\log^2n\sqrt{\mu^*}}{n^{2.5}\theta_{\text{max}}^4}.
\end{align*}
Furthermore, if $K, \mu^*, \theta_{\max} \asymp 1$, we obtain  
\begin{align*}
    \left\|\wh\bu_1-\bu_1^*\right\|_\infty\lesssim \frac{\log^{0.5}n}{n}\quad \text{ and }\quad \left\|\boldsymbol{\delta}\right\|_\infty \lesssim \frac{\log^2 n}{n^{1.5}}.
\end{align*}

\end{thm}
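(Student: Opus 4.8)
The plan is to treat the perturbation $\wh\bu_1 - \bu_1^*$ via the standard leave-one-out (LOO) paradigm combined with the explicit eigen-gap structure established in Lemma~\ref{eigenvaluelemma}. The first eigenvalue $\lambda_1^* \asymp \|\btheta\|_2^2 \asymp n\theta_{\max}^2$ is separated from all other eigenvalues of $\bH$ by a multiplicative constant (second bullet of Lemma~\ref{eigenvaluelemma}), and from the spectrum of $\bW$ since $\|\bW\| \lesssim \sqrt{n}\theta_{\max} \ll \lambda_1^*$ under the stated condition. Hence $\wh\bu_1$ is a well-separated top eigenvector, and the Davis--Kahan / $\sin\Theta$ bound gives $\|\wh\bu_1 - \bu_1^*\|_2 \lesssim \|\bW\|/\lambda_1^* \lesssim \sqrt{n}\theta_{\max}/(n\theta_{\max}^2) = 1/(\sqrt{n}\theta_{\max})$. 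I would begin from the first-order expansion
\begin{align*}
    \wh\bu_1 - \bu_1^* = \sum_{i=2}^n \frac{\bu_i^{*\top}\bW\bu_1^*}{\lambda_1^*-\lambda_i^*}\bu_i^* + \boldsymbol{\delta},
\end{align*}
which follows from resolvent/Neumann-series manipulations (e.g.\ the contour-integral eigenprojector representation), and the task reduces to (i) controlling the $\ell_\infty$ norm of the linear term and (ii) bounding the residual $\boldsymbol{\delta}$ in both $\ell_2$ and $\ell_\infty$.

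For the linear term, write it as $\sum_{i\ge 2}(\lambda_1^*-\lambda_i^*)^{-1}\bu_i^*\bu_i^{*\top}\bW\bu_1^* = \bM \bW \bu_1^*$ where $\bM := \sum_{i\ge2}(\lambda_1^*-\lambda_i^*)^{-1}\bu_i^*\bu_i^{*\top}$ satisfies $\|\bM\| \lesssim 1/\lambda_1^* \asymp 1/(n\theta_{\max}^2)$. Its $j$-th coordinate is $\bm_j^\top \bW \bu_1^*$ where $\bm_j$ is the $j$-th row of $\bM$; since $\bW\bu_1^*$ has independent-ish entries after conditioning and $\bu_1^*$ is incoherent ($\|\bu_1^*\|_\infty \lesssim \sqrt{\mu^*/n}$), a Bernstein bound gives each coordinate on the order of $\sqrt{\log n}\,\|\bm_j\|_2 \|\text{(variance proxy)}\|$; the variance of a single $W_{jk}$ is $\lesssim \theta_{\max}^2$, and $\|\bm_j\|_2 \lesssim \|\bM\|_{2,\infty}$, which by the incoherence of the $\bu_i^*$ (absorbed into $\|\oU^*\|_{2,\infty}\lesssim\sqrt{\mu^* K/n}$) is $\lesssim \sqrt{\mu^* K}/(n\theta_{\max}^2)$. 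Tracking these factors produces the $\frac{K\log^{0.5}n + K^{1.5}\sqrt{\mu^*}}{n\theta_{\max}}$-type leading term. The residual $\boldsymbol{\delta}$ collects the higher-order resolvent terms; its $\ell_2$ size is governed by $\|\bW\|^2/(\lambda_1^*)^2 \cdot \|\bu_1^*\|$ plus cross terms, giving the claimed $K^2/(n\theta_{\max}^2)$, while its $\ell_\infty$ size requires bounding quantities like $(\bM\bW)^2\bu_1^*$ entrywise.

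The \textbf{main obstacle} is the entrywise control of $\boldsymbol{\delta}$, because the higher-order terms involve products $\bW \bM \bW \bu_1^*$ and similar, where the two copies of $\bW$ are statistically dependent through the shared randomness; a naive bound $\|\bW\|\cdot\|\bM\bW\bu_1^*\|_\infty$ loses the needed factor of $\sqrt{n}$. The standard remedy — which I would implement — is a leave-one-out decoupling: for each row index $j$ introduce $\bX^{(j)}$, the matrix with the $j$-th row and column of $\bW$ zeroed out, let $\wh\bu_1^{(j)}$ be its leading eigenvector, and exploit that $\wh\bu_1^{(j)}$ is independent of the $j$-th row of $\bW$. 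One shows (a) $\wh\bu_1^{(j)}$ is close to $\wh\bu_1$ in $\ell_2$ (because the perturbation has rank $O(1)$ and small operator norm $\lesssim \sqrt{\log n}\,\theta_{\max}$), and (b) for the leave-one-out vector the relevant inner products concentrate by Bernstein conditionally on $\bX^{(j)}$. Combining (a) and (b) via a union bound over $j\in[n]$, and carefully propagating the dependence of the error bounds on $K$, $\mu^*$, and $\theta_{\max}$ through each order of the expansion, yields the stated bounds on $\|\boldsymbol\delta\|_\infty$. Finally, specializing $K,\mu^*,\theta_{\max}\asymp1$ collapses the multi-term expressions: the dominant contribution to $\|\wh\bu_1-\bu_1^*\|_\infty$ is the Bernstein term $\sqrt{\log n}/n$, and to $\|\boldsymbol\delta\|_\infty$ the term $\log^2 n / n^{1.5}$, giving the last display.
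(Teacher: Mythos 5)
Your outline follows essentially the same route as the paper: a resolvent/contour-integral expansion producing the stated first-order term, Bernstein concentration plus incoherence of $\bu_1^*$ and $\oU^*$ for the entrywise size of the linear term, and a leave-one-out decoupling to handle the quadratic-in-$\bW$ contributions to $\boldsymbol{\delta}$. One structural difference is worth noting: the paper never introduces leave-one-out \emph{eigenvectors} for this theorem. It first computes the second-order term of the contour integral exactly by residue calculus, obtaining closed-form polynomials in $\bW$ such as $\bN_1\bW\bN_1\bW\bu_1^*$ with only deterministic matrices between the two copies of $\bW$, and bounds the genuinely third-order remainder crudely in operator norm. The only dependence that then needs breaking is between $\bW_{i,\cdot}$ and the rest of $\bW$, which is done by zeroing the $i$-th row and column of the \emph{noise matrix} and applying Bernstein to $\bW_{i,\cdot}\bN_1\bW^{(i)}\bu_1^*$. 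This is lighter machinery than the full leave-one-out eigenvector analysis you describe, and it is what lets the paper extract the precise $\log^2 n/n^{1.5}$ rate for $\|\boldsymbol{\delta}\|_\infty$ without re-deriving eigenvector stability at each order.

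There is also one concrete misstep in your step (a): you justify $\|\wh\bu_1^{(j)}-\wh\bu_1\|_2$ being small by saying the perturbation $\bW-\bW^{(j)}$ ``has rank $O(1)$ and small operator norm $\lesssim \sqrt{\log n}\,\theta_{\text{max}}$.'' That operator-norm claim is false: the perturbation is supported on row and column $j$ and has operator norm $\asymp \|\bW_{j,\cdot}\|_2 \asymp \sqrt{n}\,\theta_{\text{max}}$, i.e.\ the same size as $\|\bW\|$ itself, so a Davis--Kahan bound based on it gives $\sqrt{n}\theta_{\text{max}}/\lambda_1^*$ — no better than the global $\ell_2$ bound, and useless for the decoupling. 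The correct version of (a) bounds not the operator norm of the perturbation but its \emph{action} on the leave-one-out eigenvector, $\|(\bW-\bW^{(j)})\wh\bu_1^{(j)}\|_2$, which is small precisely because $\wh\bu_1^{(j)}$ is delocalized and independent of $\bW_{j,\cdot}$, so Bernstein applies (this is exactly the role of Lemma~\ref{Wconcentration4} in the paper's subspace analysis). With that repair your plan goes through; as written, step (a) fails.
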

\begin{proof}
Combine Lemma \ref{eigenvaluelemma} with Theorem \ref{u1expansion}, Theorem \ref{deltainfty} and Lemma \ref{u1differenceinfty}.
\end{proof}

Theorem \ref{mainthmu1} provides an error bound for $\hat{u}_1$ through the quantity $\boldsymbol{\delta}$. Both $l_2$ and $l_\infty$ bounds on $\boldsymbol{\delta}$ are provided. The following result states the expansion for $\oU\bR-\oU^*$. Define
\begin{align}\label{eq:define_eps_zero}
    \varepsilon_0 := &\left(\frac{\kappa^*K^{2.5}\sqrt{\mu^*}}{n^{1.5}\beta_n^2\theta_{\text{max}}^2}+\frac{K^{2.5}\log^{0.5}n}{n^{1.5}\beta_n^2\theta_{\text{max}}^3}+\frac{K^{3.5}}{n^{1.5}\beta_n^2\theta_{\text{max}}^3} +\frac{K^{1.5}\log^{0.5}n}{n\beta_n\theta_{\text{max}}}+\frac{K^2}{n\beta_n\theta_{\text{max}}^2} +\frac{K^{0.5}\sqrt{\mu^*}}{n^{0.5}}\right)  \nonumber\\
    &\cdot \left(\frac{\kappa^*K^2}{n\beta_n^2\theta_{\text{max}}^2}+\frac{K^{1.5}\log^{0.5}n}{n\beta_n\theta_{\text{max}}}\right) +\frac{K^{2.5}\log^{0.5}n}{n^{1.5}\beta_n^2\theta_{\text{max}}^4}+ \frac{K^{3.5}}{n^{1.5}\beta_n^2\theta_{\text{max}}^3}.
\end{align}
Define $\oLambda^* := \textbf{diag}(\lambda_2^*,\lambda_3^*,\dots,\lambda_K^*)$.
\begin{thm}\label{mainthmmatrixdenoising}
Assume that $\max\{\sqrt{n}\theta_{\text{max}},\log n\}\ll \minsigma^*$ and $\sqrt{(K-1)\log n}\theta_{\text{max}}/\minsigma^*+\kappa^* n\theta_{\text{max}}^2/\minsigma^{*2}\ll 1$, where $\kappa^\star$, $\mu^\star$ are given by \eqref{eq:define_kappa} and Definition \ref{incorherenceassumption} respectively. Write
\begin{align*}
    \oU\bR-\oU^* = \left[\bW-\bu_1^*\bu_1^{*\top}\bW\bN\right]\oU^*\left(\oLambda^*\right)^{-1} +\boldsymbol{\Psi}_{\oU},
\end{align*}
where $\bR$ is defined by \eqref{Rdefinition} and
\begin{equation}\label{eq:define_n}
    \bN = \sum_{j=2}^n \lambda_1^*\bu_j^*\bu_j^{*\top}/(\lambda_1^*-\lambda_j^*).
\end{equation} If $n\gtrsim \max\left\{\mu^*\log^2n, K\log n\right\}$, then with probability at least $1-O(n^{-10})$, $\boldsymbol{\Psi}_{\oU}$ satisfies
\begin{align*}
\left\|\boldsymbol{\Psi}_{\oU}\right\|_{2,\infty}\lesssim\varepsilon_0,
\end{align*}
where $\varepsilon_0$ is defined by \eqref{eq:define_eps_zero}. 
If, in addition $K, \mu^*, \kappa^*, \beta_n, \theta_{\max} \asymp 1$ as in 
Theorem \ref{mainthmu1}, we obtain $\left\|\boldsymbol{\Psi}_{\oU}\right\|_{2,\infty}\lesssim \frac{\log^{0.5}n}{n^{1.5}}$.
\end{thm}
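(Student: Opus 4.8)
The plan is to treat $\oU\bR-\oU^*$ as a subspace perturbation for the ``bulk'' eigenspace of $\bH$ corresponding to the eigenvalues $\lambda_2^*,\dots,\lambda_K^*$, with ground truth $\oH=\sum_{i=2}^K\lambda_i^*\bu_i^*\bu_i^{*\top}$ and noisy observation $\oX=\sum_{i=2}^n\wh\lambda_i\wh\bu_i\wh\bu_i^\top$. The first step is to control the effective noise matrix $\oW=\oX-\oH=\bW-[\wh\lambda_1\wh\bu_1\wh\bu_1^\top-\lambda_1^*\bu_1^*\bu_1^{*\top}]$: using Theorem \ref{mainthmu1} (the $l_2$ and $l_\infty$ expansion of $\wh\bu_1-\bu_1^*$) together with the eigenvalue estimate $\wh\lambda_1-\lambda_1^*\approx \bu_1^{*\top}\bW\bu_1^*$, I would write
\[
\wh\lambda_1\wh\bu_1\wh\bu_1^\top-\lambda_1^*\bu_1^*\bu_1^{*\top}
=\bu_1^*\bu_1^{*\top}\bW\bu_1^*\bu_1^{*\top}+\bN\bW\bu_1^*\bu_1^{*\top}+(\bN\bW\bu_1^*\bu_1^{*\top})^\top+\bDelta,
\]
where $\bN$ is as in \eqref{eq:define_n} and $\bDelta$ is a remainder with $\|\bDelta\|\lesssim n/\lambda_1^*$ and a good $\|\cdot\|_{2,\infty}$-after-multiplication-by-$\oU^*$ bound (this is exactly the commented-out Lemma \ref{oWexpansion}, which I would reinstate as a lemma and prove by the same leave-one-out bookkeeping used for Theorem \ref{mainthmu1}). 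This gives $\oW=\bW-\bu_1^*\bu_1^{*\top}\bW\bu_1^*\bu_1^{*\top}-\bN\bW\bu_1^*\bu_1^{*\top}-(\bW\bu_1^*\bu_1^{*\top})^\top\bN-\bDelta$, so that $\oW\oU^*=(\bW-\bu_1^*\bu_1^{*\top}\bW\bN)\oU^*-\bDelta\oU^*$, which already isolates the claimed leading term.

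The second step is the standard first-order subspace expansion. Under the separation hypotheses $\max\{\sqrt n\theta_{\max},\log n\}\ll\minsigma^*$ and $\sqrt{(K-1)\log n}\theta_{\max}/\minsigma^*+\kappa^* n\theta_{\max}^2/\minsigma^{*2}\ll 1$, one has $\|\oW\|\lesssim\sqrt n\theta_{\max}\ll\minsigma^*$ (Lemma \ref{Wspectral} plus the bound on $\wh\lambda_1\wh\bu_1\wh\bu_1^\top-\lambda_1^*\bu_1^*\bu_1^{*\top}$), so the bulk eigengap of $\oX$ is preserved and $\oU\bR-\oU^*=\oW\oU^*(\oLambda^*)^{-1}+(\text{second-order terms})$ by the Davis--Kahan / iterative-refinement argument of \cite{yan2021inference}. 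Substituting the expansion of $\oW\oU^*$ from the first step turns the leading term into $[\bW-\bu_1^*\bu_1^{*\top}\bW\bN]\oU^*(\oLambda^*)^{-1}$, and everything else gets absorbed into $\boldsymbol{\Psi}_{\oU}$.

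The third and most delicate step is the $l_{2,\infty}$ bound on $\boldsymbol{\Psi}_{\oU}$. The second-order terms are of two types: (a) the genuine matrix-denoising higher-order terms, schematically $\oW\oU^*(\oLambda^*)^{-1}(\oU^{*\top}\oW\oU^*)(\oLambda^*)^{-1}$ and $(\bI-\oU^*\oU^{*\top})\oW(\oU\bR-\oU^*)(\oLambda^*)^{-1}$ and the eigenvalue-misalignment term $\oW\oU^*[(\oLambda^*)^{-1}-\bR^\top\wh{\oLambda}^{-1}\bR]$, each handled by a leave-one-out decoupling of $\oW$ from the $i$-th row of $\oU^*$ as in \cite{abbe2020entrywise,yan2021inference}; and (b) the contribution $\bDelta\oU^*(\oLambda^*)^{-1}$ from the first step, controlled via $\|\bDelta\oU^*\|_{2,\infty}\lesssim\sqrt{(K-1)\mu^* n}/\lambda_1^{*2}+n^{1.5}/\lambda_1^{*2}$ and $\minsigma^*\asymp\beta_n\lambda_1^*/K$. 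Collecting the row-wise bounds, invoking $\|\wh\bu_1-\bu_1^*\|_\infty$ and $\|\boldsymbol{\delta}\|_\infty$ from Theorem \ref{mainthmu1}, $\|\oW\|\lesssim\sqrt n\theta_{\max}$, $\|\oW\bu_1^*\|_\infty$-type bounds, and $\lambda_1^*\asymp\|\btheta\|_2^2\asymp n\theta_{\max}^2$, $\minsigma^*\asymp\beta_n\|\btheta\|_2^2/K\asymp \beta_n n\theta_{\max}^2/K$ from Lemma \ref{eigenvaluelemma}, yields precisely the two-factor-plus-two-terms structure of $\varepsilon_0$ in \eqref{eq:define_eps_zero}: the product $(\cdots)(\cdots)$ is $\|\boldsymbol{\Psi}\|_{2,\infty}$-worth of ``one factor of row-wise error times one factor of operator-level error'', while the last two additive terms $K^{2.5}\log^{0.5}n/(n^{1.5}\beta_n^2\theta_{\max}^4)$ and $K^{3.5}/(n^{1.5}\beta_n^2\theta_{\max}^3)$ come from $\bDelta\oU^*(\oLambda^*)^{-1}$. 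The specialization to $K,\mu^*,\kappa^*,\beta_n,\theta_{\max}\asymp 1$ then gives $\|\boldsymbol{\Psi}_{\oU}\|_{2,\infty}\lesssim\log^{0.5}n/n^{1.5}$ after checking each of the six terms reduces to at most this order. The main obstacle is bookkeeping the leave-one-out argument through the \emph{compound} noise $\oW$ rather than $\bW$: because $\oW$ itself depends on all rows of $\bW$ through $\wh\bu_1$, I would run a two-level leave-one-out (one copy $\bX^{(m)}$ removing the $m$-th row/column for the bulk subspace, nested inside the leave-one-out already used to control $\wh\bu_1$), and verify the independence/concentration estimates survive; keeping the polynomial-in-$K$ and power-of-$\theta_{\max}$ exponents aligned with $\varepsilon_0$ is where the real care is needed.
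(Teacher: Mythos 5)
Your proposal follows essentially the same route as the paper: expand the compound noise $\oW$ via the contour-integral expansion of $\wh\lambda_1\wh\bu_1\wh\bu_1^\top-\lambda_1^*\bu_1^*\bu_1^{*\top}$ (so that $\oW\oU^*=(\bW-\bu_1^*\bu_1^{*\top}\bW\bN)\oU^*-\bDelta\oU^*$), reduce to a matrix-denoising problem in the style of \cite{yan2021inference}, and bound the second-order terms $\oU\oLambda\bL-\oX\oU^*$, $\oU\oLambda(\bR-\bL)$, $\oU(\bR\oLambda^*-\oLambda\bR)$ and $\bDelta\oU^*$ row-wise with a leave-one-out construction in which the top eigenvector and the bulk subspace are both recomputed from $\bW^{(i)}$ — exactly the paper's Lemmas \ref{lemma1}--\ref{lemma5}. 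The only slip is attributional: the last two additive terms of $\varepsilon_0$ are dominated by the $\oU\oLambda\bL-\oX\oU^*$ contribution rather than by $\bDelta\oU^*(\oLambda^*)^{-1}$, but this does not affect the argument.
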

\begin{proof}
See Section \ref{mainthmmatrixdenoisingproof}.
\end{proof}

 We are now in a position to state our main result about $\hat{r}_i$s defined by \eqref{eq:define_hat_r}. This is our final result regarding finite sample analysis of SCORE-step. The proof involves both Theorem \ref{mainthmu1} and Theorem \ref{mainthmmatrixdenoising}, and is deferred.
\begin{thm}\label{mainthmrexpansion}
Assume the conditions in Theorem \ref{mainthmmatrixdenoising} hold. In addition, we assume 
\begin{align*}
   \frac{K\log^{0.5} n+K^{1.5}\sqrt{\mu^*}}{n^{0.5}\theta_{\text{max}}}+\frac{K\sqrt{\mu^*}\log n}{n\theta_{\text{max}}^2}+\frac{K^3}{n\theta_{\text{max}}^3}\ll 1.
\end{align*}
Recall the orthogonal matrix $\bR$ defined by~\eqref{Rdefinition} and
\begin{align*}
    \bw_i = \left\{\left[\bW-\bu_1^*\bu_1^{*\top}\bW\bN\right]_{i,\cdot}\oU^*\left(\oLambda^*\right)^{-1}\right\}^\top,\quad \forall i \in [n],
\end{align*}
where the matrix $N$ was defined by \eqref{eq:define_n}.  Then we have the following decomposition
\begin{align*}
    \bR^\top\wh\br_i-\br_i^* = \frac{1+\gamma_i}{(\bu_1^*)_i}\left(\bw_i-\frac{1}{\lambda_1^*}\left[\bN\bW\bu_1^*\right]_i\br_i^*\right)+[\boldsymbol{\Psi}_{\br}]_{i,\cdot}^\top,\quad \forall i\in [n],
\end{align*}
such that with probability at least $1-O(n^{-10})$, for all $i\in [n]$ we have
\begin{align*}
    &|\gamma_i|\lesssim \frac{K\log^{0.5} n+K^{1.5}\sqrt{\mu^*}}{n^{0.5}\theta_{\text{max}}}+\frac{K\sqrt{\mu^*}\log n}{n\theta_{\text{max}}^2}+\frac{K^3}{n\theta_{\text{max}}^3}, \\
    &\left\|[\boldsymbol{\Psi}_{\br}]_{i,\cdot}\right\|_2\lesssim \sqrt{n}\left(\left\|\boldsymbol{\Psi}_{\oU}\right\|_{2,\infty}+\left\|\br_i^*\right\|_2\left\|\boldsymbol{\delta}\right\|_{\infty}\right),
\end{align*}
where $\left\|\boldsymbol{\delta}\right\|_{\infty}$and $\left\|\boldsymbol{\Psi}_{\oU}\right\|_{2,\infty}$ are controlled by Theorem \ref{mainthmu1} and Theorem \ref{mainthmmatrixdenoising}.
Specifically, if $K, \mu^*, \kappa^*, \beta_n, \theta_{\max} \asymp 1$, for all $i\in [n]$ we have  
\begin{align*}
    |\gamma_i|\lesssim \frac{\log^{0.5} n}{n^{0.5}}\quad \text{and}\quad \left\|[\boldsymbol{\Psi}_{\br}]_{i,\cdot}\right\|_2\lesssim \frac{\log^2 n}{n}.
\end{align*}
\end{thm}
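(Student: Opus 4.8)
The plan is to reduce everything to the two expansions already established — Theorem~\ref{mainthmu1} for $\wh\bu_1-\bu_1^*$ and Theorem~\ref{mainthmmatrixdenoising} for $\oU\bR-\oU^*$ — and then book-keep how the errors in the numerator ($\oU$) and in the denominator ($\bu_1$) of the ratio defining $\wh\br_i$ propagate. The starting observation is that $\wh\br_i$ is just the $i$-th row of $\oU$ rescaled, $\wh\br_i=\oU_{i,\cdot}^\top/(\wh\bu_1)_i$, and likewise $\br_i^*=(\oU^*)_{i,\cdot}^\top/(\bu_1^*)_i$. Since $\bR^\top\wh\br_i=(\oU\bR)_{i,\cdot}^\top/(\wh\bu_1)_i$, I would \emph{define} $\gamma_i$ exactly by $\frac{1+\gamma_i}{(\bu_1^*)_i}=\frac{1}{(\wh\bu_1)_i}$, i.e.\ $\gamma_i=-[(\wh\bu_1)_i-(\bu_1^*)_i]/(\wh\bu_1)_i$, and then write
\begin{align*}
\bR^\top\wh\br_i-\br_i^*=\frac{1+\gamma_i}{(\bu_1^*)_i}\big[(\oU\bR)_{i,\cdot}^\top-(\oU^*)_{i,\cdot}^\top\big]+\gamma_i\br_i^*.
\end{align*}

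The first genuine step is to control the denominators uniformly in $i$. By Remark~\ref{rem1}, $(\bu_1^*)_i\asymp\theta_i/\|\btheta\|_2$; Assumption~\ref{assn:theta_order} gives $\|\btheta\|_2\asymp\sqrt{n}\,\theta_{\max}$ and $\theta_i\asymp\theta_{\max}$, so $(\bu_1^*)_i\asymp n^{-1/2}$. Combining this with the $\ell_\infty$ bound of Theorem~\ref{mainthmu1} and the new smallness hypothesis of the theorem yields $|(\wh\bu_1)_i-(\bu_1^*)_i|\ll n^{-1/2}$, hence $(\wh\bu_1)_i\asymp n^{-1/2}$ (in particular it keeps the positive sign, so the ratio is well-defined), and
\begin{align*}
|\gamma_i|\le\frac{\|\wh\bu_1-\bu_1^*\|_\infty}{|(\wh\bu_1)_i|}\lesssim\sqrt{n}\,\|\wh\bu_1-\bu_1^*\|_\infty,
\end{align*}
which is exactly the claimed bound on $\gamma_i$ after inserting Theorem~\ref{mainthmu1}. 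This is where the extra assumption of the theorem is used, and it is in my view the only delicate point: everything downstream needs $1/(\wh\bu_1)_i$ and $1/(\bu_1^*)_i$ to be comparable and of order $\sqrt{n}$, uniformly over the $n$ nodes.

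Next I would substitute the two expansions. From Theorem~\ref{mainthmmatrixdenoising}, taking the $i$-th row and transposing gives $(\oU\bR-\oU^*)_{i,\cdot}^\top=\bw_i+[\boldsymbol{\Psi}_{\oU}]_{i,\cdot}^\top$. For the $\gamma_i\br_i^*$ term, I rewrite the linear expansion of Theorem~\ref{mainthmu1} using \eqref{eq:define_n}: since $\sum_{j\ge2}\frac{\bu_j^{*\top}\bW\bu_1^*}{\lambda_1^*-\lambda_j^*}\bu_j^*=\frac{1}{\lambda_1^*}\bN\bW\bu_1^*$, we have $\wh\bu_1-\bu_1^*=\frac{1}{\lambda_1^*}\bN\bW\bu_1^*+\boldsymbol{\delta}$, hence $\gamma_i=-\frac{1}{(\wh\bu_1)_i}\big(\frac{1}{\lambda_1^*}[\bN\bW\bu_1^*]_i+\delta_i\big)$. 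Using again $\frac{1}{(\wh\bu_1)_i}=\frac{1+\gamma_i}{(\bu_1^*)_i}$ this becomes
\begin{align*}
\gamma_i\br_i^*=-\frac{1+\gamma_i}{(\bu_1^*)_i}\cdot\frac{[\bN\bW\bu_1^*]_i}{\lambda_1^*}\,\br_i^*-\frac{\delta_i}{(\wh\bu_1)_i}\br_i^*.
\end{align*}
Substituting both expressions into the identity above collects the advertised leading term $\frac{1+\gamma_i}{(\bu_1^*)_i}\big(\bw_i-\frac{1}{\lambda_1^*}[\bN\bW\bu_1^*]_i\br_i^*\big)$ and leaves the remainder $[\boldsymbol{\Psi}_{\br}]_{i,\cdot}^\top=\frac{1+\gamma_i}{(\bu_1^*)_i}[\boldsymbol{\Psi}_{\oU}]_{i,\cdot}^\top-\frac{\delta_i}{(\wh\bu_1)_i}\br_i^*$.

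Bounding this remainder is then routine book-keeping: on the intersection of the high-probability events of Theorems~\ref{mainthmu1} and~\ref{mainthmmatrixdenoising} (each of probability $1-O(n^{-10})$) we have $|1+\gamma_i|\asymp1$, $1/|(\bu_1^*)_i|\asymp1/|(\wh\bu_1)_i|\asymp\sqrt{n}$, $\|[\boldsymbol{\Psi}_{\oU}]_{i,\cdot}\|_2\le\|\boldsymbol{\Psi}_{\oU}\|_{2,\infty}$, and $|\delta_i|\le\|\boldsymbol{\delta}\|_\infty$, which gives $\|[\boldsymbol{\Psi}_{\br}]_{i,\cdot}\|_2\lesssim\sqrt{n}\big(\|\boldsymbol{\Psi}_{\oU}\|_{2,\infty}+\|\br_i^*\|_2\|\boldsymbol{\delta}\|_\infty\big)$, uniformly over $i\in[n]$. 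For the special case $K,\mu^*,\kappa^*,\beta_n,\theta_{\max}\asymp1$ one inserts the simplified bounds $|\gamma_i|\lesssim\sqrt{\log n/n}$, $\|\boldsymbol{\delta}\|_\infty\lesssim\log^2 n/n^{1.5}$, $\|\boldsymbol{\Psi}_{\oU}\|_{2,\infty}\lesssim\sqrt{\log n}/n^{1.5}$, together with $\|\br_i^*\|_2\lesssim\sqrt{\mu^*(K-1)}\asymp1$ (from $\|(\oU^*)_{i,\cdot}\|_2\le\|\oU^*\|_{2,\infty}$ and $(\bu_1^*)_i\asymp n^{-1/2}$), obtaining $\|[\boldsymbol{\Psi}_{\br}]_{i,\cdot}\|_2\lesssim\log^2 n/n$, as claimed.
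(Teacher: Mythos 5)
Your proposal is correct and follows essentially the same route as the paper's proof: both define $\gamma_i$ via the perturbation of the denominator $(\wh\bu_1)_i$ (your $\gamma_i=-[(\wh\bu_1)_i-(\bu_1^*)_i]/(\wh\bu_1)_i$ coincides with the paper's $-([\bN_1\bW\bu_1^*]_i+\delta_i)/((\bu_1^*)_i+[\bN_1\bW\bu_1^*]_i+\delta_i)$), substitute the expansions from Theorems \ref{mainthmu1} and \ref{mainthmmatrixdenoising}, and arrive at the identical expression $[\boldsymbol{\Psi}_{\br}]_{i,\cdot}^\top=([\boldsymbol{\Psi}_{\oU}]_{i,\cdot}^\top-\delta_i\br_i^*)/(\wh\bu_1)_i$ with the same uniform lower bound $(\wh\bu_1)_i\asymp(\bu_1^*)_i\asymp n^{-1/2}$ justifying the final estimates.
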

\begin{proof}
See Section \ref{mainthmrexpansionproof}.
\end{proof}
\begin{rem}
   
    In terms of the number of communities are allowed, if we assume $\mu^*, \kappa^*, \beta_n, \theta_{\max} \asymp 1$, then the assumptions in Theorem \ref{mainthmu1}-\ref{mainthmrexpansion} require $K\ll n^{1/3}$. When it comes to $\theta_{\max}$, if $K, \mu^*, \kappa^*, \beta_n \asymp 1$, the assumptions in Theorem \ref{mainthmu1}-\ref{mainthmrexpansion} require $\theta_{\text{max}} \gg n^{-1/3}$.

\end{rem}

We finish the section with our result regarding estimation error bound on the $\wh\br_i$s. To this end, we need to define the following quantities:
\begin{align}
    \varepsilon_1 :=& \frac{K\sqrt{\mu^*}}{n^{0.5}\theta_{\text{max}}}+\frac{K^{0.5}\log n\sqrt{\mu^*}}{n^{0.5}\theta_{\text{max}}^2}+\frac{K^{1.5}\log^{0.5}n}{n^{0.5}\beta_n \theta_{\text{max}}}+\frac{K^{1.5}\log n\sqrt{\mu^*}}{n\beta_n\theta_{\text{max}}^2}; \nonumber  \\
    \varepsilon_2 :=& \left(\frac{K\log^{0.5} n+K^{1.5}\sqrt{\mu^*}}{n^{0.5}\theta_{\text{max}}}+\frac{K\sqrt{\mu^*}\log n}{n\theta_{\text{max}}^2}+\frac{K^3}{n\theta_{\text{max}}^3}\right)\varepsilon_1 + \frac{K^3\sqrt{\mu^*}+K^{2.5}\log^{0.5}n}{n\theta_{\text{max}}^2} \nonumber\\
    &+\frac{K^{2.5}\log^{1.5} n+K^{3}\sqrt{\mu^*}}{n^{1.5}\theta_{\text{max}}^3} +\frac{K^{0.5}\log^2n\sqrt{\mu^*}}{n^2\theta_{\text{max}}^4}+\varepsilon_0,\label{eps1eps2}
\end{align}
where $\varepsilon_0$ is defined by \eqref{eq:define_eps_zero}. \par
Despite the complicated expressions, these two quantities are easily interpretable. $\varepsilon_1$ controls the estimation error $\|\bR^\top\wh\br_i-\br_i^*\|_2$ according to the Theorem \ref{estimationerr} below, while $\varepsilon_2$ controls the expansion error $\left\|\boldsymbol{\Psi}_{\oU}\right\|_{2,\infty}$ according to Theorem \ref{mainthmrexpansion}. If we assume $K, \mu^*, \kappa^*, \beta_n, \theta_{\max} \asymp 1$, for all $i\in [n]$, then one have
\begin{align*}
    \varepsilon_1 \asymp \frac{\log n}{n^{0.5}}\quad \text{and}\quad\varepsilon_2\asymp\frac{\log^2 n}{n}.
\end{align*}
That is, the expansion error decays faster than the estimation error by $\sqrt{n}$ up to logarithmic factors. This validates our theoretical results. As we have mentioned before, the following theorem shows that $\varepsilon_1$ controls the estimation error.

\begin{thm}\label{estimationerr}
    Assume the conditions in Theorem \ref{mainthmrexpansion} hold. Assume 
\begin{align}
    n\gtrsim \max\Bigg\{&\frac{K^2}{\beta_n^2\theta_{\text{max}}^6}, \frac{K^4}{\beta_n^2\theta_{\text{max}}^4\log n}, \frac{\kappa^{*2}K^2\mu^*}{\beta_n^2\theta_{\text{max}}^2\log n}, K\mu^*, \frac{\kappa^*K^{2.5}}{\beta_n^2\theta_{\text{max}}^3\log^{0.5} n}, \frac{\kappa^*K^2}{\beta_n^2\theta_{\text{max}}^2}, \nonumber\\
    &\frac{K^{1.5}\log^{0.5}n}{\beta_n\theta_{\text{max}}}, \frac{K^4}{\theta_{\text{max}}^2},\frac{K^3}{\theta_{\text{max}}^4\mu^*}\Bigg\}\label{estimationerrthmeq1}
\end{align}

and
\begin{align}
    n^{1.5}\gtrsim \max\Bigg\{\frac{\kappa^*K^4}{\beta_n^3\theta_{\text{max}}^4\log^{0.5}n}, \frac{\kappa^*K^3}{\beta_n^3\theta_{\text{max}}^4}, \frac{K^{2.5}\log^{0.5}n}{\beta_n^2\theta_{\text{max}}^3}, \frac{\kappa^{*2}K^3\sqrt{\mu^*}}{\beta_n^3\theta_{\text{max}}^3 \log^{0.5}n}, \frac{\kappa^*K^{2.5}\sqrt{\mu^*}}{\beta_n^2\theta_{\text{max}}^2}\Bigg\}.\label{estimationerrthmeq2}
\end{align}

    Then with probability at least $1-O(n^{-10})$ we have 
    \begin{align}
        \max_{1\leq i\leq n}\left\|\bR^\top\wh\br_i-\br_i^*\right\|_2\lesssim \varepsilon_1.\label{estimationerrbound2}
    \end{align}
\end{thm}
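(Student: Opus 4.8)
The plan is to substitute the entrywise expansion of Theorem~\ref{mainthmrexpansion} into a triangle inequality and bound every factor uniformly over $i\in[n]$, all on the single high-probability event on which Theorems~\ref{mainthmu1}, \ref{mainthmmatrixdenoising} and \ref{mainthmrexpansion} hold. Writing, for each $i$,
\[
\bR^\top\wh\br_i-\br_i^* \;=\; \frac{1+\gamma_i}{(\bu_1^*)_i}\Bigl(\bw_i-\tfrac{1}{\lambda_1^*}[\bN\bW\bu_1^*]_i\,\br_i^*\Bigr)+[\boldsymbol{\Psi}_{\br}]_{i,\cdot}^\top ,
\]
I would first collect the deterministic ingredients. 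By Lemma~\ref{eigenvaluelemma} and Remark~\ref{rem1}, $(\bu_1^*)_i\asymp\theta_i/\|\btheta\|_2\asymp n^{-1/2}$, $\lambda_1^*\asymp n\theta_{\max}^2$ and $\minsigma^*\asymp\beta_n n\theta_{\max}^2/K$; by the simplex geometry underlying the SCORE step (Lemma~2.1 of \cite{jin2017estimating} and the vertex-hunting analysis of Section~\ref{sec:vertex_hunting}), $\br_i^*$ is a convex combination of vertices whose entries are bounded up to powers of $K$, so $\|\br_i^*\|_2\lesssim\sqrt K$; and the extra hypothesis of Theorem~\ref{mainthmrexpansion} together with its bound on $\gamma_i$ gives $1+|\gamma_i|\asymp 1$. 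With these in hand the task reduces to controlling $n^{1/2}\|\bw_i\|_2$, $n^{1/2}\lambda_1^{*-1}|[\bN\bW\bu_1^*]_i|\,\|\br_i^*\|_2$, and $\|[\boldsymbol{\Psi}_{\br}]_{i,\cdot}\|_2$.

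For the two stochastic terms I would use $\oU^{*\top}\bN=\mathrm{diag}\bigl(\lambda_1^*/(\lambda_1^*-\lambda_j^*)\bigr)_{j=2}^{K}\oU^{*\top}$ (entries $\asymp 1$ by Lemma~\ref{eigenvaluelemma}), which gives
\[
\bw_i=(\oLambda^*)^{-1}\oU^{*\top}\bW e_i-(\bu_1^*)_i(\oLambda^*)^{-1}\mathrm{diag}\bigl(\tfrac{\lambda_1^*}{\lambda_1^*-\lambda_j^*}\bigr)_{j=2}^K\oU^{*\top}\bW\bu_1^* ,
\]
and then apply Bernstein's inequality for sums of bounded independent variables: $\mathrm{Var}([\bW\bu]_\ell)\lesssim\theta_{\max}^2\|\bu\|_2^2$ and $\mathrm{Var}(\bu^\top\bW\bv)\lesssim\theta_{\max}^2\|\bu\|_2^2\|\bv\|_2^2$, with the summands controlled entrywise by $\|\oU^*\|_{2,\infty}\lesssim\sqrt{\mu^*K/n}$ and $\|\bu_1^*\|_\infty\lesssim\sqrt{\mu^*/n}$ (Definition~\ref{incorherenceassumption}). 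A union bound over $i\in[n]$ then yields, with probability $1-O(n^{-10})$, bounds of the form $\|\oU^{*\top}\bW e_i\|_2\lesssim\sqrt K\,\theta_{\max}\sqrt{\log n}+K\sqrt{\mu^*}\,\tfrac{\log n}{\sqrt n}$, $\|\oU^{*\top}\bW\bu_1^*\|_2\lesssim\sqrt K\,\theta_{\max}\sqrt{\log n}$, and $\|\bN\bW\bu_1^*\|_\infty\lesssim\theta_{\max}\sqrt{\log n}$, the last obtained by decomposing $\bN$ into its action on the top-$K$ eigenspace plus the projector $\bI-\bu_1^*\bu_1^{*\top}-\oU^*\oU^{*\top}$ onto the complement, and using $\|\bW\bu_1^*\|_\infty\lesssim\theta_{\max}\sqrt{\log n}$. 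Dividing by $\minsigma^*$ and multiplying by $(\bu_1^*)_i^{-1}\asymp\sqrt n$ turns $n^{1/2}\|\bw_i\|_2$ into terms of the size of the $\beta_n$-dependent parts of $\varepsilon_1$, while $n^{1/2}\lambda_1^{*-1}|[\bN\bW\bu_1^*]_i|\,\|\br_i^*\|_2\lesssim\sqrt{K\log n}/(\sqrt n\,\theta_{\max})$ is of smaller order.

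For the deterministic remainder, Theorem~\ref{mainthmrexpansion} already gives $\|[\boldsymbol{\Psi}_{\br}]_{i,\cdot}\|_2\lesssim\sqrt n\bigl(\|\boldsymbol{\Psi}_{\oU}\|_{2,\infty}+\|\br_i^*\|_2\|\boldsymbol{\delta}\|_\infty\bigr)$; substituting $\|\boldsymbol{\Psi}_{\oU}\|_{2,\infty}\lesssim\varepsilon_0$ from Theorem~\ref{mainthmmatrixdenoising} and the $\ell_\infty$ bound on $\boldsymbol{\delta}$ from Theorem~\ref{mainthmu1} shows this term is of order $\varepsilon_2$; likewise the cross term $|\gamma_i|\cdot n^{1/2}\|\bw_i-\lambda_1^{*-1}[\bN\bW\bu_1^*]_i\br_i^*\|_2$ is of order $\varepsilon_2$. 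Since by \eqref{eps1eps2} we have $\varepsilon_2=(\text{prefactor})\cdot\varepsilon_1+(\text{lower-order})+\varepsilon_0$ with the prefactor $\ll 1$ and the lower-order part $\ll\varepsilon_1$ under the hypotheses, all of this is absorbed, giving $\max_i\|\bR^\top\wh\br_i-\br_i^*\|_2\lesssim\varepsilon_1$. I expect the genuine work to be bookkeeping rather than new ideas: one must verify that the long list \eqref{estimationerrthmeq1}--\eqref{estimationerrthmeq2} is exactly what guarantees (i) the hypotheses of Theorems~\ref{mainthmu1}--\ref{mainthmrexpansion}, (ii) $|\gamma_i|\ll 1$, and (iii) every auxiliary term above is dominated by one of the four terms defining $\varepsilon_1$ (and by $\varepsilon_2$ in the remainder). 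The only probabilistic input is Bernstein's inequality plus a union bound; the real pitfall is keeping the powers of $K$, $\beta_n$, $\theta_{\max}$ and $\log n$ straight through the many substitutions, since $\varepsilon_1$ as stated is a slightly loose ``round'' upper bound rather than a tight rate.
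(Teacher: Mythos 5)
Your proposal is correct and follows essentially the same route as the paper: both start from the expansion in Theorem~\ref{mainthmrexpansion}, split $\bw_i$ into the $\oU^{*\top}\bW e_i$ part and the $(\bu_1^*)_i\,\oU^{*\top}\bN\bW\bu_1^*$ part, control each via Bernstein-type concentration with the incoherence bounds and a union bound over $i$ (the paper packages these as Lemmas~\ref{Wconcentration2}, \ref{Wconcentration3} and \ref{Wconcentration6} together with Corollary~\ref{corN1N2}), and absorb the remainder $\|[\boldsymbol{\Psi}_{\br}]_{i,\cdot}\|_2\lesssim\varepsilon_2$ into $\varepsilon_1$ using conditions \eqref{estimationerrthmeq1}--\eqref{estimationerrthmeq2}. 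The only cosmetic difference is that the paper keeps the slightly cruder bound $\|\oU^{*\top}\bW\bu_1^*\|_2\leq\|\bW\|\lesssim\sqrt{n}\theta_{\text{max}}$ inside the $\bN\bW\bu_1^*$ term, which does not affect the final rate.
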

\begin{proof}
    See Section \ref{estimationerrproof}.
\end{proof}

The above result obtains finite sample error bound for the $\hat{r}_i$s defined by \eqref{eq:define_hat_r}. By Theorem \ref{mainthmrexpansion}, we know that
\begin{align}
    \max_{1\leq i\leq n}\left\|\bR^\top\wh\br_i-\br_i^*-\Delta\br_i\right\|_2&\lesssim \max_{1\leq i\leq n}|\gamma_i|_2\cdot   \max_{1\leq i\leq n}\left\|\bR^\top\wh\br_i-\br_i^*\right\|_2 + \max_{1\leq i\leq n}\left\|[\boldsymbol{\Psi}_{\br}]_{i,\cdot}\right\|_2 \nonumber \\
    &\lesssim \max_{1\leq i\leq n}|\gamma_i|_2\cdot   \varepsilon_1 + \max_{1\leq i\leq n}\left\|[\boldsymbol{\Psi}_{\br}]_{i,\cdot}\right\|_2\lesssim \varepsilon_2 \label{eq:approxerror}
\end{align}
with probability at least $1-O(n^{-10})$, where we define
\begin{align}\label{eq:define_delta_r_i}
    \Delta \br_i := (\bw_i-\left[\bN\bW\bu_1^*\right]_i\br_i^*/\lambda_1^*)/(\bu_1)_i, \quad i\in [n].
\end{align}

\begin{rem}
   We remark here that \eqref{estimationerrthmeq1} and \eqref{estimationerrthmeq2} are two sufficient conditions to ensure \eqref{estimationerrbound2} holds, but not necessary. In fact, these two conditions ensure the upper bound of the first order term $\left\|\Delta \br_i\right\|_2$ dominates the upper bound of the expansion error, which is given by Theorem \ref{mainthmrexpansion}, in order to simplify the upper bound of the estimation error $\max_{1\leq i\leq n}\|\bR^\top\wh\br_i-\br_i^*\|_2$. In other words, the results in the rest of the paper hold without \eqref{estimationerrthmeq1} and \eqref{estimationerrthmeq2}, but a more complicated expression of $\varepsilon_1$ is required if we don't have \eqref{estimationerrthmeq1} and \eqref{estimationerrthmeq2}.
\end{rem}

\section{Vertex Hunting}\label{sec:vertex_hunting}
In this section, we describe how to estimate the $K$ underlying vertices of the simplex based on the dataset. To this end, define disjoint subsets $\VV_1, \VV_2, \dots, \VV_K \subset [n]$ such that $\VV_k$ is the collection of all the pure nodes of the $k$-th community, and let $\bb_k^*$ be the vertex of the corresponding community, i.e.,
\begin{align}\label{eq:define_b_star}
    \VV_k = \{i\in [n]:\bpi_{i}(k) = 1\} \quad \text{and}\quad \bb_k^* = \left(\sum_{i\in \VV_k}\br_i^*\right) / |\VV_k| \quad \forall k\in [K].
\end{align}
The following quantity 
\begin{align}\label{eq:min_dist_from_vert}
    \Delta_{\br} = \min_{k\in [K]}\min_{i\in [n]\backslash \VV_k}\left\|\br_i^*-\bb_k^*\right\|_2
\end{align}
measures the gap between any vertex and the other points. We will use the following successive projection algorithm given by Algorithm \ref{alg1} for the vertex hunting step.
\begin{algorithm}[ht]
\caption{Successive projection}
	\begin{algorithmic}[1]		
        \STATE \textbf{Input} $\wh\br_1,\wh\br_2,\dots,\wh\br_n$, radius $\phi$
		\STATE \textbf{Initialize} $\bZ_i = (1, \wh\br_i^\top)^\top$, for $i\in [n]$
        \FOR{$k = 1,2,\dots, K$} 
        \STATE Let $i_k = \argmax_{1\leq i\leq n}\left\|\bZ_i\right\|_2$ and $\wh\bb_k' = \wh \br_{i_k}$  
        \STATE Update $\bZ_i \leftarrow \bZ_i - \wh \br_{i_k}\wh \br_{i_k}^\top\bZ_i /\left\|\wh \br_{i_k}\right\|_2^2$, for $i\in [n]$
        \ENDFOR
        \STATE Let $$\wh\VV_k = \left\{i\in [n]:\left\|\wh\br_i - \wh\bb_k'\right\|_2\leq \phi\right\} \text{ and } \wh\bb_k = \frac{1}{|\wh\VV_k|}\sum_{i\in \wh\VV_k}\wh\br_i$$
        for $k\in [K]$
        \RETURN $\wh\VV_1, \wh\VV_2, \dots, \wh\VV_K$ and $\wh\bb_1,\wh\bb_2,\dots, \wh\bb_K$
	\end{algorithmic}\label{alg1}
\end{algorithm}
Successive projection algorithm (SPA) is a forward variable selection method introduced by~\cite{araujo2001successive}. Our version of SPA borrows from\cite{gillis2013fast}, who derived its  theoretical guarantee. One might consider alternate vertex hunting algorithms similar in spirit to ~\cite{jin2017estimating}. We leave this for future research directions.

Our goal for the remainder of the section is to understand how SPA is effective in selecting the underlying vertices. If $\Delta_r$, defined by \eqref{eq:min_dist_from_vert}, is not too small, we expect the vertex hunting algorithm to retrieve all the vertices of the simplex, namely, selection consistency. This is the main result of this section. The proof follows by combining Theorem \ref{estimationerr} with Theorem 3 of \cite{gillis2013fast}.
\begin{thm}\label{SPmainthm}
Assume the conditions in Theorem \ref{estimationerr} hold and the estimation error bound on the right hand side of \eqref{estimationerrbound2} is at most of a constant order. If we further have 
\begin{align*}
    \Delta_{\br}> 2\phi> C_{\text{SP}}\cdot \varepsilon_1 
\end{align*}
for some constant $C_{\text{SP}}>0$, then with probability at least $1-O(n^{-10})$, there exists a permutation $\rho$ of $[K]$, such that $\wh\VV_{\rho(k)} = \VV_{k}$ for all $k\in[K]$.
\end{thm}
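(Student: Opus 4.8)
The plan is to reduce the statement to a direct application of the robustness guarantee for the successive projection algorithm (Theorem 3 of \cite{gillis2013fast}), using the estimation error bound from Theorem \ref{estimationerr} as the ``noise level'' fed into that guarantee. First I would recall the deterministic structure: in the noiseless regime the points $\br_i^*$ lie in the simplex with vertices $\bb_1^*,\dots,\bb_K^*$, and by \eqref{eq:define_b_star} each $\bb_k^*$ is the (common) value of $\br_i^*$ over the pure-node set $\VV_k$, so the vertex set is exactly separated from the rest by the margin $\Delta_{\br}$ defined in \eqref{eq:min_dist_from_vert}. One writes $\wh\br_i = \bR(\br_i^* + \bE_i)$ after accounting for the orthogonal alignment $\bR$ of \eqref{Rdefinition}; since SPA is invariant under a global orthogonal transformation of the input points (Step 4--5 only involve norms and projections), running Algorithm \ref{alg1} on $\{\wh\br_i\}$ is equivalent to running it on $\{\br_i^* + \bE_i\}$, where by Theorem \ref{estimationerr}, on an event of probability $1-O(n^{-10})$, $\max_i \|\bE_i\|_2 \lesssim \varepsilon_1$.

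Next I would invoke the Gillis--Vavasis robustness theorem: if the perturbation of each data point is bounded by $\epsilon$ with $\epsilon$ small relative to the conditioning of the vertex system (controlled here by $\Delta_{\br}$ and by $\kappa^*,\|\bP\|,\|\boldsymbol G\|$ being of constant order, which bound the relevant geometry of the simplex in $\RR^{K-1}$), then the $K$ points $\wh\bb_1',\dots,\wh\bb_K'$ extracted in Step 4 are within $O(\epsilon)$ of the true vertices $\bb_{\rho(1)}^*,\dots,\bb_{\rho(K)}^*$ for some permutation $\rho$. The hypothesis $\Delta_{\br} > 2\phi > C_{\mathrm{SP}}\varepsilon_1$ guarantees both that this error bound applies (the noise is below the threshold needed by \cite{gillis2013fast}, absorbing their constant into $C_{\mathrm{SP}}$) and that $\|\wh\bb_k' - \bb_{\rho(k)}^*\|_2 \le \phi/2$ say. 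Then for the clustering step (Step 7): any pure node $i \in \VV_k$ has $\|\wh\br_i - \bb_k^*\|_2 \le \varepsilon_1$, hence $\|\wh\br_i - \wh\bb_{\rho^{-1}(k)}'\|_2 \le \varepsilon_1 + \phi/2 \le \phi$, so $i \in \wh\VV_{\rho^{-1}(k)}$; conversely any non-pure node $j \in [n]\setminus\VV_k$ has $\|\br_j^* - \bb_k^*\|_2 \ge \Delta_{\br}$, so $\|\wh\br_j - \wh\bb_{\rho^{-1}(k)}'\|_2 \ge \Delta_{\br} - \varepsilon_1 - \phi/2 > \phi$ using $\Delta_{\br} > 2\phi$ and $\varepsilon_1 < \phi/2$ (enlarging $C_{\mathrm{SP}}$ if needed). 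Combining, $\wh\VV_{\rho^{-1}(k)} = \VV_k$ for all $k$, which after relabeling the permutation gives the claim.

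The main obstacle is making the invocation of \cite{gillis2013fast} fully rigorous in our coordinates: their theorem is stated for a near-separable nonnegative matrix factorization with a specific normalization, whereas here the points live in an affine hyperplane $\{\sum_k a_i(k) = 1\}$ and are lifted to $\bZ_i = (1,\wh\br_i^\top)^\top$ in Step 2. I would need to check that this lifting puts the problem exactly in their framework and that the conditioning constant in their bound is controlled by our assumptions --- specifically that the matrix whose columns are $(1,\bb_k^{*\top})^\top$ is well-conditioned, which follows from Assumptions \ref{assn:p_max}, \ref{assn:lambda_pg}, \ref{assn:eta_ratio} together with Lemma \ref{eigenvaluelemma} (these force $\kappa^*,\mu^* \asymp 1$ and pin down the simplex geometry up to constants). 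A secondary technical point is that the error from Theorem \ref{estimationerr} is a uniform-over-$i$ bound on $\|\bR^\top\wh\br_i - \br_i^*\|_2$, so one must be slightly careful that the ``argmax'' selection in Step 4 is driven by these uniformly-perturbed points and that no adversarial accumulation occurs across the $K$ deflation steps --- but this is exactly the content of the Gillis--Vavasis analysis, so no new argument is required beyond citing it at the appropriate noise scale.
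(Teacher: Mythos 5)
Your proposal matches the paper's proof in all essentials: both invoke the Gillis--Vavasis robustness guarantee (Theorem 3 of \cite{gillis2013fast}, via the argument of Lemma E.1 of \cite{jin2017estimating}) with noise level $\max_i\|\bR^\top\wh\br_i-\br_i^*\|_2\lesssim\varepsilon_1$ from Theorem \ref{estimationerr} to conclude that each true vertex $\bb_k^*$ has a selected point within $O(\varepsilon_1)$, define the permutation $\rho$ by nearest selected point, and then run the same two triangle-inequality arguments (pure nodes land within $\phi$ of the center, non-pure nodes stay farther than $\phi$ using $\Delta_{\br}>2\phi$) to get $\wh\VV_{\rho(k)}=\VV_k$. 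The only cosmetic difference is that the paper works throughout with the aligned quantities $\bR^\top\wh\br_i$ rather than appealing explicitly to the rotational invariance of SPA, and it additionally verifies that the selected index $i_{\rho(k)}$ is itself a pure node of $\VV_k$; neither changes the substance.
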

\begin{proof}
    See Section \ref{SPmainthmproof}.
\end{proof}

 Theorem \ref{SPmainthm} along with Theorem \ref{mainthmrexpansion} yields the following result regarding $\wh\bb_k$.
\begin{cor}\label{vertexexpansion}
Assume the conditions in Theorem \ref{SPmainthm} hold. Then with probability at least $1-O(n^{-10})$, there exists a permutation $\rho$ of $[K]$, such that 
\begin{align}\label{eq:expand_hat_b}
    \bR^\top\wh\bb_{\rho(k)}-\bb_k^* = \frac{1}{\left|\VV_k\right|}\sum_{i\in \VV_k}\Delta\br_i + [\boldsymbol{\Psi}_{\bb}]_{k,\cdot}^\top,\quad \forall k\in [K],
\end{align}
where $\Delta\br_i$ is defined by \eqref{eq:define_delta_r_i}. Furthermore,
$\left\|\boldsymbol{\Psi}_{\bb}\right\|_{2,\infty}\lesssim \varepsilon_2$.
\end{cor}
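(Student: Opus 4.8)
The plan is to combine the exact recovery of the pure-node index sets from Theorem~\ref{SPmainthm} with the entrywise expansion of the $\wh\br_i$'s from Theorem~\ref{mainthmrexpansion}. I would work on the high-probability event $\mathcal{E}$ on which the conclusions of Theorem~\ref{SPmainthm}, Theorem~\ref{mainthmrexpansion}, and hence the bound~\eqref{eq:approxerror}, all hold simultaneously; by a union bound $\mathbb{P}(\mathcal{E}) \geq 1 - O(n^{-10})$, since each exceptional event has probability $O(n^{-10})$. On $\mathcal{E}$, Theorem~\ref{SPmainthm} furnishes a permutation $\rho$ of $[K]$ with $\wh\VV_{\rho(k)} = \VV_k$ for every $k \in [K]$, so the definition of $\wh\bb_k$ in Algorithm~\ref{alg1} gives $\wh\bb_{\rho(k)} = |\VV_k|^{-1}\sum_{i\in\VV_k}\wh\br_i$.

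Next, since $\bb_k^* = |\VV_k|^{-1}\sum_{i\in\VV_k}\br_i^*$ by~\eqref{eq:define_b_star}, left-multiplying by $\bR^\top$ (the same orthogonal matrix from~\eqref{Rdefinition} used throughout) and subtracting yields the exact identity
\[
  \bR^\top\wh\bb_{\rho(k)} - \bb_k^* = \frac{1}{|\VV_k|}\sum_{i\in\VV_k}\bigl(\bR^\top\wh\br_i - \br_i^*\bigr).
\]
I would then insert the first-order term $\Delta\br_i$ of~\eqref{eq:define_delta_r_i} into each summand and set $[\boldsymbol{\Psi}_{\bb}]_{k,\cdot}^\top := |\VV_k|^{-1}\sum_{i\in\VV_k}(\bR^\top\wh\br_i - \br_i^* - \Delta\br_i)$, which produces exactly the claimed expansion~\eqref{eq:expand_hat_b}.

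It remains to bound $\|\boldsymbol{\Psi}_{\bb}\|_{2,\infty}$. For each $k$, the triangle inequality applied inside the average gives
\[
  \bigl\|[\boldsymbol{\Psi}_{\bb}]_{k,\cdot}\bigr\|_2 \leq \frac{1}{|\VV_k|}\sum_{i\in\VV_k}\bigl\|\bR^\top\wh\br_i - \br_i^* - \Delta\br_i\bigr\|_2 \leq \max_{1\leq i\leq n}\bigl\|\bR^\top\wh\br_i - \br_i^* - \Delta\br_i\bigr\|_2 \lesssim \varepsilon_2,
\]
where the last step is~\eqref{eq:approxerror}, valid on $\mathcal{E}$. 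Taking the maximum over $k \in [K]$ gives $\|\boldsymbol{\Psi}_{\bb}\|_{2,\infty} \lesssim \varepsilon_2$, as desired.

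There is no genuinely hard analytic step: the leave-one-out expansion (Theorem~\ref{mainthmrexpansion}), the estimation bound (Theorem~\ref{estimationerr}), and the SPA selection consistency (Theorem~\ref{SPmainthm}) already do all the work, and the corollary merely averages them over the pure-node set $\VV_k$. The points requiring care are (i) using the \emph{same} permutation $\rho$ delivered by Theorem~\ref{SPmainthm} and the \emph{same} $\bR$ from~\eqref{Rdefinition}, so that the displayed identity for $\bR^\top\wh\bb_{\rho(k)} - \bb_k^*$ is exact rather than approximate; (ii) the bookkeeping of the high-probability events, so that one event of probability $1 - O(n^{-10})$ supports every invoked statement at once; and (iii) noting $|\VV_k| \geq 1$ by Assumption~\ref{assn:pure_node}, so the averages are well defined.
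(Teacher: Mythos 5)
Your proposal is correct and follows essentially the same route as the paper: exact recovery of $\VV_k$ from Theorem~\ref{SPmainthm} gives the identity $\bR^\top\wh\bb_{\rho(k)}-\bb_k^* = |\VV_k|^{-1}\sum_{i\in\VV_k}(\bR^\top\wh\br_i-\br_i^*)$, after which the residual is defined as the average of $\bR^\top\wh\br_i-\br_i^*-\Delta\br_i$ and bounded by~\eqref{eq:approxerror}. The only difference is cosmetic: you are slightly more careful than the paper in tracking the orthogonal matrix $\bR^\top$ and in noting $|\VV_k|\geq 1$.
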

\begin{proof}
    See Section \ref{vertexexpansionproof}.
\end{proof}

The leading term of RHS of \eqref{eq:expand_hat_b} will be denoted by
\begin{align}\label{eq:define_delta_b_k}
    \Delta\bb_k := \left(\sum_{i\in \VV_k}\Delta\br_i\right)/|\VV_k |, \quad k\in [K].
\end{align} 

\section{Membership Reconstruction}\label{sec:membership_reconstruction}
In this section we characterize the behavior of $\wh\bpi_i$. To this end, we will require the expansion of the following three terms
\begin{align*}
    \wh\lambda_1, \quad \wh\bb_k^\top\textbf{diag}(\wh\lambda_2,\wh\lambda_2,\dots,\wh\lambda_K)\wh\bb_k,k\in [K] \quad \text{and} \quad \wh\ba_i, i\in [n].
\end{align*}
The expansion of $\wh\lambda_1$ can be directly given by Theorem \ref{firstsubspaceexpansion}, and we defer the precise statement to Section \ref{sec:proof_outline}. We turn to derive the expansion of $\wh\bb_k^\top\textbf{diag}(\wh\lambda_2,\wh\lambda_2,\dots,\wh\lambda_K)\wh\bb_k = \wh\bb_k^\top\oLambda\wh\bb_k$. We will use the following notations: given any vector $\bv\in \mathbb{R}^K$ and permutation $\rho(\cdot)$ of $[K]$, set
\begin{align*}
    \rho(\bv) = \left[v_{\rho(1)}, v_{\rho(2)},\dots, v_{\rho(K)}\right]^\top.
\end{align*}
And, given a matrix $\boldsymbol{A}=[\ba_1,\ba_2,\dots,\ba_K]$ with $K$ columns, define
\begin{align*}
    \rho(\boldsymbol{A}) = \left[\ba_{\rho(1)}, \ba_{\rho(2)},\dots, \ba_{\rho(K)}\right].
\end{align*}
Now, we are in a position to state our result regarding $\wh\bb_k^\top\oLambda\wh\bb_k$.

\begin{lem}\label{membershipreconstructionlem2}
    Assume the conditions in Theorem \ref{SPmainthm} hold and $\varepsilon_2\lesssim\varepsilon_1\lesssim \sqrt{K-1}$, where $\varepsilon_1$ and $\varepsilon_2$ are defined via \eqref{eps1eps2}. Then with probability at least $1-O(n^{-10})$, for all $k\in [K]$, we have the following expansion:
    \begin{align*}
        \wh\bb_{\rho(k)}^\top\oLambda\wh\bb_{\rho(k)} - \bb_k^{*\top}\oLambda^*\bb_k^* =2\bb_k^{*\top}\oLambda^*\Delta\bb_k + \left[\boldsymbol{\psi}\right]_k, 
    \end{align*}
    where $\Delta\bb_k$ is defined by \eqref{eq:define_delta_b_k}, $b^\star_k$ is defined by \eqref{eq:define_b_star}, $\oLambda^\star := \textbf{diag}(\lambda^\star_2, \ldots, \lambda^\star_K)$ and the error $\boldsymbol{\psi}$ satisfies
    \begin{align*}
        \left\|\boldsymbol{\psi}\right\|_\infty\lesssim \frac{\kappa^*K^2}{\beta_n}+K^{1.5} \theta_{\text{max}}\log^{0.5}n + \left(K^{0.5} \varepsilon_2+\varepsilon_1^2\right)\maxsigma^*.
    \end{align*}
    For the estimation error, if 
        $\frac{\kappa^*K^{1.5}}{\beta_n}+K \theta_{\text{max}}\log^{0.5}n\lesssim \varepsilon_1\maxsigma^*$, we have 
    \begin{align*} \left|\wh\bb_{\rho(k)}^\top\oLambda\wh\bb_{\rho(k)} - \bb_k^{*\top}\oLambda^*\bb_k^*\right|\lesssim K^{0.5}\varepsilon_1\maxsigma^*.
    \end{align*}
\end{lem}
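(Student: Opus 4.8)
The plan is to pass to $\bR$-rotated coordinates and split the error into a ``vertex-perturbation'' part and an ``eigenvalue-perturbation'' part. Write
\begin{align*}
\wh\bb_{\rho(k)}^\top\oLambda\wh\bb_{\rho(k)} - \bb_k^{*\top}\oLambda^*\bb_k^*
= \underbrace{\wh\bb_{\rho(k)}^\top\big(\oLambda-\bR\oLambda^*\bR^\top\big)\wh\bb_{\rho(k)}}_{=:T_1}
+ \underbrace{\big(\bR^\top\wh\bb_{\rho(k)}\big)^\top\oLambda^*\big(\bR^\top\wh\bb_{\rho(k)}\big)-\bb_k^{*\top}\oLambda^*\bb_k^*}_{=:T_2}.
\end{align*}
Set $\boldsymbol{\delta}_k := \bR^\top\wh\bb_{\rho(k)}-\bb_k^*$. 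Corollary \ref{vertexexpansion} gives $\boldsymbol{\delta}_k = \Delta\bb_k + [\boldsymbol{\Psi}_{\bb}]_{k,\cdot}^\top$, and since $\wh\bb_{\rho(k)}$ is the average of $\wh\br_i$ over $\wh\VV_{\rho(k)}=\VV_k$ (Theorem \ref{SPmainthm}), Theorem \ref{estimationerr} yields $\|\boldsymbol{\delta}_k\|_2\lesssim\varepsilon_1$ and $\|\Delta\bb_k\|_2\lesssim\varepsilon_1$. Expanding the quadratic form, $T_2 = 2\bb_k^{*\top}\oLambda^*\boldsymbol{\delta}_k + \boldsymbol{\delta}_k^\top\oLambda^*\boldsymbol{\delta}_k = 2\bb_k^{*\top}\oLambda^*\Delta\bb_k + \big(2\bb_k^{*\top}\oLambda^*[\boldsymbol{\Psi}_{\bb}]_{k,\cdot}^\top + \boldsymbol{\delta}_k^\top\oLambda^*\boldsymbol{\delta}_k\big)$; the parenthesised remainder is $\lesssim \|\bb_k^*\|_2\maxsigma^*\varepsilon_2 + \maxsigma^*\varepsilon_1^2 \lesssim (K^{0.5}\varepsilon_2+\varepsilon_1^2)\maxsigma^*$, using the a priori bound $\|\bb_k^*\|_2\lesssim\sqrt{\mu^* K}\lesssim\sqrt K$ (which follows from $(\bu_1^*)_i\asymp\theta_i/\|\btheta\|_2$ of Remark \ref{rem1}, the incoherence $\|\oU^*\|_{2,\infty}\le\sqrt{\mu^*(K-1)/n}$, and Assumption \ref{assn:theta_order}). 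Thus $2\bb_k^{*\top}\oLambda^*\Delta\bb_k$ is the announced leading term and it remains to control $T_1$.

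For $T_1$, substitute $\wh\bb_{\rho(k)}=\bR(\bb_k^*+\boldsymbol{\delta}_k)$ and put $\boldsymbol{E}:=\bR^\top\oLambda\bR-\oLambda^*$, so $T_1 = \bb_k^{*\top}\boldsymbol{E}\bb_k^* + 2\bb_k^{*\top}\boldsymbol{E}\boldsymbol{\delta}_k + \boldsymbol{\delta}_k^\top\boldsymbol{E}\boldsymbol{\delta}_k$ (using that $\boldsymbol{E}$ is symmetric). The key identities are the eigen-relations $\oLambda=\oU^\top\bX\oU$ and $\oLambda^*=\oU^{*\top}\bH\oU^*$; writing $\boldsymbol{\Delta}_U:=\oU\bR-\oU^*$ and using $\bH\oU^*=\oU^*\oLambda^*$ together with $\bX=\bH+\bW$ gives
\begin{align*}
\boldsymbol{E} = \oLambda^*\boldsymbol{M}+\boldsymbol{M}^\top\oLambda^* + \boldsymbol{\Delta}_U^\top\bH\boldsymbol{\Delta}_U + (\oU\bR)^\top\bW(\oU\bR), \qquad \boldsymbol{M}:=\oU^{*\top}\boldsymbol{\Delta}_U,
\end{align*}
hence $\bb_k^{*\top}\boldsymbol{E}\bb_k^* = 2\bb_k^{*\top}\oLambda^*\boldsymbol{M}\bb_k^* + (\boldsymbol{\Delta}_U\bb_k^*)^\top\bH(\boldsymbol{\Delta}_U\bb_k^*) + (\oU\bR\bb_k^*)^\top\bW(\oU\bR\bb_k^*)$.

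I would bound the three pieces separately. (i) As $\bR$ is the Procrustes rotation, $\oU^{*\top}\oU\bR$ is symmetric PSD and $\|\boldsymbol{M}\| = \|\oU^{*\top}\oU\bR-\bI\| = \tfrac12\|\boldsymbol{\Delta}_U\|^2$; with the Davis--Kahan/Wedin bound $\|\boldsymbol{\Delta}_U\|\lesssim\|\bW\|/\minsigma^*\lesssim\sqrt n\theta_{\max}/\minsigma^*$ (Lemma \ref{Wspectral}) and Lemma \ref{eigenvaluelemma} ($\minsigma^*\asymp\beta_n K^{-1}\|\btheta\|_2^2\asymp\beta_n K^{-1}n\theta_{\max}^2$ under Assumption \ref{assn:theta_order}), this gives $|2\bb_k^{*\top}\oLambda^*\boldsymbol{M}\bb_k^*|\lesssim K\maxsigma^*\,n\theta_{\max}^2/\minsigma^{*2}\asymp\kappa^* K^2/\beta_n$. (ii) For the $\bH$-quadratic form one must \emph{not} use $\|\bH\|=\lambda_1^*$ but split $\bH=\oH+\lambda_1^*\bu_1^*\bu_1^{*\top}$: then $(\boldsymbol{\Delta}_U\bb_k^*)^\top\oH(\boldsymbol{\Delta}_U\bb_k^*)\le\maxsigma^*\|\boldsymbol{\Delta}_U\|^2\|\bb_k^*\|_2^2\asymp\kappa^* K^2/\beta_n$, while for the rank-one part one uses that $\bu_1^*$ is nearly orthogonal to $\oU\bR$: from $(\lambda_1^*-\wh\lambda_j)\bu_1^{*\top}\wh\bu_j=-\bu_1^{*\top}\bW\wh\bu_j$ and the eigengap $|\lambda_1^*-\wh\lambda_j|\asymp\lambda_1^*$ ($2\le j\le K$) one gets $\|\bu_1^{*\top}\boldsymbol{\Delta}_U\|_2\lesssim\sqrt{Kn}\theta_{\max}/\lambda_1^*$, so $\lambda_1^*|\bu_1^{*\top}\boldsymbol{\Delta}_U\bb_k^*|^2\lesssim K^2n\theta_{\max}^2/\lambda_1^*\asymp K^2$ (lower order). (iii) For the $\bW$-quadratic form decouple $\oU\bR\bb_k^*=\oU^*\bb_k^*+\boldsymbol{\Delta}_U\bb_k^*$; the cross and pure-$\boldsymbol{\Delta}_U$ terms are $\lesssim\|\bW\|\,\|\boldsymbol{\Delta}_U\bb_k^*\|_2(\|\bb_k^*\|_2+\|\boldsymbol{\Delta}_U\bb_k^*\|_2)\lesssim K^2/\beta_n$ (using $\|\boldsymbol{\Delta}_U\bb_k^*\|_2\lesssim\sqrt{Kn}\theta_{\max}/\minsigma^*$ and $K\ll\beta_n\sqrt n\theta_{\max}$, which follows from the hypotheses of Theorem \ref{mainthmmatrixdenoising}), while $(\oU^*\bb_k^*)^\top\bW(\oU^*\bb_k^*)$ is a bilinear form in the \emph{fixed} vector $\oU^*\bb_k^*$ and a Bernstein/Hanson--Wright bound applies: variance proxy $\lesssim\theta_{\max}^2\|\bb_k^*\|_2^4\lesssim K^2\theta_{\max}^2$, $\|\oU^*\bb_k^*\|_\infty\lesssim\mu^* K/\sqrt n$ (incoherence), and mean/diagonal contribution $\lesssim\|\bb_k^*\|_2^2\max_iH_{ii}\lesssim K\theta_{\max}^2$, yielding $\lesssim K\theta_{\max}\log^{0.5}n\le K^{1.5}\theta_{\max}\log^{0.5}n$ (recall $\theta_{\max}\lesssim 1$). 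Finally the cross terms $2\bb_k^{*\top}\boldsymbol{E}\boldsymbol{\delta}_k$, $\boldsymbol{\delta}_k^\top\boldsymbol{E}\boldsymbol{\delta}_k$ are $\lesssim\sqrt K\|\boldsymbol{E}\|\varepsilon_1+\|\boldsymbol{E}\|\varepsilon_1^2$ with $\|\boldsymbol{E}\|\lesssim\sqrt n\theta_{\max}+\kappa^* K/\beta_n$ from the same decomposition, and using $\varepsilon_1\lesssim\sqrt{K-1}$ and $K\ll\beta_n\sqrt n\theta_{\max}$ they are absorbed into $\kappa^* K^2/\beta_n+\varepsilon_1^2\maxsigma^*$. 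Collecting the bounds gives $\|\boldsymbol{\psi}\|_\infty\lesssim\kappa^* K^2/\beta_n+K^{1.5}\theta_{\max}\log^{0.5}n+(K^{0.5}\varepsilon_2+\varepsilon_1^2)\maxsigma^*$. The last assertion is immediate: under $\kappa^* K^{1.5}/\beta_n+K\theta_{\max}\log^{0.5}n\lesssim\varepsilon_1\maxsigma^*$ every error term is $\lesssim K^{0.5}\varepsilon_1\maxsigma^*$ (using $\varepsilon_2\lesssim\varepsilon_1\lesssim\sqrt{K-1}$), and the leading term obeys $|2\bb_k^{*\top}\oLambda^*\Delta\bb_k|\le 2\|\bb_k^*\|_2\maxsigma^*\|\Delta\bb_k\|_2\lesssim K^{0.5}\varepsilon_1\maxsigma^*$.

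The main obstacle is obtaining the sharp constant $\kappa^*K^2/\beta_n$ (rather than a cruder $K^3/\beta_n^2$-type bound) in piece (ii): the naive estimate $\|\bH\|\,\|\boldsymbol{\Delta}_U\|^2\|\bb_k^*\|_2^2$ loses a factor $K/\beta_n$, and one must exploit simultaneously the smaller spectral norm $\maxsigma^*$ of $\oH$ on the ``bulk'' and the near-orthogonality $\|\bu_1^{*\top}\boldsymbol{\Delta}_U\|\ll\|\boldsymbol{\Delta}_U\|$ on the leading direction. A secondary nuisance is that $\oU\bR\bb_k^*$ is data-dependent, so the concentration step in (iii) must be preceded by the decoupling $\oU\bR\bb_k^*=\oU^*\bb_k^*+\boldsymbol{\Delta}_U\bb_k^*$; and one should verify the a priori bound $\|\bb_k^*\|_2\lesssim\sqrt{\mu^* K}$ if it is not already recorded elsewhere.
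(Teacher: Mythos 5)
Your proof is correct, and at the top level it uses exactly the paper's decomposition: writing $\bR^\top\wh\bb_{\rho(k)}=\bb_k^*+\Delta\bb_k+[\boldsymbol{\Psi}_{\bb}]_{k,\cdot}^\top$ (Corollary \ref{vertexexpansion}) and splitting the quadratic form into an $\oLambda^*$ part (your $T_2$, giving the leading term $2\bb_k^{*\top}\oLambda^*\Delta\bb_k$ plus the $(K^{0.5}\varepsilon_2+\varepsilon_1^2)\maxsigma^*$ remainder) and a $\bR^\top\oLambda\bR-\oLambda^*$ part (your $T_1$). Where you diverge is that the paper disposes of $T_1$ in one line by citing Lemma \ref{lemma2}, which already gives $\|\bR^\top\oLambda\bR-\oLambda^*\|\lesssim \kappa^*n\theta_{\max}^2/\minsigma^*+\sqrt{(K-1)\log n}\,\theta_{\max}$, and then simply multiplies by $\|\bb_k^*+\Delta\bb_k+[\boldsymbol{\Psi}_{\bb}]_{k,\cdot}\|_2^2\lesssim(\sqrt{K-1}+\varepsilon_1)^2\lesssim K$; with $\minsigma^*\asymp\beta_nK^{-1}n\theta_{\max}^2$ this is precisely $\kappa^*K^2/\beta_n+K^{1.5}\theta_{\max}\log^{0.5}n$, and no separate treatment of cross terms in $\boldsymbol{\delta}_k$ is needed. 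Your pieces (i)--(iii) — the Procrustes identity $\|\oU^{*\top}\boldsymbol{\Delta}_U\|=\tfrac12\|\boldsymbol{\Delta}_U\|^2$, the split of $\bH$ into bulk and leading direction, and the Bernstein bound on $\oU^{*\top}\bW\oU^*$ — essentially re-derive the $\alpha_1+\alpha_2+\alpha_3$ decomposition in the paper's proof of Lemma \ref{lemma2}, so the "main obstacle" you flag is already resolved by an available lemma. Two small caveats if you keep your inline route: the identity $\bH\oU^*=\oU^*\oLambda^*$ is exact only in the self-loop case; without self-loops one should work with $\oH=\sum_{i=2}^K\lambda_i^*\bu_i^*\bu_i^{*\top}$ and absorb the $O(\theta_{\max}^2)$ diagonal correction, as the paper does via $\oU^{*\top}\oX\oU^*-\oLambda^*=\oU^{*\top}\oW\oU^*$. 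Also, the a priori bound $\|\bb_k^*\|_2\lesssim\sqrt{K}$ that you verify from incoherence is recorded in the paper (Section \ref{IncoherenceExplanation}, via $\|\bB^*\|\lesssim\sqrt{K}$), so it need not be re-proved.
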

\begin{proof}
    See Section \ref{membershipreconstructionlem2proof}.
\end{proof}

The following Lemma gives an expansion of $\wh\ba_i,i\in [n]$.

\begin{lem}\label{membershipreconstructionlem3}
    Assume the conditions in Theorem \ref{SPmainthm} hold and $\varepsilon_2\lesssim \varepsilon_1\leq C$ for some constant $C>0$. Let $\rho(\cdot)$ be the permutation in Theorem \ref{SPmainthm}. Then with probability at least $1-O(n^{-10})$, for all $i\in [n]$, $\wh\ba_i$ can be expanded as 
    \begin{align*}
        \rho(\wh\ba_i)-\ba^*_i = (\bB^*)^{-1}\begin{bmatrix}
    \Delta\br_i\\
    0 
    \end{bmatrix}-(\bB^*)^{-1}\Delta\bB\ba_i^* + \left[\boldsymbol{\Psi}_{\ba}\right]_{i, \cdot}^\top,
    \end{align*}
    where 
    \begin{align}\label{eq:define_a_star}
    \boldsymbol{B}^* = \begin{bmatrix}
\bb_1^* & \bb_2^* & \dots & \bb_K^*\\
1 & 1 & \dots & 1
\end{bmatrix},
\quad \ba_i^* = (\bB^*)^{-1}\begin{bmatrix}
\br_i^* \\
1
\end{bmatrix}, 
\quad \Delta\bB = \begin{bmatrix}
\Delta\bb_1 & \Delta\bb_2 & \dots & \Delta\bb_K\\
0 & 0 & \dots & 0
\end{bmatrix},
\end{align}
and 
\begin{align*}
    \|\left[\boldsymbol{\Psi}_{\ba}\right]_{i, \cdot}\|_2 \lesssim \varepsilon_2+\varepsilon_1\left\|\rho(\wh\ba_i)-\ba^*_i\right\|_2.
\end{align*}
Furthermore, the estimation error can be controlled as 
\begin{align*}
    \left\|\rho(\wh\ba_i)-\ba^*_i\right\|_2\lesssim \varepsilon_1.
\end{align*}
\end{lem}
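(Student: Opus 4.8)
The plan is to expand the defining linear system \eqref{eq:define_ai} around the noiseless one. Recall that $\wh\ba_i$ solves $\wh\br_i = \sum_k \wh a_i(k)\wh\bb_k$ together with $\sum_k \wh a_i(k) = 1$, which in the augmented notation of \eqref{eq:define_a_star} reads $\wh\bB\,\wh\ba_i = [\wh\br_i^\top, 1]^\top$, where $\wh\bB$ is $\bB^*$ with columns $\bb_k^*$ replaced by $\wh\bb_k$ (after applying the permutation $\rho$ from Theorem \ref{SPmainthm}). Correspondingly $\ba_i^* = (\bB^*)^{-1}[\br_i^{*\top},1]^\top$. Subtracting, one gets the exact identity
\begin{align*}
    \rho(\wh\ba_i) - \ba_i^* = (\bB^*)^{-1}\left(\begin{bmatrix}\wh\br_i \\ 1\end{bmatrix} - \begin{bmatrix}\br_i^* \\ 1\end{bmatrix}\right) - (\bB^*)^{-1}(\wh\bB - \bB^*)\rho(\wh\ba_i),
\end{align*}
after left-multiplying the relation $\wh\bB\rho(\wh\ba_i) = [\wh\br_i^\top,1]^\top$ by $(\bB^*)^{-1}$ and rearranging. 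The top block of $\wh\br_i - \br_i^*$ is, by Theorem \ref{mainthmrexpansion} and \eqref{eq:approxerror}, equal to $\Delta\br_i$ up to an $O(\varepsilon_2)$ error (in the rotated frame $\bR^\top\wh\br_i$; I will need to carry the $\bR$ bookkeeping as in Corollary \ref{vertexexpansion}), and $\wh\bB - \bB^*$ has columns $\bR^\top\wh\bb_{\rho(k)} - \bb_k^* = \Delta\bb_k + [\boldsymbol{\Psi}_{\bb}]_{k,\cdot}^\top$ by Corollary \ref{vertexexpansion}, with zero bottom row. So the exact identity already has the claimed shape; the work is to replace $\rho(\wh\ba_i)$ by $\ba_i^*$ in the second term at the cost of a controlled remainder, and to collect all the error contributions into $[\boldsymbol{\Psi}_{\ba}]_{i,\cdot}$.

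The key steps, in order: (i) establish the exact subtraction identity above; (ii) control $\|(\bB^*)^{-1}\|$ — this follows from Assumptions \ref{assn:theta_order}–\ref{assn:eta_ratio} via the simplex geometry (the $\bb_k^*$ are affinely independent with well-separated convex hull, so $\bB^*$ is well-conditioned, $\|(\bB^*)^{-1}\|\lesssim 1$ up to $K$-dependence); (iii) write $(\bB^*)^{-1}(\wh\bB-\bB^*)\rho(\wh\ba_i) = (\bB^*)^{-1}(\wh\bB-\bB^*)\ba_i^* + (\bB^*)^{-1}(\wh\bB-\bB^*)(\rho(\wh\ba_i)-\ba_i^*)$; the first piece is $(\bB^*)^{-1}\Delta\bB\,\ba_i^*$ plus $(\bB^*)^{-1}[\text{zero-padded }\boldsymbol{\Psi}_{\bb}]\ba_i^*$ which is $O(\|\boldsymbol{\Psi}_{\bb}\|_{2,\infty}\|\ba_i^*\|_2)\lesssim\varepsilon_2$; (iv) bound $\|\wh\bB - \bB^*\| \lesssim \|\Delta\bB\| + \|\boldsymbol{\Psi}_{\bb}\|_{2,\infty} \lesssim \varepsilon_1$ (since $\|\Delta\br_i\|_2\lesssim\varepsilon_1$ by Theorem \ref{estimationerr}, hence $\|\Delta\bb_k\|_2\lesssim\varepsilon_1$), so the second piece in (iii) is $O(\varepsilon_1\|\rho(\wh\ba_i)-\ba_i^*\|_2)$; (v) similarly the top block of $\wh\br_i-\br_i^* - \Delta\br_i$ contributes $O(\varepsilon_2)$. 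Assembling, $[\boldsymbol{\Psi}_{\ba}]_{i,\cdot}$ collects the $O(\varepsilon_2)$ and $O(\varepsilon_1\|\rho(\wh\ba_i)-\ba_i^*\|_2)$ terms, giving the stated bound. Finally, for the a priori estimation-error bound $\|\rho(\wh\ba_i)-\ba_i^*\|_2\lesssim\varepsilon_1$: take norms in the exact identity, use $\|\wh\br_i-\br_i^*\|_2\lesssim\varepsilon_1$ (Theorem \ref{estimationerr}, with the $\bR$-rotation) and $\|\wh\bB-\bB^*\|\lesssim\varepsilon_1$ to get $\|\rho(\wh\ba_i)-\ba_i^*\|_2 \lesssim \varepsilon_1 + \varepsilon_1\|\rho(\wh\ba_i)-\ba_i^*\|_2$, then absorb the second term since $\varepsilon_1$ is smaller than a constant by assumption.

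The main obstacle I anticipate is the bookkeeping around the orthogonal matrix $\bR$ from \eqref{Rdefinition} and the permutation $\rho$: Theorem \ref{mainthmrexpansion} expands $\bR^\top\wh\br_i - \br_i^*$, not $\wh\br_i - \br_i^*$, and Corollary \ref{vertexexpansion} expands $\bR^\top\wh\bb_{\rho(k)}-\bb_k^*$, so the linear system $\wh\bB\rho(\wh\ba_i)=[\wh\br_i^\top,1]^\top$ must be rewritten in the rotated coordinates — multiplying the $\wh\br_i$ block by $\bR^\top$ and noting the affine combination is rotation-equivariant (the constraint $\sum_k\wh a_i(k)=1$ and the barycentric coordinates are unchanged under a common rotation of $\wh\br_i$ and all $\wh\bb_k$), so $\rho(\wh\ba_i)$ itself is the solution of the rotated system $\wh\bB_{\bR}\rho(\wh\ba_i) = [(\bR^\top\wh\br_i)^\top, 1]^\top$ with $\wh\bB_{\bR}$ having columns $\bR^\top\wh\bb_{\rho(k)}$ over a bottom row of ones. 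Once this equivariance is made precise the rest is a routine perturbation argument; a secondary technical point is verifying $\|(\bB^*)^{-1}\|\lesssim 1$, for which I would lean on the simplex non-degeneracy already used implicitly in \cite{jin2017estimating} and in Theorem \ref{SPmainthm} (the quantity $\Delta_{\br}$ being bounded below).
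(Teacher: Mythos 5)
Your proposal matches the paper's proof essentially step for step: the same augmented linear system $\wh\bB\,\rho(\wh\ba_i)=[\wh\br_i^\top,1]^\top$, the same exact subtraction identity with the split $(\wh\bB-\bB^*)\rho(\wh\ba_i)=(\wh\bB-\bB^*)\ba_i^*+(\wh\bB-\bB^*)(\rho(\wh\ba_i)-\ba_i^*)$, the same invocation of Corollary \ref{vertexexpansion} and Theorem \ref{mainthmrexpansion}, the same rotation/permutation equivariance handled via the block matrix $\widetilde\bR=\mathrm{diag}(\bR,1)$, and the same self-bounding absorption for the a priori bound $\|\rho(\wh\ba_i)-\ba_i^*\|_2\lesssim\varepsilon_1$. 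The only point where you are vaguer than the paper is the conditioning of $\bB^*$: to obtain the stated $K$-free constants, the paper uses the sharp bound $\|(\bB^*)^{-1}\|\lesssim 1/\sqrt{K}$ from \cite[(C.26)]{jin2017estimating}, which exactly cancels the $\sqrt{K}$ factors arising from $\|\Delta\bB\|_F\leq\sqrt{K}\max_k\|\Delta\bb_k\|_2$ and $\|\boldsymbol{\Psi}_{\bB}\|\leq\sqrt{K}\|\boldsymbol{\Psi}_{\bb}\|_{2,\infty}$ (together with $\|\ba_i^*\|_2\leq 1$, which holds since the barycentric coordinates lie in $[0,1]$ and sum to one); your ``$\lesssim 1$ up to $K$-dependence'' would leave a spurious $\sqrt{K}$ in the remainder.
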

\begin{proof}
    See Section \ref{membershipreconstructionlem3proof}.
\end{proof}
For simplicity, we define, for $i\in [n]$
\begin{align}
    \Delta \ba_i = (\bB^*)^{-1}\begin{bmatrix}
    \Delta\br_i\\
    0 
    \end{bmatrix}-(\bB^*)^{-1}\Delta\bB\ba_i^*\label{eq:define_delta_a_i},
\end{align}
where $B^\star$ and $a^\star$ are defined by \eqref{eq:define_a_star}. As a counterpart of $\wh\bc$ in the membership construction step \eqref{eq:define_hat_c}, we define $c_k^* = [\lambda_1^*+\bb_k^{*\top}\oLambda^*\bb_k^*]^{-1/2}$ for $k\in [K]$, where $\bb_k^{*\top}$ is given by \eqref{eq:define_b_star}. The expansion of $\wh\bc$ can be seen from Corollary \ref{membershipreconstructionlem1} and Lemma \ref{membershipreconstructionlem2}. Combine them with Lemma \ref{membershipreconstructionlem3}, we obtain the following expansion of $\wh\bpi_i$ that is linear in $\bW$.

\begin{thm}\label{Piexpansion}
Assume the conditions in Theorem \ref{SPmainthm} hold and $\varepsilon_2\lesssim \varepsilon_1\leq C$ for some constant $C>0$. Let $\rho(\cdot)$ be the permutation in Theorem \ref{SPmainthm}. Then with probability at least $1-O(n^{-10})$, for all $i\in [n]$ and $k\in [K]$, we have 
\begin{align*}
    \wh\bpi_i(\rho(k))-\bpi_i(k) = (1+\eta_i)\Delta \bpi_i(k) +\left[\boldsymbol{\Psi}_{\bPi}\right]_{i, k},
\end{align*}
where 
\begin{align}
    \Delta \bpi_i(k) =& \frac{1}{\left(\sum_{l=1}^K (\ba^*_i)_{l} /c^*_{l}\right)^2}\Bigg\{\sum_{l\neq k, l\in [K]} \textbf{Tr}\left[\bW\bu_1^*\bu_1^{*\top}+ 2\bN\bW\bu_1^*\bu_1^{*\top}\right]\left(\frac{c_k^*}{2 c_l^*} - \frac{c_l^*}{2 c_k^*}\right)(\ba_i^*)_k(\ba_i^*)_l  \nonumber\\
    &\quad \quad + \frac{(\Delta\ba_i)_k(\ba_i^*)_l - (\Delta\ba_i)_l(\ba_i^*)_k}{c_k^*c_l^*}+\left(\frac{\bb_k^{*\top}\oLambda^*\Delta\bb_k c_k^*}{c_l^*} - \frac{\bb_l^{*\top}\oLambda^*\Delta\bb_l c_l^*}{c_k^*}\right)(\ba_i^*)_k(\ba_i^*)_l \Bigg\}, \label{eq:define_delta_pi_ik}
\end{align}
$|\eta_i|  \lesssim K\varepsilon_1$ and
\begin{align*}  \left|\left[\boldsymbol{\Psi}_{\bPi}\right]_{i, k}\right|  \lesssim \frac{K}{n\theta_{\text{max}}^2}\left(\frac{\kappa^*K^2}{\beta_n}+K^{1.5} \theta_{\text{max}}\log^{0.5}n + \left(K^{0.5} \varepsilon_2+\varepsilon_1^2\right)\maxsigma^*\right) + K(\varepsilon_2+\varepsilon_1^2).
\end{align*}
\end{thm}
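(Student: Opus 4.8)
The plan is to treat $\wh\bpi_i(\rho(k))$ as a smooth rational function of a short list of ``building-block'' statistics for which Lemmas \ref{membershipreconstructionlem2} and \ref{membershipreconstructionlem3} and Corollary \ref{membershipreconstructionlem1} (the last via Theorem \ref{firstsubspaceexpansion}) already supply first-order, linear-in-$\bW$ expansions, and then to Taylor-expand this function about the population configuration. On the high-probability event of Theorem \ref{SPmainthm}, where $\rho$ is well-defined and $\wh\VV_{\rho(k)}=\VV_k$, re-indexing the normalization in the Membership Reconstruction step gives
\begin{align*}
\wh\bpi_i(\rho(k))=\frac{\wh a_i(\rho(k))/\wh\bc_{\rho(k)}}{\sum_{l\in[K]}\wh a_i(\rho(l))/\wh\bc_{\rho(l)}},\qquad \wh\bc_{\rho(k)}=\Bigl[\wh\lambda_1+\wh\bb_{\rho(k)}^\top\oLambda\wh\bb_{\rho(k)}\Bigr]^{-1/2},
\end{align*}
and, by exact recovery of Mixed-SCORE in the noiseless case $\bW=0$ (cf. Lemma 2.1 of \cite{jin2017estimating}), the same identity holds for $\bpi_i(k)$ with $\wh a_i(\rho(\cdot))$ and $\wh\bc_{\rho(\cdot)}$ replaced by $(\ba_i^*)_\cdot$ and $c^*_\cdot$. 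Writing $D_k:=\wh\lambda_1+\wh\bb_{\rho(k)}^\top\oLambda\wh\bb_{\rho(k)}$ and $D_k^*:=\lambda_1^*+\bb_k^{*\top}\oLambda^*\bb_k^*$, so that $\wh\bc_{\rho(k)}=D_k^{-1/2}$ and $c_k^*=(D_k^*)^{-1/2}$, it suffices to expand the scalar $\delta_k:=\wh a_i(\rho(k))\sqrt{D_k}-(\ba_i^*)_k\sqrt{D_k^*}$ to first order and then propagate it through the ratio.

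For $\delta_k$, a first-order expansion of $(x,y)\mapsto x\sqrt{y}$ gives $\delta_k=\tfrac{\wh a_i(\rho(k))-(\ba_i^*)_k}{c_k^*}+\tfrac{(\ba_i^*)_k c_k^*}{2}(D_k-D_k^*)+(\text{second order})$, with $D_k-D_k^*=(\wh\lambda_1-\lambda_1^*)+(\wh\bb_{\rho(k)}^\top\oLambda\wh\bb_{\rho(k)}-\bb_k^{*\top}\oLambda^*\bb_k^*)$, into which I substitute the three known expansions: $\wh a_i(\rho(k))-(\ba_i^*)_k=(\Delta\ba_i)_k+[\boldsymbol{\Psi}_{\ba}]_{i,k}$ from Lemma \ref{membershipreconstructionlem3}; $\wh\bb_{\rho(k)}^\top\oLambda\wh\bb_{\rho(k)}-\bb_k^{*\top}\oLambda^*\bb_k^*=2\bb_k^{*\top}\oLambda^*\Delta\bb_k+[\boldsymbol{\psi}]_k$ from Lemma \ref{membershipreconstructionlem2}; and $\wh\lambda_1-\lambda_1^*=\textbf{Tr}[\bW\bu_1^*\bu_1^{*\top}]+(\text{higher order})$ from Corollary \ref{membershipreconstructionlem1}. (Note $\bN\bu_1^*=0$, so $\textbf{Tr}[\bN\bW\bu_1^*\bu_1^{*\top}]=\bu_1^{*\top}\bN\bW\bu_1^*=0$; this is why the $2\bN\bW\bu_1^*\bu_1^{*\top}$ appearing inside the trace in \eqref{eq:define_delta_pi_ik} may be inserted without altering the leading term.) A first-order expansion of the ratio about the population values then gives
\begin{align*}
\wh\bpi_i(\rho(k))-\bpi_i(k)=\frac{1}{\bigl(\sum_l(\ba_i^*)_l/c_l^*\bigr)^2}\sum_{l\in[K]}\Bigl(\delta_k\,\tfrac{(\ba_i^*)_l}{c_l^*}-\delta_l\,\tfrac{(\ba_i^*)_k}{c_k^*}\Bigr)+(\text{remainder}),
\end{align*}
and substituting the expansion of $\delta_k$ while retaining exactly the terms linear in $\bW$ reproduces $\Delta\bpi_i(k)$: the $(\Delta\ba_i)$-contributions assemble into $\tfrac{(\Delta\ba_i)_k(\ba_i^*)_l-(\Delta\ba_i)_l(\ba_i^*)_k}{c_k^*c_l^*}$, the $(\wh\lambda_1-\lambda_1^*)$-contributions into the $\textbf{Tr}[\cdots]\bigl(\tfrac{c_k^*}{2c_l^*}-\tfrac{c_l^*}{2c_k^*}\bigr)(\ba_i^*)_k(\ba_i^*)_l$ term, and the $\bb_k^{*\top}\oLambda^*\Delta\bb_k$-contributions into the last summand.

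It remains to package the residual errors into $(1+\eta_i)$ and $[\boldsymbol{\Psi}_{\bPi}]_{i,k}$. The factor $1+\eta_i$ arises from the mean-value form of the ratio expansion, in which the prefactor $1/(\sum_l(\ba_i^*)_l/c_l^*)^2$ and the weights $c_l^*$ multiplying the fluctuations are really evaluated at an intermediate configuration; replacing it by the population value costs, after bounding the perturbation of the normalizing denominator $\sum_l\wh a_i(\rho(l))/\wh\bc_{\rho(l)}$ termwise (which contributes a factor $K$), a relative error of order $K(\max_i\|\rho(\wh\ba_i)-\ba_i^*\|_2+\max_k|\wh\bc_{\rho(k)}-c_k^*|/c_k^*)\lesssim K\varepsilon_1$ by Lemmas \ref{membershipreconstructionlem2} and \ref{membershipreconstructionlem3} and Lemma \ref{eigenvaluelemma}, hence $|\eta_i|\lesssim K\varepsilon_1$. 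For $[\boldsymbol{\Psi}_{\bPi}]_{i,k}$ one tracks magnitudes with Lemma \ref{eigenvaluelemma} and Remark \ref{rem1}: $c_k^*\asymp(\lambda_1^*)^{-1/2}\asymp(\sqrt n\,\theta_{\max})^{-1}$, $\maxsigma^*\asymp\beta_n n\theta_{\max}^2/K$, $\sum_l(\ba_i^*)_l/c_l^*\asymp\sqrt n\,\theta_{\max}$, and $(\bu_1^*)_i\asymp\theta_i/\|\btheta\|_2$. From these, the coefficient of $[\boldsymbol{\psi}]_l$ summed over $l$ is $\lesssim\tfrac{K}{n\theta_{\max}^2}$, which combined with $\|\boldsymbol{\psi}\|_\infty$ from Lemma \ref{membershipreconstructionlem2} yields the first term of the claimed bound; and the coefficients of $[\boldsymbol{\Psi}_{\ba}]$, of the quadratic Taylor remainder (products of two $O(\varepsilon_1)$ fluctuations), and of the higher-order part of $\wh\lambda_1-\lambda_1^*$ (of size $O(1)$, which the $\tfrac{K}{n\theta_{\max}^2}$ prefactor absorbs into the $\kappa^*K^2/\beta_n$ in that bound) together contribute at most $K(\varepsilon_2+\varepsilon_1^2)$ once $\|[\boldsymbol{\Psi}_{\ba}]_{i,\cdot}\|_2\lesssim\varepsilon_2+\varepsilon_1^2$ from Lemma \ref{membershipreconstructionlem3} is inserted. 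The main obstacle is precisely this last accounting step rather than anything conceptual: several building blocks ($\lambda_1^*$, $\maxsigma^*$) are large while their perturbations are small, so every cross term — notably $\bb_k^{*\top}\oLambda^*\Delta\bb_k$, which is ``large $\times$ small'' since $\Delta\bb_k$ inherits a $1/(\bu_1^*)_i\asymp\sqrt n/\theta_i$ factor through \eqref{eq:define_delta_r_i} and \eqref{eq:define_delta_b_k} — must be verified to land at the claimed order, and it is the $\asymp$-estimates of Lemma \ref{eigenvaluelemma} together with the hypothesis $\varepsilon_2\lesssim\varepsilon_1\leq C$ that make everything close.
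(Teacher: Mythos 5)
Your proposal is correct and follows essentially the same route as the paper's proof: Taylor-expanding $\wh c_{\rho(k)}^{-1}=\sqrt{\wh\lambda_1+\wh\bb_{\rho(k)}^\top\oLambda\wh\bb_{\rho(k)}}$ via Corollary \ref{membershipreconstructionlem1} and Lemma \ref{membershipreconstructionlem2}, combining with Lemma \ref{membershipreconstructionlem3} for $\wh\ba_i$, propagating through the ratio, and extracting $\eta_i$ as the relative perturbation of the normalizing denominator $\sum_l(\wh\ba_i)_{\rho(l)}/\wh c_{\rho(l)}$. Your side remark that $\textbf{Tr}[\bN\bW\bu_1^*\bu_1^{*\top}]=\bu_1^{*\top}\bN\bW\bu_1^*=0$ is correct but inessential, and the only minor imprecision is calling the higher-order part $\textbf{Tr}[\bDelta]$ of $\wh\lambda_1-\lambda_1^*$ ``of size $O(1)$'' when Corollary \ref{membershipreconstructionlem1} gives $O(n\theta_{\text{max}}^2/\lambda_1^*)=O(K)$ — which is nonetheless absorbed into the $\kappa^*K^2/\beta_n$ term exactly as you claim.
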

\begin{proof}
See Section \ref{Piexpansionproof}.
\end{proof}

Given $\varepsilon_1$ and $\varepsilon_2$ in \eqref{eps1eps2}, we define 
\begin{align*}
    \varepsilon_3 := \frac{K}{n\theta_{\text{max}}^2}\left(\frac{\kappa^*K^2}{\beta_n}+K^{1.5} \theta_{\text{max}}\log^{0.5}n + \left(K^{0.5} \varepsilon_2+\varepsilon_1^2\right)\maxsigma^*\right) + K(\varepsilon_2+\varepsilon_1^2),
\end{align*}
where $\kappa^\star$ is defined by \eqref{eq:define_kappa}. Since $\maxsigma^*\asymp\beta_nK^{-1}\|\btheta\|_2^2\lesssim K^{-1}\|\btheta\|_2^2$, $\max_{k\in [K]}\|\Delta \bb_k\|_2\lesssim \varepsilon_1$ and $\max_{i\in [n]}\|\Delta\ba_i\|_2\lesssim \varepsilon_1$, 
from Theorem \ref{Piexpansion} one can show that 
\begin{align*}
    |\wh\bpi_i(\rho(k))-\bpi_i(k)  -\Delta \bpi_i(k)|\lesssim \varepsilon_3
\end{align*}
with probability at least $1-O(n^{-10})$. Specifically, if $K, \mu^*, \kappa^*, \beta_n, \theta_{\max} \asymp 1$, we have
\begin{align*}
    \varepsilon_3\asymp \frac{\log^2 n}{n}.
\end{align*}

\section{Distributional Theory and Rank Inference}\label{distributionaltheory}
In this section, we tackle specific inference problems based on the uncertainty quantification results we stated before. First, we establish distributional guarantee using the first order expansion we derived in Theorem \ref{Piexpansion}. Second, we apply the distributional results to related inference problem, especially rank inference application. To state the distributional results of $\wh\bpi_i$, we need the following notations.
They are non-random matrices of dimension $n\times n$
\begin{align}
    &\boldsymbol{C}^{\br}_{i,k} :=\frac{\oLambda_{kk}^*}{(\bu_1)_i}\left[\boldsymbol{e}_i\left(\oU_{\cdot,k}^*\right)^\top\right] - \oLambda_{kk}^*\left[\bu_1^*\left(\oU_{\cdot,k}^*\right)^\top\bN\right] - \frac{(\br_i^*)_k}{(\bu_1^*)_i\lambda_1^*}\bu_1^*\bN_{i, \cdot} ,  \nonumber \\
    &\boldsymbol{C}^{\ba}_{i,k} := \sum_{l\in [K-1]}\left(\bB^*\right)_{k, l}^{-1}\boldsymbol{C}^{\br}_{i, l}-\sum_{l\in [K-1]}\sum_{t\in [K]}\left(\bB^*\right)_{k, l}^{-1}(\ba_i^*)_t\boldsymbol{C}^{\bb}_{t, l},  \quad \boldsymbol{C}^{\bb}_{i,k} := \frac{1}{\left|\VV_i\right|}\sum_{j\in \VV_i}\boldsymbol{C}^{\br}_{j,k}, \nonumber \\
    &\boldsymbol{C}^{\bpi}_{i,k} := \frac{1}{\left(\sum_{l=1}^K (\ba^*_i)_{l} /c^*_{l}\right)^2}\sum_{l\neq k, l\in [K]} \Bigg\{\left(\bu_1^*\bu_1^{*\top}+ 2\bu_1^*\bu_1^{*\top}\bN\right)\left(\frac{c_k^*}{2 c_l^*} - \frac{c_l^*}{2 c_k^*}\right)(\ba_i^*)_k(\ba_i^*)_l  \nonumber \\
    &\quad \quad + \frac{(\ba_i^*)_l \boldsymbol{C}^{\ba}_{i,k}- (\ba_i^*)_k \boldsymbol{C}^{\ba}_{i,l}}{c_k^*c_l^*}+\sum_{t\in [K-1]}\left(\frac{\bb_{kt}^{*\top}\oLambda^*_{tt} c_k^*\boldsymbol{C}^{\bb}_{k,t}}{c_l^*} - \frac{\bb_{lt}^{*\top}\oLambda^*_{tt} c_l^*\boldsymbol{C}^{\bb}_{l,t}}{c_k^*}\right)(\ba_i^*)_k(\ba_i^*)_l \Bigg\}. \label{Cpidefinition}
\end{align}
One can see that
\begin{align*}
    &(\Delta\br_i)_k = \textbf{Tr}\left[\boldsymbol{C}_{i, k}^{\br}\bW\right],\quad  (\Delta\bb_i)_k = \textbf{Tr}\left[\boldsymbol{C}_{i, k}^{\bb}\bW\right],\\ &(\Delta\ba_i)_k = \textbf{Tr}\left[\boldsymbol{C}_{i, k}^{\ba}\bW\right],\quad \Delta\bpi_i(k) = \textbf{Tr}\left[\boldsymbol{C}_{i, k}^{\bpi}\bW\right],
\end{align*}
where $\Delta\br_i$, $\Delta\bb_i$, $\Delta\ba_i$, $\Delta\bpi_i(k)$ are defined by  \eqref{eq:define_delta_r_i}, \eqref{eq:define_delta_b_k}, \eqref{eq:define_delta_a_i} and \eqref{eq:define_delta_pi_ik} respectively. For any matrices $\bM,\bM_1,\bM_2\in \mathbb{R}^{n\times n}$, we define the variance of $\textbf{Tr}[\bM\bW]$ as
\begin{align*}
    &V_{\bM} := \sum_{i\in [n]} M_{ii}^2H_{ii}(1-H_{ii})+\sum_{1\leq i < j \leq n}(M_{ij}+M_{ji})^2H_{ij}(1-H_{ij}),
\end{align*}
and the covariance of $\textbf{Tr}[\bM_1\bW]$ and $\textbf{Tr}[\bM_2\bW]$ as
\begin{align*}
    & V_{\bM_1,\bM_2} := \sum_{i\in [n]} M_{1, ii}M_{2, ii}H_{ii}(1-H_{ii})+\sum_{1\leq i < j \leq n}(M_{1,ij}+M_{1,ji})(M_{2,ij}+M_{2,ji})H_{ij}(1-H_{ij}).
\end{align*}
The asymptotic distribution of $\wh\bpi_i$ is given by the following Theorem: 
\begin{thm}\label{distributionthm}
Let $\rho(\cdot)$ be the permutation in Theorem \ref{SPmainthm}. Let $r$ be a fixed integer and
\begin{align*}
\mathcal{I} = \left\{(i_1,k_1), (i_2,k_2),\dots,(i_r,k_r)\right\}    
\end{align*}
be $r$ distinct fixed pairs from $[n]\times [K]$. We consider vectors
\begin{align*}
    \wh\bpi_\mathcal{I}:=\left(\wh\bpi_{i_1}(\rho(k_1)), \wh\bpi_{i_2}(\rho(k_2)),\dots, \wh\bpi_{i_r}(\rho(k_r))\right)^\top \text{ and  }
    \bpi_\mathcal{I}:=\left(\bpi_{i_1}(k_1), \bpi_{i_2}(k_2),\dots, \bpi_{i_r}(k_r)\right)^\top, 
\end{align*}
Denote by $\bSigma$ the covariance matrix whose $j^{th}$ diagonal entry is $V_{\bC^{\bpi}_{i_j,k_j}}, j\in [r]$ and $(j, k)$ off-diagonal entry is $V_{\bC^{\bpi}_{i_j,k_j},\bC^{\bpi}_{i_k,k_k}}, j\neq k\in [r]$. 
Then for any convex set $\mathcal{D}\subset \mathbb{R}^r$, we have
\begin{align*}
    \left|\mathbb{P}(\wh\bpi_\mathcal{I} - \bpi_\mathcal{I} \in \mathcal{D}) - \mathbb{P}(\mathcal{N}(\boldsymbol{0}_r,\bSigma) \in \mathcal{D})\right| = o(1),
\end{align*}
as long as
\begin{align}
    r^{5/4}\max_{1\leq i\leq j\leq n}\left\|\bSigma^{-1/2}\bomega_{ij}\right\|_2 = o(1) \text{ and } \lambda_{r}^{-1/2}(\bSigma)r^{3/4}\varepsilon_3 = o(1), \label{distributionthmcondition}
\end{align}
where for each pair $(i,j)\in [n]\times [n]$ such that $i\leq j$, 
\begin{align*}
    \bomega_{ij} = \begin{cases}
      \left((\bC^{\bpi}_{i_1,k_1})_{ij}+(\bC^{\bpi}_{i_1,k_1})_{ji},(\bC^{\bpi}_{i_2,k_2})_{ij}+(\bC^{\bpi}_{i_2,k_2})_{ji},\dots,(\bC^{\bpi}_{i_r,k_r})_{ij}+(\bC^{\bpi}_{i_r,k_r})_{ji}\right)^\top & i\neq j; \\
      \left((\bC^{\bpi}_{i_1,k_1})_{ii},(\bC^{\bpi}_{i_2,k_2})_{ii},\dots,(\bC^{\bpi}_{i_r,k_r})_{ii}\right)^\top & i=j.
    \end{cases}    
\end{align*} 
\end{thm}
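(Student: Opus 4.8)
\emph{Proof strategy.} The plan is to peel off the part of $\wh\bpi_{\mathcal I}-\bpi_{\mathcal I}$ that is linear in the noise $\bW$, prove a multivariate CLT for it over convex sets via a Berry--Esseen bound for sums of independent random vectors, and then show the leftover is negligible after whitening by $\bSigma$. We work throughout on the event $\mathcal A$ of probability $1-O(n^{-10})$ on which the conclusion of Theorem~\ref{SPmainthm} (so the permutation $\rho$ is the stated one) and the expansion of Theorem~\ref{Piexpansion} both hold; off $\mathcal A$ the contribution to any probability is $O(n^{-10})=o(1)$ and is discarded at the end. On $\mathcal A$, Theorem~\ref{Piexpansion} together with the entrywise bound recorded just after it gives, for each $j\in[r]$, $\wh\bpi_{i_j}(\rho(k_j))-\bpi_{i_j}(k_j)=\textbf{Tr}[\bC^{\bpi}_{i_j,k_j}\bW]+\xi_j$ with $|\xi_j|\lesssim\varepsilon_3$. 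Writing $\boldsymbol T:=(\textbf{Tr}[\bC^{\bpi}_{i_1,k_1}\bW],\dots,\textbf{Tr}[\bC^{\bpi}_{i_r,k_r}\bW])^\top$ and $\boldsymbol\xi:=(\xi_1,\dots,\xi_r)^\top$, and using that $\bW$ is symmetric with independent entries on and above the diagonal, one has $\textbf{Tr}[\bC^{\bpi}_{i_j,k_j}\bW]=\sum_{1\le a\le b\le n}[\bomega_{ab}]_jW_{ab}$ with exactly the vectors $\bomega_{ab}$ from the statement, so $\boldsymbol T=\sum_{a\le b}\bomega_{ab}W_{ab}$ is a sum of independent mean-zero random vectors whose covariance, matched term-by-term against the definitions of $V_{\bM}$ and $V_{\bM_1,\bM_2}$, equals $\bSigma$.

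Next I would whiten: $\boldsymbol T':=\bSigma^{-1/2}\boldsymbol T=\sum_{a\le b}(\bSigma^{-1/2}\bomega_{ab})W_{ab}$ has covariance $\bI_r$, and a Berry--Esseen inequality over the class of convex sets for sums of independent vectors (e.g.\ Ra{\v{i}}{\v{c}}'s bound) yields, with $\bZ\sim\mathcal N(\boldsymbol 0_r,\bI_r)$,
\[
\sup_{\mathcal D'\ \mathrm{convex}}\big|\mathbb P(\boldsymbol T'\in\mathcal D')-\mathbb P(\bZ\in\mathcal D')\big|\ \lesssim\ r^{1/4}\sum_{a\le b}\big\|\bSigma^{-1/2}\bomega_{ab}\big\|_2^3\,\mathbb E W_{ab}^2 .
\]
Since $|W_{ab}|\le 1$ gives $\mathbb E|W_{ab}|^3\le\mathbb E W_{ab}^2$, and $\sum_{a\le b}\|\bSigma^{-1/2}\bomega_{ab}\|_2^2\,\mathbb E W_{ab}^2=\textbf{Tr}[\bSigma^{-1}\bSigma]=r$, the right-hand side is $\lesssim r^{5/4}\max_{a\le b}\|\bSigma^{-1/2}\bomega_{ab}\|_2$, which is $o(1)$ by the first part of \eqref{distributionthmcondition}.

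To absorb the remainder, fix a convex $\mathcal D\subset\RR^r$ and set $\widetilde{\mathcal D}:=\bSigma^{-1/2}\mathcal D$ (convex). On $\mathcal A$, $\|\bSigma^{-1/2}\boldsymbol\xi\|_2\le\delta$ with $\delta\asymp\lambda_{r}^{-1/2}(\bSigma)\sqrt r\,\varepsilon_3$, so $\bSigma^{-1/2}(\wh\bpi_{\mathcal I}-\bpi_{\mathcal I})\in\widetilde{\mathcal D}$ forces $\boldsymbol T'\in\widetilde{\mathcal D}^{+\delta}$ and is implied by $\boldsymbol T'\in\widetilde{\mathcal D}^{-\delta}$, where $\widetilde{\mathcal D}^{\pm\delta}$ denote the (convex) $\delta$-dilation and $\delta$-erosion of $\widetilde{\mathcal D}$. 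Combining this sandwich with the previous paragraph, discarding $\mathcal A^c$, and using $\mathbb P(\bSigma^{1/2}\bZ\in\mathcal D)=\mathbb P(\mathcal N(\boldsymbol 0_r,\bSigma)\in\mathcal D)$, it remains to bound the Gaussian mass of the shell $\mathbb P(\bZ\in\widetilde{\mathcal D}^{+\delta}\setminus\widetilde{\mathcal D}^{-\delta})$. By the dimension-sensitive Gaussian anti-concentration estimate for $\delta$-neighbourhoods of convex sets (Nazarov/Ball-type Gaussian surface-area bound) this is $\lesssim r^{1/4}\delta\asymp r^{3/4}\lambda_{r}^{-1/2}(\bSigma)\varepsilon_3=o(1)$ by the second part of \eqref{distributionthmcondition}. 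Chaining the inequalities gives $\big|\mathbb P(\wh\bpi_{\mathcal I}-\bpi_{\mathcal I}\in\mathcal D)-\mathbb P(\mathcal N(\boldsymbol 0_r,\bSigma)\in\mathcal D)\big|=o(1)$ uniformly over convex $\mathcal D$.

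\emph{Main obstacle.} The crux is invoking the two external tools at the right scale: a multivariate Berry--Esseen theorem valid uniformly over \emph{all} convex sets (not just half-spaces or rectangles), whose third-moment term must be repackaged as $\max_{a\le b}\|\bSigma^{-1/2}\bomega_{ab}\|_2$ times $\textbf{Tr}[\bSigma^{-1}\mathrm{Cov}(\boldsymbol T)]=r$; and a Gaussian anti-concentration bound for thin shells around convex sets with the correct $r$-dependence. Beyond that, the only genuine verifications are that the linearization error inherited from Theorem~\ref{Piexpansion} is truly $O(\varepsilon_3)$ entrywise with probability $1-O(n^{-10})$ (which rests on the conditions of Theorem~\ref{SPmainthm} and $\varepsilon_2\lesssim\varepsilon_1\le C$) and that $\mathrm{Cov}(\boldsymbol T)=\bSigma$ exactly; neither is delicate, and the rest is bookkeeping.
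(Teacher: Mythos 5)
Your proposal is correct and follows essentially the same route as the paper's proof: the same split into the linear statistic $\sum_{a\le b}\bomega_{ab}W_{ab}$ plus an $O(\varepsilon_3)$ remainder from Theorem \ref{Piexpansion}, the same application of Ra\v{i}\v{c}'s convex-set Berry--Esseen bound with the third moment repackaged via $\textbf{Tr}[\bSigma^{-1}\bSigma]=r$, and the same sandwich over $\delta$-neighbourhoods of convex sets controlled by the $r^{1/4}\delta$ Gaussian anti-concentration estimate (which the paper also takes from Ra\v{i}\v{c}). No gaps.
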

\begin{proof}
See Section \ref{distributionthmproof}.
\end{proof}

Next, we will apply Theorem \ref{distributionthm} to answer some inference questions. 
In many practical applications, one is concerned with the characteristics of each community rather than the index (e.g., $1,2,\dots,K$) of the community. 
For this reason, we will assume that the permutation $\rho(\cdot)$ is the identical map, that is, $\rho(i) = i$ for all index $i$.
\begin{exa}\label{exa1}
Given a node $i$ in the network, it is natural to ask which community it is closest to. This amounts to find the largest component of $\{\pi_i(k)\}_{k=1}^K$ and can be formulated as $K$ hypothesis testing problems. For $k\in [K]$, we consider the following testing problem:
\begin{align*}
    H_{k0}:\bpi_i(k) \leq
    \max_{l\in [K]\backslash \{k\}} \bpi_i(l) \quad \text{ v.s. }\quad H_{ka}: \bpi_i(k)>
    \max_{l\in [K]\backslash \{k\}} \bpi_i(l).
\end{align*}
According to Theorem \ref{distributionthm} with $r = 2$, we consider the following 
Bonferroni-adjusted critical region at a significance level of $\alpha$ 
\begin{align*}
    \left\{\wh\bpi_i(k)> \wh\bpi_i(l)+\Phi^{-1}\left(\frac{\alpha}{K-1}\right)
    \sqrt{\wh V_{\bM}}
    ,\;  \forall l\in [K]\backslash\{k\}\right\},
\end{align*}
where with $\bM = \widehat{\boldsymbol{C}}^{\bpi}_{i,k} - \widehat{\boldsymbol{C}}^{\bpi}_{i,l}$ and $\wh\bH := \sum_{i=1}^K\wh\lambda_i\wh\bu_i\wh\bu_i^\top$, 
\begin{align}
   \wh V_{\bM} = \sum_{i\in [n]} M_{ii}^2\wh H_{ii}(1-\wh H_{ii})+\sum_{1\leq i < j \leq n}(M_{ij}+M_{ji})^2\wh H_{ij}(1-\wh H_{ij}).\label{hatvardefinition}
\end{align}
It is easy to see from the above critical region that at most one of $H_{10},H_{20},\dots, H_{K0}$ can be rejected. Once $H_{k0}$ is rejected, we can conclude that node $i$ is closest to community $k$ at a significance level of $\alpha$.
\end{exa}

\begin{exa}\label{exa2}
Moving beyond the question of closest community detection, it is often of interest to understand the rank of node $i$ with respect to community $k$ such as a ranking of conservativeness of a journal or a book.   Let $R_{i, k}$ be the rank  of $\bpi_i(k)$ among $\bpi_1(k), \bpi_2(k),\dots, \bpi_n(k)$. We adopt the method proposed by \cite{fan2022ranking} to construct rank confidence interval for $R_{i, k}$. Consider the following random variable $\mathcal{T}$ and its bootstrap counterpart $\mathcal{G}$:
\begin{align*}
    \mathcal{T} &= \max_{j:j\neq i}\left|\frac{(\wh\bpi_j(k) - \wh\bpi_i(k))-(\bpi_j(k) - \bpi_i(k))}{\sqrt{V_{\boldsymbol{C}^{\bpi}_{j,k} - \boldsymbol{C}^{\bpi}_{i,k}}}}\right|,\\
    \mathcal{G} &= \max_{j:j\neq i}\left|\frac{\textbf{Tr}\left[\left(\boldsymbol{C}^{\bpi}_{j,k} - \boldsymbol{C}^{\bpi}_{i,k}\right)\left(\bW\odot \boldsymbol{G} \right)\right]}{\sqrt{V_{\boldsymbol{C}^{\bpi}_{j,k} - \boldsymbol{C}^{\bpi}_{i,k}}}} - \frac{\sum_{1\leq a\leq b \leq n}G_{ab}}{(n^2+n)/2}\frac{\textbf{Tr}\left[\left(\boldsymbol{C}^{\bpi}_{j,k} - \boldsymbol{C}^{\bpi}_{i,k}\right)\bW\right]}{\sqrt{V_{\boldsymbol{C}^{\bpi}_{j,k} - \boldsymbol{C}^{\bpi}_{i,k}}}}\right|,
\end{align*}
where $\boldsymbol{G}\in \mathbb{R}^{n\times n}$ is a symmetric random matrix whose upper triangular (include diagonal) entries are i.i.d. standard Gaussian distribution and $\boldsymbol{A}\odot\boldsymbol{B}$ denotes the elementwise product of matrices $\boldsymbol{A}$ and $\boldsymbol{B}$. Given any $\alpha\in (0, 1)$, we define $c_{1-\alpha}$ as the $(1-\alpha)$th quantile of the conditional distribution of $\mathcal{G}$ given $\bW$. Then by \cite[Theorem 2.2]{chernozhuokov2022improved}, one can show that 
\begin{align}
    \left|\mathbb{P}(\mathcal{T}>c_{1-\alpha})-\alpha\right|\to 0 \label{exa2eq1}
\end{align}
under mild regularity condition (See Section \ref{exa2eq1proof} for more details). Using the plug-in estimators and estimated critical value $\wh c_{1-\alpha}$ from the bootstrap samples, we construct the following simultaneous confidence intervals for $\{\bpi_j(k) - \bpi_i(k)\}_{j\in [n]\backslash\{i\}}$ with a confidence level of $1-\alpha$ as
\begin{align*}
     \left[C_L(j), C_U(j)\right]:= \left[\wh\bpi_j(k)-\wh\bpi_i(k)\pm\wh c_{1-\alpha} \sqrt{\wh V_{\widehat{\boldsymbol{C}}^{\bpi}_{j,k} - \widehat{\boldsymbol{C}}^{\bpi}_{i,k}}}\right].
\end{align*}
In other words, for all $j\in [n]\backslash\{i\}$, $\bpi_j(k) - \bpi_i(k)\in [C_L(j), C_U(j)]$ with probability at least $1-\alpha$. Now $C_L(j)>0$ implies $\bpi_j(k)> \bpi_i(k)$ and counting the number of such $j's$ give the lower bound the rank of $\bpi_i(k)$. Similarly, 
$C_U(j)<0$ implies $\bpi_j (k)< \bpi_i (k)$ and this gives an upper bound on the rank of $\bpi_i(k)$.  As a result,  
\begin{align*}
    \left[1+\sum_{j\in [n]\backslash\{i\}}\mathbb{I}(C_L(j)>0), n-\sum_{j\in [n]\backslash\{i\}}\mathbb{I}(C_U(j)<0)\right]
\end{align*}
forms a $100(1-\alpha)\%$ confidence interval for $R_{i,k}$.
\end{exa}

\begin{exa}
\cite{fan2022simple} proposed the SIMPLE test to study the statistical inference on the membership profiles. Specifically, for each node pair $i\neq j\in [n]$, we are interested in the following testing problem:
\begin{align*}
    H_0: \bpi_i = \bpi_j \quad \text{ v.s. }\quad H_a:\bpi_i\neq \bpi_j.
\end{align*}
Theorem~\ref{distributionthm} allows us to recover their result.
To see this,  we take $r = 2(K-1)$, 
\begin{align*}
    \mathcal{I} = \{(i,1),(i,2),\dots,(i, K-1),(j, 1), (j, 2),\dots,(j, K-1)\}.
\end{align*}
We define matrix $\boldsymbol{T}\in \mathbb{R}^{(K-1)\times(2K-2)}$ by 
\begin{align*}
  T_{pq} =
    \begin{cases}
      1 & q = p \\
      -1 & q = p + K - 1 \\
      0 & \text{otherwise}
    \end{cases},\quad  \forall (p, q)\in [K-1]\times [2K-2].
\end{align*}
As a result, by Theorem \ref{distributionthm}, as long as condition \eqref{distributionthmcondition} holds, under null hypothesis $H_0: \bpi_i=\bpi_j$ we have 
\begin{align*}
\Bigl ((\wh\bpi_i)_{1:K-1} - (\wh\bpi_j)_{1:K-1} \Bigr )^T \left(\boldsymbol{T}\bSigma\boldsymbol{T}^\top\right)^{-1} \Bigl ( (\wh\bpi_i)_{1:K-1} - (\wh\bpi_j)_{1:K-1} \Bigr ) \to \chi^2_{K-1}.
\end{align*}
This Hotelling type of statistic can be used to test the null hypothesis for two individual nodes and recovers the result in \cite{fan2022simple}.

\end{exa}

\section{Numerical Studies}\label{sec:simulation}
In this section, we conduct numerical experiments on both synthetic data and real data to complement our theoretical results. We first validate our distributional results by simulations. Then, we apply our approach to stock dataset to study the simplex structure and do rank inference, as we have mentioned in previous examples.

\subsection{Synthetic Data Simulation}
Here, we conduct synthetic data experiments to verify our uncertainty quantification results in Theorems \ref{Piexpansion} and \ref{distributionthm}. Our simulation setup is as follows: set the number of nodes $n=2000$ and number of communities $K=2$. To generate the membership matrix $\bPi$, we first set the first two rows of the $2000\times 2$ matrix $\bPi$ as $[1, 0]$ and $[0, 1]$, as two pure nodes. The first entries of the remaining $1998$ rows are sampled independently from the uniform distribution over interval $[0.1,0.9]$, while the second entries are determined by the first entries since the row sum is $1$. Then we randomly shuffle the rows of $\bPi$. For the matrix $\bP\in \mathbb{R}^{2\times 2}$, we set its diagonals as $1$ and off-diagonals as $0.2$. In terms of the $\theta_i$, which partially represents the signal strength, we consider three settings: (i) $\theta_i = 0.6$ for all nodes. (ii) $\theta_i$'s are sampled independently from the uniform distribution over interval $[0.3, 0.9]$. (iii) $\theta_i=0.9$ for all nodes. In each setting, we generate the network and obtain the estimated mixed membership matrix estimator $\wh\bPi$ $500$ times. We then record the realizations of the following standardized random variable
\begin{align*}
    \frac{\wh\bpi_{1}(1) - \bpi_1(1)}{\sqrt{ \widehat{V}_{\widehat{\boldsymbol{C}}^{\bpi}_{1,1}}}},
\end{align*}
where $\widehat{V}$ {\color{black}is defined by \eqref{hatvardefinition}} and $\widehat{\boldsymbol{C}}^{\bpi}_{1,1}$ {\color{black} is the plug-in estimator of $\boldsymbol{C}^{\bpi}_{1,1}$ given by \eqref{Cpidefinition}}. Figure \ref{simulation} summarizes the results collected from the $500$ simulations. The three plots in the first row show the histograms of the results from each setting, and the orange curve is the density of standard normal distribution. The three Q-Q plots in the second row further examine the normality in the three settings. These results suggest that the random variable is nearly normally distributed and the estimated asymptotic variance is right.
They in turn support further our theoretical results Theorem \ref{Piexpansion} and Theorem \ref{distributionthm}.

\begin{figure}[ht]
\begin{center}
\includegraphics[width=\textwidth]{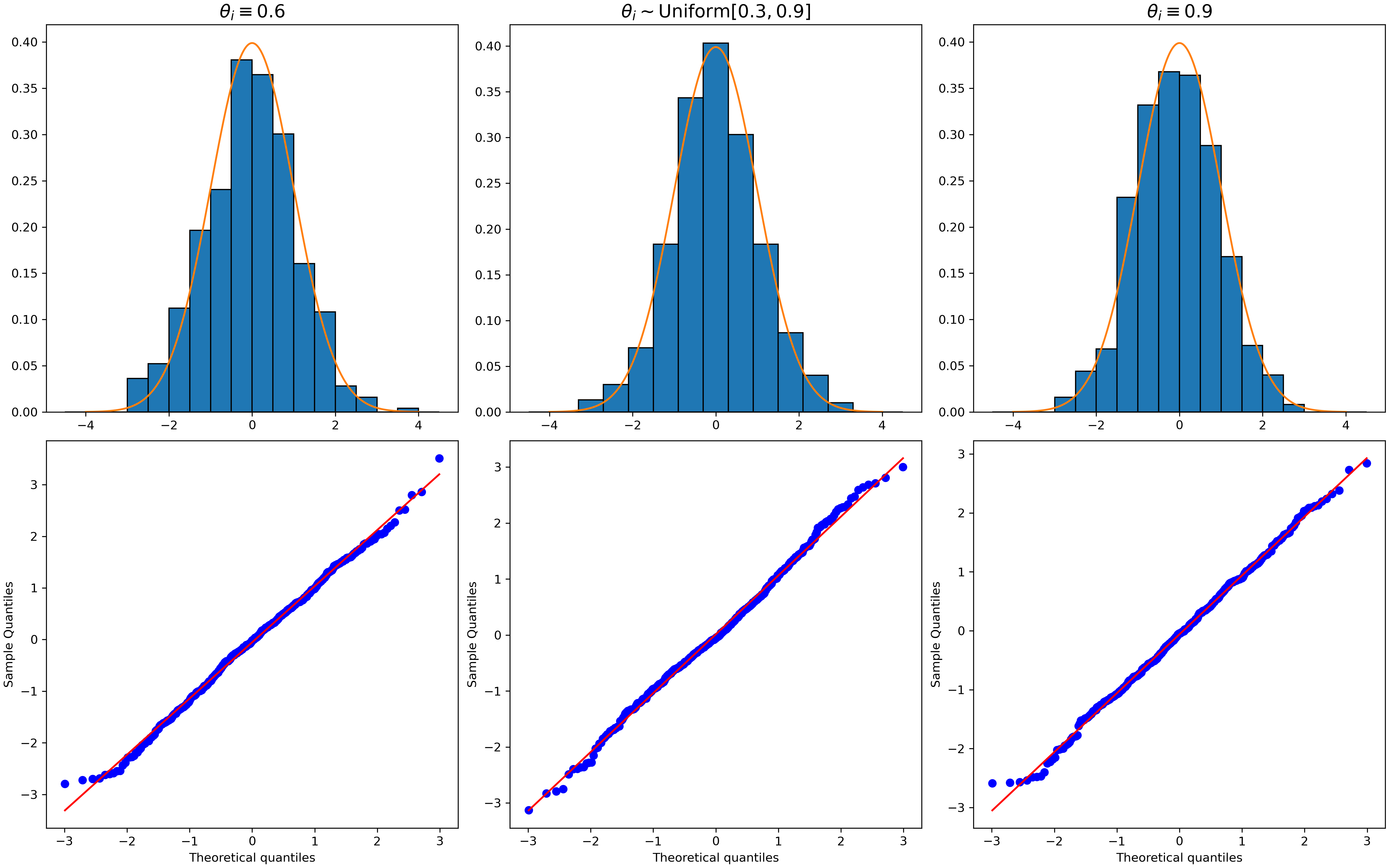} 
\end{center}
\caption{Histograms and Q-Q plots for validating the normality of $(\wh\bpi_{1}(1) - \bpi_1(1))/\sqrt{\widehat{\textbf{var}}(\widehat{\boldsymbol{C}}^{\bpi}_{1,1})}$. The orange curves in the first row of plots are the density of standard normal distribution. Three columns represent three choices of $\theta_i$, and in each setting the simulation is repeated for $500$ times.}
\label{simulation}
\end{figure}

\subsection{Real Data Experiments}
In this subsection, we apply our uncertainty quantification results 
to financial dataset. Our dataset consists of the daily close prices of the S\&P 500 stocks from January 1, 2010 to December 31, 2022 {\color{black}from Yahoo Finance}\footnote{https://pypi.org/project/yfinance/} and we calculated the log returns. We clean the data by removing the stocks with more than one missing values. For those with just one missing value, we let the corresponding log returns be zero. We would like to construct a network based on the correlation of the log returns. It is well known in finance that some common factors account for much of this correlation. Similar to \cite{fan2019simple}, we first fit a factor model with five factors to remove these common factors and then construct the network based on the covariance matrix of the idiosyncratic components. Letting $\boldsymbol{A}$ be the covariance matrix of the idiosyncratic components, we draw an edge between nodes $i$ and $j$ if and only if $A_{ij} > 0.1$. In this way, we obtain the adjacency matrix $\bX$. After the preprocessing steps, we have $n = 433$ stocks remained. 

We take $K=3$ and apply the SCORE normalization step to the leading eigenvectors of $\bX$. On the left side of Figure \ref{simplex} we display the scatter plot of $\wh\br_i$ for $i\in [n]$ and show the $2$-dimensional simplex structure. As we can see from the figure, it has a clear triangular structure. Looking closely at the nodes, we can find some characteristics of the three corners of this triangle. Many financial companies (PNC, NDAQ, TROW, RE, PSA) are very close to the vertex of the right corner, which can be viewed as the pure node of this community. And, many other financial companies (AJG, WRB, AXP, BRO, C) are also closer to this corner compared to the other two corners. In the top corner, we can find companies (REGN, EW, HOLX, ILMN) belonging to the healthcare industries. Moreover, some other healthcare companies PFE, HUM, LH, TMO, ABT can also be viewed as mixed members which are closer to this community. Similarly, companies related to the energy industry such as MPC, BA, TDY, EMR, AEP, DOV, XEL, FE make up a large part of the bottom corner, while other similar companies ED, LNT seem to be mixed members close to this corner. To validate these observations, we apply Theorem \ref{distributionthm} to conduct the following tests for each $i\in [n]$ as we have stated in Example \ref{exa1}.
\begin{align}
    &H_{01}: \bpi_i(1)\leq \max\{\bpi_i(2),  \bpi_i(3)\} \quad \text{ v.s. }\quad H_{a1}: \bpi_i(1)> \max\{\bpi_i(2),  \bpi_i(3)\};  \nonumber\\
    &H_{02}: \bpi_i(2)\leq \max\{\bpi_i(1),  \bpi_i(3)\} \quad \text{ v.s. }\quad H_{a2}: \bpi_i(2)> \max\{\bpi_i(1),  \bpi_i(3)\};  \label{eqtest}\\
    &H_{03}: \bpi_i(3)\leq \max\{\bpi_i(1),  \bpi_i(2)\} \quad \text{ v.s. }\quad H_{a3}: \bpi_i(3)> \max\{\bpi_i(1),  \bpi_i(2)\};  \nonumber
\end{align}
 At most one of $H_{01}, H_{02}$ and $H_{03}$ can be rejected. On the right side of Figure \ref{simplex}, we show the results of the aforementioned tests. We use red/blue/green to represent that $H_{01}$/$H_{02}$/$H_{03}$ is  rejected respectively. If none of these three null hypothesises is rejected, we let be point be grey. As we can see from Figure \ref{simplex}, for most ($293/433 = 67.7\%$) of the nodes, one of $H_{01}, H_{02}$ and $H_{03}$ is rejected. In other words, although many nodes have mixed membership, we can identify them to be closer to one community. Furthermore, the test results confirm our observation about the three corners we have mentioned before. It is worth mentioning that the information technology companies, which make up a large proportion of the S\&P 500 list, can be found in abundance in any part of the simplex. This also indicates that the prosperous development of information technology industry is a result of the nurturing of traditional industries.
\begin{figure}[ht]
\begin{center}
\includegraphics[width=\textwidth]{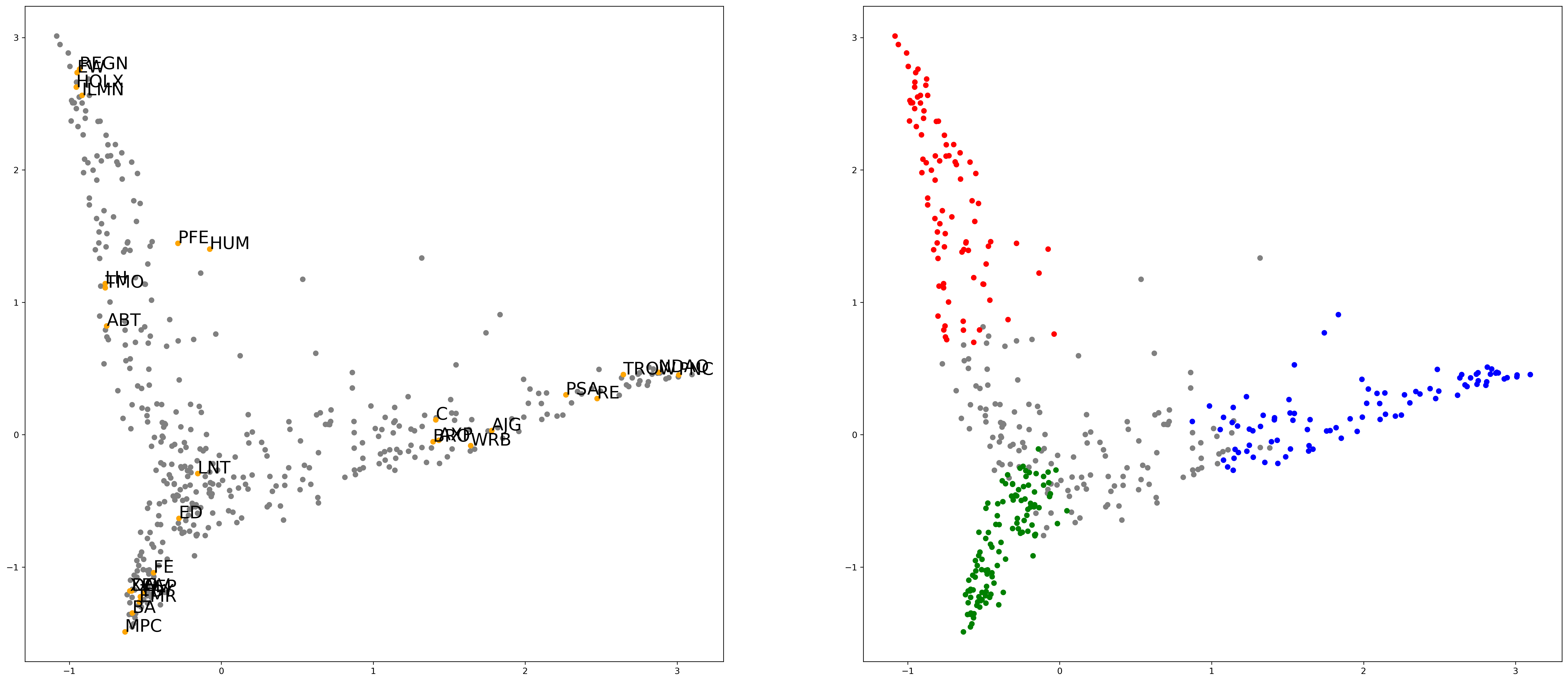} 
\end{center}
\caption{Left: Scatter plot of $\wh\br_i$ for $i\in [n]$. Several representative companies in each corners are highlighted. Right: The test results of $H_{01}, H_{02}$ and $H_{03}$. Red/blue/green means that $H_{01}$/$H_{02}$/$H_{03}$ is  rejected while grey means that none of these three null hypothesises is rejected.}
\label{simplex}
\end{figure}

Next, we apply our approach in Example \ref{exa2} to construct rank confidence interval for the nodes. We summarize our inference results in Table \ref{table1}. We select $11$ stocks as representatives for presentation. First, we include the three estimated vertices. Second, of the four categories (red/blue/green/grey) shown in Figure \ref{simplex}, we randomly select two from each category and we label them with R/B/G/C (C stands for center). In Table \ref{table1} we include the estimated mixed membership vectors $\wh\pi_i$ for each stock as well as the three $95\%$ rank confidence intervals for $\pi_i(1), \pi_i(2)$ and $\pi_i(3)$, and we denote them by RCI I, RCI II and RCI III. As we can see from Table \ref{table1}, our approach provides meaningful rank confidence intervals for the stocks and the categories which they are close to. 

\begin{table}
	\begin{center}
		\begin{tabular}[!htbp]{ p{2.5cm}|p{3.5cm}p{2cm}p{2cm}p{2cm}}
  \specialrule{.1em}{.05em}{.05em} 
			Symbol & $\wh\bpi_i$ &RCI I & RCI II & RCI III     \\
			\hline
			AAL ($V_I$) & $[1,0,0]$   & $[1,11]$  & $[206, 433]$ & $[382, 433]$  \\ 
			LLY ($V_{II}$) & $[0,1,0]$   & $[256, 433]$ & $[1,13]$& $[381, 433]$  \\ 
			MPC ($V_{III}$) & $[0,0,1]$   & $[137, 433]$ & $[148, 433]$& $[1, 7]$  \\  
            META (C)& $[0.457, 0.177, 0.366]$   & $[96, 127]$  & $[163, 341]$& $[179, 255]$  \\
            EBAY (C)& $[0.379, 0.220, 0.401]$ & $[111, 157]$  &$[150, 277]$ & $[157, 228]$  \\ 
            DHR (R) & $[0.959, 0.017, 0.025]$  & $[1, 19]$ & $[218, 433]$ & $[381, 433]$   \\ 
			HOLX (R)& $[0.933, 0.024, 0.044]$ & $[1, 23]$  & $[227, 433]$ & $[369, 433]$   \\ 
            PNC (B) & $[0.009 , 0.983 , 0.008]$ & $[253, 433]$ & $[1,17]$ & $[380, 433]$  \\ 
            MOS (B) & $[0.037, 0.947, 0.016]$ & $[243, 433]$  & $[1, 23]$ & $[378, 433]$  \\ 
            AEP (G)& $[0.080, 0.063, 0.857]$   & $[190, 433]$ &$[178, 421]$ & $[10, 37]$  \\ 
            LYB (G) & $[0.159, 0.071, 0.771]$  &$[166, 400]$  & $[221, 433]$ & $[35, 51]$   \\ 
   \specialrule{.1em}{.05em}{.05em} 
		\end{tabular}
	\end{center}
 \caption{Estimated mixed membership profile vectors $\wh\pi_i$ and $95\%$ confidence intervals for the ranks of some representative stocks's membership profiles with respect to the three estimated vertices, denoted respectively RC I, RC II, and RC III. AAL, LLY, MPC are the estimated vertices of the three categories. R/B/G/C stands for red/blue/green/grey(center), the color group that has been shown in Figure \ref{simplex}.}
\label{table1}
\end{table}

Next, we investigate whether there has been a change of simplex structure before the COVID-$19$ and after the pandemic began. We use stock data from January 1, 2017 to January 1, 2020 as before COVID-$19$ data, and stock data from May 1, 2020 to May 1, 2023 as after COVID-$19$ data. We follow the same data preprocessing procedure as before, while the threshold for $A_{ij}$ is replaced by $0.12$ when dealing with the after COVID-19 data. And, we also conduct the aforementioned tests \eqref{eqtest} to these two dataset. In Figure \ref{covid} we include the experiment results and the results of tests are also shown by the colors. 

Recall that we have identified three categories with finance, healthcare, and energy as their respective representatives in Figure \ref{simplex}. From the top two plots of Figure \ref{covid} we can see that the `finance corner' (red) and the `energy corner' (blue) are consistent with each other, and these structures are similar to what we have observed in Figure \ref{simplex} except for some companies (e.g., AXP, BRO, ED, LNT, WRP). However, a structural change in another corner brings us some interesting observations. First, the remaining corner (green) in the top left plot (before COVID-$19$) of Figure \ref{covid} is a mix of companies from many different industries, and there is no industry can be considered representative of this corner. Second, the healthcare companies from the `healthcare corner' in Figure \ref{simplex}, are now in the center cluster of the top left plot. The bottom left plot of Figure \ref{covid} is the zoomed plot of the center cluster of the top left plot, and all the companies (EW, HUM, HOLX, ABT, REGN, TMO, LH, ILMN, PFE) which have been identified as the representatives of the `healthcare corner' in Figure \ref{simplex} are there. However, when we look at the after COVID-$19$ plots (top right and bottom right of Figure \ref{covid}), we find that these healthcare companies move from the center cluster to the green corner, and consequently make the green corner a `healthcare corner'. The bottom right plot of Figure \ref{covid} is the zoomed plot of the green corner of the top right plot, and again we can see that all the nine aforementioned healthcare companies are here. To sum up, the `healthcare corner' we have observed in Figure \ref{simplex} does not exist before COVID-$19$. At that time, the healthcare companies are more in the center of the simplex, and this indicates them to be a mix of the different industries. After the pandemic began, the healthcare industry have grown dramatically, and now they become the representative of the third corner along with `finance corner' and `energy corner'.

\begin{figure}[ht]
\begin{center}
\includegraphics[width=\textwidth]{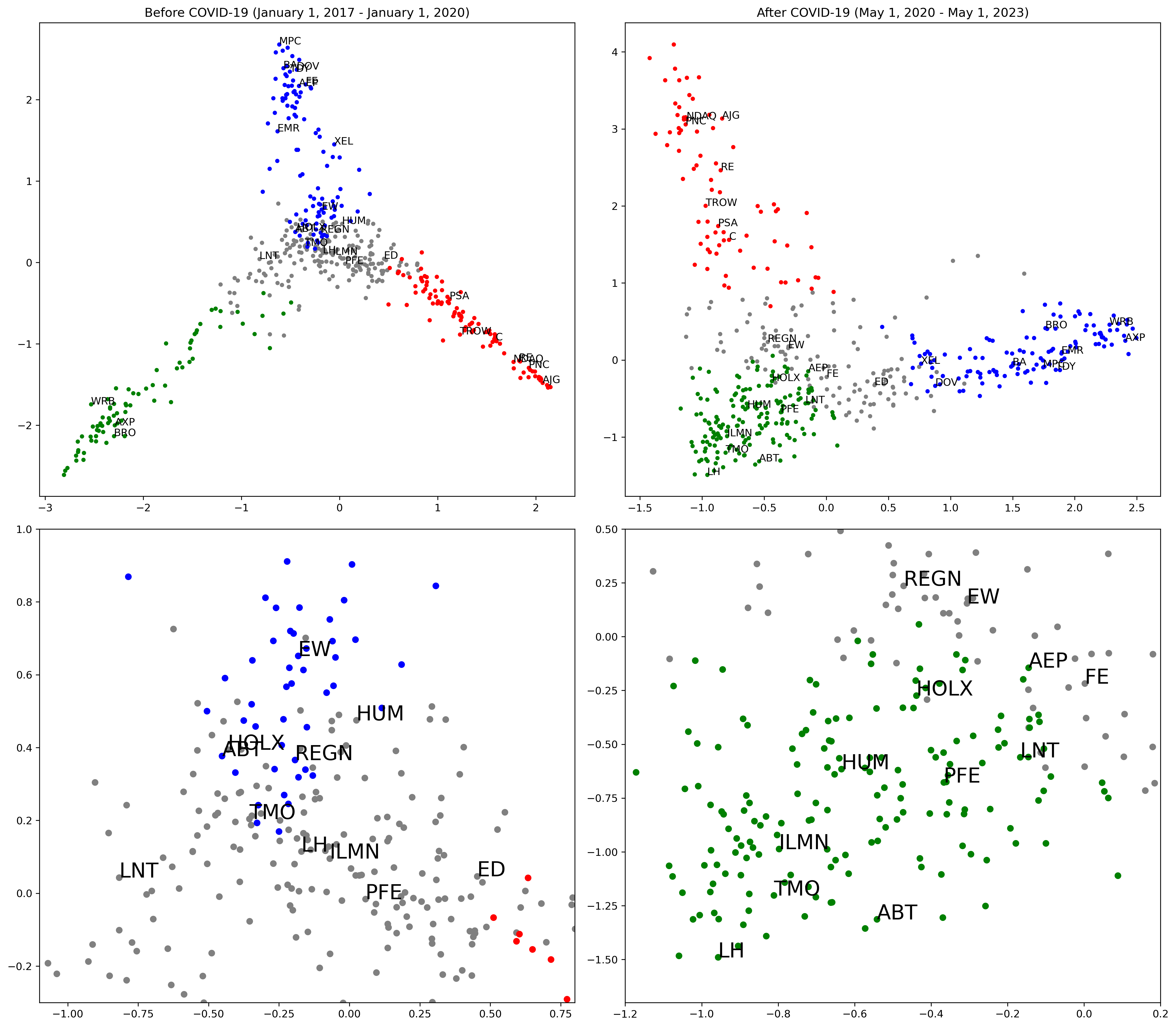} 
\end{center}
\caption{Top left: the simplex structure before COVID-$19$. Top right: the simplex structure after the pandemic began. Red/blue/green means the corresponding $H_{01}$/$H_{02}$/$H_{03}$ is rejected while grey means none of $H_{01}$/$H_{02}$/$H_{03}$ is rejected. Bottom left: the zoomed plot of the center cluster of the top left plot. Bottom right: the zoomed plot of the green corner of the top right plot.}
\label{covid}
\end{figure}

\section{Proof Outline}\label{sec:proof_outline}
In this section, we sketch the main steps of the proof. Details of the proof will be provided in Section \ref{sec:appendix}. We begin by showing an ``eigen-gap" property, which is fundamental to the matrix analysis. To this end, we provide the high-probability bound on the spectral norm of the noise matrix $\bW$.
\begin{lem}\label{Wspectral}
    The following event $\mathcal{A}_1$ happens with probability at least $1-O(n^{-10})$:
    \begin{align}\label{eq:define_a1}
        \mathcal{A}_1 = \left\{\left\|\bW\right\|\leq C_2\theta_{\text{max}}\sqrt{n}\right\}.
    \end{align}
\end{lem}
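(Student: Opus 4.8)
The plan is to view $\bW = \bX - \bH$ as a symmetric random matrix whose entries on and above the diagonal are independent, mean-zero, and bounded by $1$ in absolute value, and then invoke a sharp operator-norm concentration inequality tailored to bounded-entry matrices with small variances. First I would record the relevant moment bounds. Since each $\bpi_i$ is a probability vector and $\|\bP\|_{\text{max}}\leq C$ by Assumption \ref{assn:p_max}, we have $H_{ij} = \theta_i\theta_j\,\bpi_i^\top\bP\bpi_j \leq \theta_{\text{max}}^2\|\bP\|_{\text{max}} \lesssim \theta_{\text{max}}^2$ for all $i,j$. Hence $\mathbb{E}[W_{ij}^2] = H_{ij}(1-H_{ij}) \leq H_{ij} \lesssim \theta_{\text{max}}^2$, so the row-variance parameter satisfies $v := \max_{i\in[n]}\sum_{j\in[n]}\mathbb{E}[W_{ij}^2] \lesssim n\theta_{\text{max}}^2$, while the entrywise bound is $\max_{i,j}|W_{ij}|\leq 1$.

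Next I would apply a sharp random-matrix bound to conclude $\|\bW\|\lesssim\sqrt v\lesssim\sqrt n\,\theta_{\text{max}}$ with the desired probability. Two routes are available: (i) the inhomogeneous Erd\H{o}s--R\'enyi adjacency-matrix concentration of the Lei--Rinaldo type \cite{lei2015clustering}, which gives, for any $r>0$, a constant $C(r)$ with $\mathbb{P}(\|\bX-\mathbb{E}\bX\|\geq C(r)\sqrt{n\alpha_n})\leq n^{-r}$ whenever $\max_{ij}H_{ij}\leq\alpha_n$ and $\alpha_n\geq c_0\log n/n$; applied with $\alpha_n\asymp\theta_{\text{max}}^2$ and $r=10$ this is exactly the claim, provided $\theta_{\text{max}}^2\gtrsim\log n/n$; or (ii) the Bandeira--van Handel bound $\mathbb{E}\|\bW\|\lesssim\sqrt v+\sqrt{\log n}$ combined with an exponential deviation inequality $\mathbb{P}(\|\bW\|\geq\mathbb{E}\|\bW\|+t)\leq\exp(-ct^2/v)$, then taking $t\asymp\sqrt n\,\theta_{\text{max}}$. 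In either route, Assumption \ref{assn:theta_order} supplies the crucial input $\theta_{\text{max}}\geq C'\sqrt{\log n/n}$, i.e. $n\theta_{\text{max}}^2\gtrsim\log n$, which ensures both that the $\sqrt{\log n}$ fluctuation term is dominated by $\sqrt n\,\theta_{\text{max}}$ and that the chosen deviation level $t$ makes the tail $O(n^{-10})$; collecting constants yields $\|\bW\|\leq C_2\theta_{\text{max}}\sqrt n$ on an event of probability at least $1-O(n^{-10})$.

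The self-loop case requires only a trivial modification: the diagonal matrix $\text{diag}(W_{11},\dots,W_{nn})$ has operator norm at most $1$, so it can be split off and absorbed into the constant $C_2$, leaving the same bound. The main obstacle is obtaining the \emph{sharp} $\sqrt n\,\theta_{\text{max}}$ rate rather than the $\sqrt{n\log n}\,\theta_{\text{max}}$ rate that a naive matrix Bernstein argument would produce; this is exactly why one needs the spectral-norm estimates designed for bounded-entry matrices with small variance proxies, and why Assumption \ref{assn:theta_order} is invoked to guarantee that the variance term dominates the logarithmic one.
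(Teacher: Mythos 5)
Your proposal is correct and follows essentially the same route as the paper: bound $\max_{i,j}\mathbb{E}[W_{ij}^2]\lesssim\theta_{\text{max}}^2$ via Assumption \ref{assn:p_max}, apply a sharp spectral-norm bound for bounded-entry symmetric matrices (the paper cites \cite[Theorem 3.4]{chen2021spectral}, which gives $\|\bW\|\lesssim\theta_{\text{max}}\sqrt{n}+\sqrt{\log n}$ with the stated probability), and absorb the $\sqrt{\log n}$ term using $\theta_{\text{max}}\gtrsim\sqrt{\log n/n}$ from Assumption \ref{assn:theta_order}. The only blemish is the deviation inequality quoted in your route (ii), $\exp(-ct^2/v)$, which with $t\asymp\sqrt{v}$ would only give a constant failure probability; the correct bounded-entry concentration has $B^2=1$ rather than $v$ in the denominator, and route (i) or the paper's cited theorem sidesteps this entirely.
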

\begin{proof}
By definition we know that $|W_{ij}|\leq 1$. Assumption \ref{assn:p_max} implies
\begin{align*}
    \max_{i,j}\mathbb{E}\left[W_{ij}^2\right] &= \max_{i,j}H_{ij}\left(1-H_{ij}\right)\leq \max_{i,j}H_{ij}\leq \max_{i,j}\theta_i\theta_j \bpi_i^\top\bP\bpi_j \lesssim \theta_{\text{max}}^2 .
\end{align*}
As a result, by \cite[Theorem 3.4]{chen2021spectral} we know that
\begin{align*}
    \left\|\bW\right\|\lesssim \theta_{\text{max}}\sqrt{n} +\sqrt{\log n}\lesssim \theta_{\text{max}}\sqrt{n}
\end{align*}
with probability at least $1-O(n^{-10})$.
\end{proof}

We are going to prove our results under this favorable set $\mathcal{A}_1$. We begin by listing the key results leading to Theorem \ref{mainthmu1} and Theorem \ref{mainthmmatrixdenoising}. A key assumption in all these results will be either $\theta_{\text{max}}\sqrt{n}\ll \lambda_1^*$ (for results regarding $\wh\bu_1-\bu_1^*$) or $\theta_{\text{max}}\sqrt{n}\ll \minsigma^*$ (for results regarding $\oU\bR-\oU^*$) to guarantee the eigen-gap. From Lemma \ref{eigenvaluelemma}, we already know that these two assumptions are mild. 

We now state the results for $\wh\bu_1-\bu_1^*$. The proofs rely on contour integrals, similar in spirit to \cite{fan2022asymptotic}. 
\begin{thm} \label{u1expansion}
Assume that $\theta_{\text{max}}\sqrt{n}\ll \lambda_1^*$. Under the event $\mathcal{A}_1$ defined in \eqref{eq:define_a1}, we can write the following expansion 
\begin{align}\label{eq:delta_define}
    \wh\bu_1-\bu_1^* = \sum_{i=2}^n \frac{\bu_i^{*\top}\bW\bu_1^*}{\lambda_1^*-\lambda_i^*}\bu_i^* + \boldsymbol{\delta},
\end{align}
where $\|\boldsymbol{\delta}\|_2\lesssim \theta_{\text{max}}^2 n/\lambda_1^{*2}$. 
\end{thm}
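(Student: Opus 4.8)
The plan is to run a resolvent (Cauchy contour integral) argument for the spectral projector of the leading eigenvalue, in the spirit of \cite{fan2022asymptotic}. Working throughout on the event $\mathcal{A}_1$, Lemma \ref{eigenvaluelemma} gives $\lambda_1^*\asymp\|\btheta\|_2^2$ and $\lambda_1^*-|\lambda_j^*|\asymp\lambda_1^*$ for $2\le j\le K$, while $\|\bW\|\lesssim\theta_{\text{max}}\sqrt n\ll\lambda_1^*$ by hypothesis and Lemma \ref{Wspectral}. First I would fix a circle $\Gamma\subset\mathbb{C}$ centered near $\lambda_1^*$ with radius of order $\lambda_1^*$, chosen so that $\Gamma$ strictly encloses $\lambda_1^*$ and — by Weyl's inequality, since every eigenvalue of $\bX$ lies within $\|\bW\|\ll\lambda_1^*$ of the corresponding eigenvalue of $\bH$ — also $\wh\lambda_1$, while all remaining eigenvalues of $\bH$ and of $\bX$ stay strictly outside with a margin of order $\lambda_1^*$. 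On such a $\Gamma$ one then has $\|(z\bI-\bH)^{-1}\|\lesssim 1/\lambda_1^*$ and $\|(z\bI-\bX)^{-1}\|\lesssim 1/\lambda_1^*$ uniformly, and $\mathrm{length}(\Gamma)\asymp\lambda_1^*$.

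Next I would write the two rank-one projectors as contour integrals,
\[
\bu_1^*\bu_1^{*\top}=\frac{1}{2\pi i}\oint_\Gamma(z\bI-\bH)^{-1}\,dz,\qquad \wh\bu_1\wh\bu_1^{\top}=\frac{1}{2\pi i}\oint_\Gamma(z\bI-\bX)^{-1}\,dz,
\]
and expand $(z\bI-\bX)^{-1}$ by the resolvent identity iterated once:
\[
(z\bI-\bX)^{-1}=(z\bI-\bH)^{-1}+(z\bI-\bH)^{-1}\bW(z\bI-\bH)^{-1}+(z\bI-\bH)^{-1}\bW(z\bI-\bH)^{-1}\bW(z\bI-\bX)^{-1}.
\]
Substituting $(z\bI-\bH)^{-1}=\sum_j\bu_j^*\bu_j^{*\top}/(z-\lambda_j^*)$ into the first-order term and noting that $\lambda_1^*$ is the only enclosed pole, residue calculus collapses it to $\sum_{j\ge 2}(\lambda_1^*-\lambda_j^*)^{-1}\big(\bu_j^*\bu_j^{*\top}\bW\bu_1^*\bu_1^{*\top}+\bu_1^*\bu_1^{*\top}\bW\bu_j^*\bu_j^{*\top}\big)$, the $j=1$ piece being a double pole over a constant matrix and hence integrating to zero. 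Applying the resulting identity for $\wh\bu_1\wh\bu_1^{\top}-\bu_1^*\bu_1^{*\top}$ to the vector $\bu_1^*$ annihilates the second family of terms (because $\bu_j^{*\top}\bu_1^*=0$ for $j\ge 2$) and gives
\[
(\wh\bu_1^{\top}\bu_1^*)\wh\bu_1-\bu_1^*=\sum_{j\ge 2}\frac{\bu_j^{*\top}\bW\bu_1^*}{\lambda_1^*-\lambda_j^*}\bu_j^*+\boldsymbol{\delta}_1,
\]
where $\boldsymbol{\delta}_1$ is the second-order remainder integral applied to $\bu_1^*$. Pairing this identity with $\bu_1^*$ yields $1-(\wh\bu_1^{\top}\bu_1^*)^2=-\bu_1^{*\top}\boldsymbol{\delta}_1$, so, since $\wh\bu_1^{\top}\bu_1^*\in[0,1]$ by the chosen sign convention, $|1-\wh\bu_1^{\top}\bu_1^*|\le\|\boldsymbol{\delta}_1\|_2$. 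Writing $\wh\bu_1-\bu_1^*=\big[(\wh\bu_1^{\top}\bu_1^*)\wh\bu_1-\bu_1^*\big]+(1-\wh\bu_1^{\top}\bu_1^*)\wh\bu_1$ and setting $\boldsymbol{\delta}:=\boldsymbol{\delta}_1+(1-\wh\bu_1^{\top}\bu_1^*)\wh\bu_1$ reproduces the expansion \eqref{eq:delta_define}.

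It then remains to bound $\|\boldsymbol{\delta}\|_2$. A crude estimate of the second-order contour integral gives $\|\boldsymbol{\delta}_1\|_2\le\frac{1}{2\pi}\,\mathrm{length}(\Gamma)\cdot\sup_{z\in\Gamma}\|(z\bI-\bH)^{-1}\|^2\,\|(z\bI-\bX)^{-1}\|\cdot\|\bW\|^2\lesssim\lambda_1^*\cdot\lambda_1^{*-3}\cdot(\theta_{\text{max}}\sqrt n)^2=\theta_{\text{max}}^2 n/\lambda_1^{*2}$, using $\|\bu_1^*\|_2=1$; and $|1-\wh\bu_1^{\top}\bu_1^*|\le\|\boldsymbol{\delta}_1\|_2$ controls the normalization term at the same order, so $\|\boldsymbol{\delta}\|_2\le 2\|\boldsymbol{\delta}_1\|_2\lesssim\theta_{\text{max}}^2 n/\lambda_1^{*2}$. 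The one genuinely delicate step is the contour construction in the first paragraph: I must certify that a single circle simultaneously separates $\{\lambda_1^*,\wh\lambda_1\}$ from every other eigenvalue of both $\bH$ and $\bX$ with room of order $\lambda_1^*$, so that all resolvent norms on $\Gamma$ are $O(1/\lambda_1^*)$; this is exactly where the hypothesis $\theta_{\text{max}}\sqrt n\ll\lambda_1^*$ is used, through Lemma \ref{eigenvaluelemma} for the population spectrum and Weyl's inequality for the perturbed one. Everything after that is residue bookkeeping and operator-norm estimates.
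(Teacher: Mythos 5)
Your proposal is correct and follows essentially the same route as the paper: the same contour around $\lambda_1^*$ with radius of order $\lambda_1^*$, the same once-iterated resolvent identity, the same residue computation applied to $\bu_1^*$, and the same crude operator-norm bound on the second-order contour integral giving $\theta_{\text{max}}^2 n/\lambda_1^{*2}$. The only (minor, and perfectly valid) difference is in controlling the normalization term: you bound $|1-\wh\bu_1^{\top}\bu_1^*|$ by pairing the expansion with $\bu_1^*$ to get $1-(\wh\bu_1^{\top}\bu_1^*)^2=-\bu_1^{*\top}\boldsymbol{\delta}_1$, whereas the paper uses the identity $1-\wh\bu_1^{\top}\bu_1^*=\tfrac{1}{2}\|\wh\bu_1-\bu_1^*\|_2^2$ together with Wedin's $\sin\Theta$ theorem; both yield the same order.
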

\begin{proof}
See Section \ref{u1expansionproof}.
\end{proof}
Theorem \ref{u1expansion} expresses the quantity $\wh\bu_1-\bu_1^*$ as the sum of a leading term (first summand of the RHS of \eqref{eq:delta_define}) and a error-term $\boldsymbol{\delta}$, along with its $l_2$ bound. We also need an $l_\infty$ bound of $\boldsymbol{\delta}$ which is provided by the following Theorem.
\begin{thm}\label{deltainfty}
    Assume that $\max\{\sqrt{n}\theta_{\text{max}}, \log n \}\ll \lambda_1^*$. Then with probability at least $1-O(n^{-10})$, we have
    \begin{align*}
        \left\|\boldsymbol{\delta}\right\|_\infty\lesssim & \frac{\theta_{\text{max}}^3n^{1.5}}{\lambda_1^{*3}}+\frac{\sqrt{n((K-1)\mu^*+\log n)}\theta_{\text{max}}^2}{\lambda_1^{*2}} \nonumber \\
        &+ \frac{\log n \left(\sqrt{\log n}+\sqrt{(K-1)\mu^*}\right)\theta_{\text{max}} + \log^2 n\sqrt{\mu^* / n}}{\lambda_1^{*2}},
    \end{align*}
    where $\delta$ is defined via \eqref{eq:delta_define}.
\end{thm}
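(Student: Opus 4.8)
The plan is to realize $\boldsymbol{\delta}$ as a contour integral, strip off the first-order term so that the remainder is a genuinely second- and higher-order functional of $\bW$, and then bound that remainder \emph{entrywise} by combining a leave-one-out decoupling with Bernstein-type concentration, feeding in the $\ell_2$ control already supplied by Theorem~\ref{u1expansion}.

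\emph{Step 1: representation.} Work on the event $\mathcal{A}_1$ of Lemma~\ref{Wspectral}. By Lemma~\ref{eigenvaluelemma}, Assumption~\ref{assn:lambda_pg}, and Weyl's inequality (using $\|\bW\|\lesssim\sqrt n\,\theta_{\text{max}}\ll\lambda_1^*$), all eigenvalues of $\bH$ and $\bX$ other than $\lambda_1^*,\wh\lambda_1$ lie at distance $\asymp\lambda_1^*$ from a circle $\Gamma$ of radius $\asymp\lambda_1^*$ centered at $\lambda_1^*$. Cauchy's formula gives $\wh\bu_1\wh\bu_1^\top-\bu_1^*\bu_1^{*\top}=\frac1{2\pi i}\oint_\Gamma\big[(z\bI-\bX)^{-1}-(z\bI-\bH)^{-1}\big]\,dz$, and one application of the resolvent identity $(z\bI-\bX)^{-1}-(z\bI-\bH)^{-1}=(z\bI-\bH)^{-1}\bW(z\bI-\bX)^{-1}$ writes the integrand as $(z\bI-\bH)^{-1}\bW(z\bI-\bH)^{-1}+(z\bI-\bH)^{-1}\bW(z\bI-\bX)^{-1}\bW(z\bI-\bH)^{-1}$. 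Applying this to $\bu_1^*$, using $(z\bI-\bH)^{-1}\bu_1^*=(z-\lambda_1^*)^{-1}\bu_1^*$, and evaluating residues at $z=\lambda_1^*$, the first summand reproduces exactly $\sum_{i\ge2}(\lambda_1^*-\lambda_i^*)^{-1}(\bu_i^{*\top}\bW\bu_1^*)\bu_i^*$, i.e. the leading term in \eqref{eq:delta_define}. Since $\wh\bu_1-\bu_1^*=(\wh\bu_1^\top\bu_1^*)^{-1}\big[(\wh\bu_1\wh\bu_1^\top-\bu_1^*\bu_1^{*\top})\bu_1^*+(1-\wh\bu_1^\top\bu_1^*)\bu_1^*\big]$ and $1-\wh\bu_1^\top\bu_1^*=\tfrac12\|\wh\bu_1-\bu_1^*\|_2^2\lesssim\theta_{\text{max}}^2 n/\lambda_1^{*2}$ by Theorem~\ref{u1expansion}, the normalization correction contributes at most $\|\bu_1^*\|_\infty\,\theta_{\text{max}}^2 n/\lambda_1^{*2}\lesssim\sqrt{\mu^*/n}\,\theta_{\text{max}}^2 n/\lambda_1^{*2}$ to $\|\boldsymbol{\delta}\|_\infty$, which is within the asserted bound. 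It thus remains to bound, for each $m\in[n]$, $R_m:=\frac1{2\pi i}\oint_\Gamma (z-\lambda_1^*)^{-1}\,e_m^\top(z\bI-\bH)^{-1}\bW(z\bI-\bX)^{-1}\bW\bu_1^*\,dz$.

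\emph{Step 2: entrywise bound on the remainder.} On $\Gamma$ one has $\|(z\bI-\bX)^{-1}\|\lesssim1/\lambda_1^*$ and, using $|z-\lambda_i^*|\asymp\lambda_1^*$ for $i\ge2$ together with the incoherence $\|\bu_1^*\|_\infty\le\sqrt{\mu^*/n}$, $\|\oU^*\|_{2,\infty}\le\sqrt{\mu^*(K-1)/n}$ (Definition~\ref{incorherenceassumption}, valid with $\mu^*\asymp1$ by Remark~\ref{rem1}), also $\|e_m^\top(z\bI-\bH)^{-1}\|_2\lesssim1/\lambda_1^*$. The inner vector $(z\bI-\bX)^{-1}\bW\bu_1^*$ depends on all of $\bW$, so we decouple it from the $m$-th row: letting $\bX^{(m)},\bW^{(m)}$ be $\bX,\bW$ with their $m$-th row and column zeroed, set $\bw^{(m)}:=(z\bI-\bX^{(m)})^{-1}\bW^{(m)}\bu_1^*$, which is independent of $\{W_{m\ell}\}_\ell$ and obeys $\|\bw^{(m)}\|_2\lesssim\sqrt n\,\theta_{\text{max}}/\lambda_1^*$. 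Conditionally on $\bW^{(m)}$, $e_m^\top(z\bI-\bH)^{-1}\bW\bw^{(m)}$ is a linear form in $\{W_{m\ell}\}_\ell$, so Bernstein's inequality gives $|e_m^\top(z\bI-\bH)^{-1}\bW\bw^{(m)}|\lesssim\theta_{\text{max}}\sqrt{\log n}\,\|e_m^\top(z\bI-\bH)^{-1}\|_2\|\bw^{(m)}\|_2+\log n\,(\text{a lower-order tail})$ on a $1-O(n^{-12})$ event. For the swap term, $|e_m^\top(z\bI-\bH)^{-1}\bW(\bw-\bw^{(m)})|\le\|e_m^\top(z\bI-\bH)^{-1}\bW\|_2\,\|\bw-\bw^{(m)}\|_2$ with $\|e_m^\top(z\bI-\bH)^{-1}\bW\|_2\lesssim\sqrt n\,\theta_{\text{max}}/\lambda_1^*$, while $\bw-\bw^{(m)}$ is handled by standard resolvent perturbation: $\bX-\bX^{(m)}$ is a rank-$\le2$ modification of norm $\lesssim\|\bW_{m,\cdot}\|_2\lesssim\sqrt n\,\theta_{\text{max}}$, whence one piece of $\bw-\bw^{(m)}$ has norm $\lesssim n\theta_{\text{max}}^2/\lambda_1^{*2}$ and the other (from $(\bW-\bW^{(m)})\bu_1^*$) has norm $\lesssim\theta_{\text{max}}(\sqrt{\log n}+\sqrt{\mu^*})/\lambda_1^*$. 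Integrating over $\Gamma$ (length $\asymp\lambda_1^*$, $|z-\lambda_1^*|^{-1}\asymp1/\lambda_1^*$), union-bounding over $m\in[n]$ and a fine net of $z\in\Gamma$, and invoking Lemma~\ref{eigenvaluelemma} and, where needed, the $\ell_\infty$ bound on $\wh\bu_1-\bu_1^*$ from Lemma~\ref{u1differenceinfty} to control the $\|\bw^{(m)}\|_\infty$-type quantities, one collects exactly the three groups of terms in the statement: $\theta_{\text{max}}^3 n^{1.5}/\lambda_1^{*3}$ from the leading swap error; $\sqrt{n((K-1)\mu^*+\log n)}\,\theta_{\text{max}}^2/\lambda_1^{*2}$ from the sub-leading swap error, the Bernstein standard deviation, and the normalization correction; and $\big(\log n(\sqrt{\log n}+\sqrt{(K-1)\mu^*})\theta_{\text{max}}+\log^2 n\sqrt{\mu^*/n}\big)/\lambda_1^{*2}$ from the quadratic Bernstein tail. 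The powers of $K$ and $\mu^*$ are produced by propagating $\|\oU^*\|_{2,\infty}$ through each product.

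\emph{Main obstacle.} The crux is that any estimate replacing an inner copy of $\bW$ by $\|\bW\|$, or using the naive comparison $\|\wh\bu_1-\wh\bu_1^{(m)}\|_2\|\bW_{m,\cdot}\|_2$, only recovers the $\ell_2$ rate $\theta_{\text{max}}^2 n/\lambda_1^{*2}$; the extra $\sqrt n$ (turning it into $\theta_{\text{max}}^2\sqrt{n\log n}/\lambda_1^{*2}$) can be extracted only by routing one $\bW$ through a \emph{conditionally linear} form while simultaneously showing that the decoupling error $\bw-\bw^{(m)}$ splits into a genuinely third-order contribution (the source of $\theta_{\text{max}}^3 n^{1.5}/\lambda_1^{*3}$) plus strictly lower-order pieces, with no hidden re-inflation. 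Making this bookkeeping uniform in $z$ over $\Gamma$, and tracking the $(K-1)$-dimensionality of $\oU^*$ at every step, is the technical heart; once in place, the remaining estimates are routine applications of Bernstein's inequality, Lemma~\ref{Wspectral}, Lemma~\ref{eigenvaluelemma}, and the incoherence property.
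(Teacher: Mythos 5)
Your overall architecture is the same as the paper's (contour integral for $\wh\bP_1-\bP_1^*-\Delta\bP_1$, a crude spectral bound yielding the third-order term $n^{1.5}\theta_{\text{max}}^3/\lambda_1^{*3}$, leave-one-out decoupling plus Bernstein for the second-order part, and the $\|\bu_1^*\|_\infty\cdot n\theta_{\text{max}}^2/\lambda_1^{*2}$ treatment of the normalization correction). However, there is a genuine gap at the heart of Step 2: you claim that, conditionally on $\bW^{(m)}$, the quantity $e_m^\top(z\bI-\bH)^{-1}\bW\bw^{(m)}$ is a linear form in $\{W_{m\ell}\}_\ell$. It is not. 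The vector $e_m^\top(z\bI-\bH)^{-1}=\sum_i (z-\lambda_i^*)^{-1}(\bu_i^*)_m\bu_i^{*\top}$ is dense, so the bilinear form involves every entry of $\bW$; in particular the piece $e_m^\top(z\bI-\bH)^{-1}\bW^{(m)}\bw^{(m)}$ is $\bW^{(m)}$-measurable but is only controlled by $\|e_m^\top(z\bI-\bH)^{-1}\|_2\,\|\bW^{(m)}\|\,\|\bw^{(m)}\|_2\lesssim n\theta_{\text{max}}^2/\lambda_1^{*2}$, which is exactly the $\ell_2$ rate you are trying to beat. Your stated Bernstein bound $\sqrt{\log n}\,\theta_{\text{max}}\|e_m^\top(z\bI-\bH)^{-1}\|_2\|\bw^{(m)}\|_2$ is what one would get if $e_m^\top(z\bI-\bH)^{-1}$ were a scalar multiple of $e_m^\top$, which is the unjustified step. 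The repair is the key structural fact the paper isolates in Lemma \ref{lemmaN1N2}/Corollary \ref{corN1N2}: the relevant deterministic matrix (there $\bN_1$; in your formulation $(z\bI-\bH)^{-1}$ for $z\in\Gamma$) equals a scalar multiple of $\bI$ plus a rank-$K$ incoherent correction whose rows have $\ell_2$ norm $\lesssim\sqrt{(K-1)\mu^*/n}/\lambda_1^*$. The identity part genuinely produces a linear form in row $m$ (to which Bernstein applies), while the correction is small enough that the crude spectral bound $\|\cdot\|_2\|\bW\|\|\bw^{(m)}\|_2$ already lands inside $\sqrt{(K-1)\mu^* n}\,\theta_{\text{max}}^2/\lambda_1^{*2}$. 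Without making this decomposition explicit, your concentration step does not go through.

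Two further, more minor points. First, because you keep the random resolvent $(z\bI-\bX)^{-1}$ inside the remainder, your Bernstein bounds hold for each fixed $z$ and you must make them uniform over $\Gamma$ via a net; this is feasible (the integrand is Lipschitz in $z$ with constant polynomial in $n$ on $\Gamma$) but it is extra work the paper avoids by peeling off the third-order piece $\bDelta_2$ first and computing the residues of the pure-$\bH$-resolvent part exactly, so that the remaining concentration argument is applied to a single $z$-free quantity $\bN_1\bW\bN_1\bW\bu_1^*$ (plus two rank-one terms $\bN_2\bW\bu_1^*\bu_1^{*\top}\bW\bu_1^*$ and $\bu_1^*\bu_1^{*\top}\bW\bN_2\bW\bu_1^*$ coming from the double pole, which your contour-level bound absorbs implicitly but which must still be tracked if you follow the residue route). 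Second, your attribution of the $\sqrt{n(K-1)\mu^*}\,\theta_{\text{max}}^2/\lambda_1^{*2}$ term to the "sub-leading swap error and Bernstein standard deviation" is not quite right: in a correct accounting this term is produced precisely by the incoherent correction to the identity described above, which is further evidence that this missing decomposition is the load-bearing step.
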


\begin{proof}
See Section \ref{deltainftyproof}.
\end{proof}
 Combining Theorem \ref{u1expansion} and Theorem \ref{deltainfty} with triangle inequality provides the next lemma. 
\begin{lem}\label{u1differenceinfty}
Assume that $\max\{\sqrt{n}\theta_{\text{max}}, \log n \}\ll \lambda_1^*$. Then with probability at least $1-O(n^{-10})$, we have
\begin{align*}
    \left\|\wh\bu_1-\bu_1^*\right\|_\infty
    \lesssim & \frac{\left(\sqrt{\log n}+\sqrt{(K-1)\mu^*}\right)\theta_{\text{max}}+\log n\sqrt{\mu^* / n}}{\lambda_1^*} + \frac{\theta_{\text{max}}^3n^{1.5}}{\lambda_1^{*3}}.
\end{align*}
\end{lem}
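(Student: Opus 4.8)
The plan is to bound $\|\wh\bu_1-\bu_1^*\|_\infty$ by applying the triangle inequality in $\ell_\infty$ to the expansion $\wh\bu_1-\bu_1^* = \sum_{i=2}^n \frac{\bu_i^{*\top}\bW\bu_1^*}{\lambda_1^*-\lambda_i^*}\bu_i^* + \boldsymbol{\delta}$ furnished by Theorem \ref{u1expansion}: the error term is already controlled by Theorem \ref{deltainfty}, so the substantive work is an $\ell_\infty$ bound on the linear term $\bL := \sum_{i=2}^n \frac{\bu_i^{*\top}\bW\bu_1^*}{\lambda_1^*-\lambda_i^*}\bu_i^*$. Writing $\bL = \tfrac{1}{\lambda_1^*}\bN\bW\bu_1^*$ with $\bN$ as in \eqref{eq:define_n}, and using that $\lambda_i^*=0$ for $i>K$ since $\mathrm{rank}(\bH)\le K$, I would split
\begin{align*}
\bN = \bigl(\bI-\bu_1^*\bu_1^{*\top}\bigr) + \oU^*\bD\,\oU^{*\top},\qquad \bD := \mathrm{diag}\Bigl(\tfrac{\lambda_i^*}{\lambda_1^*-\lambda_i^*}\Bigr)_{i=2}^{K},
\end{align*}
where $\|\bD\|\lesssim 1$ by the eigen-gap estimate $\lambda_1^*-|\lambda_i^*|\asymp\lambda_1^*$ of Lemma \ref{eigenvaluelemma}. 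This decomposes $\bL = \tfrac1{\lambda_1^*}(\bI-\bu_1^*\bu_1^{*\top})\bW\bu_1^* + \tfrac1{\lambda_1^*}\oU^*\bD(\oU^{*\top}\bW\bu_1^*)$.

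For the bulk piece I would estimate $\|\bW\bu_1^*\|_\infty$ and $|\bu_1^{*\top}\bW\bu_1^*|$ by Bernstein's inequality: each coordinate $(\bW\bu_1^*)_i=\sum_j W_{ij}(\bu_1^*)_j$ is a sum of independent, mean-zero terms, each of magnitude at most $\|\bu_1^*\|_\infty\le\sqrt{\mu^*/n}$, with total variance $\sum_j \mathrm{Var}(W_{ij})(\bu_1^*)_j^2\lesssim\theta_{\text{max}}^2\|\bu_1^*\|_2^2=\theta_{\text{max}}^2$ (using $H_{jl}\lesssim\theta_{\text{max}}^2$ as in the proof of Lemma \ref{Wspectral}); a union bound over $i\in[n]$ then gives $\|\bW\bu_1^*\|_\infty\lesssim\theta_{\text{max}}\sqrt{\log n}+\sqrt{\mu^*/n}\,\log n$ with probability $1-O(n^{-10})$, and the analogous quadratic-form bound gives $|\bu_1^{*\top}\bW\bu_1^*|$ of the same order, so $\|\bu_1^*\|_\infty|\bu_1^{*\top}\bW\bu_1^*|$ is of strictly smaller order and $\|\tfrac1{\lambda_1^*}(\bI-\bu_1^*\bu_1^{*\top})\bW\bu_1^*\|_\infty\lesssim(\theta_{\text{max}}\sqrt{\log n}+\sqrt{\mu^*/n}\,\log n)/\lambda_1^*$. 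For the signal-subspace piece I would deliberately avoid an entrywise bound and instead use the incoherence estimate $\|\oU^*\|_{2,\infty}\le\sqrt{\mu^*(K-1)/n}$ of Definition \ref{incorherenceassumption} together with the crude bound $\|\oU^{*\top}\bW\bu_1^*\|_2\le\|\bW\bu_1^*\|_2\le\|\bW\|\lesssim\theta_{\text{max}}\sqrt n$ from Lemma \ref{Wspectral}; since $\|\bD\|\lesssim 1$ this yields $\|\tfrac1{\lambda_1^*}\oU^*\bD(\oU^{*\top}\bW\bu_1^*)\|_\infty\le\tfrac1{\lambda_1^*}\|\oU^*\|_{2,\infty}\|\bD\|\|\oU^{*\top}\bW\bu_1^*\|_2\lesssim\sqrt{\mu^*(K-1)}\,\theta_{\text{max}}/\lambda_1^*$. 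Combining, $\|\bL\|_\infty\lesssim\bigl[(\sqrt{\log n}+\sqrt{(K-1)\mu^*})\theta_{\text{max}}+\log n\,\sqrt{\mu^*/n}\bigr]/\lambda_1^*$, which is exactly the first term of the claimed bound.

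To close, I would add $\|\boldsymbol{\delta}\|_\infty$ from Theorem \ref{deltainfty} and simplify. Under the hypotheses $\sqrt n\,\theta_{\text{max}}\ll\lambda_1^*$ and $\log n\ll\lambda_1^*$, each of the terms $\sqrt{n((K-1)\mu^*+\log n)}\theta_{\text{max}}^2/\lambda_1^{*2}$, $\log n(\sqrt{\log n}+\sqrt{(K-1)\mu^*})\theta_{\text{max}}/\lambda_1^{*2}$ and $\log^2 n\sqrt{\mu^*/n}/\lambda_1^{*2}$ appearing there carries an extra factor $\sqrt n\,\theta_{\text{max}}/\lambda_1^*$ or $\log n/\lambda_1^*$ (which is $o(1)$) relative to a term already present in $\|\bL\|_\infty$, after using $\sqrt{a+b}\le\sqrt a+\sqrt b$; hence only $\theta_{\text{max}}^3 n^{1.5}/\lambda_1^{*3}$ survives, and adding it to $\|\bL\|_\infty$ via the triangle inequality gives the lemma. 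The only genuinely non-routine point is the $\ell_\infty$ analysis of $\bL$ — in particular, recognizing that the component lying in $\mathrm{span}(\oU^*)$ should be handled coarsely through $\|\bW\|$ and the incoherence of $\oU^*$ rather than coordinate by coordinate, since it is precisely this step that produces the $\sqrt{(K-1)\mu^*}\,\theta_{\text{max}}/\lambda_1^*$ contribution; everything else is bookkeeping with the eigen-gap estimates of Lemma \ref{eigenvaluelemma}.
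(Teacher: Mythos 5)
Your argument is essentially the paper's: the paper also writes $\wh\bu_1-\bu_1^*=\bN_1\bW\bu_1^*+\boldsymbol{\delta}$ and bounds the linear term by exactly your decomposition of $\bN$ into $\frac{1}{\lambda_1^*}\bI$ minus the $\bu_1^*\bu_1^{*\top}$ piece plus the $\oU^*$ piece (packaged as Lemma \ref{lemmaN1N2} and Corollary \ref{corN1N2}), handling the span$(\oU^*)$ component coarsely via incoherence and $\|\bW\|$, then absorbing the lower-order terms of Theorem \ref{deltainfty} using $\max\{\sqrt{n}\theta_{\text{max}},\log n\}\ll\lambda_1^*$. One caveat: your identity $\lambda_i^*=0$ for $i>K$ holds only in the self-loop case; without self-loops $\bH$ is only approximately rank $K$, and your decomposition of $\bN$ acquires an extra tail $\sum_{i>K}\frac{\lambda_i^*}{\lambda_1^*-\lambda_i^*}\bu_i^*\bu_i^{*\top}$ of spectral norm $\lesssim\theta_{\text{max}}^2/\lambda_1^*$, whose contribution $\lesssim\sqrt{n}\theta_{\text{max}}^3/\lambda_1^{*2}$ is dominated by the stated bound — the paper handles exactly this in the ``without self-loop'' branch of Lemma \ref{lemmaN1N2}, and you should add the corresponding line.
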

\begin{proof}
See Section \ref{u1differenceinftyproof}.
\end{proof}
 The combination of Theorem \ref{u1expansion}, Theorem \ref{deltainfty} and Lemma \ref{u1differenceinfty} is exactly Theorem \ref{mainthmu1}.

The next goal is to analyze $\oU\bR-\oU^*$. This is a matrix denoising problem with ground truth $\oH$ and noisy matrix $\oX$, where 
\begin{align*}
    \oH = \sum_{i=2}^K\lambda_i^*\bu_i^*\bu_i^{*\top},\quad \oX = \sum_{i=2}^n\wh\lambda_i\wh\bu_i\wh\bu_i^{\top}.
\end{align*}
Define the noise matrix
\begin{equation}\label{eq:wbar_define}
    \oW := \oX-\oH = \bW -[\wh\lambda_1\wh\bu_1\wh\bu_1^\top - \lambda_1^*\bu_1^*\bu_1^{*\top}] 
\end{equation}
Unlike $\bW$, the matrix $\oW$ does not have a close form expression. The following expansion for $\oW$ will be useful for our results.

\begin{lem}\label{oWexpansion}
Assume that $\sqrt{n}\ll \lambda_1^*$. Then we have
\begin{align*}
   \oW = \bW - \bW\bu_1^*\bu_1^{*\top}\bW\bu_1^*\bu_1^{*\top}- \bN\bW\bu_1^*\bu_1^{*\top}-\left(\bW\bu_1^*\bu_1^{*\top}\right)^\top\bN - \bDelta,
\end{align*}
where $N$ is defined by \eqref{eq:define_n}.

Further, with probability at least $1-O(n^{-10})$ we have
\begin{align*}
    \|\bDelta\|\lesssim n/\lambda_1^* \quad \text{and} \quad \left\|\bDelta\oU^*\right\|_{2,\infty} \lesssim\sqrt{\frac{(K-1)\mu^*n}{\lambda_1^{*2}}}+\frac{n^{1.5}}{\lambda_1^{*2}}.
\end{align*}
\end{lem}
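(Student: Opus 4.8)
\emph{Proof strategy.} The plan is to treat $\bM:=\wh\lambda_1\wh\bu_1\wh\bu_1^\top-\lambda_1^*\bu_1^*\bu_1^{*\top}$ (so $\oW=\bW-\bM$ by \eqref{eq:wbar_define}) by substituting the first-order eigenvector expansion of Theorem~\ref{u1expansion} and isolating the three explicit terms. First I would write both rank-one pieces in ``sandwiched'' form: since $\bX\wh\bu_1=\wh\lambda_1\wh\bu_1$, $\bH\bu_1^*=\lambda_1^*\bu_1^*$, $\bX=\bH+\bW$, and with $\bDelta_0:=\wh\bu_1\wh\bu_1^\top-\bu_1^*\bu_1^{*\top}$,
\[
\bM=\big[(\bu_1^*\bu_1^{*\top}+\bDelta_0)\bH(\bu_1^*\bu_1^{*\top}+\bDelta_0)-\bu_1^*\bu_1^{*\top}\bH\bu_1^*\bu_1^{*\top}\big]+(\bu_1^*\bu_1^{*\top}+\bDelta_0)\bW(\bu_1^*\bu_1^{*\top}+\bDelta_0),
\]
which is manifestly symmetric (matching the symmetry of the claimed expansion). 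Theorem~\ref{u1expansion} lets me write $\wh\bu_1-\bu_1^*=\bv+\boldsymbol\delta$ with $\bv:=\sum_{i\ge2}\tfrac{\bu_i^{*\top}\bW\bu_1^*}{\lambda_1^*-\lambda_i^*}\bu_i^*=\tfrac1{\lambda_1^*}\bN\bW\bu_1^*$ and $\|\boldsymbol\delta\|_2\lesssim\theta_{\text{max}}^2 n/\lambda_1^{*2}$, whence $\bDelta_0=\bu_1^*\bv^\top+\bv\bu_1^{*\top}+\boldsymbol E$ with $\boldsymbol E:=\bu_1^*\boldsymbol\delta^\top+\boldsymbol\delta\bu_1^{*\top}+(\wh\bu_1-\bu_1^*)(\wh\bu_1-\bu_1^*)^\top$, $\|\boldsymbol E\|\lesssim\theta_{\text{max}}^2 n/\lambda_1^{*2}$.

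The algebraic core is the identity $\bH\bN=\lambda_1^*(\bN-\bI+\bu_1^*\bu_1^{*\top})$ — immediate from $\bH\bu_j^*=\lambda_j^*\bu_j^*$, the definition \eqref{eq:define_n} of $\bN$, and $\sum_{j\ge1}\bu_j^*\bu_j^{*\top}=\bI$, the numerator $\lambda_1^*-\lambda_j^*$ cancelling the denominator — equivalently $\bH\bv=\bN\bW\bu_1^*-\bW\bu_1^*+(\bu_1^{*\top}\bW\bu_1^*)\bu_1^*$. Plugging $\bDelta_0$ into the sandwiched form, using $\bu_1^{*\top}\bv=0$, the $-\bW\bu_1^*\bu_1^{*\top}$ produced by $\bH\bv\,\bu_1^{*\top}$ cancels the $\bu_1^*\bu_1^{*\top}\bW$ contribution from the $\bW$-sandwich, and the surviving first-order terms are $\bN\bW\bu_1^*\bu_1^{*\top}$, $(\bW\bu_1^*\bu_1^{*\top})^\top\bN=\bu_1^*\bu_1^{*\top}\bW\bN$, and the rank-one correction $\bu_1^*\bu_1^{*\top}\bW\bu_1^*\bu_1^{*\top}$; the remainder is
\[
\bDelta=\lambda_1^*\big(\bu_1^*\bu_1^{*\top}\boldsymbol E+\boldsymbol E\bu_1^*\bu_1^{*\top}\big)+\bDelta_0\bH\bDelta_0+\bu_1^*\bu_1^{*\top}\bW\bDelta_0+\bDelta_0\bW\bu_1^*\bu_1^{*\top}+\bDelta_0\bW\bDelta_0 ,
\]
again symmetric. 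For $\|\bDelta\|$ I would bound each summand crudely, using Lemma~\ref{Wspectral} ($\|\bW\|\lesssim\theta_{\text{max}}\sqrt n$), Lemma~\ref{eigenvaluelemma} ($\|\bH\|=\lambda_1^*$, $\|\bN\|\lesssim1$), Theorem~\ref{u1expansion} ($\|\wh\bu_1-\bu_1^*\|_2,\|\bv\|_2\lesssim\theta_{\text{max}}\sqrt n/\lambda_1^*$, $\|\boldsymbol E\|\lesssim\theta_{\text{max}}^2 n/\lambda_1^{*2}$), and $\|\bH(\wh\bu_1-\bu_1^*)\|_2\lesssim\theta_{\text{max}}\sqrt n$ via $\bH(\wh\bu_1-\bu_1^*)=(\wh\lambda_1-\lambda_1^*)\wh\bu_1+\lambda_1^*(\wh\bu_1-\bu_1^*)-\bW\wh\bu_1$ and Weyl; every term is then $O(\theta_{\text{max}}^2 n/\lambda_1^*)=O(n/\lambda_1^*)$ since $\theta_{\text{max}}$ is bounded.

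For $\|\bDelta\oU^*\|_{2,\infty}$ — which I expect to be the real obstacle, since crude operator-norm estimates are too lossy — I would exploit $\bu_1^{*\top}\oU^*=\boldsymbol 0$ and $\bH\oU^*=\oU^*\oLambda^*$. Multiplying $\bX=\bH+\bW$ on the right by $\oU^*$ gives directly $\oW\oU^*=\bW\oU^*-\wh\lambda_1\wh\bu_1(\wh\bu_1^\top\oU^*)$; subtracting the three explicit terms (two of which kill $\oU^*$, while the third equals $\lambda_1^*\bu_1^*(\bu_1^{*\top}\bW\oU^*)(\lambda_1^*\bI_{K-1}-\oLambda^*)^{-1}$, using $\bN\oU^*=\lambda_1^*\oU^*(\lambda_1^*\bI_{K-1}-\oLambda^*)^{-1}$) and inserting $\wh\bu_1^\top\oU^*=(\bu_1^{*\top}\bW\oU^*)(\lambda_1^*\bI_{K-1}-\oLambda^*)^{-1}+\boldsymbol\delta^\top\oU^*$ from Theorem~\ref{u1expansion}, one reaches the rank-$\le(K-1)$ identity
\[
\bDelta\oU^*=\big(\wh\lambda_1\wh\bu_1-\lambda_1^*\bu_1^*\big)\big(\bu_1^{*\top}\bW\oU^*\big)\big(\lambda_1^*\bI_{K-1}-\oLambda^*\big)^{-1}+\wh\lambda_1\wh\bu_1\,\boldsymbol\delta^\top\oU^* .
\]
Each term is a column times a short row, so its $\|\cdot\|_{2,\infty}$ factors as the $\ell_\infty$-norm of the column times the $\ell_2$-norm of the row, and it remains to control $\|\wh\lambda_1\wh\bu_1-\lambda_1^*\bu_1^*\|_\infty\le|\wh\lambda_1-\lambda_1^*|\,\|\wh\bu_1\|_\infty+\lambda_1^*\|\wh\bu_1-\bu_1^*\|_\infty$ through Weyl and the entrywise eigenvector bound of Theorem~\ref{mainthmu1}; $\|\bu_1^{*\top}\bW\oU^*\|_2\lesssim\sqrt{(K-1)\mu^*}\,\theta_{\text{max}}\sqrt{\log n}$ by a Bernstein/Hanson--Wright bound for the bilinear forms $\big\{\bu_1^{*\top}\bW\bu_j^*\big\}_{j=2}^K$, which is where incoherence (Definition~\ref{incorherenceassumption}) enters the variance and maximal-entry terms; $\|\wh\bu_1\|_\infty\lesssim\sqrt{\mu^*/n}+\|\wh\bu_1-\bu_1^*\|_\infty$; and $\|\boldsymbol\delta^\top\oU^*\|_2\le\|\boldsymbol\delta\|_2$; using $\lambda_1^*-\maxsigma^*\asymp\lambda_1^*$ (Lemma~\ref{eigenvaluelemma}) for the inverse and $\lambda_1^*\lesssim n\theta_{\text{max}}^2$ to simplify, these combine to $\sqrt{(K-1)\mu^* n}/\lambda_1^*+n^{1.5}/\lambda_1^{*2}$. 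All high-probability statements are on the intersection of the events of Lemma~\ref{Wspectral}, Theorems~\ref{u1expansion}, \ref{mainthmu1}, \ref{deltainfty}, and the Bernstein events for the linear/bilinear forms in $\bW$, of total failure probability $O(n^{-10})$.
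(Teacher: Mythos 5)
Your argument is correct, but it takes a genuinely different route from the paper's. The paper proves this lemma by citation: Theorem \ref{firstsubspaceexpansion} obtains the expansion of $\wh\lambda_1\wh\bu_1\wh\bu_1^\top-\lambda_1^*\bu_1^*\bu_1^{*\top}$ by writing it as the contour integral $\frac{1}{2\pi i}\oint_{\mathcal{C}_1}\bigl[\bX(\lambda\bI-\bX)^{-1}-\bH(\lambda\bI-\bH)^{-1}\bigr]d\lambda$, expanding the resolvent difference to first order in $\bW$ and evaluating residues; Lemma \ref{lemma5} then bounds $\|\bDelta\oU^*\|_{2,\infty}$ by splitting the remainder into four further contour integrals and controlling each applied to $\oU^*$ separately. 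You instead substitute the eigenvector expansion $\wh\bu_1=\bu_1^*+\lambda_1^{*-1}\bN\bW\bu_1^*+\boldsymbol{\delta}$ of Theorem \ref{u1expansion} into the projector difference and do direct linear algebra. Your key identities ($\bH\bN=\lambda_1^*(\bN-\bI+\bu_1^*\bu_1^{*\top})$, $\bN\oU^*=\lambda_1^*\oU^*(\lambda_1^*\bI-\oLambda^*)^{-1}$, $\bu_1^{*\top}\oU^*=\boldsymbol{0}$) all check out, and your exact rank-$(K-1)$ formula for $\bDelta\oU^*$ is a noticeably cleaner path to the second bound: each summand is a column times a short row, so the $\|\cdot\|_{2,\infty}$ norm factors, and the dominant contributions land inside the stated bound. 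What your route costs is reliance on the entrywise bound $\|\wh\bu_1-\bu_1^*\|_\infty$ of Lemma \ref{u1differenceinfty}, which needs $\log n\ll\lambda_1^*$; but the paper's own Lemma \ref{lemma5} assumes the same, so nothing is lost. Two cosmetic remarks: the second term in the displayed statement should read $\bu_1^*\bu_1^{*\top}\bW\bu_1^*\bu_1^{*\top}$ (as in Theorem \ref{firstsubspaceexpansion} and \eqref{eq:barw_expand}), which is indeed what you derive; and the concentration bound you want is $\|\bu_1^{*\top}\bW\oU^*\|_2\lesssim\sqrt{K-1}\bigl(\sqrt{\log n}\,\theta_{\text{max}}+\log n\sqrt{\mu^*/n}\bigr)$ from Lemma \ref{Wconcentration6}, rather than the form $\sqrt{(K-1)\mu^*}\,\theta_{\text{max}}\sqrt{\log n}$ you quote --- either suffices for the conclusion.
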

\begin{proof}
See Theorem \ref{firstsubspaceexpansion} and Lemma \ref{lemma5}.
\end{proof}

In order to study the expansion of $\oW$, defined via \eqref{eq:wbar_define}, it is enough to expand $\wh\lambda_1\wh\bu_1\wh\bu_1^\top - \lambda_1^*\bu_1^*\bu_1^{*\top}$. 
We state our result here. The proof, which uses contour integrals, is deferred to Section \ref{firstsubspaceexpansionproof}.
\begin{thm}\label{firstsubspaceexpansion}
Assume that $\sqrt{n}\theta_{\text{max}}\ll \lambda_1^*$. Then under event $\mathcal{A}_1$ defined by \eqref{eq:define_a1}, we have the following expansion:
\begin{align}
    \wh\lambda_1\wh\bu_1\wh\bu_1^\top - \lambda_1^*\bu_1^*\bu_1^{*\top} = \bu_1^*\bu_1^{*\top}\bW\bu_1^*\bu_1^{*\top}+ \bN\bW\bu_1^*\bu_1^{*\top}+\left(\bW\bu_1^*\bu_1^{*\top}\right)^\top\bN + \bDelta,\label{eqfirstsubspaceexpansion}
\end{align}
where $\bN \overset{\Delta}{=} \sum_{i=2}^n\frac{\lambda_1^*}{\lambda_1^*-\lambda_i^*}\bu_i^*\bu_i^{*\top}$ is a symmetric matrix and $\|\bDelta\|\lesssim n\theta_{\text{max}}^2 /\lambda_1^*$.
\end{thm}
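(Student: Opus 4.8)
The plan is to represent the two rank-one spectral projectors as contour integrals of the resolvents, expand the resolvent of $\bX$ as a Neumann series in $\bW$ around that of $\bH$, read off in closed form the single term that is linear in $\bW$, and absorb the higher-order terms into $\bDelta$. This is precisely the contour-integral style already used for $\wh\bu_1-\bu_1^*$ in Theorem~\ref{u1expansion}.

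First I would fix the contour. By Lemma~\ref{eigenvaluelemma} there is a constant $c\in(0,1)$ with $\lambda_1^*-|\lambda_i^*|\ge c\,\lambda_1^*$ for all $2\le i\le K$, while $\lambda_i^*=0$ for $i>K$ since $\bH=\bTheta\bPi\bP\bPi^\top\bTheta$ has rank at most $K$. Let $\Gamma$ be the positively oriented circle $\{z\in\mathbb{C}:|z-\lambda_1^*|=c\,\lambda_1^*/2\}$. Then $\Gamma$ encircles $\lambda_1^*$ and no other eigenvalue of $\bH$, with $|z|\asymp\lambda_1^*$ and $\operatorname{dist}(\Gamma,\operatorname{spec}(\bH)\setminus\{\lambda_1^*\})\asymp\lambda_1^*$ along $\Gamma$. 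On the event $\mathcal{A}_1$ of Lemma~\ref{Wspectral}, Weyl's inequality gives $|\wh\lambda_i-\lambda_i^*|\le\|\bW\|\lesssim\theta_{\max}\sqrt{n}$, so under $\sqrt{n}\theta_{\max}\ll\lambda_1^*$ the same $\Gamma$ also encircles $\wh\lambda_1$ and no other eigenvalue of $\bX$, again with separation of order $\lambda_1^*$. With $\mathcal{R}_{\bH}(z):=(z\bI-\bH)^{-1}$ and $\mathcal{R}_{\bX}(z):=(z\bI-\bX)^{-1}$, Cauchy's integral formula gives
\[
\wh\lambda_1\wh\bu_1\wh\bu_1^\top=\frac{1}{2\pi i}\oint_\Gamma z\,\mathcal{R}_{\bX}(z)\,dz,\qquad \lambda_1^*\bu_1^*\bu_1^{*\top}=\frac{1}{2\pi i}\oint_\Gamma z\,\mathcal{R}_{\bH}(z)\,dz.
\]

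Next I would expand and extract the linear term. On $\Gamma$, $\|\mathcal{R}_{\bH}(z)\bW\|\le\|\mathcal{R}_{\bH}(z)\|\,\|\bW\|\lesssim\theta_{\max}\sqrt{n}/\lambda_1^*\ll1$, so the resolvent identity and the geometric series give $\mathcal{R}_{\bX}(z)=\sum_{k\ge0}(\mathcal{R}_{\bH}(z)\bW)^k\mathcal{R}_{\bH}(z)$ uniformly on $\Gamma$; subtracting the two representations, the $k=0$ terms cancel, leaving the $k=1$ integral plus $\bDelta:=\sum_{k\ge2}\frac{1}{2\pi i}\oint_\Gamma z\,(\mathcal{R}_{\bH}(z)\bW)^k\mathcal{R}_{\bH}(z)\,dz$. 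To evaluate the $k=1$ integral I would split $\mathcal{R}_{\bH}(z)=\frac{\bu_1^*\bu_1^{*\top}}{z-\lambda_1^*}+\mathcal{R}_\perp(z)$, where $\mathcal{R}_\perp(z):=\sum_{i\ge2}\frac{\bu_i^*\bu_i^{*\top}}{z-\lambda_i^*}$ is analytic inside $\Gamma$; expanding $z\,\mathcal{R}_{\bH}(z)\bW\mathcal{R}_{\bH}(z)$ into four pieces, the $\mathcal{R}_\perp\bW\mathcal{R}_\perp$ piece integrates to zero, the double-pole piece contributes $\bu_1^*\bu_1^{*\top}\bW\bu_1^*\bu_1^{*\top}$, and the two cross pieces contribute $\bu_1^*\bu_1^{*\top}\bW\,(\lambda_1^*\mathcal{R}_\perp(\lambda_1^*))$ and $(\lambda_1^*\mathcal{R}_\perp(\lambda_1^*))\,\bW\bu_1^*\bu_1^{*\top}$. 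Since $\lambda_1^*\mathcal{R}_\perp(\lambda_1^*)=\sum_{i\ge2}\frac{\lambda_1^*}{\lambda_1^*-\lambda_i^*}\bu_i^*\bu_i^{*\top}=\bN$, which is manifestly symmetric, these residues give precisely $\bu_1^*\bu_1^{*\top}\bW\bu_1^*\bu_1^{*\top}+\bN\bW\bu_1^*\bu_1^{*\top}+(\bW\bu_1^*\bu_1^{*\top})^\top\bN$ (using $\bW=\bW^\top$ to recast the last summand), matching~\eqref{eqfirstsubspaceexpansion}. Finally, on $\Gamma$ we have $|z|\asymp\lambda_1^*$, $\|\mathcal{R}_{\bH}(z)\|\lesssim1/\lambda_1^*$ and $\operatorname{length}(\Gamma)\asymp\lambda_1^*$, so
\[
\|\bDelta\|\le\sum_{k\ge2}\frac{\operatorname{length}(\Gamma)}{2\pi}\max_{z\in\Gamma}\Bigl(|z|\,\|\mathcal{R}_{\bH}(z)\|^{k+1}\|\bW\|^{k}\Bigr)\lesssim(\lambda_1^*)^2\sum_{k\ge2}\frac{(C_2\theta_{\max}\sqrt{n})^{k}}{(\lambda_1^*)^{k+1}}\asymp\frac{n\theta_{\max}^2}{\lambda_1^*},
\]
the series converging because $\theta_{\max}\sqrt{n}\ll\lambda_1^*$.

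The main obstacle is the construction of a single explicit contour that simultaneously isolates $\lambda_1^*$ from the rest of $\operatorname{spec}(\bH)$ and $\wh\lambda_1$ from the rest of $\operatorname{spec}(\bX)$ with a gap of order $\lambda_1^*$; this rests on the eigen-gap $\lambda_1^*-|\lambda_i^*|\asymp\lambda_1^*$ from Lemma~\ref{eigenvaluelemma} together with Lemma~\ref{Wspectral} and Weyl's inequality under $\sqrt{n}\theta_{\max}\ll\lambda_1^*$. Once the contour is in place, the residue computation for the linear term and the geometric-series estimate for $\bDelta$ are routine.
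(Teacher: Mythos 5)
Your proposal is correct and takes essentially the same route as the paper: both represent the rank-one spectral projectors as contour integrals over a circle of radius $\asymp\lambda_1^*$ isolating $\lambda_1^*$ (and, via Weyl plus $\mathcal{A}_1$, $\wh\lambda_1$), extract the term linear in $\bW$ by residue calculus, and bound the remainder by $n\theta_{\max}^2/\lambda_1^*$. The only cosmetic difference is that you sum a full Neumann series for $\mathcal{R}_{\bX}$ while the paper stops the telescoping at second order and keeps the exact resolvent of $\bX$ in the remainder; your linear integrand $z\,\mathcal{R}_{\bH}(z)\bW\mathcal{R}_{\bH}(z)$ is identical to the paper's $\bW(\lambda\bI-\bH)^{-1}+\bH(\lambda\bI-\bH)^{-1}\bW(\lambda\bI-\bH)^{-1}$ since $\bH(\lambda\bI-\bH)^{-1}=\lambda(\lambda\bI-\bH)^{-1}-\bI$.
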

 Theorem \ref{firstsubspaceexpansion} shows the first part of Lemma \ref{oWexpansion}, while the second part, the bound for $\|\bDelta\oU^*\|_{2,\infty}$, will be shown later. From Theorem \ref{firstsubspaceexpansion} we can directly deduce the following corollary.
\begin{cor}\label{cor1}
Assume that $\sqrt{n}\theta_{\text{max}}\ll \lambda_1^*$. Then under event $\mathcal{A}_1$, we have
\begin{align*}
     \left\|\wh\lambda_1\wh\bu_1\wh\bu_1^\top - \lambda_1^*\bu_1^*\bu_1^{*\top}\right\|\lesssim \sqrt{n}\theta_{\text{max}}.
\end{align*}
\end{cor}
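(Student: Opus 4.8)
The plan is to read off the bound directly from the expansion \eqref{eqfirstsubspaceexpansion} in Theorem \ref{firstsubspaceexpansion} by applying the triangle inequality and estimating each of the four summands separately. So first I would write
\[
\left\|\wh\lambda_1\wh\bu_1\wh\bu_1^\top - \lambda_1^*\bu_1^*\bu_1^{*\top}\right\| \leq \left\|\bu_1^*\bu_1^{*\top}\bW\bu_1^*\bu_1^{*\top}\right\| + \left\|\bN\bW\bu_1^*\bu_1^{*\top}\right\| + \left\|\left(\bW\bu_1^*\bu_1^{*\top}\right)^\top\bN\right\| + \|\bDelta\|,
\]
and recall that on the event $\mathcal{A}_1$ (Lemma \ref{Wspectral}) we have $\|\bW\|\leq C_2\theta_{\text{max}}\sqrt{n}$, and that Theorem \ref{firstsubspaceexpansion} already gives $\|\bDelta\|\lesssim n\theta_{\text{max}}^2/\lambda_1^*$, which is $\lesssim\sqrt{n}\theta_{\text{max}}$ precisely because of the standing assumption $\sqrt{n}\theta_{\text{max}}\ll\lambda_1^*$ (in fact it is of smaller order).

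For the first three terms I would use submultiplicativity of the operator norm together with $\|\bu_1^*\bu_1^{*\top}\| = 1$ (it is a rank-one orthogonal projector, since $\|\bu_1^*\|_2 = 1$). This gives $\|\bu_1^*\bu_1^{*\top}\bW\bu_1^*\bu_1^{*\top}\|\leq \|\bW\|$ and $\|\bN\bW\bu_1^*\bu_1^{*\top}\|\leq \|\bN\|\,\|\bW\|$, and the third term equals the second by symmetry of $\bN$ and the fact that the operator norm is invariant under transposition. The only remaining input is a bound on $\|\bN\|$: since $\bN = \sum_{i=2}^n \frac{\lambda_1^*}{\lambda_1^*-\lambda_i^*}\bu_i^*\bu_i^{*\top}$ is symmetric with eigenvalues $\lambda_1^*/(\lambda_1^*-\lambda_i^*)$ for $2\leq i\leq n$ (and $0$ in the $\bu_1^*$ direction), and Lemma \ref{eigenvaluelemma} gives $\lambda_1^* - |\lambda_i^*| \asymp \lambda_1^*$ for $2\leq i\leq K$ while the tail eigenvalues $\lambda_i^*$, $i>K$, are negligible compared to $\lambda_1^*$, we get $|\lambda_1^*/(\lambda_1^*-\lambda_i^*)| \lesssim 1$ uniformly, hence $\|\bN\|\lesssim 1$. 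Therefore each of the first three terms is $\lesssim \|\bW\| \lesssim \sqrt{n}\theta_{\text{max}}$, and combining with the bound on $\|\bDelta\|$ yields the claim.

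There is no real obstacle here; this corollary is an immediate consequence of Theorem \ref{firstsubspaceexpansion} and the operator-norm control of $\bW$ and $\bN$. The only point requiring a moment of care is justifying $\|\bN\|\lesssim 1$, which needs the eigen-gap structure of $\bH$ (the three-group separation of eigenvalues recorded after Lemma \ref{eigenvaluelemma}) so that every denominator $\lambda_1^* - \lambda_i^*$ is comparable to $\lambda_1^*$; alternatively one may cite the corresponding bound on $\|\bN\|$ used in the proof of Theorem \ref{firstsubspaceexpansion} itself.
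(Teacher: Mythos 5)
Your proposal is correct and follows essentially the same route as the paper: triangle inequality applied to the expansion in Theorem \ref{firstsubspaceexpansion}, the bound $\|\bW\|\lesssim\sqrt{n}\theta_{\text{max}}$ on $\mathcal{A}_1$, the observation that $\|\bN\|\lesssim 1$ because each factor $\lambda_1^*/(\lambda_1^*-\lambda_i^*)$ is $O(1)$ by the eigen-gap, and the fact that $\|\bDelta\|\lesssim n\theta_{\text{max}}^2/\lambda_1^*\ll\sqrt{n}\theta_{\text{max}}$. No gaps.
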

\begin{proof}
See Section \ref{cor1proof}.
\end{proof}
 Corollary \ref{cor1}, coupled with Lemma \ref{Wspectral}, shows that under event $\mathcal{A}_1$,
\begin{align}\label{eq:wbar_op}
    \|\oW\| \le \|\bW\|+ \left\|\wh\lambda_1\wh\bu_1\wh\bu_1^\top - \lambda_1^*\bu_1^*\bu_1^{*\top}\right\|\lesssim \sqrt{n}\theta_{\text{max}}.
\end{align}
 Equipped with Theorem \ref{u1differenceinfty}, we are ready to prove results regarding matrix denoising. We show the following five results whose combination proves Theorem \ref{mainthmmatrixdenoising}. Note that the first four results here are similar to \cite[Lemma 1-4]{yan2021inference}, while the fifth results is exactly the second part of Lemma \ref{oWexpansion}. To state these results, we need to introduce some notations first. Define
\begin{align}\label{eq:define_L}
    \bL := \oU^\top\oU^*, \text{ } \oV := [\wh\bv_2,\dots,\wh\bv_K],\text{ } \oV^* := [\bv_2^*,\dots,\bv_K^*], \text{ }\oLambda := \textbf{diag}(\wh\lambda_2,\dots,\wh\lambda_K),
\end{align}
where $\wh \bv_i$'s are defined as \eqref{eq:define_vi}. Recall the definition of $R$ from \eqref{Rdefinition}. Here we state the five lemmas whose proofs are deferred.
\begin{lem}\label{lemma1}
Assume that $\sqrt{n}\theta_{\text{max}}\ll \minsigma^*$. Then under event $\mathcal{A}_1$ we have
\begin{align*}
    \left\| \oU\bR -\oU^*\right\|\lesssim \frac{\sqrt{n}\theta_{\text{max}}}{\minsigma^*},\quad \left\|\bL-\bR\right\|\lesssim\frac{n\theta_{\text{max}}^2}{\minsigma^{*2}}\quad \text{and} \quad \frac{1}{2}\leq\sigma_i(\bL)\leq 2, \quad 1\leq i\leq K-1.
\end{align*}
Furthermore, under event $\mathcal{A}_1$ we have
$\oV = \oU\bD$, $\oV^* = \oU^*\bD$, where 
\begin{align}
    \bD = \textbf{diag}(\textbf{sgn}(\lambda_2^*),\dots,\textbf{sgn}(\lambda_K^*))\in\mathcal{O}^{(K-1)\times(K-1)}.\label{Ddefinition}
\end{align}
This immediately implies  $\oV^\top\oV^* = \bD\bL\bD$, $$\bD\bR\bD = \min_{\boldsymbol{O}\in \mathcal{O}^{(K-1)\times (K-1)}}\left\|\oV\boldsymbol{O}-\oV^*\right\|_F,$$
and the same results for $\oV$ and $\oV^*$ are also true. In fact, we have
\begin{align*}
    \left\| \oV\bD \bR\bD-\oV^*\right\| = \left\| \oU\bR -\oU^*\right\|.
\end{align*}
\end{lem}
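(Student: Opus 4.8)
The plan is to read $\oU\bR-\oU^*$ as a subspace-perturbation statement for the symmetric matrix $\oH=\sum_{i=2}^K\lambda_i^*\bu_i^*\bu_i^{*\top}$ perturbed by $\oW=\oX-\oH$, for which the only external input is the operator-norm bound $\|\oW\|\lesssim\sqrt n\,\theta_{\text{max}}$ established in \eqref{eq:wbar_op} under $\mathcal{A}_1$; the argument parallels \cite[Lemma 1]{yan2021inference}, with the eigengap supplied by Lemma \ref{eigenvaluelemma}. First I would record the spectral picture of $\oH$: its nonzero eigenvalues are exactly $\lambda_2^*,\dots,\lambda_K^*$, with eigenvectors the columns of $\oU^*$, and $|\lambda_i^*|\ge\minsigma^*$ for $2\le i\le K$; hence the gap separating the column space of $\oU^*$ from the null space of $\oH$ is $\minsigma^*$, which by hypothesis dominates $\|\oW\|$. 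Weyl's inequality then yields two facts: exactly $K-1$ eigenvalues of $\oX$ have magnitude $\ge\minsigma^*-\|\oW\|$ — namely $\wh\lambda_2,\dots,\wh\lambda_K$, the $(K-1)$ largest in magnitude, since $\oX$ is $\bX$ with the $\wh\lambda_1$-direction removed — while all remaining eigenvalues of $\oX$ have magnitude $\le\|\oW\|$; and, because the Weyl matching is monotone, $|\wh\lambda_i-\lambda_i^*|\le\|\oW\|$ for each $2\le i\le K$ once both lists are sorted in decreasing order.

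Second, I would invoke the Davis--Kahan $\sin\Theta$ theorem for the top-$(K-1)$-in-magnitude invariant subspaces of $\oX$ and $\oH$, whose applicability is exactly what the first step certifies: it gives $\|\sin\Theta(\oU,\oU^*)\|\lesssim\|\oW\|/(\minsigma^*-\|\oW\|)\lesssim\sqrt n\,\theta_{\text{max}}/\minsigma^*$. With $\bL=\oU^\top\oU^*$, a direct computation of $(\oU\bR-\oU^*)^\top(\oU\bR-\oU^*)=2\bigl(\bI-\bR^\top\bL\bigr)$ using that $\bR$ is the orthogonal polar factor of $\bL$ (the minimizer in \eqref{Rdefinition} equals $\bL(\bL^\top\bL)^{-1/2}$, well defined once $\|\sin\Theta\|<1$) shows $\|\oU\bR-\oU^*\|\le\sqrt2\,\|\sin\Theta\|$; and since the singular values of $\bL$ are the cosines of the principal angles, $\sigma_i(\bL)\ge1-\|\sin\Theta\|^2$ and $\|\bL-\bR\|=\max_i\bigl(1-\sigma_i(\bL)\bigr)\le\|\sin\Theta\|^2$. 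Plugging in the bound on $\|\sin\Theta\|$ gives $\|\oU\bR-\oU^*\|\lesssim\sqrt n\,\theta_{\text{max}}/\minsigma^*$ and $\|\bL-\bR\|\lesssim n\,\theta_{\text{max}}^2/\minsigma^{*2}$, and, for $n$ large, $\tfrac12\le\sigma_i(\bL)\le2$ since $\sigma_i(\bL)\le1$ always.

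Third, for the identities tying $\oV,\oV^*$ to $\oU,\oU^*$: $\oV^*=\oU^*\bD$ is immediate from $\bv_i^*=\textbf{sgn}(\lambda_i^*)\bu_i^*$, and $\oV=\oU\bD$ follows because $|\wh\lambda_i-\lambda_i^*|\le\|\oW\|\ll|\lambda_i^*|$ forces $\textbf{sgn}(\wh\lambda_i)=\textbf{sgn}(\lambda_i^*)$, so $\wh\bv_i=\textbf{sgn}(\wh\lambda_i)\wh\bu_i$ assembles column-by-column into $\oU\bD$. From these, $\oV^\top\oV^*=\bD\,\oU^\top\oU^*\,\bD=\bD\bL\bD$; substituting $\oV=\oU\bD$, $\oV^*=\oU^*\bD$ into the Procrustes problem and using that right-multiplication by the orthogonal $\bD$ preserves the Frobenius norm identifies its minimizer as $\bD\bR\bD$; and $\oV(\bD\bR\bD)-\oV^*=(\oU\bR-\oU^*)\bD$, whose operator norm equals $\|\oU\bR-\oU^*\|$ because $\bD$ is orthogonal.

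I expect the main obstacle to be the bookkeeping of the second step rather than any single estimate: one must be sure the $\sin\Theta$ bound is applied to the correct pair of invariant subspaces, which is legitimate only after the Weyl argument cleanly isolates $\wh\lambda_2,\dots,\wh\lambda_K$ from the bulk; one must carry the harmless $\sqrt2$-type constants from $\|\sin\Theta\|$ to $\|\oU\bR-\oU^*\|$ and from $1-\sigma_i(\bL)$ to $\|\sin\Theta\|^2$; and one must keep track of the fact that some $\lambda_i^*$ may be negative, which is precisely what the sign matrix $\bD$ and the rotation $\bR$ are designed to absorb. The remaining manipulations are routine once $\|\oW\|\lesssim\sqrt n\,\theta_{\text{max}}$ is in hand.
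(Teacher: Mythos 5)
Your proposal is correct and follows essentially the same route as the paper: a Wedin/Davis--Kahan $\sin\Theta$ bound for the $(K-1)$-dimensional invariant subspaces of $\oX$ versus $\oH$ using $\|\oW\|\lesssim\sqrt n\,\theta_{\text{max}}$, the identification of $\bR$ as the orthogonal polar factor of $\bL$ so that $\|\oU\bR-\oU^*\|\le\sqrt2\,\|\sin\bOmega\|$ and $\|\bL-\bR\|=\|\bI-\cos\bOmega\|\le\|\sin\bOmega\|^2$, and a Weyl-type sign-matching argument to get $\textbf{sgn}(\wh\lambda_i)=\textbf{sgn}(\lambda_i^*)$ and hence $\oV=\oU\bD$. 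The only cosmetic difference is that the paper runs the sign-matching on $\bX$ versus $\bH$ directly (invoking Lemma \ref{eigenvaluelemma} for the eigenvalue scales) rather than on $\oX$ versus $\oH$, which changes nothing substantive.
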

\begin{proof}
See Section \ref{lemma1proof}.
\end{proof}

\begin{lem}\label{lemma2}
Assume that $\sqrt{n}\theta_{\text{max}}\ll \minsigma^*$ and $n^2\theta_{\text{max}}^2\gtrsim (K-1)\mu^{*2}\log n$. Then with probability exceeding $1-O(n^{-10})$ we have
\begin{align*}
    &\left\|\bR^\top\oLambda\bR - \oLambda^*\right\|\lesssim \frac{\kappa^*n\theta_{\text{max}}^2}{\minsigma^{*}} +\sqrt{(K-1)\log n}\theta_{\text{max}},  \\
    &\left\|\bL^\top\oLambda\bL - \oLambda^*\right\|\lesssim \frac{\theta_{\text{max}}^3 n^{1.5}}{\minsigma^{*2}} +\sqrt{(K-1)\log n}\theta_{\text{max}}.
\end{align*}
\end{lem}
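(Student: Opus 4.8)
The plan is to turn $\bL^{\top}\oLambda\bL-\oLambda^{*}$ and $\bR^{\top}\oLambda\bR-\oLambda^{*}$ into sandwiched perturbation expressions via the eigen‑relations $\oX\oU=\oU\oLambda$ and $\bH=\sum_{i=1}^{K}\lambda_i^{*}\bu_i^{*}\bu_i^{*\top}$, isolate the leading ``diagonal noise'' block $\oU^{*\top}\bW\oU^{*}$, and control everything else by the operator‑norm bounds already available. I will treat the $\bL$–version first and deduce the $\bR$–version from $\|\bL-\bR\|\lesssim n\theta_{\text{max}}^{2}/\minsigma^{*2}$ (Lemma \ref{lemma1}).

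\emph{Exact decomposition for $\bL$.} Since $\oU^{\top}\oX\oU=\oLambda$ and $\oU\bL=\oU\oU^{\top}\oU^{*}$, we have $\bL^{\top}\oLambda\bL=\oU^{*\top}\big((\oU\oU^{\top})\oX(\oU\oU^{\top})\big)\oU^{*}$. The projector $\oU\oU^{\top}$ commutes with $\oX$ (they share the eigenvectors $\wh\bu_2,\dots,\wh\bu_K$), so $(\oU\oU^{\top})\oX(\oU\oU^{\top})=\oX-(\bI-\oU\oU^{\top})\oX(\bI-\oU\oU^{\top})$, and the removed matrix, which equals $\sum_{i>K}\wh\lambda_i\wh\bu_i\wh\bu_i^{\top}$, has operator norm $\max_{i>K}|\wh\lambda_i|\le\|\oW\|\lesssim\sqrt n\,\theta_{\text{max}}$ by Weyl's inequality (as $\oH$ has rank $K-1$) and \eqref{eq:wbar_op}. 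Using $\oX=\oH+\oW$ and $\oU^{*\top}\oH\oU^{*}=\oLambda^{*}$ gives the identity
\begin{align*}
\bL^{\top}\oLambda\bL-\oLambda^{*}=\oU^{*\top}\oW\oU^{*}-\big((\bI-\oU\oU^{\top})\oU^{*}\big)^{\top}\oX\big((\bI-\oU\oU^{\top})\oU^{*}\big).
\end{align*}
Here $(\bI-\oU\oU^{\top})\oU^{*}=(\bI-\oU\oU^{\top})(\oU^{*}-\oU\bR)$ has norm $\le\|\oU\bR-\oU^{*}\|\lesssim\sqrt n\,\theta_{\text{max}}/\minsigma^{*}$ (Lemma \ref{lemma1}), so together with the tail bound the second term is $\lesssim n^{1.5}\theta_{\text{max}}^{3}/\minsigma^{*2}$.

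\emph{The leading block.} From $\oW=\bW-\wh\lambda_1\wh\bu_1\wh\bu_1^{\top}+\lambda_1^{*}\bu_1^{*}\bu_1^{*\top}$ and $\oU^{*\top}\bu_1^{*}=0$ one gets $\oU^{*\top}\oW\oU^{*}=\oU^{*\top}\bW\oU^{*}-\wh\lambda_1(\oU^{*\top}\wh\bu_1)(\oU^{*\top}\wh\bu_1)^{\top}$. For the rank‑one correction, $\oU^{*\top}\wh\bu_1=\oU^{*\top}(\wh\bu_1-\bu_1^{*})$; substituting the expansion of Theorem \ref{u1expansion}, only the coordinates $i=2,\dots,K$ of its leading sum survive after left‑multiplication by $\oU^{*\top}$, so with $|\wh\lambda_1|\asymp\lambda_1^{*}$ (Weyl and Lemma \ref{eigenvaluelemma}), $\lambda_1^{*}-\lambda_i^{*}\asymp\lambda_1^{*}$, and the Bernstein bound $|\bu_i^{*\top}\bW\bu_1^{*}|\lesssim\theta_{\text{max}}\sqrt{\log n}$ for the incoherent unit vectors $\bu_i^{*},\bu_1^{*}$ (whose lower‑order term is subsumed by the hypothesis $n^{2}\theta_{\text{max}}^{2}\gtrsim(K-1)\mu^{*2}\log n$), this correction is $\lesssim K\theta_{\text{max}}^{2}\log n/\lambda_1^{*}+n^{2}\theta_{\text{max}}^{4}/\lambda_1^{*3}$, which by Lemma \ref{eigenvaluelemma} is negligible relative to the terms already present. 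For $\oU^{*\top}\bW\oU^{*}$, a symmetric $(K-1)\times(K-1)$ matrix, its operator norm is at most twice $\max_{\bx\in\mathcal N}|\bu^{\top}\bW\bu|$ over a $\tfrac14$‑net $\mathcal N$ of the unit sphere of $\RR^{K-1}$, where $\bu=\oU^{*}\bx$ satisfies $\|\bu\|_2=1$ and $\|\bu\|_{\infty}\le\|\oU^{*}\|_{2,\infty}\le\sqrt{\mu^{*}(K-1)/n}$; Bernstein's inequality for the independent, mean‑zero, $1$‑bounded entries of $\bW$ with variances $\lesssim\theta_{\text{max}}^{2}$ gives $|\bu^{\top}\bW\bu|\lesssim\theta_{\text{max}}\sqrt t+\mu^{*}(K-1)t/n$ with probability $1-e^{-t}$, and a union bound over $|\mathcal N|\le 9^{K-1}$ with $t\asymp(K-1)+\log n$ yields $\|\oU^{*\top}\bW\oU^{*}\|\lesssim\theta_{\text{max}}\sqrt{(K-1)\log n}$, the residual term absorbed through $n^{2}\theta_{\text{max}}^{2}\gtrsim(K-1)\mu^{*2}\log n$ and $n\gg K$. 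Combining these estimates proves the bound on $\|\bL^{\top}\oLambda\bL-\oLambda^{*}\|$.

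\emph{The $\bR$–version and the main obstacle.} Write $\bR^{\top}\oLambda\bR-\bL^{\top}\oLambda\bL=\bR^{\top}\oLambda(\bR-\bL)+(\bR-\bL)^{\top}\oLambda\bL$; with $\|\bR\|=1$, $\|\bL\|\le2$ (Lemma \ref{lemma1}), $\|\oLambda\|=\max_{2\le i\le K}|\wh\lambda_i|\asymp\maxsigma^{*}$ (Weyl, since $\|\oW\|\ll\minsigma^{*}$), and $\|\bR-\bL\|\lesssim n\theta_{\text{max}}^{2}/\minsigma^{*2}$, this difference is $\lesssim\maxsigma^{*}n\theta_{\text{max}}^{2}/\minsigma^{*2}=\kappa^{*}n\theta_{\text{max}}^{2}/\minsigma^{*}$; as $\maxsigma^{*}\gg\sqrt n\,\theta_{\text{max}}$ this dominates $n^{1.5}\theta_{\text{max}}^{3}/\minsigma^{*2}$, and adding it to the $\bL$–bound gives the stated $\kappa^{*}n\theta_{\text{max}}^{2}/\minsigma^{*}+\sqrt{(K-1)\log n}\,\theta_{\text{max}}$. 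The delicate point is the concentration bound for $\oU^{*\top}\bW\oU^{*}$: the net argument must be tuned (net cardinality $\sim e^{K-1}$ against the Bernstein tail) so that it lands at $\sqrt{(K-1)\log n}\,\theta_{\text{max}}$ rather than the $K\theta_{\text{max}}\sqrt{\log n}$ of a naive entrywise union bound, and one must check that the $\wh\bu_1$–leakage term and the second‑order Bernstein remainder are genuinely of lower order — which is exactly where the incoherence $\|\oU^{*}\|_{2,\infty}\lesssim\sqrt{\mu^{*}(K-1)/n}$ and the hypothesis $n^{2}\theta_{\text{max}}^{2}\gtrsim(K-1)\mu^{*2}\log n$ are used.
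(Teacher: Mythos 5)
Your architecture coincides with the paper's: split off $\bR^\top\oLambda\bR-\bL^\top\oLambda\bL$ via $\|\bL-\bR\|\lesssim n\theta_{\text{max}}^2/\minsigma^{*2}$ (this is the paper's $\alpha_1$ and gives the $\kappa^*n\theta_{\text{max}}^2/\minsigma^*$ term), identify $\oU^{*\top}\oX\oU^*-\oLambda^*=\oU^{*\top}\oW\oU^*$ as the leading block, and control the mismatch between $\bL^\top\oLambda\bL$ and $\oU^{*\top}\oX\oU^*$ by a quadratic sin$\Theta$ bound. Your route for that mismatch — writing $\bL^\top\oLambda\bL=\oU^{*\top}(\oU\oU^\top)\oX(\oU\oU^\top)\oU^*$ and peeling off $(\bI-\oU\oU^\top)\oX(\bI-\oU\oU^\top)$ — is the same computation as the paper's $-\oU^{*\top}\oU_\perp\oSigma_\perp\oV_\perp^\top\oU^*$ in different clothing, and lands on the same $n^{1.5}\theta_{\text{max}}^3/\minsigma^{*2}$. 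Your handling of the $\wh\bu_1$-leakage is in fact sharper than the paper's: the paper bounds $\|\oU^{*\top}\bDelta\oU^*\|\leq\|\bDelta\|\lesssim n\theta_{\text{max}}^2/\lambda_1^*$ and silently drops this term from the final $\bL$-bound, whereas your expansion of $\oU^{*\top}(\wh\bu_1-\bu_1^*)$ yields $K\theta_{\text{max}}^2\log n/\lambda_1^*+n^2\theta_{\text{max}}^4/\lambda_1^{*3}$, which is genuinely negligible.

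The one substantive deviation — and the one genuine gap — is the concentration of $\|\oU^{*\top}\bW\oU^*\|$. The paper writes $\oU^{*\top}\bW\oU^*=\sum_i W_{ii}\boldsymbol{Y}_{ii}+\sum_{i<j}W_{ij}\boldsymbol{Y}_{ij}$ and applies the \emph{matrix} Bernstein inequality, so the dimension enters only through a $\log n$ factor and the sub-exponential term is $\mu^*(K-1)\log n/n$, which the hypothesis $n^2\theta_{\text{max}}^2\gtrsim(K-1)\mu^{*2}\log n$ absorbs into $\sqrt{(K-1)\log n}\,\theta_{\text{max}}$. Your scalar-Bernstein-plus-net argument forces a union bound over $|\mathcal N|\asymp 9^{K-1}$ points, hence $t\asymp(K-1)+\log n$, and the sub-exponential contribution becomes $\mu^*(K-1)\bigl((K-1)+\log n\bigr)/n$. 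The extra piece $\mu^*(K-1)^2/n$ is controlled by $\theta_{\text{max}}\sqrt{(K-1)\log n}$ only if $K\lesssim\log n$; the stated hypothesis does not give this, and the paper elsewhere allows $K$ as large as $n^{1/3}$. You flag this yourself as the delicate point, and the fix is exactly to replace the net by the matrix Bernstein inequality (\cite[Theorem 6.1.1]{tropp2015introduction}), after which the second-moment computation $\|\mathbb{E}\,\bW\oU^*\oU^{*\top}\bW\|\lesssim(K-1)\theta_{\text{max}}^2$ delivers the claimed $\sqrt{(K-1)\log n}\,\theta_{\text{max}}$ with no extra condition on $K$.
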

\begin{proof}
See Section \ref{lemma2proof}.
\end{proof}

\begin{lem}\label{lemma3}
Assume that $\sqrt{(K-1)\log n}\theta_{\text{max}}/\minsigma^*+\kappa^* n\theta_{\text{max}}^2/\minsigma^{*2}\ll 1$ and $\max\{\sqrt{n}\theta_{\text{max}}, \log n\}\ll \minsigma^*$. Then with probability at least $1-O(n^{-10})$ we have
\begin{align*}
    \left\|\oU\oLambda\bL-\oX\oU^*\right\|_{2,\infty} 
    \lesssim&  \left(\frac{n \theta_{\text{max}}^2}{\minsigma^*}+\log n\right)\left\|\oU\bL-\oU^*\right\|_{2,\infty}  +\frac{\sqrt{(K-1)n\log n}}{\minsigma^*}\theta_{\text{max}} \\
    &+ \frac{(\sqrt{(K-1)n}\theta_{\text{max}}+\log n) n\theta_{\text{max}}^2}{\lambda_1^*\minsigma^*} + \frac{(\sqrt{n}\theta_{\text{max}}+\log n)^2\sqrt{(K-1)\mu^*/n}}{\minsigma^*} \\
    & + \frac{\kappa^*}{\minsigma^{*}}\sqrt{(K-1)\mu^*n}\theta_{\text{max}}^2.
\end{align*}
\end{lem}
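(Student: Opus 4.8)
The plan is to mirror the strategy of \cite[Lemma 3]{yan2021inference} but carefully track the dependence on $K$, $\mu^*$, $\kappa^*$, and $\theta_{\text{max}}$, using the structural facts already established in the excerpt. The starting point is the algebraic identity $\oU\oLambda\bL - \oX\oU^* = \oU\oLambda\bL - \oU\oLambda\oU^\top\oU^* - \oW\oU^*$; since $\bL = \oU^\top\oU^*$, the first two terms cancel, leaving us to estimate $\oX\oU^* - \oU\oLambda\bL$ through the decomposition $\oX\oU^* = (\oH + \oW)\oU^* = \oH\oU^* + \oW\oU^*$ and $\oH\oU^* = \oU^*\oLambda^*$. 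Writing $\oU\oLambda\bL - \oX\oU^* = \oU\oLambda\bL - \oU^*\oLambda^* - \oW\oU^*$ and then inserting $\oU^*\oLambda^* = \oU\bL\oLambda^* + (\oU^* - \oU\bL)\oLambda^*$, I can split the target into (i) a ``rotation mismatch'' term involving $\oU(\oLambda\bL - \bL\oLambda^*)$, which Lemma \ref{lemma2} controls since $\oLambda\bL - \bL\oLambda^*$ is close to $\bL^\top\oLambda\bL - \oLambda^*$ up to the near-orthogonality of $\bL$ from Lemma \ref{lemma1}; (ii) a term $(\oU\bL - \oU^*)\oLambda^*$ whose $l_{2,\infty}$ norm is exactly $\|\oU\bL - \oU^*\|_{2,\infty}$ scaled by $\|\oLambda^*\| = \maxsigma^* \lesssim \kappa^*\minsigma^*$; and (iii) the noise term $\oW\oU^*$.

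The core of the work is bounding $\|\oW\oU^*\|_{2,\infty}$, and this is where the expansion of $\oW$ from Lemma \ref{oWexpansion} enters. Substituting $\oW = \bW - \bW\bu_1^*\bu_1^{*\top}\bW\bu_1^*\bu_1^{*\top} - \bN\bW\bu_1^*\bu_1^{*\top} - (\bW\bu_1^*\bu_1^{*\top})^\top\bN - \bDelta$ and applying it to $\oU^*$, I would bound each of the five resulting pieces in $l_{2,\infty}$. The bare term $\bW\oU^*$ is handled by a row-wise Bernstein/matrix-concentration argument (each row of $\bW$ has independent entries with variance $\lesssim \theta_{\text{max}}^2$, and $\oU^*$ has columns that are incoherent, $\|\oU^*\|_{2,\infty} \lesssim \sqrt{\mu^*(K-1)/n}$), giving something of order $(\sqrt{(K-1)n\log n}/\minsigma^*)\theta_{\text{max}}$ after dividing through appropriately in the final assembly — this is the source of the second term in the claimed bound. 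The terms involving $\bu_1^*\bu_1^{*\top}$ factors use $\|\bW\bu_1^*\|_\infty$ and $\|\bW\bu_1^*\|_2 \lesssim \sqrt{n}\theta_{\text{max}}$ type controls together with $\|\oU^{*\top}\bu_1^*\| = 0$ (orthogonality of eigenspaces!) which kills the leading contribution and leaves only cross terms mediated by $\bN$; since $\|\bN\| \lesssim 1$ and $\bN$ inherits incoherence from $\bH$, these feed the $\lambda_1^*$-denominator terms. The $\bDelta\oU^*$ piece is handled directly by the second bound in Lemma \ref{oWexpansion}, namely $\|\bDelta\oU^*\|_{2,\infty} \lesssim \sqrt{(K-1)\mu^*n/\lambda_1^{*2}} + n^{1.5}/\lambda_1^{*2}$, which after dividing by $\minsigma^*$ in the reconstruction gives the third term and contributes to the $\kappa^*$-term via $\lambda_1^* \gtrsim \minsigma^*/\beta_n$.

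A recurring subtlety is that several of these pieces are most naturally bounded in terms of $\|\oU\bL - \oU^*\|_{2,\infty}$ itself (because, e.g., to control $\bW$ acting on an eigenvector one often wants a leave-one-out decoupling that introduces the very quantity being estimated), which is why the right-hand side of the lemma is stated \emph{self-referentially} with the factor $(n\theta_{\text{max}}^2/\minsigma^* + \log n)\|\oU\bL - \oU^*\|_{2,\infty}$ left on the RHS — this is not circular, it is the raw output of the estimate, to be closed later (presumably in the proof of Theorem \ref{mainthmmatrixdenoising}) via a fixed-point/contraction argument once the coefficient $n\theta_{\text{max}}^2/\minsigma^* \ll 1$ under the standing assumptions. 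I would therefore \emph{not} attempt to absorb that term here. The main obstacle I anticipate is the bookkeeping in the leave-one-out argument for $\|\bW\oU^*\|_{2,\infty}$ and the $\bN\bW\bu_1^*\bu_1^{*\top}\oU^*$ cross term: one must build, for each row $m \in [n]$, an auxiliary matrix $\oX^{(m)}$ (or eigenvector $\oU^{(m)}$) independent of the $m$-th row of $\bW$, control $\|\oU\bL - \oU^{(m)}\bL^{(m)}\|$ in operator norm via a Davis–Kahan/perturbation bound, and then use independence plus a scalar Bernstein inequality on $[\bW\oU^{(m)}]_{m,\cdot}$; keeping all the $K$, $\mu^*$, $\log n$ factors sharp through this chain — especially in the terms quadratic in $(\sqrt{n}\theta_{\text{max}} + \log n)$, which arise from iterating the noise bound twice — is the delicate part. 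Everything else is triangle inequality, the operator-norm bounds \eqref{eq:wbar_op} and Corollary \ref{cor1}, and Lemmas \ref{lemma1}–\ref{lemma2}.
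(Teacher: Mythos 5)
Your decomposition is algebraically valid but analytically fatal. Writing $\oU\oLambda\bL-\oX\oU^* = \oU(\oLambda\bL-\bL\oLambda^*)+(\oU\bL-\oU^*)\oLambda^*-\oW\oU^*$ and then bounding the three pieces separately by the triangle inequality destroys the one cancellation that makes the lemma true. To first order $\oU\bL-\oU^*\approx \oW\oU^*(\oLambda^*)^{-1}$, so your terms (ii) and (iii) are each of size $\asymp\|\oW\oU^*\|_{2,\infty}$ and cancel each other to leading order; separated, term (ii) contributes $\maxsigma^*\left\|\oU\bL-\oU^*\right\|_{2,\infty}=\kappa^*\minsigma^*\left\|\oU\bL-\oU^*\right\|_{2,\infty}$, which is incompatible with the claimed coefficient $\bigl(n\theta_{\text{max}}^2/\minsigma^*+\log n\bigr)\ll\minsigma^*$, and term (iii) alone, $\|\oW\oU^*\|_{2,\infty}\asymp\sqrt{(K-1)\log n}\,\theta_{\text{max}}+\cdots$, already exceeds every non-self-referential term on the right-hand side (all of which carry an extra factor of order $1/\minsigma^*$). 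A bound with coefficient $\maxsigma^*$ on $\left\|\oU\bL-\oU^*\right\|_{2,\infty}$ also cannot be closed in Lemma \ref{lemma4}, where one divides by $\minsigma^*$ and needs the coefficient to be $o(\minsigma^*)$ for the fixed-point step. (Your very first identity is also wrong as written: $\oX\oU^*\neq\oU\oLambda\oU^\top\oU^*+\oW\oU^*$, since $\oX=\oU\oLambda\oU^\top+\oU_\perp\oSigma_\perp\oV_\perp^\top$; you recover by switching to $\oX=\oH+\oW$, but it signals the confusion.)

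The paper instead uses $\oU\oLambda=\oX\oU$ to write $\oU\oLambda\bL-\oX\oU^*=\oX(\oU\bL-\oU^*)=\oH(\oU\bL-\oU^*)+\oW(\oU\bL-\oU^*)$, keeping the large pieces together. The first term is quadratically small because $\oH(\oU\bL-\oU^*)=\oU^*\oLambda^*(\bL^\top\bL-\bI)$ and $\|\bL^\top\bL-\bI\|=\|\sin\bOmega\|^2\lesssim n\theta_{\text{max}}^2/\minsigma^{*2}$; this is the cancellation your split discards. The second term is where the leave-one-out machinery lives: it is needed precisely because $\oU\bL-\oU^*$ is random and correlated with $\bW$, whereas your plan applies the leave-one-out construction to $\bW\oU^*$ with $\oU^*$ deterministic, where a direct Bernstein bound (Lemma \ref{Wconcentration3}) already suffices and no decoupling is required. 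The self-referential term $\bigl(n\theta_{\text{max}}^2/\minsigma^*+\log n\bigr)\left\|\oU\bL-\oU^*\right\|_{2,\infty}$ in the statement is the output of that leave-one-out recursion applied to $\bW(\oU\bL-\oU^*)$ together with the $\oW$-expansion of Lemma \ref{oWexpansion}; it does not arise from any term in your decomposition.
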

\begin{proof}
See Section \ref{lemma3proof}.
\end{proof}

\begin{lem}\label{lemma4}
Assume that $\sqrt{(K-1)\log n}\theta_{\text{max}}/\minsigma^*+\kappa^* n\theta_{\text{max}}^2/\minsigma^{*2}\ll 1$ and $n\gtrsim\mu^*\max\{\log^2n, K-1\}$. Then with probability at least $1-O(n^{-10})$ we have
\begin{align*}
   \left\|\oU\bL-\oU^*\right\|_{2,\infty}\lesssim &\frac{\kappa^*}{\minsigma^{*2}}\sqrt{(K-1)\mu^*n}\theta_{\text{max}}^2+\frac{\sqrt{(K-1)n\log n}}{\minsigma^{*2}}\theta_{\text{max}} + \frac{\sqrt{K-1} n^{1.5}\theta_{\text{max}}^3}{\lambda_1^*\minsigma^{*2}}   \\
    &+\frac{\sqrt{(K-1)\log n}\theta_{\text{max}} + \log n\sqrt{(K-1)\mu^* / n} + \sqrt{\mu^*}\theta_{\text{max}}}{\minsigma^*}+\frac{n\theta_{\text{max}}^2}{\lambda_1^*\minsigma^*}.
\end{align*}
\end{lem}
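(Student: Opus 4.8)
The plan is to run a standard leave-one-out style argument adapted to the two-to-$K$ eigenspace. First I would start from the identity $\oU\bL = \oU\oU^\top\oU^*$, which is the spectral projector of $\oX$ (restricted to its top $K-1$ directions away from the first) applied to $\oU^*$. Writing $\oU\oLambda\bL = \oX\oU\oU^\top\oU^* $ and using Lemma~\ref{lemma3} to control $\|\oU\oLambda\bL - \oX\oU^*\|_{2,\infty}$, I would convert the target into bounding $\|\oX\oU^*\,(\oLambda^*)^{-1} - \oU^*\|_{2,\infty}$ plus cross terms, and then isolate the noise contribution $\oW\oU^*(\oLambda^*)^{-1}$. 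The crucial observation is that $\oX\oU^* = \oH\oU^* + \oW\oU^* = \oU^*\oLambda^* + \oW\oU^*$, so that the leading behaviour of $\oU\bL - \oU^*$ is governed by $\oW\oU^*(\oLambda^*)^{-1}$ up to a rotation correction of size $\|\oLambda^* \bL - \bL\oLambda^*\|$-type terms handled by Lemma~\ref{lemma2}.

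The key steps, in order, would be: (i) decompose $\oU\bL - \oU^* = (\oX\oU^*-\oU^*\oLambda^*)(\oLambda^*)^{-1} + \text{(rotation mismatch terms)} + \text{(terms from Lemma~\ref{lemma3})}$, using $\bR^\top\oLambda\bR\approx\oLambda^*$ and $\bL\approx\bR$ from Lemmas~\ref{lemma1}--\ref{lemma2} to pass between $\oLambda$ and $\oLambda^*$; (ii) bound the ``clean'' term $\|\oW\oU^*(\oLambda^*)^{-1}\|_{2,\infty}\le \|\oW\oU^*\|_{2,\infty}/\minsigma^*$, and here invoke the expansion of $\oW$ in Lemma~\ref{oWexpansion}: the piece $\bW\oU^*$ is controlled row-wise by a Bernstein bound using incoherence of $\oU^*$ (giving the $\sqrt{(K-1)\log n}\,\theta_{\max}$ and $\log n\sqrt{(K-1)\mu^*/n}$ contributions), while the remaining pieces $\bW\bu_1^*\bu_1^{*\top}\bW\bu_1^*\bu_1^{*\top}$, $\bN\bW\bu_1^*\bu_1^{*\top}$, their transposes, and $\bDelta$ contribute the $\sqrt{\mu^*}\theta_{\max}/\minsigma^*$, $n\theta_{\max}^2/(\lambda_1^*\minsigma^*)$ and, via the $\|\bDelta\oU^*\|_{2,\infty}$ bound, the $\sqrt{(K-1)\mu^* n}\,\theta_{\max}^2/\minsigma^{*2}$-type terms; (iii) handle the self-referential term $(\tfrac{n\theta_{\max}^2}{\minsigma^*}+\log n)\|\oU\bL-\oU^*\|_{2,\infty}$ coming out of Lemma~\ref{lemma3} by absorbing it into the left-hand side, which is legitimate precisely because the hypothesis $\sqrt{(K-1)\log n}\,\theta_{\max}/\minsigma^* + \kappa^* n\theta_{\max}^2/\minsigma^{*2}\ll 1$ forces that prefactor to be $o(1)$; (iv) collect terms and simplify using $\minsigma^*\le\maxsigma^*\le\lambda_1^*$ and $\kappa^* = \maxsigma^*/\minsigma^*$ to reach the stated bound.

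The main obstacle I anticipate is step~(iii) together with the bookkeeping in step~(ii): the bound in Lemma~\ref{lemma3} feeds back $\|\oU\bL - \oU^*\|_{2,\infty}$ into itself, so one must verify quantitatively that the coefficient $\frac{n\theta_{\max}^2}{\minsigma^*}+\log n$ divided by $\minsigma^*$ — i.e.\ $\frac{n\theta_{\max}^2}{\minsigma^{*2}} + \frac{\log n}{\minsigma^*}$ — is strictly below $1$ under the stated hypotheses (the first piece is exactly $\le \kappa^{*-1}\cdot\kappa^* n\theta_{\max}^2/\minsigma^{*2}\ll 1$ and the second is $\ll 1$ since $\log n \ll \minsigma^*$), so the absorption is valid; after that, matching each residual term to the five terms in the conclusion is careful but routine algebra. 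A secondary subtlety is that $\oW$ itself depends on $\bW$ in a nonlinear way through $\wh\lambda_1\wh\bu_1\wh\bu_1^\top$, so the row-wise concentration in step~(ii) must be applied to the explicit \emph{expansion} of $\oW$ from Lemma~\ref{oWexpansion} rather than to $\oW$ directly, and the leave-one-out decoupling is what makes $\bW_{i,\cdot}\oU^*$ and related quadratic forms concentrate with the incoherence factors $\sqrt{\mu^*}$ appearing as stated.
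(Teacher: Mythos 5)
Your plan follows essentially the same route as the paper's proof: the same decomposition $\minsigma^*\|\oU\bL-\oU^*\|_{2,\infty}\le\|\oU\oLambda\bL-\oX\oU^*\|_{2,\infty}+\|\oW\oU^*\|_{2,\infty}+\|\oU\bL\oLambda^*-\oU\oLambda\bL\|_{2,\infty}$, the same use of Lemmas \ref{lemma1}--\ref{lemma3} and the $\oW$ expansion, and the same absorption of the self-referential term under the hypothesis $\xi_1/\minsigma^*\ll1$. The only slight misattribution is that the $\kappa^*\sqrt{(K-1)\mu^*n}\,\theta_{\max}^2/\minsigma^{*2}$ term comes from Lemma \ref{lemma3} rather than from $\|\bDelta\oU^*\|_{2,\infty}$ (which the paper bounds crudely by $\|\bDelta\|$ here), but this does not affect the argument.
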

\begin{proof}
See Section \ref{lemma4proof}.
\end{proof}

\begin{lem}\label{lemma5}
Assume that $\max\{\sqrt{n}\theta_{\text{max}}, \log n\}\ll \lambda_1^*$. Then with probability at least $1-O(n^{-10})$ we have
\begin{align*}
    \left\|\bDelta\oU^*\right\|_{2,\infty} \lesssim\frac{\sqrt{(K-1)\mu^*n}\theta_{\text{max}}^2}{\lambda_1^*}+\frac{\sqrt{K-1}\mu^*\log ^2n}{n\lambda_1^*}+\frac{n^{1.5}\theta_{\text{max}}^3}{\lambda_1^{*2}}.
\end{align*}
\end{lem}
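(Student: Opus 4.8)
The plan is to start from the very definition of $\bDelta$ in Theorem \ref{firstsubspaceexpansion}, namely $\bDelta = \bigl(\wh\lambda_1\wh\bu_1\wh\bu_1^\top - \lambda_1^*\bu_1^*\bu_1^{*\top}\bigr) - \bigl[\bu_1^*\bu_1^{*\top}\bW\bu_1^*\bu_1^{*\top}+ \bN\bW\bu_1^*\bu_1^{*\top}+(\bW\bu_1^*\bu_1^{*\top})^\top\bN\bigr]$, and to compute $\bDelta\oU^*$ in closed form before bounding it row by row. Right‑multiplying by $\oU^*$ and using the orthogonality $\bu_1^{*\top}\oU^*=\boldsymbol{0}$, the first two bracketed terms vanish; the third becomes $\bu_1^*\bu_1^{*\top}\bW\bN\oU^*$, and since $\bu_j^{*\top}\oU^* = \boldsymbol e_{j-1}^\top$ for $2\le j\le K$ one gets $\bN\oU^* = \lambda_1^*\,\oU^*(\lambda_1^*\bI-\oLambda^*)^{-1}$. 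Writing $\boldsymbol{z}:=\wh\bu_1-\bu_1^*$, expanding $\wh\lambda_1\wh\bu_1\wh\bu_1^\top-\lambda_1^*\bu_1^*\bu_1^{*\top}$, discarding the pieces killed by $\oU^*$, and finally substituting the expansion $\boldsymbol{z} = \bN_0\bW\bu_1^* + \boldsymbol{\delta}$ of Theorem \ref{u1expansion} (with $\bN_0:=\sum_{j=2}^n(\lambda_1^*-\lambda_j^*)^{-1}\bu_j^*\bu_j^{*\top}$, so that $\boldsymbol{z}^\top\oU^* = \bu_1^{*\top}\bW\oU^*(\lambda_1^*\bI-\oLambda^*)^{-1} + \boldsymbol{\delta}^\top\oU^*$) yields the exact identity
\[
\bDelta\oU^* = \underbrace{(\wh\lambda_1-\lambda_1^*)\bu_1^*\bu_1^{*\top}\bW\oU^*(\lambda_1^*\bI-\oLambda^*)^{-1}}_{\bA} + \underbrace{\wh\lambda_1\bu_1^*\boldsymbol{\delta}^\top\oU^*}_{\bB} + \underbrace{\wh\lambda_1\boldsymbol{z}\boldsymbol{z}^\top\oU^*}_{\bC}.
\]

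For $\bA$ and $\bB$ the rows are rank one. The $i$-th row of $\bA$ is $(\wh\lambda_1-\lambda_1^*)(\bu_1^*)_i\cdot\bu_1^{*\top}\bW\oU^*(\lambda_1^*\bI-\oLambda^*)^{-1}$, so on $\mathcal{A}_1$ (Lemma \ref{Wspectral}), combining Weyl's inequality $|\wh\lambda_1-\lambda_1^*|\le\|\bW\|\lesssim\sqrt n\,\theta_{\text{max}}$, the incoherence bound $\|\bu_1^*\|_\infty\le\sqrt{\mu^*/n}$ (Definition \ref{incorherenceassumption}), the crude $\|\bu_1^{*\top}\bW\oU^*\|_2\le\|\bW\|$, and $\|(\lambda_1^*\bI-\oLambda^*)^{-1}\|\le(\lambda_1^*-\maxsigma^*)^{-1}\asymp(\lambda_1^*)^{-1}$ (Lemma \ref{eigenvaluelemma}) gives $\|\bA\|_{2,\infty}\lesssim\sqrt{\mu^* n}\,\theta_{\text{max}}^2/\lambda_1^*$, which is below the first term of the claim. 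Likewise $\|\bB\|_{2,\infty}\le\wh\lambda_1\|\bu_1^*\|_\infty\|\oU^{*\top}\boldsymbol{\delta}\|_2$ with $\wh\lambda_1\asymp\lambda_1^*$; the $\ell_2$ bound $\|\boldsymbol{\delta}\|_2\lesssim\theta_{\text{max}}^2 n/\lambda_1^{*2}$ of Theorem \ref{u1expansion} already gives $O(\sqrt{\mu^* n}\,\theta_{\text{max}}^2/\lambda_1^*)$. The $\ell_\infty$ bound of Theorem \ref{deltainfty}, used here and in the treatment of $\bC$, is what produces the remaining, always‑subdominant, term $\sqrt{K-1}\,\mu^*\log^2 n/(n\lambda_1^*)$ of the claim (it is dominated by the first term under Assumption \ref{assn:theta_order}).

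The term $\bC=\wh\lambda_1\boldsymbol{z}\boldsymbol{z}^\top\oU^*$ is the delicate one and where the main obstacle lies: its $i$-th row has norm $\wh\lambda_1|\boldsymbol{z}_i|\,\|\boldsymbol{z}^\top\oU^*\|_2$, and the naive bound $\|\bC\|_{2,\infty}\le\wh\lambda_1\|\boldsymbol{z}\|_\infty\|\boldsymbol{z}\|_2$ with $\|\boldsymbol{z}\|_2\lesssim\sqrt n\,\theta_{\text{max}}/\lambda_1^*$ loses roughly a factor $\sqrt{\log n}$ against the target. The remedy is to bound $\|\boldsymbol{z}\|_\infty$ by Lemma \ref{u1differenceinfty} and to control $\|\boldsymbol{z}^\top\oU^*\|_2$ sharply through the expansion $\boldsymbol{z}^\top\oU^* = \bu_1^{*\top}\bW\oU^*(\lambda_1^*\bI-\oLambda^*)^{-1} + \boldsymbol{\delta}^\top\oU^*$: the vector $\oU^{*\top}\bW\bu_1^*\in\RR^{K-1}$ consists of only $K-1$ bilinear forms in the independent entries of $\bW$, so a Bernstein inequality (as in \cite{chen2021spectral}), with incoherence of $\bu_1^*$ and $\oU^*$ supplying the $L^\infty$‑bound on the summands, gives $\|\oU^{*\top}\bW\bu_1^*\|_2\lesssim\sqrt{K-1}\,\theta_{\text{max}}\sqrt{\log n}$ with probability $1-O(n^{-10})$, far smaller than $\|\bW\|$, whence $\|\boldsymbol{z}^\top\oU^*\|_2\lesssim\sqrt{(K-1)\log n}\,\theta_{\text{max}}/\lambda_1^* + \theta_{\text{max}}^2 n/\lambda_1^{*2}$. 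Multiplying this by the $\ell_\infty$ bound of Lemma \ref{u1differenceinfty} and $\wh\lambda_1\asymp\lambda_1^*$, then discarding the subdominant monomials via the standing hypothesis $\max\{\sqrt n\,\theta_{\text{max}},\log n\}\ll\lambda_1^*$ (which also subsumes $(K-1)\mu^*\ll n$), collapses everything into $\sqrt{(K-1)\mu^* n}\,\theta_{\text{max}}^2/\lambda_1^*$ and $n^{1.5}\theta_{\text{max}}^3/\lambda_1^{*2}$; summing the bounds for $\bA$, $\bB$, $\bC$ then gives the statement. Apart from the closed‑form identity for $\bDelta\oU^*$, the only genuine work is (i) the Bernstein estimate for $\|\oU^{*\top}\bW\bu_1^*\|_2$ that avoids the $O(\sqrt n)$ operator‑norm bound, and (ii) the routine but tedious bookkeeping verifying that each of the finitely many resulting monomials is absorbed by one of the three terms in the claim; the probabilistic content sits entirely on $\mathcal{A}_1$ and the high‑probability events already established in Theorems \ref{u1expansion}, \ref{deltainfty} and Lemma \ref{u1differenceinfty}.
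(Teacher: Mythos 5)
Your proof is correct in substance but takes a genuinely different route from the paper's. The paper goes back to the contour-integral representation of $\bDelta$ from the proof of Theorem \ref{firstsubspaceexpansion}, splits the integrand into four pieces $\bDelta_1,\dots,\bDelta_4$, computes the residues of $\bDelta_1\oU^*$ and $\bDelta_3\oU^*$ exactly (obtaining closed forms such as $\frac{\bu_1^{*\top}\bW\bu_i^*}{\lambda_1^*-\lambda_i^*}\bW\bu_1^*$), and bounds $\bDelta_2,\bDelta_4$ crudely in operator norm; the three monomials in the claim then fall out of $(\sqrt{\log n}\,\theta_{\text{max}}+\log n\sqrt{\mu^*/n})^2\sqrt{K-1}/\lambda_1^*$, the residue computation for $\bDelta_3$, and the operator-norm pieces, respectively. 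You instead bypass the contour entirely: you take $\bDelta$ as defined by the \emph{statement} of Theorem \ref{firstsubspaceexpansion}, exploit $\bu_1^{*\top}\oU^*=\boldsymbol{0}$ and $\bN\oU^*=\lambda_1^*\oU^*(\lambda_1^*\bI-\oLambda^*)^{-1}$ to get the exact identity $\bDelta\oU^*=\bA+\bB+\bC$, and reduce everything to inputs already proved ($\|\boldsymbol{\delta}\|_2$ from Theorem \ref{u1expansion}, $\|\wh\bu_1-\bu_1^*\|_\infty$ from Lemma \ref{u1differenceinfty}, and concentration of the $K-1$ bilinear forms $\bu_j^{*\top}\bW\bu_1^*$, which is exactly Lemma \ref{Wconcentration6} of the paper). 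This is more elementary and makes transparent that $\bDelta\oU^*$ is controlled by quantities about the first eigenpair alone, at the price of routing the dominant contribution through the product $\|\boldsymbol{z}\|_\infty\|\boldsymbol{z}^\top\oU^*\|_2$ rather than through a single residue.

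Two points of bookkeeping deserve care. First, your stated bound $\|\oU^{*\top}\bW\bu_1^*\|_2\lesssim\sqrt{K-1}\,\theta_{\text{max}}\sqrt{\log n}$ drops the $\sqrt{K-1}\log n\sqrt{\mu^*/n}$ piece that Bernstein actually produces; you need to keep it, since the second monomial $\sqrt{K-1}\,\mu^*\log^2 n/(n\lambda_1^*)$ of the claim arises precisely as the product of the $\log n\sqrt{\mu^*/n}/\lambda_1^*$ pieces of $\|\boldsymbol{z}\|_\infty$ and $\|\boldsymbol{z}^\top\oU^*\|_2$ (not, as you suggest, from the $\ell_\infty$ bound on $\boldsymbol{\delta}$ in $\bB$, where the $\ell_2$ bound already suffices). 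Second, because both factors of $\bC$ carry a $\sqrt{K-1}$, some cross terms (e.g.\ $\sqrt{(K-1)\mu^*}\theta_{\text{max}}\cdot\sqrt{K-1}\log n\sqrt{\mu^*/n}/\lambda_1^*$) come with a full factor $K-1$ where the paper's residue computation only produces $\sqrt{K-1}$; absorbing these into the first monomial requires conditions of the type $(K-1)\mu^*\log n\lesssim n\theta_{\text{max}}^2\cdot n/\log n$ (in practice $(K-1)\mu^*\log n\lesssim n$), which are not literally among the hypotheses of the lemma, though they are assumed elsewhere in the paper (Theorem \ref{mainthmmatrixdenoising}) and are vacuous in the regime $K,\mu^*\asymp 1$. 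Neither issue is fatal, but both should be made explicit in a final write-up.
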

\begin{proof}
See Section \ref{lemma5proof}.
\end{proof}

 Similar to \cite{yan2021inference}, we prove Theorem \ref{mainthmmatrixdenoising} by combining these five lemmas. The proof of Theorem \ref{mainthmmatrixdenoising} is included in Section \ref{mainthmmatrixdenoisingproof}. Next, we combine Theorem \ref{mainthmu1} and Theorem \ref{mainthmmatrixdenoising} to yield Theorem \ref{mainthmrexpansion}. See Section \ref{mainthmrexpansionproof} for details.

Finally, to obtain the membership reconstruction results in Section \ref{sec:membership_reconstruction}, we need the following result regarding $\wh\lambda_1$, which is a direct corollary of Theorem \ref{firstsubspaceexpansion}.
\begin{cor} \label{membershipreconstructionlem1}
Assume that $\max\{\sqrt{n}\theta_{\text{max}}, \log n\}\ll \lambda_1^*$. Then under event $\mathcal{A}_1$ defined by \eqref{eq:define_a1}, we have the following expansion:
\begin{align*}
    \wh\lambda_1 - \lambda_1^* = \textbf{Tr}\left[\bW\bu_1^*\bu_1^{*\top}+ 2\bN\bW\bu_1^*\bu_1^{*\top}\right]+ \textbf{Tr}\left[\bDelta\right],
\end{align*}
where $|\textbf{Tr}\left[\bDelta\right]|\lesssim n\theta_{\text{max}}^2/\lambda_1^*$. In terms of the estimation error, we have $|\wh\lambda_1-\lambda_1^*|\lesssim \sqrt{n}\theta_{\text{max}}$.
\end{cor}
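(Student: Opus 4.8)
The plan is to derive Corollary~\ref{membershipreconstructionlem1} directly from the operator-level expansion in Theorem~\ref{firstsubspaceexpansion}. Recall that $\wh\lambda_1 = \wh\bu_1^\top \bX \wh\bu_1$ since $\wh\bu_1$ is a unit eigenvector, and likewise $\lambda_1^* = \bu_1^{*\top}\bH\bu_1^*$. A cleaner route, and the one I would follow, is to notice that $\textbf{Tr}[\wh\lambda_1\wh\bu_1\wh\bu_1^\top] = \wh\lambda_1$ and $\textbf{Tr}[\lambda_1^*\bu_1^*\bu_1^{*\top}] = \lambda_1^*$, because $\wh\bu_1$ and $\bu_1^*$ are unit vectors so the rank-one matrices have trace equal to their nonzero eigenvalue. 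Hence taking the trace on both sides of \eqref{eqfirstsubspaceexpansion} immediately gives
\begin{align*}
    \wh\lambda_1 - \lambda_1^* = \textbf{Tr}\left[\bu_1^*\bu_1^{*\top}\bW\bu_1^*\bu_1^{*\top}\right] + \textbf{Tr}\left[\bN\bW\bu_1^*\bu_1^{*\top}\right] + \textbf{Tr}\left[\left(\bW\bu_1^*\bu_1^{*\top}\right)^\top\bN\right] + \textbf{Tr}\left[\bDelta\right].
\end{align*}

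The next step is to simplify the first three trace terms. Using cyclicity of the trace and $\|\bu_1^*\|_2 = 1$, the first term equals $\textbf{Tr}[\bW\bu_1^*\bu_1^{*\top}\bu_1^*\bu_1^{*\top}] = \textbf{Tr}[\bW\bu_1^*\bu_1^{*\top}]$. For the second term, cyclicity gives $\textbf{Tr}[\bN\bW\bu_1^*\bu_1^{*\top}]$ as is. For the third, since $\bN$ and $\bW$ are symmetric and $(\bW\bu_1^*\bu_1^{*\top})^\top = \bu_1^*\bu_1^{*\top}\bW$, we get $\textbf{Tr}[\bu_1^*\bu_1^{*\top}\bW\bN] = \textbf{Tr}[\bN\bu_1^*\bu_1^{*\top}\bW] = \textbf{Tr}[\bN\bW\bu_1^*\bu_1^{*\top}]$ by cyclicity (moving $\bN$ and $\bW$ does not commute, but moving the whole block $\bu_1^*\bu_1^{*\top}\bW$ to the front and using symmetry of $\bN$ works — more carefully, $\textbf{Tr}[(\bW\bu_1^*\bu_1^{*\top})^\top \bN] = \textbf{Tr}[\bu_1^*\bu_1^{*\top}\bW\bN]$ and by cyclicity this is $\textbf{Tr}[\bW\bN\bu_1^*\bu_1^{*\top}]$, which by symmetry of $\bW$ and $\bN$ and taking transpose inside the trace equals $\textbf{Tr}[\bu_1^*\bu_1^{*\top}\bN\bW] = \textbf{Tr}[\bN\bW\bu_1^*\bu_1^{*\top}]$). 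Thus the second and third terms are equal, and they combine to $2\,\textbf{Tr}[\bN\bW\bu_1^*\bu_1^{*\top}]$. Collecting everything yields exactly the claimed identity with $\textbf{Tr}[\bDelta]$ as the remainder.

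It remains to bound the two error quantities. For $|\textbf{Tr}[\bDelta]|$: $\bDelta$ has rank at most $O(1)$ (it is a difference of a few rank-one or low-rank objects arising in the contour-integral expansion), so $|\textbf{Tr}[\bDelta]| \lesssim \|\bDelta\| \lesssim n\theta_{\text{max}}^2/\lambda_1^*$ by Theorem~\ref{firstsubspaceexpansion}; if rank control is not immediate one can instead write $\textbf{Tr}[\bDelta] = \textbf{Tr}[\bDelta\,\bu_1^*\bu_1^{*\top}] + (\text{lower order})$ using that the leading eigenspace dominates, or simply observe $\textbf{Tr}[\bDelta] = \wh\lambda_1 - \lambda_1^* - (\text{the three explicit traces})$ and bound each piece. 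For the estimation-error claim $|\wh\lambda_1 - \lambda_1^*| \lesssim \sqrt{n}\theta_{\text{max}}$, the simplest argument is Weyl's inequality: $|\wh\lambda_1 - \lambda_1^*| \le \|\bX - \bH\| = \|\bW\| \lesssim \sqrt{n}\theta_{\text{max}}$ on the event $\mathcal{A}_1$ from Lemma~\ref{Wspectral}. Alternatively it follows from Corollary~\ref{cor1} together with $\|\bu_1^*\bu_1^{*\top}\bW\bu_1^*\bu_1^{*\top}\| \le \|\bW\|$. The main (and only mildly delicate) obstacle is the bookkeeping in the trace simplification — making sure the cross terms genuinely collapse to $2\,\textbf{Tr}[\bN\bW\bu_1^*\bu_1^{*\top}]$ rather than picking up an extra factor — but this is routine once one is careful with the non-commutativity of $\bN$ and $\bW$ under the trace; everything else is an immediate consequence of results already established.
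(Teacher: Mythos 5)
Your proposal is correct and follows essentially the same route as the paper: take the trace of the expansion \eqref{eqfirstsubspaceexpansion}, collapse the three explicit terms to $\textbf{Tr}[\bW\bu_1^*\bu_1^{*\top}]+2\,\textbf{Tr}[\bN\bW\bu_1^*\bu_1^{*\top}]$ by cyclicity and symmetry, bound $|\textbf{Tr}[\bDelta]|$ by rank times spectral norm (the paper makes the rank bound precise by rewriting $\bDelta$ as a sum of three low-rank pieces, giving $\textbf{Rank}[\bDelta]\le 3$), and obtain the estimation error from Weyl's inequality on $\mathcal{A}_1$.
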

\begin{proof}
    See Section \ref{membershipreconstructionlem1proof}.
\end{proof}

\bibliographystyle{alpha}
\bibliography{dynamic}

\section{Proofs}\label{sec:appendix}
\subsection{Proof of Theorem \ref{u1expansion}} \label{u1expansionproof}
\begin{proof}
Define $r := \frac{\lambda_1^*-|\lambda_2^*|}{2}$ and let $\mathcal{C}_1$ be the circular contour around $\lambda_1^*$ with radius $r$. Then $\lambda_1^*$ is the only eigenvalue of $\bH$ that is inside $\mathcal{C}_1$. Under event $\mathcal{A}_1$ defined by \eqref{eq:define_a1}, by Weyl's theorem, we know that 
\begin{align*}
    &\wh\lambda_1 = \wh\sigma_1\geq \sigma_1 ^*-C_2\sqrt{n}  = \lambda_1 ^*-C_2\theta_{\text{max}}\sqrt{n},   \\
    &\wh\lambda_i\leq\wh\sigma_{\text{max}}\leq \sigma_{\text{max}} ^*+C_2\theta_{\text{max}}\sqrt{n}  = \max_{2\leq j\leq n}|\lambda_j ^*|+C_2\theta_{\text{max}}\sqrt{n},\text{ for } 2\leq i \leq K. 
\end{align*}

As a result, $\wh\lambda_1$ is the only eigenvalue of $\bX$ that is inside $\mathcal{C}_1$. 
For $\lambda \in \mathbb{C}$, we have
\begin{align}
    \left(\lambda\bI-\bH \right)^{-1} &= \sum_{i=1}^n \frac{1}{\lambda-\lambda_i^*}\bu_i^*\bu_i^{*\top} \label{eq:IHinverse}\\
    \left(\lambda\bI-\bX \right)^{-1} &= \left(\lambda\bI-\bH-\bW \right)^{-1} = \sum_{i=1}^n \frac{1}{\lambda-\wh\lambda_i}\wh\bu_i\wh\bu_i^\top.\label{eq:IXinverse}
\end{align}
As a result, we know that
\begin{align}\label{eq:inverse_spectral_bound}
    \left\|(\lambda \bI-\bH)^{-1}\right\|&= \max_{i\in [n]}\frac{1}{|\lambda-\lambda_i^*|} = \frac{1}{r} \asymp \frac{1}{\lambda_1^*}, \nonumber \\
    \left\|(\lambda \bI-\bX)^{-1}\right\|&= \max_{i\in [n]}\frac{1}{|\lambda-\wh\lambda_i|} \leq \max_{i\in [n]}\frac{1}{|\lambda-\lambda_i^*| - |\wh\lambda_i -\lambda_i^*|}\leq\frac{1}{r - C_2\theta_{\text{max}}\sqrt{n}}\asymp \frac{1}{\lambda_1^*}
\end{align}
under event $\mathcal{A}_1$. Using \eqref{eq:IHinverse} and \eqref{eq:IXinverse} we know that 
\begin{align}\label{eq:define_p1}
    \frac{1}{2\pi i}\oint_{\mathcal{C}_1}\left(\lambda\bI-\bH \right)^{-1}d\lambda = \bu_1^*\bu_1^{*\top}\overset{\Delta}{=}\bP_1^*, \quad \frac{1}{2\pi i}\oint_{\mathcal{C}_1}\left(\lambda\bI-\bX \right)^{-1}d\lambda = \wh\bu_1\wh\bu_1^\top \overset{\Delta}{=}\wh\bP_1.
\end{align}
We denote by 
\begin{align*}
    \Delta\bP_1 = \frac{1}{2\pi i}\oint_{\mathcal{C}_1}\left(\lambda\bI-\bH \right)^{-1}\bW\left(\lambda\bI-\bH \right)^{-1}d\lambda.
\end{align*}
Then we know that
\begin{align*}
    \left\|\wh\bP_1-\bP_1^*-\Delta\bP_1\right\|\leq\frac{1}{2\pi}\oint_{\mathcal{C}_1}\left\|\left(\lambda\bI-\bX \right)^{-1}-\left(\lambda\bI-\bH \right)^{-1} - \left(\lambda\bI-\bH \right)^{-1}\bW\left(\lambda\bI-\bH \right)^{-1}\right\| d\lambda.
\end{align*}
The integrand can be reformulated as,
\begin{align*}
    &\left(\lambda\bI-\bX \right)^{-1}-\left(\lambda\bI-\bH \right)^{-1} - \left(\lambda\bI-\bH \right)^{-1}\bW\left(\lambda\bI-\bH \right)^{-1} \\
    =&\left(\lambda\bI-\bX \right)^{-1} \left[\left(\lambda\bI-\bH \right)-\left(\lambda\bI-\bX \right)\right]\left(\lambda\bI-\bH \right)^{-1} - \left(\lambda\bI-\bH \right)^{-1}\bW\left(\lambda\bI-\bH \right)^{-1} \\
    =&\left[\left(\lambda\bI-\bX \right)^{-1}-\left(\lambda\bI-\bH \right)^{-1}\right] \bW\left(\lambda\bI-\bH \right)^{-1}  \\
    =& \left(\lambda\bI-\bX \right)^{-1} \bW\left(\lambda\bI-\bH \right)^{-1}\bW\left(\lambda\bI-\bH \right)^{-1}.
\end{align*}
As a result, we have, under event $\mathcal{A}_1$,
\begin{align*}
    &\left\|\left(\lambda\bI-\bX \right)^{-1}-\left(\lambda\bI-\bH \right)^{-1} - \left(\lambda\bI-\bH \right)^{-1}\bW\left(\lambda\bI-\bH \right)^{-1}\right\| \\
    \leq & \left\| \left(\lambda\bI-\bX \right)^{-1}\right\|\left\| \left(\lambda\bI-\bH \right)^{-1}\right\|^2\left\| \bW\right\|^2 \lesssim \frac{\theta_{\text{max}}^2 n}{\lambda_1^{*3}},
\end{align*}
where the first inequality uses \eqref{eq:inverse_spectral_bound}. Hence, under event $\mathcal{A}_1$,
\begin{align*}
    \left\|\wh\bP_1-\bP_1^*-\Delta\bP_1\right\|\lesssim\frac{1}{2\pi}\oint_{\mathcal{C}_1}\frac{\theta_{\text{max}}^2 n}{\lambda_1^{*3}}d\lambda  = \frac{\theta_{\text{max}}^2 n r}{\lambda_1^{*3}}\lesssim \frac{\theta_{\text{max}}^2 n}{\lambda_1^{*2}},
\end{align*}
since $r \lesssim \lambda_1$. This immediately yields
\begin{align}\label{eq:T1_bound}
    \left\|\left(\wh\bP_1-\bP_1^*-\Delta\bP_1\right)\bu_1^*\right\|_2\lesssim \frac{\theta_{\text{max}}^2 n}{\lambda_1^{*2}}.
\end{align}
By \eqref{eq:define_p1}, we know that $\bP_1^*\bu_1^* = \bu_1^*$, $\wh\bP_1\bu_1^* = (\wh\bu_1^\top\bu_1^*)\wh\bu_1$. Therefore,
\begin{align*}
   \Delta\bP_1 &= \frac{1}{2\pi i}\oint_{\mathcal{C}_1}\sum_{i=1}^n \sum_{j=1}^n \frac{1}{(\lambda-\lambda_i^*)(\lambda-\lambda_j^*)}\bu_i^*\bu_i^{*\top}\bW\bu_j^*\bu_j^{*\top}d\lambda \\
   & = \sum_{i=1}^n \sum_{j=1}^n \textbf{Res}\left(\frac{1}{(\lambda-\lambda_i^*)(\lambda-\lambda_j^*)},\lambda_1^*\right)\bu_i^*\bu_i^{*\top}\bW\bu_j^*\bu_j^{*\top}   \\
   & = \sum_{i=2}^n \frac{1}{\lambda_1^*-\lambda_i^*}\left(\bu_i^*\bu_i^{*\top}\bW\bu_1^*\bu_1^{*\top}+\bu_1^*\bu_1^{*\top}\bW\bu_i^*\bu_i^{*\top}\right).
\end{align*}
As a result, we know that
\begin{align*}
    \Delta\bP_1 \bu_1^* = \sum_{i=2}^n\frac{\bu_i^{*\top}\bW\bu_1^*}{\lambda_1^*-\lambda_i^*}\bu_i^*.
\end{align*}
Therefore, we obtain,
\begin{align}
    \wh\bu_1 - \bu_1^* - \sum_{i=2}^n\frac{\bu_i^{*\top}\bW\bu_1^*}{\lambda_1^*-\lambda_i^*}\bu_i^* & =\left((\wh\bu_1^\top\bu_1^*)\wh\bu_1 - \bu_1^* - \sum_{i=2}^n\frac{\bu_i^{*\top}\bW\bu_1^*}{\lambda_1^*-\lambda_i^*}\bu_i^*\right) + \left((\wh\bu_1^\top\bu_1^*)\wh\bu_1 - \wh\bu_1\right) \nonumber \\
    &= \underbrace{\left(\wh\bP_1-\bP_1^*-\Delta\bP_1\right)\bu_1^*}_{T_1}+ \underbrace{\left((\wh\bu_1^\top\bu_1^*)\wh\bu_1 - \wh\bu_1\right)}_{T_2}.\label{eq:u1_decompose}
\end{align}
Now, $\|T_1\|_2\lesssim \theta_{\text{max}}^2 n/\lambda_1^{*2}$ by ~\eqref{eq:T1_bound}.
To bound $\|T_2\|_2$, it is enough to bound
$\|(\wh\bu_1^\top\bu_1^*)\wh\bu_1 - \wh\bu_1\|_2 = |\wh\bu_1^\top\bu_1^*-1|$. To this end, note that, 
\begin{align}\label{eq:u1_1}
    |\wh\bu_1^\top\bu_1^*-1| =1-\wh\bu_1^\top\bu_1^* =  \frac{\bu_1^{*\top}\bu_1^*+\wh\bu_1^{\top}\wh\bu_1-\wh\bu_1^\top\bu_1^*-\bu_1^{*\top}\wh\bu_1}{2} = \frac{\left\|\bu_1^*-\wh\bu_1\right\|_2^2}{2},
\end{align}
Also, by Wedin’s sin$\Theta$ Theorem \cite[Theorem 2.9]{chen2021spectral},
\begin{align}\label{eq:u1_2}
    \left\|\bu_1^*-\wh\bu_1\right\|_2\lesssim\frac{\left\|\bW\right\|}{\lambda_1^*}\lesssim\frac{\theta_{\text{max}}\sqrt{n}}{\lambda_1^*},
\end{align}
under $\mathcal{A}_1$. Combining ~\eqref{eq:u1_1} and ~\eqref{eq:u1_2}, we obtain
\begin{align*}
    \|(\wh\bu_1^\top\bu_1^*)\wh\bu_1 - \wh\bu_1\|_2 = \frac{\left\|\bu_1^*-\wh\bu_1\right\|_2^2}{2}\lesssim \frac{\theta_{\text{max}}^2 n}{\lambda_1^{*2}}.
\end{align*}
Therefore, we get, under the event $\mathcal{A}_1$,
\begin{align*}
   \|\boldsymbol{\delta}\|_2 &\le \left\|\wh\bu_1 - \bu_1^* - \sum_{i=2}^n\frac{\bu_i^{*\top}\bW\bu_1^*}{\lambda_1^*-\lambda_i^*}\bu_i^* \right\|_2\\
   &\leq\left\| \left(\wh\bP_1-\bP_1^*-\Delta\bP_1\right)\bu_1^*\right\|_2 + \|(\wh\bu_1^\top\bu_1^*)\wh\bu_1 - \wh\bu_1\|_2\lesssim \frac{\theta_{\text{max}}^2 n}{\lambda_1^{*2}}.
\end{align*}
\end{proof}

\subsection{Proof of Theorem \ref{deltainfty}}\label{deltainftyproof}
\begin{proof}
Recall from ~\eqref{eq:u1_decompose} that
\begin{align}
    \boldsymbol{\delta} = \wh\bu_1-\bu_1^*-\Delta\bP_1\bu_1^* = \left(\wh\bP_1-\bP^\star_1-\Delta\bP_1\right)\bu_1^* +(1-\wh\bu_1^\top\bu_1^*)\wh\bu_1.\label{deltadecomposition}
\end{align}
We begin with bounding $\|(\wh\bP_1-\bP^\star_1-\Delta\bP_1)\bu_1^*\|_{\infty}$. Recall that,
\begin{align*}
    \wh\bP_1-\bP_1^*-\Delta\bP_1 & 
    =\frac{1}{2\pi i}\oint_{\mathcal{C}_1}\left(\lambda\bI-\bX \right)^{-1} \bW\left(\lambda\bI-\bH \right)^{-1}\bW\left(\lambda\bI-\bH \right)^{-1} d\lambda.
\end{align*}
We split the integrand as a sum of the following two quantities,
\begin{align*}
    \bDelta_1 &:= \left(\lambda\bI-\bH \right)^{-1} \bW\left(\lambda\bI-\bH \right)^{-1}\bW\left(\lambda\bI-\bH \right)^{-1};\\
    \bDelta_2 &:= \left[\left(\lambda\bI-\bX \right)^{-1} -\left(\lambda\bI-\bH \right)^{-1} \right]\bW\left(\lambda\bI-\bH \right)^{-1}\bW\left(\lambda\bI-\bH \right)^{-1} \\
    & = \left(\lambda\bI-\bX \right)^{-1}\bW\left(\lambda\bI-\bH \right)^{-1}\bW\left(\lambda\bI-\bH \right)^{-1}\bW\left(\lambda\bI-\bH \right)^{-1},
\end{align*}
so that $\wh\bP_1-\bP_1^*-\Delta\bP_1 = \frac{1}{2\pi i}\oint_{\mathcal{C}_1}\left(\bDelta_1+\bDelta_2\right) d\lambda$. Under the event $\mathcal{A}_1$,
\begin{align}
    \left\|\left[\frac{1}{2\pi i}\oint_{\mathcal{C}_1}\bDelta_2 d\lambda\right] \bu_1^*\right\|_{\infty} &\leq \left\|\left[\frac{1}{2\pi i}\oint_{\mathcal{C}_1}\bDelta_2 d\lambda\right] \bu_1^*\right\|_{2} \leq \left\|\frac{1}{2\pi i}\oint_{\mathcal{C}_1}\bDelta_2 d\lambda \right\| \nonumber \\
    &\leq \frac{1}{2\pi}\oint_{\mathcal{C}_1}\left\|\bDelta_2\right\| d\lambda \lesssim\frac{\theta_{\text{max}}^3 n^{1.5}r}{\lambda_1^{*4}}\lesssim \frac{\theta_{\text{max}}^3 n^{1.5}}{\lambda_1^{*3}},\label{ointDelta2}
\end{align}
where the fourth inequality uses \eqref{eq:inverse_spectral_bound}. It remains to bound $\left\|\left[\frac{1}{2\pi i}\oint_{\mathcal{C}_1}\bDelta_1 d\lambda\right] \bu_1^*\right\|_{\infty}$.
Since $\bDelta_1$ can be expanded as
\begin{align*}
    \bDelta_1 = \sum_{i,j,k=1}^n \frac{1}{(\lambda-\lambda_i^*)(\lambda-\lambda_j^*)(\lambda-\lambda_k^*)} \bu_i^*\bu_i^{*\top}\bW\bu_j^*\bu_j^{*\top}\bW\bu_k^*\bu_k^{*\top}, 
\end{align*}
we have
\begin{align*}
    \frac{1}{2\pi i}\oint_{\mathcal{C}_1}\bDelta_1 d\lambda = \sum_{i,j,k=1}^n\textbf{Res}\left(\frac{1}{(\lambda-\lambda_i^*)(\lambda-\lambda_j^*)(\lambda-\lambda_k^*)},\lambda_1^*\right)\bu_i^*\bu_i^{*\top}\bW\bu_j^*\bu_j^{*\top}\bW\bu_k^*\bu_k^{*\top}.
\end{align*}
Since $\bu_k^{*\top}\bu_1^*= \mathbbm{1}_{k=1}$, we have 
\begin{align*}
    \left[\frac{1}{2\pi i}\oint_{\mathcal{C}_1}\bDelta_1 d\lambda\right] \bu_1^* =& \sum_{i,j=1}^n\textbf{Res}\left(\frac{1}{(\lambda-\lambda_i^*)(\lambda-\lambda_j^*)(\lambda-\lambda_1^*)},\lambda_1^*\right)\bu_i^*\bu_i^{*\top}\bW\bu_j^*\bu_j^{*\top}\bW\bu_1^*  \\
    =&\sum_{i,j=2}^n\frac{1}{(\lambda_1^*-\lambda_i^*)(\lambda_1^*-\lambda_j^*)}\bu_i^*\bu_i^{*\top}\bW\bu_j^*\bu_j^{*\top}\bW\bu_1^* \\
    &-\sum_{i=2}^n\frac{1}{(\lambda_1^*-\lambda_i^*)^2}\left[\bu_i^*\bu_i^{*\top}\bW\bu_1^*\bu_1^{*\top}\bW\bu_1^*+\bu_1^*\bu_1^{*\top}\bW\bu_i^*\bu_i^{*\top}\bW\bu_1^*\right].
\end{align*}
Define the matrices
\begin{align}
    \bN_1 := \sum_{i=2}^n\frac{1}{\lambda_1^*-\lambda_i^*}\bu_i^*\bu_i^{*\top},\quad \bN_2 := \sum_{i=2}^n\frac{1}{(\lambda_1^*-\lambda_i^*)^2}\bu_i^*\bu_i^{*\top}.\label{N1N2}
\end{align}
 Then we can write
\begin{align}
    \left[\frac{1}{2\pi i}\oint_{\mathcal{C}_1}\bDelta_1 d\lambda\right] \bu_1^* = \bN_1\bW\bN_1\bW\bu_1^*-\bN_2\bW\bu_1^*\bu_1^{*\top}\bW\bu_1^*-\bu_1^*\bu_1^{*\top}\bW\bN_2\bW\bu_1^*.\label{ointDelta1}
\end{align}
We analyse the three terms separately now.
\begin{itemize}
    \item[(i)] \textbf{Control} $\left\|\bN_1\bW\bN_1\bW\bu_1^*\right\|_\infty$: We introduce leave-one-out matrix $\bW^{(i)}$ by replacing all the elements in $i$-th row and $i$-th column of original $\bW$ with $0$ for $i\in [n]$. Then, for any $i \in [n]$, $\bW_{i, \cdot}$ is independent of $\bW^{(i)}$. We write
    \begin{align}
        \left\|\bW\bN_1\bW\bu_1^*\right\|_\infty &= \max_{1\leq i\leq n}\left|\bW_{i,\cdot}\bN_1\bW\bu_1^*\right| \nonumber \\
        &\leq \max_{1\leq i\leq n}\left\{\left|\bW_{i,\cdot}\bN_1\bW^{(i)}\bu_1^*\right| +\left|\bW_{i,\cdot}\bN_1\Big(\bW-\bW^{(i)}\Big)\bu_1^*\right|\right\}. \label{u1leaveoneouteq1}
    \end{align}
    For a fixed $i\in [n]$, by Lemma \ref{Wconcentration1} and Lemma \ref{lemmaN1N2}, we obtain
    \begin{align}
        \left|\bW_{i,\cdot}\bN_1\bW^{(i)}\bu_1^*\right|&\lesssim \sqrt{\log n}\theta_{\text{max}}\Big\|\bN_1\bW^{(i)}\bu_1^*\Big\|_2 +\log n \Big\|\bN_1\bW^{(i)}\bu_1^*\Big\|_\infty  \nonumber \\
        &\lesssim \frac{\sqrt{n\log n}\theta_{\text{max}}^2 }{\lambda_1^*} +\log n \left\|\bN_1\bW^{(i)}\bu_1^*\right\|_\infty \label{u1leaveoneouteq2}
    \end{align}
    with probability at least $1-O(n^{-15})$ under $\mathcal{A}_1$. By Corollary \ref{corN1N2} and Lemma \ref{Wconcentration3}, we have
    \begin{align}
        \left\|\bN_1\bW^{(i)}\bu_1^*\right\|_\infty &\lesssim \frac{1}{\lambda_1^*}\left\|\bW^{(i)}\bu_1^*\right\|_\infty +\sqrt{\frac{(K-1)\mu^*}{n\lambda_1^{*2}}}\left\|\bW^{(i)}\bu_1^*\right\|_2  \nonumber \\
        & \lesssim \frac{\left(\sqrt{\log n}+\sqrt{(K-1)\mu^*}\right)\theta_{\text{max}} + \log n\sqrt{\mu^* / n}}{\lambda_1^*} :=\rho_1\label{u1leaveoneouteq3}
    \end{align}
    with probability at least $1-O(n^{-15})$ under event $\mathcal{A}_1$. Plugging \eqref{u1leaveoneouteq3} in \eqref{u1leaveoneouteq2} tells us 
    \begin{align}
        \left|\bW_{i,\cdot}\bN_1\bW^{(i)}\bu_1^*\right|&\lesssim \frac{\sqrt{n\log n}\theta_{\text{max}}^2}{\lambda_1^*} + \log n \rho_1 =: \eta_1\label{u1leaveoneouteq4}
    \end{align}
    with probability at least $1-O(n^{-15})$ under event $\mathcal{A}_1$.
    
    The second summand in \eqref{u1leaveoneouteq1} can be bounded using Lemma \ref{lemmaN1N2} and Lemma \ref{Wconcentration4} as
    \begin{align}
        \left|\bW_{i,\cdot}\bN_1\Big(\bW-\bW^{(i)}\Big)\bu_1^*\right|&\lesssim \left\|\bW_{i,\cdot}\right\|_2\left\|\bN_1\right\|\left\|\Big(\bW-\bW^{(i)}\Big)\bu_1^*\right\|_2  \nonumber \\
        &\lesssim \frac{\left\|\bW\right\|}{\lambda_1^*}\left(\sqrt{\log n}\theta_{\text{max}}+\left(\log n+\sqrt{n}\theta_{\text{max}}\right)\sqrt{\mu^*/n}\right) \nonumber \\
        &\lesssim \frac{\sqrt{n(\mu^*+\log n)}\theta_{\text{max}}^2+\log n\sqrt{\mu^*}\theta_{\text{max}}}{\lambda_1^*} \label{u1leaveoneouteq5}
    \end{align}
    with probability at least $1-O(n^{-15})$. Plugging \eqref{u1leaveoneouteq4} and \eqref{u1leaveoneouteq5} in ~\eqref{u1leaveoneouteq1} we get
    \begin{align*}
    \left\|\bW\bN_1\bW\bu_1^*\right\|_\infty\lesssim &\frac{\sqrt{n \mu^*}\theta_{\text{max}}^2}{\lambda_1^*} + \eta_1
    \end{align*}
    with probability at least $1-O(n^{-10})$. Then by Cororllary \ref{corN1N2} we have
    \begin{align*}
        \left\|\bN_1\bW\bN_1\bW\bu_1^*\right\|_\infty\lesssim &\frac{1}{\lambda_1^*}\left\|\bW\bN_1\bW\bu_1^*\right\|_\infty +\sqrt{\frac{(K-1)\mu^*}{n\lambda_1^{*2}}}\left\|\bW\bN_1\bW\bu_1^*\right\|_2 \\
        \lesssim &\frac{1}{\lambda_1^*}\left\|\bW\bN_1\bW\bu_1^*\right\|_\infty+\sqrt{\frac{(K-1)\mu^*}{n\lambda_1^{*2}}}\left\|\bW\right\|^2\left\|\bN_1\right\|\left\|\bu_1^*\right\|_2\\
        \lesssim & \frac{\sqrt{n \mu^*}\theta_{\text{max}}^2}{\lambda_1^{*2}} + \frac{\eta_1}{\lambda_1^* } + \frac{\sqrt{(K-1)\mu^*}}{\sqrt{n}\lambda_1^*} n\theta_{\text{max}}^2\frac{1}{\lambda_1^*}\\
        \lesssim & \frac{\sqrt{n(K-1)\mu^*}\theta_{\text{max}}^2}{\lambda_1^{*2}} +\frac{\eta_1}{\lambda_1^*}
    \end{align*}
    with probability at least $1-O(n^{-10})$.
    \item[(ii)] \textbf{Control} $\left\|\bN_2\bW\bu_1^*\bu_1^{*\top}\bW\bu_1^*\right\|_\infty$: First, by Lemma \ref{Wconcentration5}, with probability at least $1-O(n^{-15})$, we have $|\bu_1^{*\top}\bW\bu_1^*|\lesssim \sqrt{\log n}$. Second, by Lemma \ref{Wconcentration3} we have
    \begin{align}
        \left\|\bW\bu_1^*\right\|_\infty \lesssim  \sqrt{\log n}\theta_{\text{max}}+\log n\sqrt{\mu^*/n}\label{u1leaveoneouteq6}
    \end{align}
    with probability at least $1-O(n^{-14})$. Using the definition of $\rho_1$ from \eqref{u1leaveoneouteq3}, 
    \begin{align*}
        &\left\|\bN_2\bW\bu_1^*\bu_1^{*\top}\bW\bu_1^*\right\|_\infty = \left|\bu_1^{*\top}\bW\bu_1^*\right|\left\|\bN_2\bW\bu_1^*\right\|_\infty\lesssim \sqrt{\log n}\left\|\bN_2\bW\bu_1^*\right\|_\infty \\
        &\lesssim\sqrt{\log n} \Big(\frac{1}{\lambda_1^{*2}}\left\|\bW\bu_1^*\right\|_\infty+\sqrt{\frac{(K-1)\mu^*}{n\lambda_1^{*4}}}\left\|\bW\bu_1^*\right\|_2\Big) \lesssim \frac{\sqrt{\log n}}{\lambda_1^{*}}\rho_1
    \end{align*}
    with probability at least $1-O(n^{-10})$, where the second inequality uses Corollary \ref{corN1N2}.
    \item[(iii)] \textbf{Control} $\left\|\bu_1^*\bu_1^{*\top}\bW\bN_2\bW\bu_1^*\right\|_\infty$: Using Lemma \ref{lemmaN1N2}, under the event $\mathcal{A}_1$,
    \begin{align*}
        \left\|\bu_1^*\bu_1^{*\top}\bW\bN_2\bW\bu_1^*\right\|_\infty &= \left\|\bu_1^*\right\|_\infty\left|\bu_1^{*\top}\bW\bN_2\bW\bu_1^*\right|\\
        &\leq \left\|\bu_1^*\right\|_\infty\left|\bu_1^*\right\|_2^2\left\|\bW\right\|^2\left\|\bN_2\right\| \leq \frac{\sqrt{\mu^*n}\theta_{\text{max}}^2}{\lambda_1^{*2}}
    \end{align*}
    with probability at least $1-O(n^{-10})$, where the final inequality uses  \eqref{eq:incoherence}.
\end{itemize}
Combine these three parts with~\eqref{ointDelta1}, we get
\begin{align}
    \left\|\left[\frac{1}{2\pi i}\oint_{\mathcal{C}_1}\bDelta_1 d\lambda\right] \bu_1^*\right\|_\infty\lesssim&\frac{\sqrt{n(K-1)\mu^*}\theta_{\text{max}}^2}{\lambda_1^{*2}} + \frac{\eta_1}{\lambda_1} =: \eta_2.\label{ointDelta1bound}
\end{align}
with probability at least $1-O(n^{-10})$. Combining ~\eqref{ointDelta1bound} and ~\eqref{ointDelta2}, we get
\begin{align}
    \left\|(\wh\bP_1-\bP_1-\Delta\bP_1)\bu_1^*\right\|_{\infty}\lesssim&\frac{\theta_{\text{max}}^3n^{1.5}}{\lambda_1^{*3}}+\eta_2.\label{deltadecomposition1}
\end{align}

 It remains to bound $\|(1-\wh\bu_1^\top\bu_1^*)\wh\bu_1\|_\infty$. To this end, using \eqref{eq:u1_1} and \eqref{eq:u1_2},

\begin{align}
    \|(1-\wh\bu_1^\top\bu_1^*)\wh\bu_1\|_\infty &= |1-\wh\bu_1^\top\bu_1^*|\|\wh\bu_1\|_\infty\lesssim \frac{n\theta_{\text{max}}^2}{\lambda_1^{*2}}\left(\left\|\bu_1^*\right\|_\infty+\left\|\wh\bu-\bu_1^*\right\|_\infty\right) \nonumber \\
    &\leq \frac{n\theta_{\text{max}}^2}{\lambda_1^{*2}}\left(\sqrt{\frac{\mu^*}{n}}+\left\|\wh\bu-\bu_1^*\right\|_2\right)\lesssim \frac{\sqrt{\mu^*n}\theta_{\text{max}}^2}{\lambda_1^{*2}}+\frac{\theta_{\text{max}}^3 n^{1.5}}{\lambda_1^{*3}},\label{deltadecomposition2}
\end{align}
where the second inequality is by \eqref{eq:incoherence}. Plugging  \eqref{deltadecomposition1} and \eqref{deltadecomposition2} in \eqref{deltadecomposition}, we obtain
\begin{align*}
    \left\|\boldsymbol{\delta}\right\|_\infty\leq \left\|(\wh\bP_1-\bP_1-\Delta\bP_1)\bu_1^*\right\|_{\infty}+\|(1-\wh\bu_1^\top\bu_1^*)\wh\bu_1\|_\infty
    \lesssim \frac{\theta_{\text{max}}^3n^{1.5}}{\lambda_1^{*3}}+\eta_2
\end{align*}
with probability at least $1-O(n^{-10})$, completing the proof.
\end{proof}

\subsection{Proof of Lemma \ref{u1differenceinfty}} \label{u1differenceinftyproof}
\begin{proof}
By Theorem \ref{u1expansion} we know that $\wh\bu_1-\bu_1^* = \bN_1\bW\bu_1+\boldsymbol{\delta}$. By Lemma \ref{Wspectral}, Corollary \ref{corN1N2} and \eqref{u1leaveoneouteq6} we know with probability at least $1-O(n^{-10})$,
\begin{align*}
    \left\|\bN_1\bW\bu_1\right\|_{\infty}&\leq \frac{1}{\lambda_1^*}\left\|\bW\bu_1\right\|_{\infty}+\sqrt{\frac{(K-1)\mu^*}{n\lambda_1^{*2}}}\left\|\bW\bu_1^*\right\|_2 \\
    &\leq \frac{1}{\lambda_1^*}\left\|\bW\bu_1\right\|_{2, \infty}+\sqrt{\frac{(K-1)\mu^*}{n\lambda_1^{*2}}}\left\|\bW\right\| \lesssim \rho_1,
\end{align*}
where $\rho_1$ is defined by \eqref{u1leaveoneouteq3}. Combine this with Theorem \ref{deltainfty}, we get, with probability at least $1-O(n^{-10})$,
\begin{align*}
    \left\|\wh\bu_1-\bu_1^*\right\|_\infty\leq \left\|\bN_1\bW\bu_1\right\|_\infty +\left\|\boldsymbol{\delta}\right\|_\infty
    \lesssim \rho_1 + \frac{\theta_{\text{max}}^3n^{1.5}}{\lambda_1^{*3}} + \frac{\rho_1 \log n}{\lambda_1}
    \lesssim \rho_1 + \frac{\theta_{\text{max}}^3n^{1.5}}{\lambda_1^{*3}}
\end{align*}
by our assumption $\lambda^\star_1 \gg \log n$. This completes the proof.
\end{proof}

\subsection{Proof of Theorem \ref{firstsubspaceexpansion}} \label{firstsubspaceexpansionproof}
\begin{proof}
We use the same $r$ and $\mathcal{C}_1$ as in the proof of Theorem \ref{u1expansion}. Since 
\begin{align*}
    \bH\left(\lambda\bI-\bH \right)^{-1} &= \sum_{i=1}^n \frac{\lambda_i^*}{\lambda-\lambda_i^*}\bu_i^*\bu_i^{*\top}, \quad
    \bX\left(\lambda\bI-\bX \right)^{-1} = \sum_{i=1}^n \frac{\wh\lambda_i}{\lambda-\wh\lambda_i}\wh\bu_i\wh\bu_i^\top,
\end{align*}
for the same reason as the proof of Theorem \ref{u1expansion}, under event $\mathcal{A}_1$, we have
\begin{align*}
    \wh\lambda_1\wh\bu_1\wh\bu_1^\top - \lambda_1^*\bu_1^*\bu_1^{*\top} &= \frac{1}{2\pi i}\oint_{\mathcal{C}_1}\bX\left(\lambda\bI-\bX \right)^{-1}d\lambda-\frac{1}{2\pi i}\oint_{\mathcal{C}_1}\bH\left(\lambda\bI-\bH \right)^{-1}d\lambda \\
    &=\frac{1}{2\pi i}\oint_{\mathcal{C}_1}\left[\bX\left(\lambda\bI-\bX \right)^{-1}-\bH\left(\lambda\bI-\bH \right)^{-1}\right]d\lambda.
\end{align*}
We expand the integrand as sum of two quantitites in the following way:
\begin{align}
    &\bX\left(\lambda\bI-\bX \right)^{-1}-\bH\left(\lambda\bI-\bH \right)^{-1}   \nonumber \\
    =& \left(\bX-\bH\right)\left(\lambda\bI-\bX \right)^{-1}+\bH\left[\left(\lambda\bI-\bX \right)^{-1}-\left(\lambda\bI-\bH \right)^{-1}\right] \nonumber \\
    =&\bW\left(\lambda\bI-\bX \right)^{-1} + \bH \left(\lambda\bI-\bX \right)^{-1}\bW\left(\lambda\bI-\bH \right)^{-1} \nonumber \\
    =& \bW\left(\lambda\bI-\bH \right)^{-1} + \bH \left(\lambda\bI-\bH \right)^{-1}\bW\left(\lambda\bI-\bH \right)^{-1} \nonumber \\
    &+\bW\left[\left(\lambda\bI-\bX \right)^{-1}-\left(\lambda\bI-\bH \right)^{-1}\right]+\bH \left[\left(\lambda\bI-\bX \right)^{-1}-\left(\lambda\bI-\bH \right)^{-1}\right]\bW\left(\lambda\bI-\bH \right)^{-1} \nonumber \\
    =&\underbrace{\bW\left(\lambda\bI-\bH \right)^{-1} + \bH \left(\lambda\bI-\bH \right)^{-1}\bW\left(\lambda\bI-\bH \right)^{-1}}_{\bDelta_1} \nonumber \\
    &+ \underbrace{\bW \left(\lambda\bI-\bX \right)^{-1}\bW\left(\lambda\bI-\bH \right)^{-1}+\bH \left(\lambda\bI-\bX \right)^{-1}\bW\left(\lambda\bI-\bH \right)^{-1}\bW\left(\lambda\bI-\bH \right)^{-1}}_{\bDelta_2}. \label{firstsubspaceeq1}
\end{align}
For $\bDelta_1$, the contour integral can be calculated as
\begin{align*}
    \frac{1}{2\pi i}\oint_{\mathcal{C}_1}\bDelta_1 d\lambda =& \frac{1}{2\pi i}\oint_{\mathcal{C}_1}\left(\sum_{i=1}^n\frac{1}{\lambda-\lambda_i^*}\bW\bu_i^*\bu_i^{*\top}+\sum_{i=1}^n \sum_{j=1}^n \frac{\lambda_i^*}{(\lambda-\lambda_i^*)(\lambda-\lambda_j^*)}\bu_i^*\bu_i^{*\top}\bW\bu_j^*\bu_j^{*\top} \right) d\lambda \\
    =& \sum_{i=1}^n\textbf{Res}\left(\frac{1}{\lambda-\lambda_i^*},\lambda_1^*\right)\bW\bu_i^*\bu_i^{*\top} \\
    &+ \sum_{i,j=1}^n\textbf{Res}\left(\frac{\lambda_i^*}{(\lambda-\lambda_i^*)(\lambda-\lambda_j^*)},\lambda_1^*\right)\bu_i^*\bu_i^{*\top}\bW\bu_j^*\bu_j^{*\top} \\
    = & \bW\bu_1^*\bu_1^{*\top} +\sum_{i=2}^n\frac{\lambda_i^*}{\lambda_1^*-\lambda_i^*}\bu_i^*\bu_i^{*\top}\bW\bu_1^*\bu_1^{*\top}+
    \sum_{i=2}^n\frac{\lambda_1^*}{\lambda_1^*-\lambda_i^*}\bu_1^*\bu_1^{*\top}\bW\bu_i^*\bu_i^{*\top}  \\
    = & \left(\bI+\sum_{i=2}^n\frac{\lambda_i^*}{\lambda_1^*-\lambda_i^*}\bu_i^*\bu_i^{*\top} \right)\bW\bu_1^*\bu_1^{*\top} + \left(\bW\bu_1^*\bu_1^{*\top}\right)^\top\sum_{i=2}^n\frac{\lambda_1^*}{\lambda_1^*-\lambda_i^*}\bu_i^*\bu_i^{*\top} \\
    = & \left(\sum_{i=1}^n\bu_i^*\bu_i^{*\top}+\sum_{i=2}^n\frac{\lambda_i^*}{\lambda_1^*-\lambda_i^*}\bu_i^*\bu_i^{*\top} \right)\bW\bu_1^*\bu_1^{*\top} + \left(\bW\bu_1^*\bu_1^{*\top}\right)^\top\bN \\
    = & \left(\bu_1^*\bu_1^{*\top}+\sum_{i=2}^n\frac{\lambda_1^*}{\lambda_1^*-\lambda_i^*}\bu_i^*\bu_i^{*\top} \right)\bW\bu_1^*\bu_1^{*\top} + \left(\bW\bu_1^*\bu_1^{*\top}\right)^\top\bN  \\
    = & \bu_1^*\bu_1^{*\top}\bW\bu_1^*\bu_1^{*\top}+ \bN\bW\bu_1^*\bu_1^{*\top}+\left(\bW\bu_1^*\bu_1^{*\top}\right)^\top\bN.
\end{align*}
Next, we bound the spectral norm of $\bDelta_2$ under the event $\mathcal{A}_1$, as:
\begin{align*}
    \left\|\bDelta_2\right\| &\leq \left\|\bW \left(\lambda\bI-\bX \right)^{-1}\bW\left(\lambda\bI-\bH \right)^{-1}\right\| +\left\|\bH \left(\lambda\bI-\bX \right)^{-1}\bW\left(\lambda\bI-\bH \right)^{-1}\bW\left(\lambda\bI-\bH \right)^{-1}\right\| \\
    &\leq \left\|\bW\right\|^2\left\|\left(\lambda\bI-\bX \right)^{-1}\right\|\left\|\left(\lambda\bI-\bH \right)^{-1}\right\| +\left\|\bH\right\| \left\|\left(\lambda\bI-\bX \right)^{-1}\right\| \left\|\bW\right\|^2\left\|\left(\lambda\bI-\bH \right)^{-1}\right\|^2 \\
    &\lesssim \frac{n\theta_{\text{max}}^2}{\lambda_1^{*2}}+\frac{\lambda_1^* n\theta_{\text{max}}^2}{\lambda_1^{*3}}\lesssim \frac{n\theta_{\text{max}}^2}{\lambda_1^{*2}},
\end{align*}
where the third inequality \eqref{eq:inverse_spectral_bound}. As a result, the contour integral of $\bDelta_2$, can be bounded as 
\begin{align*}
    \left\|\frac{1}{2\pi i}\oint_{\mathcal{C}_1}\bDelta_2d\lambda\right\|\lesssim \frac{1}{2\pi}\oint_{\mathcal{C}_1}\frac{n\theta_{\text{max}}^2}{\lambda_1^{*2}}d\lambda  = \frac{n\theta_{\text{max}}^2 r}{\lambda_1^{*2}}\lesssim\frac{n\theta_{\text{max}}^2}{\lambda_1^*}.
\end{align*}
Noting that the contour integral of $\bDelta_2$ is exactly
\begin{align*}
    \wh\lambda_1\wh\bu_1\wh\bu_1^\top - \lambda_1^*\bu_1^*\bu_1^{*\top}-\left[\bu_1^*\bu_1^{*\top}\bW\bu_1^*\bu_1^{*\top}+ \bN\bW\bu_1^*\bu_1^{*\top}+\left(\bW\bu_1^*\bu_1^{*\top}\right)^\top\bN\right]
\end{align*}
gives us the desired result.
\end{proof}

\subsection{Proof of Corollary \ref{cor1}} \label{cor1proof}
\begin{proof}
Since $\bu_2^*,\bu_3^*,\dots,\bu_K^*$ are orthogonal to each other, we have
\begin{align}\label{eq:N_spectral_bound}
   \left\|\bN\right\|=\left\|\sum_{i=2}^n\frac{\lambda_1^*}{\lambda_1^*-\lambda_i^*}\bu_i^*\bu_i^{*\top}\right\|\lesssim\max_{2\leq i\leq n}\frac{\lambda_1^*}{\lambda_1^*-\lambda_i^*}\lesssim 1.
\end{align}
Since $\|\bW\|\lesssim\sqrt{n}\theta_{\text{max}}$ on the set $\mathcal{A}_1$ we have, by \eqref{firstsubspaceexpansion},
\begin{align*}
    \left\|\wh\lambda_1\wh\bu_1\wh\bu_1^\top - \lambda_1^*\bu_1^*\bu_1^{*\top}\right\|&\leq \left\|\bu_1^*\bu_1^{*\top}\bW\bu_1^*\bu_1^{*\top}\right\|+2\left\|\bW\bu_1^*\bu_1^{*\top}\right\|\left\|\bN\right\|+\left\|\bDelta\right\| \\
    &\leq \left\|\bW\right\|+ 2\left\|\bW\right\|\left\|\bN\right\| +\left\|\bDelta\right\|\lesssim \sqrt{n}\theta_{\text{max}}+\frac{n\theta_{\text{max}}^2}{\lambda_1^*}\lesssim \sqrt{n}\theta_{\text{max}},
\end{align*}
where the third inequality used \eqref{eq:N_spectral_bound}.
\end{proof}

\subsection{Proof of Lemma \ref{lemma1}} \label{lemma1proof}
\begin{proof}
Recall the definitions of $L$ and $R$ from \eqref{eq:define_L} and \eqref{Rdefinition} respectively. We write the SVD of $\bL$ as $\bL = \bY_1(\cos \bOmega)\bY_2^\top$, where $\bY_1,\bY_2\in \mathbb{R}^{(K-1)\times(K-1)}$. Then we have $\bR = \bY_1 \bY_2^\top$. Therefore, we have
\begin{align*}
     \left\| \oU\bR -\oU^*\right\|^2 &= \left\|\oU\bY_1 \bY_2^\top -\oU^* \right\|^2 =  \left\|\left(\oU\bY_1 \bY_2^\top -\oU^*\right)^\top\left(\oU\bY_1 \bY_2^\top -\oU^*\right) \right\| \\
     &=\left\|2\bI-\bY_2\bY_1^\top\oU^\top\oU^*-\oU^{*\top}\oU\bY_1 \bY_2^\top\right\| \\
     &=\left\|2\bI-\bY_2\bY_1^\top\bY_1(\cos \bOmega)\bY_2^\top-\bY_2(\cos \bOmega)\bY_1^\top\bY_1 \bY_2^\top\right\| \\
     &=2\left\|\bI-\bY_2(\cos \bOmega)\bY_2^\top\right\| = 2 \left\|\bI-\cos \bOmega\right\|\leq 2 \left\|\bI-\cos^2 \bOmega\right\| = 2\left\|\sin\bOmega\right\|^2.
\end{align*}
By Wedin's sin$\Theta$ Theorem \cite[Theorem 2.9]{chen2021spectral}, we have
\begin{align*}
    \left\|\sin\bOmega\right\|\leq \textbf{dist}(\oU,\oU^*)\leq  \frac{\sqrt{2}\left\|\oX-\oH\right\|}{\sigma_{K-1}(\oX) - \sigma_{K}(\oH)}=\frac{\sqrt{2}\left\|\oW\right\|}{\sigma_K(\bX) - \sigma_{K+1}(\bH)}.
\end{align*}
Recall that, $\|\oW\| \lesssim \sqrt{n}\theta_{\text{max}}$
under event $\mathcal{A}_1$, by \eqref{eq:wbar_op}. On the other hand, we have
\begin{align*}
    \sigma_K(\bX) - \sigma_{K+1}(\bH) &\geq  \sigma_K(\bX) - \left\|\textbf{diag}(\bTheta\bPi\bP\bPi^\top\bTheta)\right\| \\  &\geq \sigma_K(\bH)-\left\|\bW\right\|- \left\|\textbf{diag}(\bTheta\bPi\bP\bPi^\top\bTheta)\right\|\asymp \minsigma^*.
\end{align*}
Therefore, we have
\begin{align}\label{eq:sintheta_bound}
    \left\| \oU\bR -\oU^*\right\|\lesssim \left\|\sin\bOmega\right\|\lesssim \frac{\sqrt{n}\theta_{\text{max}}}{\minsigma^*}.
\end{align}
Again, by \eqref{eq:sintheta_bound},
\begin{align*}
    \left\|\bL-\bR\right\| = \left\|\bY_1(\cos \bOmega)\bY_2^\top-\bY_1\bY_2^\top\right\| = \left\|\bI-\cos \bOmega\right\|\leq 2\left\|\sin\bOmega\right\|^2\lesssim \frac{n\theta_{\text{max}}^2}{\minsigma^{*2}}.
\end{align*}
Since we have assumed $n\theta_{\text{max}}^2 \ll \minsigma^{*2}$, we have $\|\bL-\bR\|=o(1)$
\begin{align*}
    &\sigma_i(\bL)\leq\sigma_i(\bR)+\left\|\bL-\bR\right\|\leq 1+o(1)\leq 2, \\
    &\sigma_i(\bL)\geq\sigma_i(\bR)-\left\|\bL-\bR\right\|\geq 1-o(1) \geq \frac{1}{2}.
\end{align*}
This proves the first part of this lemma. \par
Moving onto the proof of second part, by \eqref{eq:define_vi}, we have $\bv_i^* =\textbf{sgn}\left(\lambda_i^*\right)\bu_i^*$ yielding $\oV^* = \oU^*\bD$. It remains to show that  $\wh\bv_i = \textbf{sgn}(\lambda_i^*)\wh\bu_i$ under event $\mathcal{A}_1$. To this end, set $s := |\{i\in [K]:\lambda_i^*>0\}|$. by Lemma \ref{eigenvaluelemma} we know that
\begin{align*}
    &\lambda_1^*>\lambda_2^*\geq \cdots\geq \lambda_s^*\asymp\beta_n K^{-1}\left\|\btheta\right\|_2^2>0>-\beta_n K^{-1}\left\|\btheta\right\|_2^2\asymp \lambda_{s+1}^*\geq \lambda_{K}^*, \\
    &|\lambda_i^*|\leq \left\|\textbf{diag}(\bTheta\bPi\bP\bPi^\top\bTheta)\right\|\lesssim \theta_{\text{max}}^2 \ll \beta_n K^{-1}\left\|\btheta\right\|_2^2,\quad \forall i>K.
\end{align*}
Recall that $\wh\lambda_1,\wh\lambda_2,\dots,\wh\lambda_K$ are the largest $K$ eigenvalues in magnitude among all $n$ eigenvalues of $\bX$, and $\wh\lambda_1,\wh\lambda_2,\dots,\wh\lambda_K$ are sorted descendingly. By Weyl’s theorem, under event $\mathcal{A}_1$,
\begin{align*}
    &\wh\lambda_1>\wh\lambda_2\geq \cdots\geq \wh\lambda_s\asymp\beta_n K^{-1}\left\|\btheta\right\|_2^2>0>-\beta_n K^{-1}\left\|\btheta\right\|_2^2\asymp \wh\lambda_{s+1}\geq \wh\lambda_{K} ,\\
    &|\wh\lambda_i|\lesssim \sqrt{n}\theta_{\text{max}}+ \left\|\textbf{diag}(\bTheta\bPi\bP\bPi^\top\bTheta)\right\|\lesssim \sqrt{n}\theta_{\text{max}}\ll\beta_n K^{-1}\left\|\btheta\right\|_2^2,\quad \forall i>K.
\end{align*}
As a result, we get $\textbf{sgn}(\lambda_i^*) = \textbf{sgn}(\wh\lambda_i)$. This leads to, using \eqref{eq:define_vi},
\begin{align*}
    \wh\bv_i = \textbf{sgn}(\wh\lambda_i)\wh\bu_i = \textbf{sgn}(\lambda_i^*)\wh\bu_i,
\end{align*}
implying $\oV = \oU\bD$. This completes the proof of the Lemma.
\end{proof}

\subsection{Proof of Lemma \ref{lemma2}}\label{lemma2proof}
\begin{proof}
Recall the defintions of $R$ and $L$ from \eqref{Rdefinition} and \eqref{eq:define_L} respectively. By triangle inequality,
\begin{align*}
    \left\|\bR^\top\oLambda\bR - \oLambda^*\right\|\leq \underbrace{\left\|\bR^\top\oLambda\bR - \bL^\top\oLambda\bL \right\|}_{\alpha_1}+ \underbrace{\left\|\bL^\top\oLambda\bL- \oU^{*\top} \oX\oU^* \right\|}_{\alpha_2}+\underbrace{\left\|\oU^{*\top} \oX\oU^* - \oLambda^*\right\|}_{\alpha_3}.
\end{align*}
We bound the three quantities separately. For the first term $\alpha_1$, we have, on the event $\mathcal{A}_1$, using Lemma \ref{lemma1}
\begin{align*}
    \alpha_1&\leq \left\|(\bR-\bL)^\top\oLambda\bR\right\|+\left\|\bL^\top\oLambda (\bR-\bL)\right\| \\
    &\leq \left\|\bR-\bL\right\|\left\|\oLambda\right\|\left\|\bR\right\|+\left\|\bL\right\|\left\|\oLambda\right\|\left\|\bR-\bL\right\| \\
    &\lesssim \frac{n\theta_{\text{max}}^2}{\minsigma^{*2}}\left\|\oLambda\right\|\leq \frac{n\theta_{\text{max}}^2}{\minsigma^{*2}}\left(\left\|\oLambda^*\right\|+\left\|\oW\right\|\right)\lesssim \frac{n\theta_{\text{max}}^2}{\minsigma^{*2}}\left(\maxsigma^*+\sqrt{n}\theta_{\text{max}}\right)\lesssim \frac{n\theta_{\text{max}}^2\kappa^*}{\minsigma^{*}},
\end{align*}
where the third inequality uses the fact $R$, $L$ are orthogonal matrices and the last inquality uses our assumption that $\sqrt{n} \theta_{\text{max}} \ll \minsigma^{*}$. \par 
For the second quantity $\alpha_2$, one can see that
\begin{align}
    \bL^\top\oLambda\bL- \oU^{*\top} \oX\oU^* = \oU^{*\top}\oU\oLambda\oU^\top\oU^* - \oU^{*\top} \oX\oU^* =  - \oU^{*\top}\oU_\perp \oSigma_\perp \oV_\perp^\top\oU^*,\label{eq1}
\end{align}
where $\oU_\perp,\oSigma_\perp$ and $\oV_\perp$ come from the SVD of $\oX$
\begin{align*}
    \oX\overset{\text{SVD}}{=}\left[\begin{array}{ll}
\oU & \oU_{\perp}
\end{array}\right]\left[\begin{array}{cc}
\oLambda\bD & \boldsymbol{0} \\
\boldsymbol{0} & \oSigma_{\perp}
\end{array}\right]\left[\begin{array}{c}
(\oU\bD)^{\top} \\
\oV_{\perp}^{\top}
\end{array}\right]=\oU \oLambda\oU^\top +\oU_{\perp} \oSigma_{\perp} \oV_{\perp}^{\top}.
\end{align*}
By Weyl's theorem we have
\begin{align}
    \left\| \oSigma_{\perp}\right\|\leq \sigma_{K}(\oH)+\left\|\bW\right\| \leq  \left\|\bW\right\| +\left\|\textbf{diag}(H)\right\|\lesssim \sqrt{n}\theta_{\text{max}}\label{eq2}
\end{align}
under event $\mathcal{A}_1$. On the other hand, by \cite[Lemma 2.5]{chen2021spectral} and Lemma \ref{lemma1} we have 
\begin{align}
    &\left\|\oU^{*\top} \oU_\perp\right\| = \left\|\oU_\perp^\top \oU^*\right\| = \left\|\sin \bOmega\right\|\lesssim \frac{\sqrt{n}\theta_{\text{max}}}{\minsigma^*},  \label{eq3}\\
    &\left\|\oV^\top_\perp\oU^*\right\| = \left\|\oV^\top_\perp\oU^*\bD\right\| = \left\|\oV^\top_\perp\oV^*\right\|= \left\|\sin \bOmega\right\|\lesssim \frac{\sqrt{n}\theta_{\text{max}}}{\minsigma^*}.\label{eq4}
\end{align}
Combine \eqref{eq1}, \eqref{eq2}, \eqref{eq3} and \eqref{eq4} we get
\begin{align*}
    \alpha_2\leq \left\|\oU^{*\top} \oU_\perp\right\|\left\| \oSigma_{\perp}\right\|\left\|\oV^\top_\perp\oU^*\right\|\lesssim \frac{\theta_{\text{max}}^3 n^{1.5}}{\minsigma^{*2}}.
\end{align*}
Next, we analyze $\alpha_3$. By definition we have
\begin{align*}
    \oU^{*\top} \oX\oU^* - \oLambda^* = \oU^{*\top} (\oH+\oW)\oU^* - \oU^{*\top} \oH\oU^* = \oU^{*\top} \oW\oU^*.
\end{align*}
Use the notation of Theorem \ref{firstsubspaceexpansion}, we can write
\begin{align}\label{eq:barw_expand}
    \oW &= \bW -\left[\wh\lambda_1\wh\bu_1\wh\bu_1^\top - \lambda_1^*\bu_1^*\bu_1^{*\top}\right] \nonumber \\
    &= \bW - \bu_1^*\bu_1^{*\top}\bW\bu_1^*\bu_1^{*\top}- \bN\bW\bu_1^*\bu_1^{*\top}-\bu_1^*\bu_1^{*\top}\bW\bN - \bDelta.
\end{align}
Since $\bu_1^*\bu_1^{*\top}\oU^* = \bu_1^*\bu_1^{*\top}[\bu_2^*,\bu_3^*,\dots,\bu_K^*] = \boldsymbol{0}$, we know that
\begin{align*}
    \oU^{*\top} \oW\oU^* = \oU^{*\top} \bW\oU^*-\oU^{*\top} \bDelta\oU^*.
\end{align*}
Now we bound the two terms separately. The second quantity is immediately bounded by Theorem \ref{firstsubspaceexpansion}:
\begin{align*}
    \left\|\oU^{*\top} \bDelta\oU^*\right\|\leq \left\|\bDelta\right\|\lesssim \frac{n\theta_{\text{max}}^2}{\lambda_1^*}.
\end{align*}
On the other hand, similar to \cite[Lemma 2]{yan2021inference}, we use matrix Bernstein inequality \cite[Theorem 6.1.1]{tropp2015introduction} to control $\left\|\oU^{*\top} \bW\oU^*\right\|$. Define $\boldsymbol{Y}_{ii} := \oU^{*\top}_{i,\cdot}\oU^{*}_{i,\cdot}$, $i\in [n]$ and $\boldsymbol{Y}_{ij} := \oU^{*\top}_{i,\cdot}\oU^{*}_{j,\cdot}+\oU^{*\top}_{j,\cdot}\oU^{*}_{i,\cdot}$, $1\leq i< j\leq n$. Then
\begin{align*}
    \oU^{*\top} \bW\oU^* = \sum_{i=1}^n W_{ii}\boldsymbol{Y}_{ii}+\sum_{1\leq i<j\leq n}W_{ij}\boldsymbol{Y}_{ij}.
\end{align*}
 By \eqref{eq:incoherence},
\begin{align*}
    \max_{1\leq i\leq j\leq n}\left\|W_{ij}\boldsymbol{Y}_{ij}\right\|\leq \max_{1\leq i\leq j\leq n}\left\|\boldsymbol{Y}_{ij}\right\| \leq 2\left(\max_{i\in [n]}\left\|\oU^*_{i,\cdot}\right\|_2\right)^2 \leq \frac{2\mu^*(K-1)}{n}.
\end{align*}
Second, since $\oU^{*\top} \bW\oU^*$ is symmetric, we know that 
\begin{align*}
    \left(\oU^{*\top} \bW\oU^*\right)^\top\oU^{*\top} \bW\oU^* = \oU^{*\top} \bW\oU^*\left(\oU^{*\top} \bW\oU^*\right)^\top = \oU^{*\top} \bW\oU^*\oU^{*\top} \bW\oU^*.
\end{align*}
Therefore, we only have to bound $\left\|\mathbb{E}\oU^{*\top} \bW\oU^*\oU^{*\top} \bW\oU^*\right\|$. Since $\|\oU^*\|\leq 1$, we have
\begin{align*}
    \left\|\mathbb{E}\oU^{*\top} \bW\oU^*\oU^{*\top} \bW\oU^*\right\| = \left\|\oU^{*\top}\left(\mathbb{E} \bW\oU^*\oU^{*\top} \bW\right)\oU^*\right\|\leq \left\|\mathbb{E} \bW\oU^*\oU^{*\top} \bW\right\|.
\end{align*}
For any $1\leq i\neq j\leq n$, we have
\begin{align*}
    \left[\mathbb{E} \bW\oU^*\oU^{*\top} \bW\right]_{ij} = \mathbb{E}\left[ \bW\oU^*\oU^{*\top} \bW\right]_{ij} =\mathbb{E}\sum_{k,l=1}^nW_{ik}\left[\oU^*\oU^{*\top}\right]_{kl}W_{lj} = \left[\oU^*\oU^{*\top}\right]_{j, i}\mathbb{E}W_{ij}^2.
\end{align*}
And, for any $1\leq i\leq n$, we have
\begin{align}\label{eq:diag_A}
    \left[\mathbb{E} \bW\oU^*\oU^{*\top} \bW\right]_{ii} &=\mathbb{E}\sum_{k,l=1}^nW_{ik}\left[\oU^*\oU^{*\top}\right]_{kl}W_{li}   = \sum_{j=1}^n\left[\oU^*\oU^{*\top}\right]_{jj}\mathbb{E}W_{ii}^2 \nonumber \\
    &\lesssim  \sum_{j=1}^n\left[\oU^*\oU^{*\top}\right]_{jj}\theta_{\text{max}}^2 = \textbf{tr}\left[\oU^*\oU^{*\top}\right]\theta_{\text{max}}^2 =\left\|\oU^*\right\|_F^2 \theta_{\text{max}}^2 = (K-1)\theta_{\text{max}}^2.
\end{align}
Define $\boldsymbol{A}$ by setting $A_{ij}:=\left(\mathbb{E} \bW\oU^*\oU^{*\top} \bW\right)_{ij} \mathbbm{1}_{i=j}$. Then, by \eqref{eq:diag_A}, 
\begin{align*}
    \left\|\mathbb{E} \bW\oU^*\oU^{*\top} \bW\right\|\leq \left\|\boldsymbol{A}\right\|+\left\|\mathbb{E} \bW\oU^*\oU^{*\top} \bW-\boldsymbol{A}\right\| \leq (K-1)\theta_{\text{max}}^2+\left\|\mathbb{E} \bW\oU^*\oU^{*\top} \bW-\boldsymbol{A}\right\|_F.
\end{align*}
The second summand of the above display can be bounded as:
\begin{align*}
    &\left\|\mathbb{E} \bW\oU^*\oU^{*\top} \bW-\boldsymbol{A}\right\|_F = \sqrt{\sum_{1\leq i\neq j\leq n}\left[\mathbb{E} \bW\oU^*\oU^{*\top} \bW\right]_{ij}^2} = \sqrt{\sum_{1\leq i\neq j\leq n}\left[\oU^*\oU^{*\top}\right]_{ji}^2\left[\mathbb{E}W_{ij}^2\right]^2} \\
    \lesssim &\sqrt{\sum_{i,j=1}^n\left[\oU^*\oU^{*\top}\right]_{ji}^2\theta_{\text{max}}^4} = \left\|\oU^*\oU^{*\top}\right\|_F\theta_{\text{max}}^2\leq \left\|\oU^*\right\|_F^2\theta_{\text{max}}^2 = (K-1)\theta_{\text{max}}^2.
\end{align*}
In conclusion, we get
\begin{align*}
    \left\|\mathbb{E}\oU^{*\top} \bW\oU^*\oU^{*\top} \bW\oU^*\right\|\leq \left\|\mathbb{E} \bW\oU^*\oU^{*\top} \bW\right\|\lesssim  (K-1)\theta_{\text{max}}^2.
\end{align*}
Then by matrix Bernstein inequality \cite[Theorem 6.1.1]{tropp2015introduction}, with probability at least $1-O(n^{-10})$,
\begin{align*}
    \alpha_3 = \left\|\oU^{*\top} \bW\oU^*\right\|\lesssim \sqrt{(K-1)\theta_{\text{max}}^2\log n}+\frac{\mu^*(K-1)}{n}\log n.
\end{align*}
This implies $\alpha_3\lesssim \sqrt{(K-1)\log n}\theta_{\text{max}}$ since we assumed $n^2\theta_{\text{max}}^2\gtrsim  (K-1)\mu^{*2}\log n$. Finally, combining the bounds for $\alpha_1, \alpha_2$ and $\alpha_3$, we get
\begin{align*}
    \left\|\bR^\top\oLambda\bR - \oLambda^*\right\|\leq\alpha_1+\alpha_2+\alpha_3&\lesssim \frac{\kappa^*n\theta_{\text{max}}^2}{\minsigma^{*}} + \frac{\theta_{\text{max}}^3n^{1.5}}{\minsigma^{*2}} +\sqrt{(K-1)\log n}\theta_{\text{max}}  \\
    &\asymp \frac{\kappa^*n\theta_{\text{max}}^2}{\minsigma^{*}} +\sqrt{(K-1)\log n}\theta_{\text{max}},
\end{align*}
since $\maxsigma^*\geq \minsigma^*\gg \sqrt{n}\theta_{\text{max}}$. Also,
\begin{align*}
   \|\bL^\top\oLambda\bL - \oLambda^*\|&\leq \left\|\bL^\top\oLambda\bL- \oU^{*\top} \oX\oU^* \right\|+\left\|\oU^{*\top} \oX\oU^* - \oLambda^*\right\| = \alpha_2+\alpha_3 \\
    &\lesssim\frac{\theta_{\text{max}}^3n^{1.5}}{\minsigma^{*2}} +\sqrt{(K-1)\log n}\theta_{\text{max}},
\end{align*}
completing the proof of the Lemma.
\end{proof}

\subsection{Proof of Lemma \ref{lemma3}}\label{lemma3proof}
\begin{proof}
Since $\oU\oLambda = \oX\oU$, by triangle inequality we have
\begin{align}
    \left\|\oU\oLambda\bL-\oX\oU^*\right\|_{2,\infty} =\left\|\oX\oU\bL-\oX\oU^*\right\|_{2,\infty} \leq \left\|\oH\left(\oU\bL-\oU^*\right)\right\|_{2,\infty}+\left\|\oW\left(\oU\bL-\oU^*\right)\right\|_{2,\infty}.\label{lem3decomposition}
\end{align}
We control the first term on the RHS first. We consider the with self-loop case and the without self-loop case separately.

\noindent\textbf{With self-loop:} The first term can be bounded as
\begin{align*}
    \left\|\oH\left(\oU\bL-\oU^*\right)\right\|_{2,\infty} = \left\|\oU^*\oLambda^*\oU^{*\top}\left(\oU\bL-\oU^*\right)\right\|_{2,\infty}\lesssim  \left\|\oU^*\right\|_{2,\infty}\maxsigma^*\left\|\oU^{*\top}\left(\oU\bL-\oU^*\right)\right\|.
\end{align*}
Let $\bL = \bY_1(\cos \bOmega)\bY_2^\top$ be the SVD of $\bL$, then 
\begin{align*}
    \left\|\oU^{*\top}\left(\oU\bL-\oU^*\right)\right\| = \left\|\bL^\top\bL-\bI\right\| = \left\|\bY\left(\cos^2 \bOmega\right)\bY^\top-\bI\right\|  = \left\|\cos^2 \bOmega-\bI\right\| = \left\|\sin \bOmega\right\|^2.
\end{align*}
By \eqref{eq:sintheta_bound}, we have $\left\|\sin \bOmega\right\|\lesssim\sqrt{n}\theta_{\text{max}}/\minsigma^*$. Therefore,
\begin{align}
    \left\|\oH\left(\oU\bL-\oU^*\right)\right\|_{2,\infty} \lesssim \left\|\oU^*\right\|_{2,\infty}\maxsigma^*\left\|\sin \bOmega\right\|^2\lesssim \frac{\kappa^*}{\minsigma^{*}}\sqrt{(K-1)\mu^*n}\theta_{\text{max}}^2.\label{lem3oH1}
\end{align}

\noindent \textbf{Without self-loop:} The first term can be bounded as
\begin{align*}
     \left\|\oH\left(\oU\bL-\oU^*\right)\right\|_{2,\infty}\leq \left\|\oU^*\oLambda^*\oU^{*\top}\left(\oU\bL-\oU^*\right)\right\|_{2,\infty} +\left\|\left(\oH-\oU^*\oLambda^*\oU^{*\top}\right)\left(\oU\bL-\oU^*\right)\right\|_{2,\infty}.
\end{align*}
The first summand is bounded as the last case. For the second term,
\begin{align*}
    \left\|\oH-\oU^*\oLambda^*\oU^{*\top}\right\|\leq \sigma_{K+1}(\bH)\leq \left\|\textbf{diag}(\bTheta\bPi\bP\bPi^\top\bTheta)\right\|\lesssim \theta_{\text{max}}^2, 
\end{align*}
one can see that 
\begin{align}
    &\left\|\left(\oH-\oU^*\oLambda^*\oU^{*\top}\right)\left(\oU\bL-\oU^*\right)\right\|_{2,\infty}\leq \left\|\oH-\oU^*\oLambda^*\oU^{*\top}\right\|\left\|\oU\bL-\oU^*\right\|  \lesssim \theta_{\text{max}}^2\left\|\oU\bL-\oU^*\right\| \nonumber \\
    \leq&  \theta_{\text{max}}^2\left\|\oU\oU^\top\oU^*-\oU^*\right\| = \theta_{\text{max}}^2\left\|\left(\oU\oU^\top-\oU^*\oU^{*\top}\right)\oU^*\right\| \nonumber \\
    \leq&\theta_{\text{max}}^2 \left\|\oU\oU^\top-\oU^*\oU^{*\top}\right\| = \theta_{\text{max}}^2\left\|\sin\bOmega\right\|\lesssim\frac{\sqrt{n}\theta_{\text{max}}^3}{\minsigma^*}. \label{lem3woselfloopeq2}
\end{align}
Since $\theta_{\text{max}}\lesssim 1$, combine \eqref{lem3oH1} and \eqref{lem3woselfloopeq2} we get 
\begin{align}
    \left\|\oH\left(\oU\bL-\oU^*\right)\right\|_{2,\infty} \lesssim  \frac{\kappa^*}{\minsigma^{*}}\sqrt{(K-1)\mu^*n}\theta_{\text{max}}^2.\label{lem3oH2}
\end{align}

It remains to bound the second term $\left\|\oW\left(\oU\bL-\oU^*\right)\right\|_{2,\infty}$. Recall from \eqref{eq:barw_expand},
\begin{align}
    \oW = \bW - \bu_1^*\bu_1^{*\top}\bW\bu_1^*\bu_1^{*\top}- \bN\bW\bu_1^*\bu_1^{*\top}-\left(\bW\bu_1^*\bu_1^{*\top}\right)^\top\bN - \bDelta.\label{lem3oWexpansion}
\end{align}
We control the five summands of RHS separately.
\begin{enumerate}
\item[(a)] \textbf{Control} $\left\|\bW\left(\oU\bL-\oU^*\right)\right\|_{2,\infty}$: We use the definition of leave-one-out matrix $\bW^{(i)}$ in the proof of Theorem \ref{deltainfty}. Define $\bX^{(i)} := \bH + \bW^{(i)}$ and let $\wh\lambda_1^{(i)}, \wh\lambda_2^{(i)},\dots, \wh\lambda_K^{(i)}$ be the largest $K$ eigenvalues of $\bX^{(i)}$ in magnitude, and they are sorted decreasingly. Let $\wh\bu_1^{(i)}, \wh\bu_2^{(i)}, \dots, \wh\bu_K^{(i)}$ the corresponding eigenvectors. Let $\oX^{(i)} = \bX^{(i)}-\wh\lambda_1^{(i)}\wh\bu_1^{(i)}\wh\bu_1^{(i)\top}$, $\oU^{(i)} = [\wh\bu_2^{(i)}, \dots, \wh\bu_K^{(i)}]$ and $\bL^{(i)} = \oU^{(i)\top}\oU^*$. Then $\oU^{(i)}$ and $\bL^{(i)}$ are independent with $\bW-\bW^{(i)}$. And, one can easily see that the results in Lemma \ref{Wspectral}, Theorem \ref{firstsubspaceexpansion} (we also define $\bDelta^{(i)}$), Corollary \ref{cor1} and Lemma \ref{lemma1} also apply to the leave-one-out matrices. As a result, we have
\begin{align*}
    \left\|\bW\left(\oU\bL-\oU^*\right)\right\|_{2,\infty} &= \max_{1\leq i\leq n}\left\|\bW_{i, \cdot}\left(\oU\bL-\oU^*\right)\right\|_{2} \\
    &\leq \max_{1\leq i\leq n}\left\{\left\|\bW_{i, \cdot}\left(\oU^{(i)}\bL^{(i)}-\oU^*\right)\right\|_{2}+\left\|\bW_{i, \cdot}\left(\oU\bL-\oU^{(i)}\bL^{(i)}\right)\right\|_{2}\right\}
\end{align*}
By Lemma \ref{Wconcentration2}, we have
\begin{align*}
    \left\|\bW_{i, \cdot}\left(\oU^{(i)}\bL^{(i)}-\oU^*\right)\right\|_{2}\lesssim & \sqrt{\log n}\theta_{\text{max}} \left\|\oU^{(i)}\bL^{(i)}-\oU^*\right\|_F +\log n \left\|\oU^{(i)}\bL^{(i)}-\oU^*\right\|_{2,\infty} \\
    \leq& \sqrt{\log n}\theta_{\text{max}} \left\|\oU\bL-\oU^*\right\|_F +\log n \left\|\oU\bL-\oU^*\right\|_{2,\infty} \\ 
    &+ \sqrt{\log n}\theta_{\text{max}}\left\|\oU^{(i)}\bL^{(i)}-\oU\bL\right\|_F+ \log n \left\|\oU^{(i)}\bL^{(i)}-\oU\bL\right\|_{2,\infty}  \\
    \leq& \sqrt{\log n}\theta_{\text{max}} \left\|\oU\bL-\oU^*\right\|_F +\log n \left\|\oU\bL-\oU^*\right\|_{2,\infty} \\ 
    &+ \log n \left\|\oU^{(i)}\bL^{(i)}-\oU\bL\right\|_F
\end{align*}
with probability at least $1-O(n^{-14})$. By \cite[Lemma 2.5]{chen2021spectral}, since $\oU^{*\top}\oU^* = \bI$, we have
\begin{align}
    \left\|\oU\bL-\oU^*\right\|_F &= \left\|\oU\oU^\top\oU^*-\oU^*\right\|_F = \left\|\left(\oU\oU^\top-\oU^*\oU^{*\top}\right)\oU^*\right\|_F \nonumber \\
    &\leq\left\|\oU\oU^\top-\oU^*\oU^{*\top}\right\|_F = \sqrt{2}\left\|\sin \bOmega\right\|_F\leq \sqrt{2(K-1)}\left\|\sin \bOmega\right\| \nonumber \\
    &\lesssim \frac{\sqrt{(K-1)n}}{\minsigma^*}\theta_{\text{max}}, \label{lem3eq18}
\end{align}
where we used \eqref{eq:sintheta_bound} for the last equality. As a result, we have
\begin{align}
    &\left\|\bW_{i, \cdot}\left(\oU^{(i)}\bL^{(i)}-\oU^*\right)\right\|_{2}   \nonumber \\
    \lesssim & \underbrace{\frac{\sqrt{(K-1)n\log n}}{\minsigma^*}\theta_{\text{max}}+\log n \left\|\oU\bL-\oU^*\right\|_{2,\infty}+\log n\left\|\oU^{(i)}\bL^{(i)}-\oU\bL\right\|_F}_{\alpha}.\label{lem3eq7}
\end{align}
On the other hand, we have 
\begin{align}
    \left\|\bW_{i, \cdot}\left(\oU\bL-\oU^{(i)}\bL^{(i)}\right)\right\|_{2}\leq \left\|\bW_{i,\cdot}\right\|_2\left\|\oU\bL-\oU^{(i)}\bL^{(i)}\right\|\leq \sqrt{n}\theta_{\text{max}}\left\|\oU\bL-\oU^{(i)}\bL^{(i)}\right\|_F.\label{lem3eq17}
\end{align}
As a result, it remains to bound $\beta:=\left\|\oU^{(i)}\bL^{(i)}-\oU\bL\right\|_F$. One can see that
\begin{align*}
    \beta = \left\|\left(\oU^{(i)}\oU^{(i)\top}-\oU\oU^{\top}\right)\oU^*\right\|_F\leq\left\|\oU^{(i)}\oU^{(i)\top}-\oU\oU^{\top}\right\|_F .
\end{align*}
By Davis-Kahan's sin$\Theta$ theorem \cite[Theorem 2.7]{chen2021spectral}, we have
\begin{align}
    \beta = \left\|\oU^{(i)}\bL^{(i)}-\oU\bL\right\|_F &\lesssim \frac{\left\|\left(\oX-\oX^{(i)}\right)\oU^{(i)}\right\|_F}{\min_{2\leq j\leq K}\left|\wh\lambda_j\right|-\max_{j>K}\left|\wh\lambda_j^{(i)}\right|} \nonumber \\
    &\lesssim\frac{\left\|\left(\oX-\oX^{(i)}\right)\oU^{(i)}\right\|_F}{\minsigma^*-\left\|\bW\right\|-\left\|\bW^{(i)}\right\|} \lesssim\frac{\left\|\left(\oX-\oX^{(i)}\right)\oU^{(i)}\right\|_F}{\minsigma^*},\label{lem3eq6}
\end{align}
since $\|\bW^{(i)}\|\leq \|\bW\|\lesssim \sqrt{n}\theta_{\text{max}}\ll \minsigma^*$. By Theorem \ref{firstsubspaceexpansion}, $\oX-\oX^{(i)}$ can be decomposed as
\begin{align*}
    \oX-\oX^{(i)} =& \bW-\bW^{(i)}-\bu_1^*\bu_1^{*\top}\Big(\bW-\bW^{(i)}\Big)\bu_1^*\bu_1^{*\top}- \bN\Big(\bW-\bW^{(i)}\Big)\bu_1^*\bu_1^{*\top}\\
    &-\left(\Big(\bW-\bW^{(i)}\Big)\bu_1^*\bu_1^{*\top}\right)^\top\bN - \left(\bDelta-\bDelta^{(i)}\right).
\end{align*}
We bound the numerator of \eqref{lem3eq6} via controlling the five summands of RHS of the above display separately.

\begin{enumerate}
    \item 
\textbf{Control} $\left\|\Big(\bW-\bW^{(i)}\Big)\oU^{(i)}\right\|_F$: By triangle inequality we have
\begin{align}
    \left\|\Big(\bW-\bW^{(i)}\Big)\oU^{(i)}\right\|_F&= \left\|\Big(\bW-\bW^{(i)}\Big)\oU^{(i)}\bL^{(i)}\left(\bL^{(i)}\right)^{-1}\right\|_F  \nonumber \\
    &\lesssim \left\|\Big(\bW-\bW^{(i)}\Big)\oU^{(i)}\bL^{(i)}\right\|_F  \nonumber \\
    &\leq \left\|\Big(\bW-\bW^{(i)}\Big)\oU^*\right\|_F+\underbrace{\left\|\Big(\bW-\bW^{(i)}\Big)\left(\oU^{(i)}\bL^{(i)}-\oU^*\right)\right\|_F}_{=:\vartheta_1} \label{lem3eq15}
\end{align}
On one hand, by Lemma \ref{Wconcentration4} and  \eqref{eq:incoherence} we have
\begin{align}
    &\left\|\Big(\bW-\bW^{(i)}\Big)\oU^*\right\|_F\lesssim \sqrt{\log n}\theta_{\text{max}}\left\|\oU^*\right\|_F+(\sqrt{n}\theta_{\text{max}}+\log n)\left\|\oU^*\right\|_{2,\infty}   \nonumber \\
    &\lesssim \sqrt{(K-1)\log n}\theta_{\text{max}}+(\sqrt{n}\theta_{\text{max}}+\log n)\sqrt{(K-1)\mu^*/n}=:\alpha_1 \label{lem3eq16}
\end{align}
with probability at least $1-O(n^{-14})$. On the other hand, we have
\begin{align*}
    \vartheta^2_1 =& \left\|\bW_{i,\cdot}\left(\oU^{(i)}\bL^{(i)}-\oU^*\right)\right\|_2^2+\sum_{j\in [n],j\neq i}W_{ji}^2\left\|\left(\oU^{(i)}\bL^{(i)}-\oU^*\right)_{j,\cdot}\right\|_2^2.
\end{align*}
Since $\sum_{j\in [n],j\neq i}W_{ji}^2\leq \left\|\bW_{\cdot, i}\right\|_2^2\leq \left\|\bW\right\|^2\lesssim n\theta_{\text{max}}^2$,
we get 
\begin{align*}
    \vartheta^2_1 \lesssim \alpha^2+n\theta_{\text{max}}^2\left\|\oU^{(i)}\bL^{(i)}-\oU^*\right\|^2_{2,\infty},
\end{align*}
where $\alpha$ is defined by \eqref{lem3eq7}. As a result, we have
\begin{align*}
     \vartheta^2_1&\lesssim \alpha+\sqrt{n}\theta_{\text{max}}\left\|\oU^{(i)}\bL^{(i)}-\oU^*\right\|_{2,\infty}\nonumber \\
     &\lesssim \alpha+ \sqrt{n}\theta_{\text{max}} \left\|\oU^{(i)}\bL^{(i)}-\oU\bL\right\|_{2,\infty}+\sqrt{n}\theta_{\text{max}}\left\|\oU\bL-\oU^*\right\|_{2,\infty} \nonumber \\
     &\lesssim \alpha + \sqrt{n}\theta_{\text{max}}\beta +\sqrt{n}\theta_{\text{max}}\left\|\oU\bL-\oU^*\right\|_{2,\infty},
\end{align*}
where $\beta$ is defined by \eqref{lem3eq6}. Combine this with \eqref{lem3eq15} and \eqref{lem3eq16} we have
\begin{align}
    \left\|\Big(\bW-\bW^{(i)}\Big)\oU^{(i)}\right\|_F\lesssim &\; \alpha + \sqrt{n}\theta_{\text{max}}\beta +\sqrt{n}\theta_{\text{max}}\left\|\oU\bL-\oU^*\right\|_{2,\infty} +\alpha_1.\label{lem3eq1}
\end{align}
with probability at least $1-O(n^{-14})$. 

\item \textbf{Control} $\left\|\bu_1^*\bu_1^{*\top}\Big(\bW-\bW^{(i)}\Big)\bu_1^*\bu_1^{*\top}\oU^{(i)}\right\|_F$: Since $\bu_1^*\bu_1^{*\top}\Big(\bW-\bW^{(i)}\Big)\bu_1^*\bu_1^{*\top}\oU^{(i)}$ is a rank-$1$ matrix, we know that
\begin{align*}
    &\left\|\bu_1^*\bu_1^{*\top}\Big(\bW-\bW^{(i)}\Big)\bu_1^*\bu_1^{*\top}\oU^{(i)}\right\|_F = \left\|\bu_1^*\right\|_2\left\|\bu_1^{*\top}\Big(\bW-\bW^{(i)}\Big)\bu_1^*\bu_1^{*\top}\oU^{(i)}\right\|_2 \\
    &\leq \left\|\Big(\bW-\bW^{(i)}\Big)\bu_1^*\bu_1^{*\top}\right\|\leq \left\|\Big(\bW-\bW^{(i)}\Big)\bu_1^*\bu_1^{*\top}\right\|_F.
\end{align*}
By Lemma \ref{Wconcentration4} we know that
\begin{align}\label{eq:wi_quad_frob}
    \left\|\Big(\bW-\bW^{(i)}\Big)\bu_1^*\bu_1^{*\top}\right\|_F&\lesssim \sqrt{\log n}\theta_{\text{max}}\left\|\bu_1^*\bu_1^{*\top}\right\|_F+(\sqrt{n}\theta_{\text{max}}+\log n)\left\|\bu_1^*\bu_1^{*\top}\right\|_{2,\infty}\nonumber \\
    & \leq \sqrt{\log n}\theta_{\text{max}}+(\sqrt{n}\theta_{\text{max}}+\log n)\sqrt{\mu^*/n} =: \rho_2.
\end{align}
with probability at least $1-O(n^{-14})$, where the last inequality uses \eqref{eq:incoherence}. Hence,
\begin{align}
    \left\|\bu_1^*\bu_1^{*\top}\Big(\bW-\bW^{(i)}\Big)\bu_1^*\bu_1^{*\top}\oU^{(i)}\right\|_F\lesssim \rho_2\label{lem3eq2}
\end{align}
with probability at least $1-O(n^{-14})$. 

\item \textbf{Control} $\left\|\bN\Big(\bW-\bW^{(i)}\Big)\bu_1^*\bu_1^{*\top}\oU^{(i)}\right\|_F$: By \eqref{eq:N_spectral_bound}, we have $\left\|\bN\right\|\lesssim 1$ implying 
\begin{align*}
    \left\|\bN\Big(\bW-\bW^{(i)}\Big)\bu_1^*\bu_1^{*\top}\oU^{(i)}\right\|_F&\leq \left\|\bN\right\|\left\|\oU^{(i)}\right\|\left\|\Big(\bW-\bW^{(i)}\Big)\bu_1^*\bu_1^{*\top}\right\|_F \\
    &\lesssim \left\|\Big(\bW-\bW^{(i)}\Big)\bu_1^*\bu_1^{*\top}\right\|_F \lesssim \rho_2.
\end{align*}
 with probability at least $1-O(n^{-14})$ using \eqref{eq:wi_quad_frob}.

\item \textbf{Control} $\left\|\left(\Big(\bW-\bW^{(i)}\Big)\bu_1^*\bu_1^{*\top}\right)^\top\bN\oU^{(i)}\right\|_F$: Since $\left(\Big(\bW-\bW^{(i)}\Big)\bu_1^*\bu_1^{*\top}\right)^\top\bN\oU^{(i)}$ is a rank-$1$ matrix and $\bN = \lambda_1^*\bN_1$, by Lemma \ref{lemmaN1N2} and \eqref{Wconcentration4} we know that
\begin{align}
    \left\|\left(\Big(\bW-\bW^{(i)}\Big)\bu_1^*\bu_1^{*\top}\right)^\top\bN\oU^{(i)}\right\|_F &= \left\|\bu_1^*\right\|_2\left\|\bu_1^{*\top}\Big(\bW-\bW^{(i)}\Big)\bN\oU^{(i)}\right\|_2 \nonumber\\
    &\lesssim \left\|\Big(\bW-\bW^{(i)}\Big)\bu_1^{*}\right\|_2 \lesssim \rho_2 \label{lem3eq4}
\end{align}
with probability at least $1-O(n^{-15})$. 

\item \textbf{Control} $\left\|\left(\bDelta-\bDelta^{(i)}\right)\oU^{(i)}\right\|_F$: By Theorem \ref{firstsubspaceexpansion}, the ranks of $\bDelta$ and $\bDelta^{(i)}$ are at most $3$. Hence,
\begin{align}
    \left\|\left(\bDelta-\bDelta^{(i)}\right)\oU^{(i)}\right\|_F\lesssim \left\|\bDelta-\bDelta^{(i)}\right\|_F \left\|\oU^{(i)}\right\|\lesssim \left\|\bDelta-\bDelta^{(i)}\right\|\lesssim \frac{n\theta_{\text{max}}^2}{\lambda_1^*}.\label{lem3eq5}
\end{align}
\end{enumerate}
Recall the definitions of $\alpha, \beta$ from \eqref{lem3eq7} and  \eqref{lem3eq6} respectively. Combining~\eqref{lem3eq1}-\eqref{lem3eq5}, we get
\begin{align*}
    \left\|\left(\oX-\oX^{(i)}\right)\oU^{(i)}\right\|_F\lesssim&\; \alpha + \sqrt{n}\theta_{\text{max}}\beta +\sqrt{n}\theta_{\text{max}}\left\|\oU\bL-\oU^*\right\|_{2,\infty} + \frac{n\theta_{\text{max}}^2}{\lambda_1^*} +\alpha_1
\end{align*}
with probability at least $1-O(n^{-14})$. This, along with~\eqref{lem3eq6} yields
\begin{align*}
    \beta\lesssim & \frac{\sqrt{n}\theta_{\text{max}}}{\minsigma^*}\beta+\frac{1}{\minsigma^*}\alpha+\frac{\sqrt{n}\theta_{\text{max}}}{\minsigma^*}\left\|\oU\bL-\oU^*\right\|_{2,\infty} + \frac{n\theta_{\text{max}}^2}{\lambda_1^*\minsigma^*} \\
    &+ \frac{\sqrt{(K-1)\log n}\theta_{\text{max}}+(\sqrt{n}\theta_{\text{max}}+\log n)\sqrt{(K-1)\mu^*/n}}{\minsigma^*}
\end{align*}
with probability at least $1-O(n^{-14})$. Since $\sqrt{n}\theta_{\text{max}}\ll\minsigma^*$,
\begin{align*}
    \beta\lesssim & \frac{1}{\minsigma^*}\alpha+\frac{\sqrt{n}\theta_{\text{max}}}{\minsigma^*}\left\|\oU\bL-\oU^*\right\|_{2,\infty} + \frac{n\theta_{\text{max}}^2}{\lambda_1^*\minsigma^*} + \frac{\alpha_1}{\minsigma^*}
\end{align*}
with probability at least $1-O(n^{-14})$. Recall that $\alpha$ is defined as
\begin{align*}
    \alpha = \frac{\sqrt{(K-1)n\log n}}{\minsigma^*}\theta_{\text{max}}+\log n \left\|\oU\bL-\oU^*\right\|_{2,\infty}+\beta  \log n 
\end{align*}
in ~\eqref{lem3eq7}, we have
\begin{align*}
    \alpha \lesssim & \frac{\sqrt{(K-1)n\log n}}{\minsigma^*}\theta_{\text{max}} + \log n \left\|\oU\bL-\oU^*\right\|_{2,\infty} + \frac{\log n}{\minsigma^*}\alpha \\
    &+ \frac{\sqrt{n}\theta_{\text{max}}\log n}{\minsigma^*}\left\|\oU\bL-\oU^*\right\|_{2,\infty} + \frac{n\theta_{\text{max}}^2\log n}{\lambda_1^*\minsigma^*} + \frac{\alpha_1 \log n }{\minsigma^*}
\end{align*}
with probability at least $1-O(n^{-14})$. By our assumption $\max \{\sqrt{n}\theta_{\text{max}}, \log n\}\ll\minsigma^*$, yielding
\begin{align}
    \alpha \lesssim & \log n \left\|\oU\bL-\oU^*\right\|_{2,\infty} +\frac{\sqrt{(K-1)n\log n}}{\minsigma^*}\theta_{\text{max}} + \frac{n\theta_{\text{max}}^2\log n}{\lambda_1^*\minsigma^*}  \nonumber \\
    &+  \frac{(\sqrt{n}\theta_{\text{max}}+\log n)\sqrt{(K-1)\mu^*/n}\log n}{\minsigma^*}\label{lem3eq8}
\end{align}
with probability at least $1-O(n^{-14})$. This also implies
\begin{align}
    \beta\lesssim & \frac{\sqrt{n}\theta_{\text{max}}+\log n}{\minsigma^*}\left\|\oU\bL-\oU^*\right\|_{2,\infty} + \frac{n\theta_{\text{max}}^2}{\lambda_1^*\minsigma^*} \nonumber \\
    &+ \frac{\sqrt{(K-1)\log n}\theta_{\text{max}}+(\sqrt{n}\theta_{\text{max}}+\log n)\sqrt{(K-1)\mu^*/n}}{\minsigma^*}\label{lem3eq10}
\end{align}
with probability at least $1-O(n^{-14})$. Plugging the quantities $\alpha, \beta$ in ~\eqref{lem3eq7} we get
\begin{align*}
\left\|\bW_{i, \cdot}\left(\oU\bL-\oU^*\right)\right\|_{2}\leq&\left\|\bW_{i, \cdot}\left(\oU^{(i)}\bL^{(i)}-\oU^*\right)\right\|_{2}+\left\|\bW_{i, \cdot}\left(\oU\bL-\oU^{(i)}\bL^{(i)}\right)\right\|_{2}  \\
\lesssim& \alpha + \sqrt{n}\theta_{\text{max}}\beta
\end{align*}
with probability at least $1-O(n^{-14})$. Taking a union bound for $i\in [n]$ and noting that $\mathbb{P}(\mathcal{A}_1) \ge 1-O(n^{-10})$, we have with probability at least $1-O(n^{-10})$,
\begin{align}
    \left\|\bW\left(\oU\bL-\oU^*\right)\right\|_{2,\infty}\lesssim& \left(\frac{n \theta_{\text{max}}^2}{\minsigma^*}+\log n\right)\left\|\oU\bL-\oU^*\right\|_{2,\infty} +\frac{\sqrt{(K-1)n\log n}}{\minsigma^*}\theta_{\text{max}} \nonumber \\
&+\frac{(\sqrt{n}\theta_{\text{max}}+\log n) n\theta_{\text{max}}^2}{\lambda_1^*\minsigma^*} + \frac{(\sqrt{n}\theta_{\text{max}}+\log n)^2\sqrt{(K-1)\mu^*/n}}{\minsigma^*},\label{lem3eq9}
\end{align}
where we used the bounds ~\eqref{lem3eq8} and ~\eqref{lem3eq10}.

\item[(b)] \textbf{Control} $\left\|\bu_1^*\bu_1^{*\top}\bW\bu_1^*\bu_1^{*\top}\left(\oU\bL-\oU^*\right)\right\|_{2,\infty}$: We can write it as
\begin{align}\label{eq:lem3term4}
    \left\|\bu_1^*\bu_1^{*\top}\bW\bu_1^*\bu_1^{*\top}\left(\oU\bL-\oU^*\right)\right\|_{2,\infty} = \left\|\bu_1^*\right\|_\infty\left|\bu_1^{*\top}\bW\bu_1^*\right|\left\|\bu_1^{*\top}\left(\oU\bL-\oU^*\right)\right\|_{2}
\end{align}
By Lemma \ref{Wconcentration5}, 
$\left|\bu_1^{*\top}\bW\bu_1^*\right|\lesssim \sqrt{\log n}$
with probability at least $1-O(n^{-15})$. Since $\oU^{*\top}\oU = \bI$,
\begin{align*}
    \left\|\bu_1^{*\top}\left(\oU\bL-\oU^*\right)\right\|_{2}&\leq \left\|\bu_1^{*}\right\|_2\left\|\oU\bL-\oU^*\right\| = \left\|\oU\oU^\top\oU^*-\oU^*\right\| = \left\|\left(\oU\oU^\top-\oU^*\oU^{*\top}\right)\oU^*\right\|  \\
    &\leq \left\|\oU\oU^\top-\oU^*\oU^{*\top}\right\| = \left\|\sin\bOmega\right\|\lesssim\frac{\sqrt{n}\theta_{\text{max}}}{\minsigma^*},
\end{align*}
where the last inequality uses \eqref{eq:sintheta_bound}. Plugging in \eqref{eq:lem3term4},
\begin{align}
    \left\|\bu_1^*\bu_1^{*\top}\bW\bu_1^*\bu_1^{*\top}\left(\oU\bL-\oU^*\right)\right\|_{2,\infty}\lesssim \sqrt{\frac{\mu^*}{n}}\sqrt{\log n}\frac{\sqrt{n}\theta_{\text{max}}}{\minsigma^*} = \frac{\sqrt{\mu^*\log n}\theta_{\text{max}}}{\minsigma^*}\label{lem3eq11}
\end{align}
with probability at least $1-O(n^{-15})$.

\item[(c)] \textbf{Control} $\left\|\bN\bW\bu_1^*\bu_1^{*\top}\left(\oU\bL-\oU^*\right)\right\|_{2,\infty}$: Since $\bN = \lambda_1^*\bN_1$, we apply Lemma \ref{N1N2} to get
\begin{align*}
    &\left\|\bN\bW\bu_1^*\bu_1^{*\top}\left(\oU\bL-\oU^*\right)\right\|_{2,\infty}\\
    \lesssim &\left\|\bW\bu_1^*\bu_1^{*\top}\left(\oU\bL-\oU^*\right)\right\|_{2,\infty}  +\left\|\left(\bN-\bI\right)\bW\bu_1^*\bu_1^{*\top}\left(\oU\bL-\oU^*\right)\right\|_{2,\infty}\\
    \lesssim &\left\|\bW\bu_1^*\bu_1^{*\top}\left(\oU\bL-\oU^*\right)\right\|_{2,\infty}  +\left\|\bN-\bI\right\|_{2,\infty}\left\|\bW\bu_1^*\bu_1^{*\top}\left(\oU\bL-\oU^*\right)\right\|\\
    \lesssim & \left\|\bW\bu_1^*\right\|_{2,\infty}\left\|\bu_1^{*\top}\left(\oU\bL-\oU^*\right)\right\|+\sqrt{\frac{(K-1)\mu^*}{n}}\left\|\bW\right\|\left\|\oU\bL-\oU^*\right\|.
\end{align*}
On one hand, under event $\mathcal{A}_1$, we have $\|\bW\|\lesssim \sqrt{n}\theta_{\text{max}}$ and $\left\|\oU\bL-\oU^*\right\| = \left\|\sin\bOmega\right\|\lesssim \sqrt{n}\theta_{\text{max}}/\minsigma^*$, using \eqref{eq:sintheta_bound}, yielding
\begin{align*}
    \sqrt{\frac{(K-1)\mu^*}{n}}\left\|\bW\right\|\left\|\oU\bL-\oU^*\right\|
    \lesssim \frac{\sqrt{(K-1)n\mu^*}\theta_{\text{max}}^2}{\minsigma^*}.
\end{align*}
On the other hand, by Lemma \ref{Wconcentration3}, $\left\|\bW\bu_1^*\right\|_{2,\infty}\lesssim \sqrt{\log n}\theta_{\text{max}}+\log n\sqrt{\mu^*/n} $ with probability at least $1-O(n^{-14})$. As a result, one can see that 
\begin{align*}
    \left\|\bW\bu_1^*\right\|_{2,\infty}\left\|\bu_1^{*\top}\left(\oU\bL-\oU^*\right)\right\|&\lesssim \left(\sqrt{\log n}\theta_{\text{max}}+\log n\sqrt{\mu^*/n}\right)\left\|\oU\bL-\oU^*\right\|\\
    &\lesssim  \frac{\sqrt{n\log n}\theta_{\text{max}}^2+\log n\sqrt{\mu^*}\theta_{\text{max}}}{\minsigma^*}
\end{align*}
with probability at least $1-O(n^{-14})$. To sum up, we get
\begin{align}
    \left\|\bN\bW\bu_1^*\bu_1^{*\top}\left(\oU\bL-\oU^*\right)\right\|_{2,\infty}\lesssim \frac{(\sqrt{(K-1)\mu^*}+\sqrt{\log n})\sqrt{n}\theta_{\text{max}}^2+\log n\sqrt{\mu^*}\theta_{\text{max}}}{\minsigma^*}\label{lem3eq12}
\end{align}
with probability at least $1-O(n^{-14})$.

\item[(d)] \textbf{Control} $\left\|\left(\bW\bu_1^*\bu_1^{*\top}\right)^\top\bN\left(\oU\bL-\oU^*\right)\right\|_{2,\infty}$: It can be bounded as
\begin{align*}
\left\|\left(\bW\bu_1^*\bu_1^{*\top}\right)^\top\bN\left(\oU\bL-\oU^*\right)\right\|_{2,\infty} &= \left\|\bu_1^*\right\|_{\infty}\left\|\bu_1^{*\top}\bW\bN\left(\oU\bL-\oU^*\right)\right\|_{2}  \\
&\leq \left\|\bu_1^*\right\|_{\infty}\left\|\bu_1^{*}\right\|_2\left\|\bW\right\|\left\|\bN\right\|\left\|\oU\bL-\oU^*\right\|.
\end{align*}
Recall that $\bN = \lambda_1^*\bN_1$, as we have shown in Lemma \ref{lemmaN1N2} and \eqref{eq:sintheta_bound}, $\left\|\bN\right\|\lesssim 1$ and $\left\|\oU\bL-\oU^*\right\| = \left\|\sin\bOmega\right\|\lesssim \sqrt{n}\theta_{\text{max}}/\minsigma^*$. Therefore, we have
\begin{align}
\left\|\left(\bW\bu_1^*\bu_1^{*\top}\right)^\top\bN\left(\oU\bL-\oU^*\right)\right\|_{2,\infty}\lesssim \sqrt{\frac{\mu^*}{n}}\sqrt{n}\theta_{\text{max}}\frac{\sqrt{n}\theta_{\text{max}}}{\minsigma^*} = \frac{\sqrt{\mu^*n}\theta_{\text{max}}^2}{\minsigma^*}.\label{lem3eq13}
\end{align}

\item[(e)] \textbf{Control} $\left\|\bDelta\left(\oU\bL-\oU^*\right)\right\|_{2,\infty}$: According to Lemma \ref{firstsubspaceexpansion}, we simply have
\begin{align*}
    \left\|\bDelta\left(\oU\bL-\oU^*\right)\right\|_{2,\infty}\leq \left\|\bDelta\left(\oU\bL-\oU^*\right)\right\|_F \leq \left\|\bDelta\right\|\left\|\oU\bL-\oU^*\right\|_F\lesssim \frac{n\theta_{\text{max}}^2}{\lambda_1^*}\left\|\oU\bL-\oU^*\right\|_F.
\end{align*}
As we have shown in \eqref{lem3eq18}, we have $\left\|\oU\bL-\oU^*\right\|_F \lesssim \sqrt{(K-1)n}\theta_{\text{max}}/\minsigma^*$. As a result, we have
\begin{align}
    \left\|\bDelta\left(\oU\bL-\oU^*\right)\right\|_{2,\infty}\lesssim \frac{n}{\lambda_1^*}\left\|\oU\bL-\oU^*\right\|_F\lesssim \frac{\sqrt{K-1}\theta_{\text{max}}^3n^{1.5}}{\lambda_1^*\minsigma^*}.\label{lem3eq14}
\end{align}
\end{enumerate}
Finally, we can sum up all the five terms we bounded above. Specifically, a combination of \eqref{lem3eq9}, \eqref{lem3eq11}, \eqref{lem3eq12}, \eqref{lem3eq13}, \eqref{lem3eq14} and \eqref{lem3oWexpansion} yields
\begin{align*}
    \left\|\oW\left(\oU\bL-\oU^*\right)\right\|_{2,\infty}\lesssim& \left(\frac{n \theta_{\text{max}}^2}{\minsigma^*}+\log n\right)\left\|\oU\bL-\oU^*\right\|_{2,\infty} +\frac{\sqrt{(K-1)n\log n}}{\minsigma^*}\theta_{\text{max}} \nonumber \\
&+\frac{(\sqrt{(K-1)n}\theta_{\text{max}}+\log n) n\theta_{\text{max}}^2}{\lambda_1^*\minsigma^*} + \frac{(\sqrt{n}\theta_{\text{max}}+\log n)^2\sqrt{(K-1)\mu^*/n}}{\minsigma^*}
\end{align*}
with probability at least $1-O(n^{-10})$. Plugging in this bound, along with the bound \eqref{lem3oH1} and \eqref{lem3oH2} in \eqref{lem3decomposition} provides the desired conclusion.

\end{proof}

\subsection{Proof of Lemma \ref{lemma4}}\label{lemma4proof}
\begin{proof}
Since $\oX\oU^* = \oU^*\oLambda^*+\oW\oU^*$, we consider the following decomposition
\begin{align}
    \minsigma^*\left\|\oU\bL-\oU^*\right\|_{2,\infty}&\leq \left\|\oU\bL\oLambda^*-\oU^*\oLambda^*\right\|_{2,\infty} \nonumber \\
    &\leq \left\|\oU\oLambda\bL-\oU^*\oLambda^*\right\|_{2,\infty}+\left\|\oU\bL\oLambda^*-\oU\oLambda\bL\right\|_{2,\infty} \nonumber \\
    &\leq \left\|\oU\oLambda\bL-\oX\oU^*\right\|_{2,\infty}+\left\|\oW\oU^*\right\|_{2,\infty}+\left\|\oU\bL\oLambda^*-\oU\oLambda\bL\right\|_{2,\infty}.\label{lem4eq7}
\end{align}
Since the upper bound of $\left\|\oU\oLambda\bL-\oX\oU^*\right\|_{2,\infty}$ follows from Lemma \ref{lemma3}, we only have to deal with the second and third quantity. We begin with the second quantity. By Lemma \ref{lemma1}, since $\sigma_{K-1}(\bL)\geq 1/2$, we have
\begin{align}
    \left\|\oU\bL\oLambda^*-\oU\oLambda\bL\right\|_{2,\infty} &= \left\|\oU\right\|_{2,\infty}\left\|\bL\oLambda^*-\oLambda\bL\right\|\lesssim \left\|\oU\bL\right\|_{2,\infty}\left\|\bL\oLambda^*-\oLambda\bL\right\|  \nonumber \\
    &\leq \left(\left\|\oU^*\right\|_{2,\infty}+ \left\|\oU\bL-\oU^*\right\|_{2,\infty}\right)\left\|\bL\oLambda^*-\oLambda\bL\right\| \nonumber \\
    & \leq \left(\sqrt{\frac{(K-1)\mu^*}{n}}+ \left\|\oU\bL-\oU^*\right\|_{2,\infty}\right)\left\|\bL\oLambda^*-\oLambda\bL\right\| ,\label{lem4eq3}
\end{align}
where the last inequality follows from \eqref{eq:incoherence}. In addition,
\begin{align}
    \left\|\bL\oLambda^*-\oLambda\bL\right\| &=\left\|\left(\bL^\top\right)^{-1}\left(\bL^\top\bL\oLambda^*-\bL^\top\oLambda\bL\right)\right\| \lesssim\left\|\bL^\top\bL\oLambda^*-\bL^\top\oLambda\bL\right\|  \nonumber \\
    &\leq \left\|\oLambda^*-\bL^\top\oLambda\bL\right\| +\left\|\bL^\top\bL\oLambda^*-\oLambda^*\right\| \nonumber \\
    &\lesssim \frac{\theta_{\text{max}}^3 n^{1.5}}{\minsigma^{*2}} +\sqrt{(K-1)\log n}\theta_{\text{max}}+\maxsigma^{*}\left\|\bL^\top\bL-\bI\right\|. \label{lem4eq1}
\end{align}
Since $\bR^\top\bR = \bI$, by Lemma \ref{lemma1} we have
\begin{align}
    \left\|\bL^\top\bL-\bI\right\| &= \left\|\bL^\top\bL-\bR^\top\bR\right\|\leq \|\bL^\top\left(\bL-\bR\right)\|+\|\left(\bL-\bR\right)^\top\bR\| \lesssim \|\bL-\bR\|\lesssim \frac{n\theta_{\text{max}}^2}{\minsigma^{*2}}.\label{lem4eq2}
\end{align}
Combining \eqref{lem4eq1} and \eqref{lem4eq2}, we get
\begin{align}\label{eq:define_xi1}
    \left\|\bL\oLambda^*-\oLambda\bL\right\|&\lesssim \frac{\theta_{\text{max}}^3 n^{1.5}}{\minsigma^{*2}} +\sqrt{(K-1)\log n}\theta_{\text{max}}+\maxsigma^{*}\frac{n\theta_{\text{max}}^2}{\minsigma^{*2}} \nonumber\\
    &\lesssim \sqrt{(K-1)\log n}\theta_{\text{max}}+\frac{ \kappa^* n \theta_{\text{max}}^2}{\minsigma^{*}}=: \xi_1
\end{align}
since $\sqrt{n}\theta_{\text{max}}\ll \minsigma^*\leq \maxsigma^*$. Therefore, by~\eqref{lem4eq3} we have
\begin{align}
    \left\|\oU\bL\oLambda^*-\oU\oLambda\bL\right\|_{2,\infty} \lesssim \xi_1 \left(\sqrt{\frac{(K-1)\mu^*}{n}}+ \left\|\oU\bL-\oU^*\right\|_{2,\infty}\right).\label{lem4eq8}
\end{align}
 We now turn to bound the third summand of \eqref{lem4eq7}, namely, $\left\|\oW\oU^*\right\|_{2,\infty}$. Recall the decomposition of $\oW$ from \eqref{eq:barw_expand}.
By Lemma \ref{Wconcentration3} we have
\begin{align}
    \left\|\bW\oU^*\right\|_{2,\infty}&\lesssim \sqrt{\log n}\theta_{\text{max}}\left\|\oU^*\right\|_F + \log n \left\|\oU^*\right\|_{2,\infty} \nonumber \\
    &\lesssim \sqrt{(K-1)\log n}\theta_{\text{max}} + \log n\sqrt{(K-1)\mu^* / n}\label{lem4eq4}
\end{align}
with probability at least $1-O(n^{-14})$. And, since $\left(\bW\bu_1^*\bu_1^{*\top}\right)^\top\bN\oU^* = \bu_1^*\bu_1^{*\top}\bW\bN\oU^*$ is a rank-$1$ matrix, we have
\begin{align}
    \left\|\left(\bW\bu_1^*\bu_1^{*\top}\right)^\top\bN\oU^*\right\|_{2,\infty} &= \left\|\bu_1^*\right\|_\infty\left\|\bu_1^{*\top}\bW\bN\oU^*\right\|_2 \leq \left\|\bu_1^*\right\|_\infty\left\|\bu_1^{*}\right\|_2\left\|\bW\right\|\left\|\bN\right\|\left\|\oU^*\right\| \nonumber \\
    &\lesssim \sqrt{\frac{\mu^*}{n}}\sqrt{n}\theta_{\text{max}} = \sqrt{\mu^*}\theta_{\text{max}}. \label{lem4eq5}
\end{align}
And, since $\left\|\bDelta\right\|_{2,\infty}\leq\left\|\bDelta\right\|$, we have via \eqref{eq:incoherence},
\begin{align}
    \left\|\bDelta\oU^*\right\|_{2,\infty} \leq \left\|\bDelta\right\|_{2,\infty}\left\|\oU^*\right\|\leq \left\|\bDelta\right\|\left\|\oU^*\right\|\lesssim \frac{n\theta_{\text{max}}^2}{\lambda_1^*}. \label{lem4eq6}
\end{align}
Finally, since $\bu_1^{*\top}\oU^* = \boldsymbol{0}$, we know that
$\left\|\bu_1^*\bu_1^{*\top}\bW\bu_1^*\bu_1^{*\top}\oU^*\right\|_{2,\infty} = \left\|\bN\bW\bu_1^*\bu_1^{*\top}\oU^*\right\|_{2,\infty} = 0$. This, along with \eqref{lem4eq4},
 \eqref{lem4eq5} and \eqref{lem4eq6} yields
\begin{align}
    \left\|\oW\oU^*\right\|_{2,\infty}\lesssim&\sqrt{(K-1)\log n}\theta_{\text{max}} + \log n\sqrt{(K-1)\mu^* / n} + \sqrt{\mu^*}\theta_{\text{max}}+\frac{n\theta_{\text{max}}^2}{\lambda_1^*} \nonumber \\
    &=: \xi_2.\label{lem4eq9}
\end{align}
Combining \eqref{lem4eq7},~\eqref{lem4eq8}, \eqref{lem4eq9} and Lemma \ref{lemma3} we have

\begin{align*}
    \left\|\oU\bL-\oU^*\right\|_{2,\infty}\lesssim&\Big(\frac{\xi_1+\log n}{\minsigma^\star}\Big)\left\|\oU\bL-\oU^*\right\|_{2,\infty}+\frac{\xi_1}{\minsigma^*}\sqrt{\frac{(K-1)\mu^*}{n}} +\frac{\xi_2}{\minsigma^\star}\\
    &+\frac{\kappa^*}{\minsigma^{*2}}\sqrt{(K-1)\mu^*n}\theta_{\text{max}}^2+\frac{\sqrt{(K-1)n\log n}}{\minsigma^{*2}}\theta_{\text{max}} \\
    &+ \frac{(\sqrt{(K-1)n}\theta_{\text{max}}+\log n) n\theta_{\text{max}}^2}{\lambda_1^*\minsigma^{*2}} + \frac{(\sqrt{n}\theta_{\text{max}}+\log n)^2\sqrt{(K-1)\mu^*/n}}{\minsigma^{*2}}  \\
    \lesssim&\Big(\frac{\xi_1+\log n}{\minsigma^\star}\Big)\left\|\oU\bL-\oU^*\right\|_{2,\infty}  +\frac{\xi_2}{\minsigma^\star}\\
    &+\frac{\kappa^*}{\minsigma^{*2}}\sqrt{(K-1)\mu^*n}\theta_{\text{max}}^2+\frac{\sqrt{(K-1)n\log n}}{\minsigma^{*2}}\theta_{\text{max}} + \frac{\sqrt{K-1} n^{1.5}\theta_{\text{max}}^3}{\lambda_1^*\minsigma^{*2}}   
\end{align*}
since $n\gtrsim\mu^*\max\{\log^2n, K-1\}$. When we further have $\xi_1/\minsigma^*\ll 1$, the first summand is negligible and we obtain the desired conclusion.
\end{proof}

\subsection{Proof of Lemma \ref{lemma5}}\label{lemma5proof}
\begin{proof}
From \eqref{firstsubspaceeq1} we know that
\begin{align*}
    \bDelta = & \frac{1}{2\pi i}\oint_{\mathcal{C}_1}\bW \left(\lambda\bI-\bX \right)^{-1}\bW\left(\lambda\bI-\bH \right)^{-1}d\lambda \\
    &+\frac{1}{2\pi i}\oint_{\mathcal{C}_1}\bH \left(\lambda\bI-\bX \right)^{-1}\bW\left(\lambda\bI-\bH \right)^{-1}\bW\left(\lambda\bI-\bH \right)^{-1}d\lambda
\end{align*}
We define the following four matrices
\begin{align*}
    \bDelta_1 &= \bW \left(\lambda\bI-\bH \right)^{-1}\bW\left(\lambda\bI-\bH \right)^{-1} \\
    \bDelta_2 &= \bW \left[\left(\lambda\bI-\bX \right)^{-1}-\left(\lambda\bI-\bH \right)^{-1}\right]\bW\left(\lambda\bI-\bH \right)^{-1} \\
    &= \bW \left(\lambda\bI-\bX \right)^{-1}\bW\left(\lambda\bI-\bH\right)^{-1}\bW\left(\lambda\bI-\bH \right)^{-1}  \\
    \bDelta_3 &= \bH \left(\lambda\bI-\bH \right)^{-1}\bW\left(\lambda\bI-\bH \right)^{-1}\bW\left(\lambda\bI-\bH \right)^{-1}  \\
    \bDelta_4 &= \bH \left[\left(\lambda\bI-\bX \right)^{-1}-\left(\lambda\bI-\bH \right)^{-1}\right]\bW\left(\lambda\bI-\bH \right)^{-1}\bW\left(\lambda\bI-\bH \right)^{-1}  \\
    &= \bH \left(\lambda\bI-\bX \right)^{-1}\bW\left(\lambda\bI-\bH \right)^{-1}\bW\left(\lambda\bI-\bH \right)^{-1}\bW\left(\lambda\bI-\bH \right)^{-1}.
\end{align*}
\textbf{Control} $\left\|\frac{1}{2\pi i}\oint_{\mathcal{C}_1}\bDelta_1 d\lambda\oU^*\right\|_{2,\infty}$: For $2\leq i\leq K$, one can show that
\begin{align*}
    \left[\frac{1}{2\pi i}\oint_{\mathcal{C}_1}\bDelta_1 d\lambda\oU^*\right]_{\cdot,i-1} &= \frac{1}{2\pi i}\oint_{\mathcal{C}_1}\sum_{j,k=1}^n\frac{1}{(\lambda-\lambda_j^*)(\lambda-\lambda_k^*)}\bW\bu_j^*\bu_j^{*\top}\bW\bu_k\bu_k^{*\top}\bu_i^* d\lambda \\
    &= \frac{1}{2\pi i}\oint_{\mathcal{C}_1}\sum_{j=1}^n\frac{1}{(\lambda-\lambda_j^*)(\lambda-\lambda_i^*)}\bW\bu_j^*\bu_j^{*\top}\bW\bu_i^* d\lambda   \\
    & = \sum_{j=1}^n\textbf{Res}\left(\frac{1}{(\lambda-\lambda_j^*)(\lambda-\lambda_i^*)},\lambda_1^*\right)\bW\bu_j^*\bu_j^{*\top}\bW\bu_i^* \\
    & = \frac{1}{\lambda_1^*-\lambda_i^*}\bW\bu_1^*\bu_1^{*\top}\bW\bu_i^* = \frac{\bu_1^{*\top}\bW\bu_i^*}{\lambda_1^*-\lambda_i^*}\bW\bu_1^*.
\end{align*}
As a result, we have
\begin{align*}
    \left\|\frac{1}{2\pi i}\oint_{\mathcal{C}_1}\bDelta_1 d\lambda\oU^*\right\|_{2,\infty} = \left\|\bW\bu_1^*\right\|_{\infty}\sqrt{\sum_{i=2}^K\left(\frac{\bu_1^{*\top}\bW\bu_i^*}{\lambda_1^*-\lambda_i^*}\right)^2}.
\end{align*}
By Lemma \ref{Wconcentration3}, we know that $\left\|\bW\bu_1^*\right\|_{\infty}\lesssim \sqrt{\log n}\theta_{\text{max}}+\log n\sqrt{\mu^*/n}$ with probability at least $1-O(n^{-14})$. For each $2\leq i\leq K$, by Lemma \ref{Wconcentration6} we know that $\left|\bu_1^{*\top}\bW\bu_i^*\right|\lesssim \sqrt{\log n} \theta_{\text{max}}+\log n \sqrt{\mu^*/n}$ with probability at least $1-O(n^{-15})$. As a result, we know that
\begin{align}
    \left\|\frac{1}{2\pi i}\oint_{\mathcal{C}_1}\bDelta_1 d\lambda\oU^*\right\|_{2,\infty}&\lesssim \left(\sqrt{\log n}\theta_{\text{max}}+\log n\sqrt{\mu^*/n}\right)^2\frac{\sqrt{K-1}}{\lambda_1^*}  \label{lem5eq5}
\end{align}
with probability at least $1-O(n^{-14})$.

\textbf{Control} $\left\|\frac{1}{2\pi i}\oint_{\mathcal{C}_1}\bDelta_2 d\lambda\oU^*\right\|$: By definition we have
\begin{align}
    \left\|\frac{1}{2\pi i}\oint_{\mathcal{C}_1}\bDelta_2 d\lambda\oU^*\right\|&\leq \left\|\frac{1}{2\pi i}\oint_{\mathcal{C}_1}\bDelta_2 d\lambda\right\|\leq \frac{1}{2\pi}\oint_{\mathcal{C}_1}\left\|\bDelta_2\right\| d\lambda \nonumber \\
    &\leq \frac{1}{2\pi}\oint_{\mathcal{C}_1}\left\|\bW\right\|^3\left\|\left(\lambda\bI-\bX\right)^{-1}\right\|\left\|\left(\lambda\bI-\bH\right)^{-1}\right\|^2 d\lambda \nonumber \\
    &\lesssim \oint_{\mathcal{C}_1}\frac{(\sqrt{n}\theta_{\text{max}})^3}{\lambda_1^{*3}}d\lambda \lesssim \frac{n^{1.5}\theta_{\text{max}}^3r}{\lambda_1^{*3}}\lesssim \frac{n^{1.5}\theta_{\text{max}}^3}{\lambda_1^{*2}}. \label{lem5eq6}
\end{align}

\textbf{Control} $\left\|\frac{1}{2\pi i}\oint_{\mathcal{C}_1}\bDelta_3 d\lambda\oU^*\right\|_{2,\infty}$: For $2\leq i\leq K$, one can show that
\begin{align*}
    &\left[\frac{1}{2\pi i}\oint_{\mathcal{C}_1}\bDelta_3 d\lambda\oU^*\right]_{\cdot,i-1} = \frac{1}{2\pi i}\oint_{\mathcal{C}_1}\sum_{j,k,l=1}^n\frac{\lambda_j^*}{(\lambda-\lambda_j^*)(\lambda-\lambda_k^*)(\lambda-\lambda_l^*)}\bu_j^*\bu_j^{*\top}\bW\bu_k^*\bu_k^{*\top}\bW\bu_l\bu_l^{*\top}\bu_i^* d\lambda  \\
    =& \frac{1}{2\pi i}\oint_{\mathcal{C}_1}\sum_{j,k=1}^n\frac{\lambda_j^*}{(\lambda-\lambda_j^*)(\lambda-\lambda_k^*)(\lambda-\lambda_i^*)}\bu_j^*\bu_j^{*\top}\bW\bu_k^*\bu_k^{*\top}\bW\bu_i^* d\lambda  \\
    =&\sum_{j,k=1}^n\textbf{Res}\left(\frac{\lambda_j^*}{(\lambda-\lambda_j^*)(\lambda-\lambda_k^*)(\lambda-\lambda_i^*)},\lambda_1^*\right)\bu_j^*\bu_j^{*\top}\bW\bu_k^*\bu_k^{*\top}\bW\bu_i^*  \\
    =& \sum_{j=2}^n\frac{\lambda_j^*}{(\lambda_1^*-\lambda_j^*)(\lambda_1^*-\lambda_i^*)}\bu_j^*\bu_j^{*\top}\bW\bu_1^*\bu_1^{*\top}\bW\bu_i^*  \\
    & + \sum_{j=2}^n\frac{\lambda_1^*}{(\lambda_1^*-\lambda_j^*)(\lambda_1^*-\lambda_i^*)}\bu_1^*\bu_1^{*\top}\bW\bu_j^*\bu_j^{*\top}\bW\bu_i^* \\
    &-\frac{\lambda_1^*}{(\lambda_1^*-\lambda_i^*)^2}\bu_1^*\bu_1^{*\top}\bW\bu_1^*\bu_1^{*\top}\bW\bu_i^*  \\
    =& \frac{1}{\lambda_1^*-\lambda_i^*}\left(\bN-\bI\right)\bW\bu_1^*\bu_1^{*\top}\bW\bu_i^* +\frac{1}{\lambda_1^*-\lambda_i^*}\bu_1^*\bu_1^{*\top}\bW\bN\bW\bu_i^* \\
    &-\frac{\lambda_1^*}{(\lambda_1^*-\lambda_i^*)^2}\bu_1^*\bu_1^{*\top}\bW\bu_1^*\bu_1^{*\top}\bW\bu_i^*.
\end{align*}
Let 
\begin{align*}
    \bC_1 &= \textbf{diag}\left(\frac{1}{\lambda_1^*-\lambda_2^*}, \frac{1}{\lambda_1^*-\lambda_3^*},\dots, \frac{1}{\lambda_1^*-\lambda_K^*}\right);  \\
    \bC_2 &= \textbf{diag}\left(\frac{\lambda_1^*}{(\lambda_1^*-\lambda_2^*)^2}, \frac{\lambda_1^*}{(\lambda_1^*-\lambda_3^*)^2},\dots, \frac{\lambda_1^*}{(\lambda_1^*-\lambda_K^*)^2}\right).
\end{align*}
Then we have
\begin{align}
    \frac{1}{2\pi i}\oint_{\mathcal{C}_1}\bDelta_3 d\lambda\oU^* =& \left(\bN-\bI\right)\bW\bu_1^*\bu_1^{*\top}\bW\oU^*\bC_1 +\bu_1^*\bu_1^{*\top}\bW\bN\bW\oU^*\bC_1 \nonumber \\
    &-\bu_1^*\bu_1^{*\top}\bW\bu_1^*\bu_1^{*\top}\bW\oU^*\bC_2.\label{lem5eq1}
\end{align}
Note that $\bN = \lambda_1^*\bN_1$ for $\bN_1$ defined in ~\eqref{N1N2}. In the proof of Lemma \ref{lemmaN1N2} we actually show that $ \left\|\bN-\bI\right\|_{2,\infty}\lesssim \sqrt{\frac{(K-1)\mu^*}{n}}$, which implies
\begin{align}
    \left\|\left(\bN-\bI\right)\bW\bu_1^*\bu_1^{*\top}\bW\oU^*\bC_1\right\|_{2,\infty}&\leq \left\|\bN-\bI\right\|_{2,\infty}\left\|\bW\bu_1^*\bu_1^{*\top}\bW\oU^*\bC_1\right\| \nonumber \\
    &\leq \left\|\bN-\bI\right\|_{2,\infty}\left\|\bW\right\|^2\left\|\bC_1\right\|  \nonumber \\
    &\lesssim \sqrt{\frac{(K-1)\mu^*}{n}} \frac{n\theta_{\text{max}}^2}{\lambda_1^{*}} = \sqrt{\frac{(K-1)\mu^*n}{\lambda_1^{*2}}}\theta_{\text{max}}^2. \label{lem5eq2}
\end{align}
For $\bu_1^*\bu_1^{*\top}\bW\bN\bW\oU^*\bC_1$ and $\bu_1^*\bu_1^{*\top}\bW\bu_1^*\bu_1^{*\top}\bW\oU^*\bC_2$, since both of them are rank-$1$ matrices, we have, using  $\left\|\bN\right\|\lesssim 1$ from Lemma \ref{lemmaN1N2}, 
\begin{align}
    \left\|\bu_1^*\bu_1^{*\top}\bW\bN\bW\oU^*\bC_1\right\|_{2,\infty} &= \left\|\bu_1^*\right\|_\infty\left\|\bu_1^{*\top}\bW\bN\bW\oU^*\bC_1\right\| \nonumber \\
    &\leq \left\|\bu_1^*\right\|_\infty\left\|\bW\right\|^2\left\|\bN\right\|\left\|\bC_1\right\| \lesssim \sqrt{\frac{\mu^*}{n}}\frac{n\theta_{\text{max}}^2}{\lambda_1^*} = \sqrt{\frac{\mu^*n}{\lambda_1^{*2}}}\theta_{\text{max}}^2\label{lem5eq3}
\end{align}
and
\begin{align}
    \left\|\bu_1^*\bu_1^{*\top}\bW\bu_1^*\bu_1^{*\top}\bW\oU^*\bC_2\right\|_{2,\infty} &= \left\|\bu_1^*\right\|_\infty\left\|\bu_1^{*\top}\bW\bu_1^*\bu_1^{*\top}\bW\oU^*\bC_2\right\| \nonumber \\
    &\leq \left\|\bu_1^*\right\|_\infty\left\|\bW\right\|^2\left\|\bN\right\|\left\|\bC_2\right\| \lesssim = \sqrt{\frac{\mu^*n}{\lambda_1^{*2}}}\theta_{\text{max}}^2. \label{lem5eq4}
\end{align}
Now, combining ~\eqref{lem5eq1},~\eqref{lem5eq2},~\eqref{lem5eq3} ~\eqref{lem5eq4} yields
\begin{align}
    \Big\|\frac{1}{2\pi i}\oint_{\mathcal{C}_1}\bDelta_3 d\lambda\oU^*\Big\|\lesssim \sqrt{\frac{(K-1)\mu^*n}{\lambda_1^{*2}}}\theta_{\text{max}}^2.  \label{lem5eq7}
\end{align}

\textbf{Control} $\left\|\frac{1}{2\pi i}\oint_{\mathcal{C}_1}\bDelta_4 d\lambda\oU^*\right\|$:
By definition we have
\begin{align}
    \Big\|\frac{1}{2\pi i}\oint_{\mathcal{C}_1}\bDelta_4 d\lambda\oU^*\Big\|&\leq \Big\|\frac{1}{2\pi i}\oint_{\mathcal{C}_1}\bDelta_4 d\lambda\Big\|\leq \frac{1}{2\pi}\oint_{\mathcal{C}_1}\left\|\bDelta_4\right\| d\lambda \nonumber \\
    &\leq \frac{1}{2\pi}\oint_{\mathcal{C}_1}\left\|\bH\right\|\|\left(\lambda\bI-\bX\right)^{-1}\|\left\|\bW\right\|^3\|\left(\lambda\bI-\bH\right)^{-1}\|^3 d\lambda  \nonumber \\
    &\lesssim \oint_{\mathcal{C}_1}\frac{\lambda_1^*(\sqrt{n}\theta_{\text{max}})^3}{\lambda_1^{*4}}d\lambda \lesssim \frac{n^{1.5}\theta_{\text{max}}^3r}{\lambda_1^{*3}}\lesssim \frac{n^{1.5}\theta_{\text{max}}^3}{\lambda_1^{*2}}, \label{lem5eq8}
\end{align}
where the second inequality uses \eqref{eq:inverse_spectral_bound}. \par 
Next, we combine the four bounds obtained above. More precisely, using ~\eqref{lem5eq5}, ~\eqref{lem5eq6}, ~\eqref{lem5eq7} and ~\eqref{lem5eq8} with triangle inequality, we obtain
\begin{align*}
    \left\|\bDelta\oU^*\right\|_{2,\infty}\leq & \sum\limits_{i=1}^4 \left\|\frac{1}{2\pi i}\oint_{\mathcal{C}_1}\bDelta_i d\lambda\oU^*\right\|_{2,\infty}\\
    \leq & \sum\limits_{i=1,3}\left\|\frac{1}{2\pi i}\oint_{\mathcal{C}_1}\bDelta_i d\lambda\oU^*\right\|_{2,\infty}+ \sum\limits_{i=2,4}\left\|\frac{1}{2\pi i}\oint_{\mathcal{C}_1}\bDelta_i d\lambda\oU^*\right\|  \\
    \lesssim  &\left(\sqrt{\log n}\theta_{\text{max}}+\log n\sqrt{\mu^*/n}\right)^2\frac{\sqrt{K-1}}{\lambda_1^*}+ \frac{n^{1.5}\theta_{\text{max}}^3}{\lambda_1^{*2}}+ \sqrt{\frac{(K-1)\mu^*n}{\lambda_1^{*2}}}\theta_{\text{max}}^2\\
    \lesssim & \frac{\sqrt{(K-1)\mu^*n}\theta_{\text{max}}^2}{\lambda_1^*}+\frac{\sqrt{K-1}\mu^*\log ^2n}{n\lambda_1^*}+\frac{n^{1.5}\theta_{\text{max}}^3}{\lambda_1^{*2}}
\end{align*}
with probability at least $1-O(n^{-10})$.
\end{proof}

\subsection{Proof of Theorem \ref{mainthmmatrixdenoising}}\label{mainthmmatrixdenoisingproof}
\begin{proof}
As for the first step, we have
\begin{align*}
    \oX\oU^*\left(\oLambda^*\right)^{-1} =& \left(\oH+\oW\right)\oU^*\left(\oLambda^*\right)^{-1}  = \oU^* +\oW\oU^*\left(\oLambda^*\right)^{-1}  \\
    =&\oU^* +\left[\bW-\bu_1^*\bu_1^{*\top}\bW\bu_1^*\bu_1^{*\top}- \bN\bW\bu_1^*\bu_1^{*\top}-\left(\bW\bu_1^*\bu_1^{*\top}\right)^\top\bN\right]\oU^*\left(\oLambda^*\right)^{-1} \\
    &-\bDelta \oU^*\left(\oLambda^*\right)^{-1}  \\
    =&\oU^* +\left[\bW-\bu_1^*\bu_1^{*\top}\bW\bN\right]\oU^*\left(\oLambda^*\right)^{-1} -\bDelta \oU^*\left(\oLambda^*\right)^{-1},
\end{align*}
and
\begin{align*}
    \oU\bR&=\oU\bR \oLambda^*\left(\oLambda^*\right)^{-1} = \left[\oU\oLambda\bR +\oU\left(\bR \oLambda^* - \oLambda\bR\right)\right]\left(\oLambda^*\right)^{-1}  \\
    &= \left[\oU\oLambda\bL +\oU\oLambda\left(\bR-\bL\right)+\oU\left(\bR \oLambda^* - \oLambda\bR\right)\right]\left(\oLambda^*\right)^{-1} \\
    &= \left[\oX\oU^*+\left(\oU\oLambda\bL-\oX\oU^*\right) +\oU\oLambda\left(\bR-\bL\right)+\oU\left(\bR \oLambda^* - \oLambda\bR\right)\right]\left(\oLambda^*\right)^{-1} \\
    &= \oX\oU^*\left(\oLambda^*\right)^{-1}+\left[\left(\oU\oLambda\bL-\oX\oU^*\right) +\oU\oLambda\left(\bR-\bL\right)+\oU\left(\bR \oLambda^* - \oLambda\bR\right)\right]\left(\oLambda^*\right)^{-1}
\end{align*}
As a result, we know that
\begin{align*}
    \boldsymbol{\Psi}_{\oU} 
    = \left[\left(\oU\oLambda\bL-\oX\oU^*\right)+\oU\oLambda\left(\bR-\bL\right)+\oU\left(\bR \oLambda^* - \oLambda\bR\right)-\bDelta \oU^*\right]\left(\oLambda^*\right)^{-1}.
\end{align*}
This imples that 
\begin{align*}
    \left\|\boldsymbol{\Psi}_{\oU}\right\|_{2,\infty}\leq\frac{\left\|\oU\oLambda\bL-\oX\oU^*\right\|_{2,\infty}+\left\|\oU\oLambda\left(\bR-\bL\right)\right\|_{2,\infty}+\left\|\oU\left(\bR \oLambda^* - \oLambda\bR\right)\right\|_{2,\infty}+\left\|\bDelta \oU^*\right\|_{2,\infty}}{\minsigma^*}.
\end{align*}
We now bound the numerator. Note that the last term is already bounded by Lemma \ref{lemma5}.

\textbf{Control} $\left\|\oU\oLambda\bL-\oX\oU^*\right\|_{2,\infty}$: Combine \eqref{lem4eq7} and \eqref{lem4eq8} we know that
\begin{align*}
    \left\|\oU\bL-\oU^*\right\|_{2,\infty}\leq  &\frac{1}{\minsigma^*}\left\|\oU\oLambda\bL-\oX\oU^*\right\|_{2,\infty}+\frac{1}{\minsigma^*}\left\|\oW\oU^*\right\|_{2,\infty}+\frac{1}{\minsigma^*}\left\|\oU\bL\oLambda^*-\oU\oLambda\bL\right\|_{2,\infty} \\
    \lesssim & \frac{1}{\minsigma^*}\left\|\oU\oLambda\bL-\oX\oU^*\right\|_{2,\infty}+\frac{1}{\minsigma^*}\left\|\oW\oU^*\right\|_{2,\infty}\\
    &+\frac{\xi_1}{\minsigma^\star}\left(\sqrt{\frac{(K-1)\mu^*}{n}}+ \left\|\oU\bL-\oU^*\right\|_{2,\infty}\right),
 \end{align*}
 where $\xi_1$ is defined by \eqref{eq:define_xi1}. Since we assumed $\xi_1/\minsigma^*\ll 1$, we further have
\begin{align*}
    \left\|\oU\bL-\oU^*\right\|_{2,\infty}
    \lesssim & \frac{1}{\minsigma^*}\left\|\oU\oLambda\bL-\oX\oU^*\right\|_{2,\infty}+ \frac{1}{\minsigma^*}\left\|\oW\oU^*\right\|_{2,\infty} + \frac{\xi_1}{\minsigma^\star} \sqrt{\frac{(K-1)\mu^*}{n}}.
\end{align*}
Plugging this in Lemma \ref{lemma3} we get
\begin{align*}
    \left\|\oU\oLambda\bL-\oX\oU^*\right\|_{2,\infty}\lesssim &\Bigg(\frac{n \theta_{\text{max}}^2}{\minsigma^{*2}}+\frac{\log n}{\minsigma^*}\Bigg)\Bigg(\left\|\oU\oLambda\bL-\oX\oU^*\right\|_{2,\infty}+\left\|\oW\oU^*\right\|_{2,\infty}+ \sqrt{\frac{(K-1)\mu^*}{n}}\frac{\xi_1}{\minsigma^\star} \Bigg) \\
    &+\frac{\sqrt{(K-1)n\log n}}{\minsigma^*}\theta_{\text{max}}+\frac{\kappa^*}{\minsigma^{*}}\sqrt{(K-1)\mu^*n}\theta_{\text{max}}^2\\
    &+\frac{(\sqrt{(K-1)n}\theta_{\text{max}}+\log n) n\theta_{\text{max}}^2}{\lambda_1^*\minsigma^*} + \frac{(\sqrt{n}\theta_{\text{max}}+\log n)^2\sqrt{(K-1)\mu^*/n}}{\minsigma^*}.
\end{align*}
Since $\max\{\sqrt{n}\theta_{\text{max}}, \log n\}\ll \minsigma^*$, and by \eqref{lem4eq9}, we know that
\begin{align*}
    \left\|\oU\oLambda\bL-\oX\oU^*\right\|_{2,\infty}\lesssim &\left(\frac{n \theta_{\text{max}}^2}{\minsigma^{*2}}+\frac{\log n}{\minsigma^*}\right)\left\|\oW\oU^*\right\|_{2,\infty}+\frac{\sqrt{(K-1)n\log n}}{\minsigma^*}\theta_{\text{max}}+\frac{\kappa^*}{\minsigma^{*}}\sqrt{(K-1)\mu^*n}\theta_{\text{max}}^2 \\
    &+ \left(\frac{n \theta_{\text{max}}}{\minsigma^{*2}}+\log n\right) \sqrt{\frac{(K-1)\mu^*}{n}}\frac{\xi_1}{\minsigma^\star} \\
    &+\frac{(\sqrt{(K-1)n}\theta_{\text{max}}+\log n) n\theta_{\text{max}}^2}{\lambda_1^*\minsigma^*} + \frac{(\sqrt{n}\theta_{\text{max}}+\log n)^2\sqrt{(K-1)\mu^*/n}}{\minsigma^*} \\
    \lesssim & \frac{\sqrt{(K-1)n\log n}+\kappa^*\sqrt{(K-1)\mu^*n}\theta_{\text{max}}^2 }{\minsigma^*}+\frac{n\theta_{\text{max}}^2\log n + \sqrt{K-1} n^{1.5}\theta_{\text{max}}^3}{\lambda_1^*\minsigma^*}
\end{align*}
since we assumed $\mu^*\log^2 n\lesssim n$. According to Lemma \ref{eigenvaluelemma}, since $K^{-1}\left\|\btheta\right\|_2^2\lesssim \lambda_1^*$ and $\minsigma\asymp \beta_n K^{-1}\left\|\btheta\right\|_2^2$, we have, using $K\log n\lesssim n$,
\begin{align}
     \left\|\oU\oLambda\bL-\oX\oU^*\right\|_{2,\infty}&\lesssim \frac{K^{1.5}\log^{0.5}n}{n^{0.5}\beta_n\theta_{\text{max}}^2}+\frac{\kappa^* K^{1.5}\sqrt{\mu^*}}{n^{0.5}\beta_n}+\frac{K^2\log n}{n\beta_n\theta_{\text{max}}^2} + \frac{K^{2.5}}{n^{0.5}\beta_n\theta_{\text{max}}}  \nonumber \\
     &\lesssim \frac{K^{1.5}\log^{0.5}n}{n^{0.5}\beta_n\theta_{\text{max}}^2}+\frac{\kappa^* K^{1.5}\sqrt{\mu^*}}{n^{0.5}\beta_n} + \frac{K^{2.5}}{n^{0.5}\beta_n\theta_{\text{max}}}\label{denoisingthmeq4}.
\end{align}

\textbf{Control} $\left\|\oU\oLambda\left(\bR-\bL\right)\right\|_{2,\infty}+\left\|\oU\left(\bR \oLambda^* - \oLambda\bR\right)\right\|_{2,\infty}$: By Lemma \ref{lemma1} we have
\begin{align*}
    \left\|\oU\oLambda\left(\bR-\bL\right)\right\|_{2,\infty}&\leq \left\|\oU\right\|_{2,\infty}\left\|\oLambda\left(\bR-\bL\right)\right\| \leq \maxsigma^*\left\|\oU\right\|_{2,\infty}\left\|\bR-\bL\right\|  \nonumber \\
    &\lesssim \frac{n\theta_{\text{max}}^2\maxsigma^*}{\minsigma^{*2}}\left\|\oU\right\|_{2,\infty} = \frac{\kappa^*n\theta_{\text{max}}^2}{\minsigma^{*2}}\left\|\oU\right\|_{2,\infty}.
\end{align*}
By Lemma \ref{lemma2} we have
\begin{align*}
    \left\|\oU\left(\bR \oLambda^* - \oLambda\bR\right)\right\|_{2,\infty}&\leq \left\|\oU\right\|_{2,\infty}\left\|\bR \oLambda^* - \oLambda\bR\right\| = \left\|\oU\right\|_{2,\infty}\left\| \oLambda^* - \bR^\top\oLambda\bR\right\| \\
    &\lesssim \left(\frac{\kappa^*n\theta_{\text{max}}^2}{\minsigma^{*}} +\sqrt{(K-1)\log n}\theta_{\text{max}}\right)\left\|\oU\right\|_{2,\infty}  
\end{align*}
As a result, by Lemma \ref{eigenvaluelemma} we know that
\begin{align}
    \left\|\oU\oLambda\left(\bR-\bL\right)\right\|_{2,\infty}+\left\|\oU\left(\bR \oLambda^* - \oLambda\bR\right)\right\|_{2,\infty}&\lesssim \left(\frac{\kappa^*n\theta_{\text{max}}^2}{\minsigma^{*}} +\sqrt{(K-1)\log n}\theta_{\text{max}}\right)\left\|\oU\right\|_{2,\infty}  \nonumber \\
    &\lesssim \left(\frac{\kappa^*K}{\beta_n}+K^{0.5}\log^{0.5}n\theta_{\text{max}}\right)\left\|\oU\right\|_{2,\infty}\label{denoisingthmeq1}
\end{align}
By lemma \ref{lemma1} we know that $\sigma_{K-1}(\bL)\geq 1/2$. As a result, by Lemma \ref{lemma4} and Lemma \ref{eigenvaluelemma} we have
\begin{align}
    \left\|\oU\right\|_{2,\infty} \leq & 2\left\|\oU\bL\right\|_{2,\infty}\lesssim \left\|\oU\bL-\oU^*\right\|_{2,\infty} + \left\|\oU^*\right\|_{2,\infty} \nonumber \\
    \lesssim &\frac{\kappa^*}{\minsigma^{*2}}\sqrt{(K-1)\mu^*n}\theta_{\text{max}}^2+\frac{\sqrt{(K-1)n\log n}}{\minsigma^{*2}}\theta_{\text{max}} + \frac{\sqrt{K-1} n^{1.5}\theta_{\text{max}}^3}{\lambda_1^*\minsigma^{*2}}   \nonumber \\
    &+\frac{\sqrt{(K-1)\log n}\theta_{\text{max}} + \log n\sqrt{(K-1)\mu^* / n} + \sqrt{\mu^*}\theta_{\text{max}}}{\minsigma^*}+\frac{n\theta_{\text{max}}^2}{\lambda_1^*\minsigma^*} + \sqrt{\frac{(K-1)\mu^*}{n}}  \nonumber \\
    \lesssim &\frac{\kappa^*}{\minsigma^{*2}}\sqrt{(K-1)\mu^*n}\theta_{\text{max}}^2+\frac{\sqrt{(K-1)n\log n}}{\minsigma^{*2}}\theta_{\text{max}} + \frac{\sqrt{K-1} n^{1.5}\theta_{\text{max}}^3}{\lambda_1^*\minsigma^{*2}}   \nonumber \\
    &+\frac{\sqrt{(K-1)\log n}\theta_{\text{max}} }{\minsigma^*}+\frac{n\theta_{\text{max}}^2}{\lambda_1^*\minsigma^*} + \sqrt{\frac{(K-1)\mu^*}{n}}  \nonumber \\
    \lesssim & \frac{\kappa^*K^{2.5}\sqrt{\mu^*}}{n^{1.5}\beta_n^2\theta_{\text{max}}^2}+\frac{K^{2.5}\log^{0.5}n}{n^{1.5}\beta_n^2\theta_{\text{max}}^3}+\frac{K^{3.5}}{n^{1.5}\beta_n^2\theta_{\text{max}}^3} +\frac{K^{1.5}\log^{0.5}n}{n\beta_n\theta_{\text{max}}}+\frac{K^2}{n\beta_n\theta_{\text{max}}^2} +\frac{K^{0.5}\sqrt{\mu^*}}{n^{0.5}} :=\xi_3\label{denoisingthmeq2}
\end{align}
with probability at least $1-O(n^{-10})$.
Combine ~\eqref{denoisingthmeq1} and ~\eqref{denoisingthmeq2} we get
\begin{align}
    &\left\|\oU\oLambda\left(\bR-\bL\right)\right\|_{2,\infty}+\left\|\oU\left(\bR \oLambda^* - \oLambda\bR\right)\right\|_{2,\infty}  \lesssim \xi_3 \left(\frac{\kappa^*K}{\beta_n}+K^{0.5}\log^{0.5}n\theta_{\text{max}}\right)\label{denoisingthmeq5}
\end{align}
with probability at least $1-O(n^{-10})$. 

\textbf{Control} $\left\|\bDelta \oU^*\right\|_{2,\infty} $: Plugging Lemma \ref{eigenvaluelemma} in Lemma \ref{lemma5} we get:
\begin{align}
\left\|\bDelta\oU^*\right\|_{2,\infty} &\lesssim\frac{\sqrt{(K-1)\mu^*n}\theta_{\text{max}}^2}{\lambda_1^*}+\frac{\sqrt{K-1}\mu^*\log ^2n}{n\lambda_1^*}+\frac{n^{1.5}\theta_{\text{max}}^3}{\lambda_1^{*2}} \nonumber \\
&\lesssim \frac{K^{1.5}\sqrt{\mu^*}}{n^{0.5}} +\frac{K^{1.5}\mu^*\log ^2n}{n^2\theta_{\text{max}}^2} +\frac{K^2}{n^{0.5}\theta_{\text{max}}} \lesssim \frac{K^{1.5}\sqrt{\mu^*}}{n^{0.5}} +\frac{K^2}{n^{0.5}\theta_{\text{max}}}\label{denoisingthmeq7}
\end{align} since $\mu^*\log^2 n\lesssim n$.

Combine \eqref{denoisingthmeq4}, \eqref{denoisingthmeq5} and Lemma \eqref{denoisingthmeq7} we get
\begin{align*}
    &\left\|\oU\oLambda\bL-\oX\oU^*\right\|_{2,\infty}+\left\|\oU\oLambda\left(\bR-\bL\right)\right\|_{2,\infty}+\left\|\oU\left(\bR \oLambda^* - \oLambda\bR\right)\right\|_{2,\infty}+\left\|\bDelta \oU^*\right\|_{2,\infty}  \\
    \lesssim & \xi_3 \left(\frac{\kappa^*K}{\beta_n}+K^{0.5}\log^{0.5}n\theta_{\text{max}}\right) \\
    &+\frac{K^{1.5}\log^{0.5}n}{n^{0.5}\beta_n\theta_{\text{max}}^2}+\frac{\kappa^* K^{1.5}\sqrt{\mu^*}}{n^{0.5}\beta_n} + \frac{K^{2.5}}{n^{0.5}\beta_n\theta_{\text{max}}}+\frac{K^{1.5}\sqrt{\mu^*}}{n^{0.5}} +\frac{K^2}{n^{0.5}\theta_{\text{max}}}  \\
     \lesssim & \xi_3 \left(\frac{\kappa^*K}{\beta_n}+K^{0.5}\log^{0.5}n\theta_{\text{max}}\right) +\frac{K^{1.5}\log^{0.5}n}{n^{0.5}\beta_n\theta_{\text{max}}^2}+ \frac{K^{2.5}}{n^{0.5}\beta_n\theta_{\text{max}}}=:\xi_4
\end{align*}
with probability at least $1-O(n^{-10})$, since 
    $n\gtrsim \max\left\{\mu^*\log^2 n, K\log n\right\}$, by our assumption.
Therefore, for $\boldsymbol{\Psi}_{\oU}$ we have
\begin{align*}
\left\|\boldsymbol{\Psi}_{\oU}\right\|_{2,\infty}\lesssim \frac{\xi_4}{\theta_{\text{max}}^2}.
\end{align*}
with probability at least $1-O(n^{-10})$, completing the proof.
\end{proof}

\subsection{Proof of Theorem \ref{mainthmrexpansion}}\label{mainthmrexpansionproof}
\begin{proof}
Note that by \cite[Lemma B.2]{jin2017estimating}, we have $(\bu_1^*)_i\asymp \theta_i/\left\|\btheta\right\|_2$ for all $i\in [n]$. Combine this with Lemma \ref{u1differenceinfty}, we can ensure $(\wh\bu_1)_i>0$ and $(\wh\bu_1)_i\asymp (\bu_1^*)_i$ for all $i\in [n]$ as long as
\begin{align*}
   \left\|\wh\bu_1-\bu_1^*\right\|_\infty&\lesssim \frac{K\log^{0.5} n+K^{1.5}\sqrt{\mu^*}}{n\theta_{\text{max}}}+\frac{K\sqrt{\mu^*}\log n}{n^{1.5}\theta_{\text{max}}^2}+\frac{K^3}{n^{1.5}\theta_{\text{max}}^3} \\
   &\ll \min_{i\in [n]}(\bu_1^*)_i\asymp \frac{\min_{i\in [n]}\theta_i}{\left\|\btheta\right\|_2}\asymp \frac{1}{\sqrt{n}}.
\end{align*}

We have already derived the expansion of $\wh\bu_1-\bu_1^*$ and $\oU\bR-\oU^*$ in Theorem \ref{u1expansion}, Theorem \ref{deltainfty} and Theorem \ref{mainthmmatrixdenoising}. For $i\in [n]$ we have 
\begin{align*}
    &(\wh\bu_1)_i = (\bu_1^*)_i+\left[\bN_1\bW\bu_1^*\right]_i+\delta_i , \\
    &[\oU\bR]_{i,\cdot}^\top = [\oU^*]_{i,\cdot}^\top+\bw_i + \left[\boldsymbol{\Psi}_{\oU}\right]_{i,\cdot}^\top.
\end{align*}
Since $[\oU\bR]_{i,\cdot}^\top = \bR^\top[\oU]_{i,\cdot}^\top$ and $\wh\br_i =[\oU]_{i,\cdot}^\top /(\wh\bu_1)_i $, we know that $[\oU\bR]_{i,\cdot}^\top / (\wh\bu_1)_i = \bR^\top\wh\br_i$. As a result, we have
\begin{align*}
    \bR^\top\wh\br_i-\br_i^* =& \frac{[\oU\bR]_{i,\cdot}^\top }{ (\wh\bu_1)_i} - \frac{[\oU^*]_{i,\cdot}^\top}{(\bu_1^*)_i} = \frac{[\oU^*]_{i,\cdot}^\top+\bw_i + \left[\boldsymbol{\Psi}_{\oU}\right]_{i,\cdot}^\top}{(\bu_1^*)_i+\left[\bN_1\bW\bu_1^*\right]_i+\delta_i} -\frac{[\oU^*]_{i,\cdot}^\top}{(\bu_1^*)_i}  \\
    = & \frac{(\bu_1^*)_i\bw_i+ (\bu_1^*)_i\left[\boldsymbol{\Psi}_{\oU}\right]_{i,\cdot}^\top -\left[\bN_1\bW\bu_1^*\right]_i[\oU^*]_{i,\cdot}^\top - \delta_i[\oU^*]_{i,\cdot}^\top}{\left((\bu_1^*)_i+\left[\bN_1\bW\bu_1^*\right]_i+\delta_i\right)(\bu_1^*)_i}   \\
    = & \frac{(\bu_1^*)_i\bw_i - \left[\bN_1\bW\bu_1^*\right]_i[\oU^*]_{i,\cdot}^\top}{(\bu_1^*)_i^2}\cdot \frac{(\bu_1^*)_i}{(\bu_1^*)_i+\left[\bN_1\bW\bu_1^*\right]_i+\delta_i}  \\
    &+\frac{(\bu_1^*)_i\left[\boldsymbol{\Psi}_{\oU}\right]_{i,\cdot}^\top - \delta_i[\oU^*]_{i,\cdot}^\top}{\left((\bu_1^*)_i+\left[\bN_1\bW\bu_1^*\right]_i+\delta_i\right)(\bu_1^*)_i} \\
    = & \frac{1}{(\bu_1^*)_i}\left(\bw_i-\left[\bN_1\bW\bu_1^*\right]_i\br_i^*\right)\left(1-\frac{\left[\bN_1\bW\bu_1^*\right]_i+\delta_i}{(\bu_1^*)_i+\left[\bN_1\bW\bu_1^*\right]_i+\delta_i}\right) \\
    & +\frac{\left[\boldsymbol{\Psi}_{\oU}\right]_{i,\cdot}^\top-\delta_i\br_i^*}{(\bu_1^*)_i+\left[\bN_1\bW\bu_1^*\right]_i+\delta_i}.
\end{align*}
We let 
\begin{align*}
    \gamma_i = -\frac{\left[\bN_1\bW\bu_1^*\right]_i+\delta_i}{(\bu_1^*)_i+\left[\bN_1\bW\bu_1^*\right]_i+\delta_i}\quad \text{and} \quad [\boldsymbol{\Psi}_{\br}]_{i,\cdot}^\top = \frac{\left[\boldsymbol{\Psi}_{\oU}\right]_{i,\cdot}^\top-\delta_i\br_i^*}{(\bu_1^*)_i+\left[\bN_1\bW\bu_1^*\right]_i+\delta_i}.
\end{align*}
Then the expansion can be written as 
\begin{align*}
    \bR^\top\wh\br_i-\br_i^* = \frac{1+\gamma_i}{(\bu_1^*)_i}\left(\bw_i-\left[\bN_1\bW\bu_1^*\right]_i\br_i^*\right)+[\boldsymbol{\Psi}_{\br}]_{i,\cdot}^\top.
\end{align*}
On the other hand, by Lemma \ref{u1differenceinfty} we know that
\begin{align*}
    \max_{1\leq i\leq n}\left|\left[\bN_1\bW\bu_1^*\right]_i+\delta_i\right| = \left\|\wh\bu_1-\bu_1^*\right\|_\infty&\lesssim \frac{K\log^{0.5} n+K^{1.5}\sqrt{\mu^*}}{n\theta_{\text{max}}}+\frac{K\sqrt{\mu^*}\log n}{n^{1.5}\theta_{\text{max}}^2}+\frac{K^3}{n^{1.5}\theta_{\text{max}}^3}\\
    &\ll  \min_{1\leq i\leq n}(\bu_1^*)_i\asymp \frac{1}{\sqrt{n}}.
\end{align*}
Therefore, $\gamma_i$ and $[\boldsymbol{\Psi}_{\br}]_{i,\cdot}^\top$ can be controlled as
\begin{align*}
    &|\gamma_i|\lesssim  \frac{\left\|\wh\bu_1-\bu_1^*\right\|_\infty}{(\bu_1^*)_i}\lesssim  \frac{K\log^{0.5} n+K^{1.5}\sqrt{\mu^*}}{n^{0.5}\theta_{\text{max}}}+\frac{K\sqrt{\mu^*}\log n}{n\theta_{\text{max}}^2}+\frac{K^3}{n\theta_{\text{max}}^3}, \\
    &\left\|[\boldsymbol{\Psi}_{\br}]_{i,\cdot}\right\|_2 \lesssim \frac{\left\|\left[\boldsymbol{\Psi}_{\oU}\right]_{i,\cdot}\right\|_2+|\delta_i|\left\|\br_i^*\right\|_2}{(\bu_1^*)_i} \lesssim \sqrt{n}\left(\left\|\boldsymbol{\Psi}_{\oU}\right\|_{2,\infty}+\left\|\br_i^*\right\|_2\left\|\boldsymbol{\delta}\right\|_{\infty}\right).
\end{align*}
\end{proof}

\subsection{Proof of Theorem \ref{estimationerr}}\label{estimationerrproof}
\begin{proof}
By Theorem \ref{mainthmrexpansion}, with probability at least $1-O(n^{-10})$, for all $i\in [n]$ we have
\begin{align}
    \left\|\bR^\top\wh\br_i-\br_i^*\right\|_2 &\lesssim  \frac{1}{(\bu_1^*)_i}\left(\left\|\bw_i\right\|_2+\frac{1}{\lambda_1^*}\left[\bN\bW\bu_1^*\right]_i\left\|\br_i^*\right\|_2\right)+\left\|[\boldsymbol{\Psi}_{\br}]_{i,\cdot}\right\|_2  \nonumber  \\
    &\lesssim \frac{1}{(\bu_1^*)_i}\left(\left\|\bw_i\right\|_2+\left\|\boldsymbol{\Psi}_{\oU}\right\|_{2,\infty} +\frac{1}{\lambda_1^*}\left|\left[\bN\bW\bu_1^*\right]_i\right|\left\|\br_i^*\right\|_2+\left\|\boldsymbol{\delta}\right\|_\infty\left\|\br_i^*\right\|_2\right).\label{estimationerreq0}
\end{align}

On one hand, we know that 
\begin{align}
    \left\|\bw_i\right\|_2&\leq \left\|\bW_{i, \cdot}\oU^*\left(\oLambda^*\right)^{-1}\right\|_2+\left\|\left[\bu_1^*\bu_1^{*\top}\bW\bN\right]_{i,\cdot}\oU^*\left(\oLambda^*\right)^{-1}\right\|_2 \nonumber \\
    &= \left\|\bW_{i, \cdot}\oU^*\left(\oLambda^*\right)^{-1}\right\|_2+(\bu_1^*)_i\left\|\bu_1^{*\top}\bW\bN\oU^*\left(\oLambda^*\right)^{-1}\right\|_2. \label{estimationerreq1}
\end{align}
By Lemma \ref{Wconcentration2} we know that
\begin{align}
    \left\|\bW_{i, \cdot}\oU^*\left(\oLambda^*\right)^{-1}\right\|_2&\lesssim \sqrt{\log n}\theta_{\text{max}}\left\|\oU^*\left(\oLambda^*\right)^{-1}\right\|_F +\log n \left\|\oU^*\left(\oLambda^*\right)^{-1}\right\|_{2,\infty} \nonumber\\
    &\lesssim \frac{\sqrt{(K-1)\log n}\theta_{\text{max}}}{\minsigma^*}+\frac{\log n}{\minsigma^*}\sqrt{\frac{(K-1)\mu^*}{n}}\label{estimationerreq2}
\end{align}
with probability at least $1-O(n^{-14})$. And, by Lemma \ref{Wconcentration6} and Lemma \ref{lemmaN1N2} we have
\begin{align}
    &\left\|\bu_1^{*\top}\bW\bN\oU^*\left(\oLambda^*\right)^{-1}\right\|_2 = \sqrt{\sum_{i=2}^{K}\left(\frac{\bu_1^{*\top}\bW\bN\bu_i^*}{\lambda_i^*}\right)^2}\leq\frac{1}{\minsigma^*}\sqrt{\sum_{i=2}^{K}\left(\bu_1^{*\top}\bW\bN\bu_i^*\right)^2} \nonumber\\
    \lesssim & \frac{\sqrt{K-1}}{\minsigma^*}\left( \sqrt{\log n} \theta_{\text{max}}+\log n \sqrt{\mu^*/n}\right)\left\|\bN\right\| \lesssim \frac{\sqrt{(K-1)\log n}\theta_{\text{max}}}{\minsigma^*}+\frac{\log n}{\minsigma^*}\sqrt{\frac{(K-1)\mu^*}{n}} \label{estimationerreq3}
\end{align}
with probability at least $1-O(n^{-14})$. Plugging \eqref{estimationerreq2} and \eqref{estimationerreq3} in \eqref{estimationerreq1} we get 
\begin{align*}
    \left\|\bw_i\right\|_2&\lesssim (1+(\bu_1^*)_i)\left(\frac{\sqrt{(K-1)\log n}\theta_{\text{max}}}{\minsigma^*}+\frac{\log n}{\minsigma^*}\sqrt{\frac{(K-1)\mu^*}{n}}\right)\\
    &\lesssim \frac{\sqrt{(K-1)\log n}\theta_{\text{max}}}{\minsigma^*}+\frac{\log n}{\minsigma^*}\sqrt{\frac{(K-1)\mu^*}{n}}
\end{align*}
with probability at least $1-O(n^{-14})$. Combine this with Theorem \ref{mainthmmatrixdenoising}, we get
\begin{align}
\left\|\bw_i\right\|_2+\left\|\boldsymbol{\Psi}_{\oU}\right\|_{2,\infty}\lesssim \frac{K^{1.5}\log^{0.5}n}{n\beta_n \theta_{\text{max}}}+\frac{K^{1.5}\log n\sqrt{\mu^*}}{n^{1.5}\beta_n\theta_{\text{max}}^2}\label{estimationerreq4}
\end{align}
for all $i\in [n]$ with probability at least $1-O(n^{-10})$. On the other hand, by Corollary \ref{corN1N2} and Lemma \ref{Wconcentration3} we know that
\begin{align*}
    \frac{1}{\lambda_1^*}\left|\left[\bN\bW\bu_1^*\right]_i\right|&\leq \left\|\bN_1\bW\bu_1^*\right\|_\infty\lesssim \frac{1}{\lambda_1^*}\left\|\bW\bu_1^*\right\|_\infty +\sqrt{\frac{(K-1)\mu^*}{n\lambda_1^{*2}}}\left\|\bW\bu_1^*\right\|_2 \\
    &\lesssim \frac{1}{\lambda_1^*}\left(\sqrt{\log n} \theta_{\text{max}}+\log n\sqrt{\mu^*/n}\right)+\sqrt{\frac{(K-1)\mu^*}{n\lambda_1^{*2}}}\sqrt{n}\theta_{\text{max}} \\
    &= \frac{\left(\sqrt{(K-1)\mu^*}+\sqrt{\log n}\right)\theta_{\text{max}}+\log n\sqrt{\mu^*/n}}{\lambda_1^*}
\end{align*}
for all $i\in [n]$ with probability at least $1-O(n^{-13})$. Combine this with Theorem \ref{mainthmu1}, we get 
\begin{align}
    \frac{1}{\lambda_1^*}\left|\left[\bN\bW\bu_1^*\right]_i\right|\left\|\br_i^*\right\|_2+\left\|\boldsymbol{\delta}\right\|_\infty\left\|\br_i^*\right\|_2\lesssim \frac{K\sqrt{\mu^*}+K^{0.5}\log^{0.5}n}{n\theta_{\text{max}}}+\frac{K^{0.5}\log n\sqrt{\mu^*}}{n\theta_{\text{max}}^2}\label{estimationerreq5}
\end{align}
for all $i\in [n]$ with probability at least $1-O(n^{-10})$. 

Plugging \eqref{estimationerreq4} and \eqref{estimationerreq5} in \eqref{estimationerreq0} we get
\begin{align*}
    \left\|\bR^\top\wh\br_i-\br_i^*\right\|_2&\lesssim \frac{1}{(\bu_1^*)_i}\left(\frac{K\sqrt{\mu^*}}{n\theta_{\text{max}}}+\frac{K^{0.5}\log n\sqrt{\mu^*}}{n\theta_{\text{max}}^2}+\frac{K^{1.5}\log^{0.5}n}{n\beta_n \theta_{\text{max}}}+\frac{K^{1.5}\log n\sqrt{\mu^*}}{n^{1.5}\beta_n\theta_{\text{max}}^2}\right)  \\
    &\lesssim \frac{K\sqrt{\mu^*}}{n^{0.5}\theta_{\text{max}}}+\frac{K^{0.5}\log n\sqrt{\mu^*}}{n^{0.5}\theta_{\text{max}}^2}+\frac{K^{1.5}\log^{0.5}n}{n^{0.5}\beta_n \theta_{\text{max}}}+\frac{K^{1.5}\log n\sqrt{\mu^*}}{n\beta_n\theta_{\text{max}}^2}
\end{align*}
for all $i\in [n]$ with probability at least $1-O(n^{-10})$. 
\end{proof}

\subsection{Proof of Theorem \ref{SPmainthm}}\label{SPmainthmproof}
\begin{proof}
Let $\mathcal{K} = \{i_1,i_2,\dots, i_K\}$. For the same reason as the proof of \cite[Lemma E.1]{jin2017estimating}, we know that
\begin{align*}
    \max_{k\in [K]}\min_{i\in \mathcal{K}}\left\|\wh\br_i-\bb_k^*\right\|_2 &\lesssim \max_{i\in [n]}\left\|\bR^\top\wh\br_i-\br_i^*\right\|_2 \\
    &\lesssim \frac{K\sqrt{\mu^*}}{n^{0.5}\theta_{\text{max}}}+\frac{K^{0.5}\log n\sqrt{\mu^*}}{n^{0.5}\theta_{\text{max}}^2}+\frac{K^{1.5}\log^{0.5}n}{n^{0.5}\beta_n \theta_{\text{max}}}+\frac{K^{1.5}\log n\sqrt{\mu^*}}{n\beta_n\theta_{\text{max}}^2}
\end{align*}
with probability at least $1-O(n^{-10})$. And we denote by $\mathcal{A}_2$ this event. We let $\rho(k) = \argmin_{l\in [K]}\left\|\wh\br_{i_l}-\bb_k^*\right\|_2$ for $k\in [K]$. By triangle inequality, under event $\mathcal{A}_2$ we know that
\begin{align}
    \left\|\br_{i_{\rho(k)}}^*-\bb_k^*\right\|_2&\leq \left\|\br_{i_{\rho(k)}}^*-\wh\br_{i_{\rho(k)}}\right\|_2+\left\|\wh\br_{i_{\rho(k)}}-\bb_k^*\right\|_2 \nonumber \\
    &\lesssim \frac{K\sqrt{\mu^*}}{n^{0.5}\theta_{\text{max}}}+\frac{K^{0.5}\log n\sqrt{\mu^*}}{n^{0.5}\theta_{\text{max}}^2}+\frac{K^{1.5}\log^{0.5}n}{n^{0.5}\beta_n \theta_{\text{max}}}+\frac{K^{1.5}\log n\sqrt{\mu^*}}{n\beta_n\theta_{\text{max}}^2}. \label{SPeq1}
\end{align}
If $i_{\rho(k)}\notin \VV_k$, we know that $\|\br_{i_{\rho(k)}}^*-\bb_k^*\|_2\geq \min_{l\in [K]}\min_{i\in [n]\backslash \VV_l}\left\|\br_i^*-\bb_l^*\right\|_2 = \Delta_{\br}$. In this case, \eqref{SPeq1} cannot hold for appropriately chosen $C_{\text{SP}}$. Therefore, for appropriately chosen $C_{\text{SP}}$, we must have $i_{\rho(k)}\in \VV_k$. This also implies that $\rho$ is a permutation of $[K]$, since the cardinality of $\mathcal{K}$ is exactly $K$. 

For any $k\in [K]$ and $j\in [n]$, if $j\in \VV_k$, by triangle inequality we have
\begin{align*}
    \left\|\wh\br_j-\wh\br_{i_{\rho(k)}}\right\|_2 &=\left\|\bR^\top\left(\wh\br_j-\wh\br_{i_{\rho(k)}}\right)\right\|_2\leq \left\|\bR^\top\wh\br_j-\br_j^*\right\|_2 +\left\|\br_j^* - \br_{i_{\rho(k)}}^*\right\|_2 + \left\| \br_{i_{\rho(k)}}^* - \bR^\top\wh\br_{i_{\rho(k)}}\right\|_2 \\
    &\leq \left\|\bb_k^*-\bb_k^*\right\|_2+2\max_{i\in [n]}\left\|\bR^\top\wh\br_i-\br_i^*\right\|_2  \\
    &\lesssim \frac{K\sqrt{\mu^*}}{n^{0.5}\theta_{\text{max}}}+\frac{K^{0.5}\log n\sqrt{\mu^*}}{n^{0.5}\theta_{\text{max}}^2}+\frac{K^{1.5}\log^{0.5}n}{n^{0.5}\beta_n \theta_{\text{max}}}+\frac{K^{1.5}\log n\sqrt{\mu^*}}{n\beta_n\theta_{\text{max}}^2}
\end{align*}
under event $\mathcal{A}_2$. As a result, for for appropriately chosen $C_{\text{SP}}$, it holds that $\|\wh\br_j-\wh\bb_{\rho(k)}'\|_2 = \|\wh\br_j-\wh\br_{i_{\rho(k)}}\|_2\leq \phi$. In other words, we must have $j\in \wh\VV_{\rho(k)}$. On the other hand, if $j\notin \VV_k$, again by triangle inequality we have
\begin{align*}
    \left\|\br_j^* - \bb_{k}^*\right\|_2 = \left\|\br_j^* - \br_{i_{\rho(k)}}^*\right\|_2&\leq \left\|\br_j^*-\bR^\top\wh\br_j\right\|_2 + \left\|\bR^\top\left(\wh\br_j-\wh\br_{i_{\rho(k)}}\right)\right\|_2 + \left\| \bR^\top\wh\br_{i_{\rho(k)}}-\br_{i_{\rho(k)}}^*\right\|_2 \\
    & = \left\|\br_j^*-\bR^\top\wh\br_j\right\|_2 + \left\|\wh\br_j-\wh\br_{i_{\rho(k)}}\right\|_2 + \left\| \bR^\top\wh\br_{i_{\rho(k)}}-\br_{i_{\rho(k)}}^*\right\|_2.
\end{align*}
As a result, $\|\wh\br_j-\wh\br_{i_{\rho(k)}}\|_2$ can be lower bounded as
\begin{align*}
    \left\|\wh\br_j-\wh\br_{i_{\rho(k)}}\right\|_2 &\geq \left\|\br_j^* - \bb_{k}^*\right\|_2 - \left\|\br_j^*-\bR^\top\wh\br_j\right\|_2-  \left\| \bR^\top\wh\br_{i_{\rho(k)}}-\br_{i_{\rho(k)}}^*\right\|_2 \\
    &\geq \min_{l\in [K]}\min_{i\in [n]\backslash \VV_l}\left\|\br_i^*-\bb_l^*\right\|_2 - 2 \max_{i\in [n]}\left\|\bR^\top\wh\br_i-\br_i^*\right\|_2  \\
    &= \Delta_{\br} - 2 \max_{i\in [n]}\left\|\bR^\top\wh\br_i-\br_i^*\right\|_2 > \phi
\end{align*}
as long as $C_{\text{SP}}$ satisfies 
\begin{align*}
    \max_{i\in [n]}\left\|\bR^\top\wh\br_i-\br_i^*\right\|_2\leq \frac{C_{\text{SP}}}{4}\varepsilon_1
\end{align*}
under event $\mathcal{A}_2$, we have $\|\wh\br_j-\wh\br_{i_{\rho(k)}}\|_2 > \phi$. This implies $j \notin \wh\VV_{\rho(k)}$. To sum up, we have $\wh\VV_{\rho(k)} = \VV_k$ for all $k\in [K]$ under event $\mathcal{A}_2$.
\end{proof}

\subsection{Proof of Corollary \ref{vertexexpansion}}\label{vertexexpansionproof}
\begin{proof}
    Let $\rho(\cdot)$ be the permutation from Theorem \ref{SPmainthm}. According to Theorem \ref{SPmainthm}, with probability at least $1-O(n^{-10})$, we have $\wh\VV_{\rho(k)} = \VV_{k}$ for all $k\in[K]$. Combine this fact with Algorithm \ref{alg1} one can see that
    \begin{align*}
        \wh\bb_{\rho(k)} = \frac{1}{|\wh\VV_{\rho(k)}|}\sum_{i\in \wh\VV_{\rho(k)}}\wh\br_i = \frac{1}{|\VV_k|}\sum_{i\in \VV_{k}}\wh\br_i.
    \end{align*}
    As a result, we can write
    \begin{align*}
         \bR^\top\wh\bb_{\rho(k)}-\bb_k^* = \frac{1}{|\VV_k|}\sum_{i\in \VV_{k}}\left(\wh\br_i - \bb_k^*\right) = \frac{1}{|\VV_k|}\sum_{i\in \VV_{k}}\left(\wh\br_i - \br_i^*\right).
    \end{align*}
    Then by \eqref{eq:approxerror} we know that
    \begin{align*}
        \left\|[\boldsymbol{\Psi}_{\bb}]_{k,\cdot}\right\|_2 &= \left\|\frac{1}{|\VV_k|}\sum_{i\in \VV_{k}}\left(\wh\br_i - \br_i^*\right) - \frac{1}{\left|\VV_k\right|}\sum_{i\in \VV_k}\Delta\br_i\right\|_2 \\
        &\leq \frac{1}{|\VV_k|}\left\|\sum_{i\in \VV_{k}}\left(\wh\br_i - \br_i^*-\Delta\br_i\right)\right\|_2 \lesssim \varepsilon_2
    \end{align*}
    with probability at least $1-O(n^{-10})$.
\end{proof}

\subsection{Proof of Corollary \ref{membershipreconstructionlem1}} \label{membershipreconstructionlem1proof}
\begin{proof}
Considering the trace on both sides of \eqref{eqfirstsubspaceexpansion} we get
\begin{align*}
    \wh\lambda_1 - \lambda_1^* &= \textbf{Tr}\left[\wh\lambda_1\wh\bu_1\wh\bu_1^\top - \lambda_1^*\bu_1^*\bu_1^{*\top}\right] \\
    &=\textbf{Tr}\left[\bu_1^*\bu_1^{*\top}\bW\bu_1^*\bu_1^{*\top}+ \bN\bW\bu_1^*\bu_1^{*\top}+\left(\bW\bu_1^*\bu_1^{*\top}\right)^\top\bN + \bDelta\right] \\
    &= \textbf{Tr}\left[\bW\bu_1^*\bu_1^{*\top}\bu_1^*\bu_1^{*\top}+ \bN\bW\bu_1^*\bu_1^{*\top}+\bN^\top\bW\bu_1^*\bu_1^{*\top}\right] + \textbf{Tr}\left[\bDelta\right]  \\
    &=\textbf{Tr}\left[\bW\bu_1^*\bu_1^{*\top}+ 2\bN\bW\bu_1^*\bu_1^{*\top}\right]+ \textbf{Tr}\left[\bDelta\right].
\end{align*}
And, from \eqref{eqfirstsubspaceexpansion} we also know that 
\begin{align*}
    \bDelta &= \wh\lambda_1\wh\bu_1\wh\bu_1^\top - \lambda_1^*\bu_1^*\bu_1^{*\top} - \bu_1^*\bu_1^{*\top}\bW\bu_1^*\bu_1^{*\top}- \bN\bW\bu_1^*\bu_1^{*\top}-\left(\bW\bu_1^*\bu_1^{*\top}\right)^\top\bN  \\
    & = \wh\lambda_1\wh\bu_1\wh\bu_1^\top - \bu_1^*\left(\lambda_1^*\bu_1^{*\top} + \bu_1^{*\top}\bW\bu_1^*\bu_1^{*\top}+ \bu_1^{*\top}\bW\bN\right)-\bN\bW\bu_1^*\bu_1^{*\top}.
\end{align*}
As a result, the rank of $\bDelta$ is at most $3$. Therefore, the trace of $\bDelta$ can be bounded as  $\left|\textbf{Tr}\left[\bDelta\right]\right|\leq \textbf{Rank}\left[\bDelta\right]\left\|\bDelta\right\|\lesssim \frac{n\theta_{\text{max}}^2}{\lambda_1^*}$, completing the proof.
\end{proof}

\subsection{Proof of Lemma \ref{membershipreconstructionlem2}} \label{membershipreconstructionlem2proof}
\begin{proof}
By Corollary \ref{vertexexpansion} we know that
\begin{align}
    \wh\bb_{\rho(k)}^\top\oLambda\wh\bb_{\rho(k)} =& \left(\bb_k^* +\Delta\bb_k + [\boldsymbol{\Psi}_{\bb}]_{k,\cdot}\right)^\top\bR^\top\oLambda \bR\left(\bb_k^* +\Delta\bb_k + [\boldsymbol{\Psi}_{\bb}]_{k,\cdot}\right)  \nonumber \\
     =& \left(\bb_k^* +\Delta\bb_k + [\boldsymbol{\Psi}_{\bb}]_{k,\cdot}\right)^\top\oLambda^*\left(\bb_k^* +\Delta\bb_k + [\boldsymbol{\Psi}_{\bb}]_{k,\cdot}\right) \nonumber \\
     &+\left(\bb_k^* +\Delta\bb_k + [\boldsymbol{\Psi}_{\bb}]_{k,\cdot}\right)^\top\left(\bR^\top\oLambda \bR-\oLambda^*\right)\left(\bb_k^* +\Delta\bb_k + [\boldsymbol{\Psi}_{\bb}]_{k,\cdot}\right). \label{quadraticeq1}
\end{align}
On one hand, by Lemma \ref{lemma2} we have
\begin{align}
    &\left|\left(\bb_k^* +\Delta\bb_k + [\boldsymbol{\Psi}_{\bb}]_{k,\cdot}\right)^\top\left(\bR^\top\oLambda \bR-\oLambda^*\right)\left(\bb_k^* +\Delta\bb_k + [\boldsymbol{\Psi}_{\bb}]_{k,\cdot}\right)\right| \nonumber \\
    \leq & \left\|\bR^\top\oLambda \bR-\oLambda^*\right\|\left\|\bb_k^* +\Delta\bb_k + [\boldsymbol{\Psi}_{\bb}]_{k,\cdot}\right\|^2_2 \nonumber \\
    \lesssim & \left(\frac{\kappa^*n\theta_{\text{max}}^2}{\minsigma^{*}} +\sqrt{(K-1)\log n}\theta_{\text{max}}\right)\left(\sqrt{K-1}+\varepsilon_1\right)^2\lesssim \frac{\kappa^*K^2}{\beta_n}+K^{1.5}\theta_{\text{max}}\log^{0.5}n\label{quadraticeq2}
\end{align}
with probability exceeding $1-O(n^{-10})$. On the other hand, by triangle inequality we know that
\begin{align}
    &\left|\left(\bb_k^* +\Delta\bb_k + [\boldsymbol{\Psi}_{\bb}]_{k,\cdot}\right)^\top\oLambda^*\left(\bb_k^* +\Delta\bb_k + [\boldsymbol{\Psi}_{\bb}]_{k,\cdot}\right) -\bb_k^{*\top}\oLambda^*\bb_k^* -2\bb_k^{*\top}\oLambda^*\Delta\bb_k\right| \nonumber \\
    \leq &\left|[\boldsymbol{\Psi}_{\bb}]_{k,\cdot}^\top\oLambda^*\left(\bb_k^* +\Delta\bb_k + [\boldsymbol{\Psi}_{\bb}]_{k,\cdot}\right)\right| +\left|\left(\bb_k^* +\Delta\bb_k\right)^\top\oLambda^*[\boldsymbol{\Psi}_{\bb}]_{k,\cdot}\right| +\left|\Delta\bb_k^{\top}\oLambda^*\Delta\bb_k\right| \nonumber \\
    \lesssim & \left(\sqrt{K-1} \varepsilon_2+\varepsilon_1^2\right)\maxsigma^*. \label{quadraticeq3}
\end{align}
Plugging \eqref{quadraticeq2} and \eqref{quadraticeq3} in \eqref{quadraticeq1} we get
\begin{align*}
    &\left|\wh\bb_{\rho(k)}^\top\oLambda\wh\bb_{\rho(k)} - \bb_k^{*\top}\oLambda^*\bb_k^* -2\bb_k^{*\top}\oLambda^*\Delta\bb_k\right|  \\
    \lesssim & \frac{\kappa^*K^2}{\beta_n}+K^{1.5}\mu^* \theta_{\text{max}}\log^{0.5}n + \left(K^{0.5} \varepsilon_2+\varepsilon_1^2\right)\maxsigma^*
\end{align*}
with probability at least $1-O(n^{-10})$. Moreover, the left hand side is a small order term compared to $K^{0.5}\varepsilon_1\maxsigma^*$, which controls $\bb_k^{*\top}\oLambda^*\Delta\bb_k$. As a result, the estimation error can be controlled by $K^{0.5}\varepsilon_1\maxsigma^*$,
\end{proof}

\subsection{Proof of Lemma \ref{membershipreconstructionlem3}} \label{membershipreconstructionlem3proof}
\begin{proof}
We denote by
\begin{align*}
\boldsymbol{\wh B} = \begin{bmatrix}
\wh\bb_1 & \wh\bb_2 & \dots & \wh\bb_K\\
1 & 1 & \dots & 1
\end{bmatrix}\in \mathbb{R}^{K\times K},
\end{align*}
By the definition of $\wh\ba_i$, we know that $\boldsymbol{\wh B}\wh\ba_i = [\wh\br_i^\top,1]^\top$. We also denote by
\begin{align*}
    \widetilde\bR = \begin{bmatrix}
\bR & \boldsymbol{0}_{(K-1)\times 1}\\
\boldsymbol{0}_{1\times (K-1)} & 1 
\end{bmatrix}\in \mathbb{R}^{K\times K}
\end{align*}
Let $\rho(\cdot)$ be the permutation from Theorem \ref{SPmainthm}, then we have
\begin{align*}
    \widetilde{\bR}^\top\begin{bmatrix}
    \wh\br_i\\
    1 
    \end{bmatrix}
    =\widetilde{\bR}^\top \boldsymbol{\wh B}\wh\ba_i  = \widetilde{\bR}^\top\sum_{j=1}^K \begin{bmatrix}
    \wh\bb_j\\
    1 
    \end{bmatrix}(\wh\ba_i)_j 
    = \widetilde{\bR}^\top\sum_{j=1}^K \begin{bmatrix}
    \wh\bb_{\rho(j)}\\
    1 
    \end{bmatrix}(\wh\ba_i)_{\rho(j)}
    = \widetilde{\bR}^\top \rho(\boldsymbol{\wh B})\rho(\wh\ba_i).
\end{align*}
Therefore, we can write
\begin{align}
    \bB^*\left(\rho(\wh\ba_i)-\ba_i^*\right) &= \bB^*\rho(\wh\ba_i) - \begin{bmatrix}
    \br_i^*\\
    1 
    \end{bmatrix}
    = \widetilde{\bR}^\top 
    \begin{bmatrix}
    \wh\br_i\\
    1 
    \end{bmatrix}
-\begin{bmatrix}
    \br_i^*\\
    1 
    \end{bmatrix} - \left(\widetilde{\bR}^\top\rho(\boldsymbol{\wh B})-\boldsymbol{B}^*\right) \rho(\wh\ba_i) \nonumber \\
    &= \widetilde{\bR}^\top 
    \begin{bmatrix}
    \wh\br_i\\
    1 
    \end{bmatrix}
-\begin{bmatrix}
    \br_i^*\\
    1 
    \end{bmatrix} - \left(\widetilde{\bR}^\top\rho(\boldsymbol{\wh B})-\boldsymbol{B}^*\right) \ba^*_i - \left(\widetilde{\bR}^\top\rho(\boldsymbol{\wh B})-\boldsymbol{B}^*\right) \left(\rho(\wh\ba_i)-\ba^*_i\right).\label{aexpansioneq1}
\end{align}
Denote by $\boldsymbol{\Psi}_{\bB} = [\boldsymbol{\Psi}_{\bb}, \boldsymbol{0}_{K\times 1}]^\top$. By Corollary \ref{vertexexpansion} we know that
\begin{align}
    \widetilde{\bR}^\top\rho(\boldsymbol{\wh B})-\boldsymbol{B}^* = \Delta\bB+\boldsymbol{\Psi}_{\bB}, \label{aexpansioneq2}
\end{align}
and 
\begin{align*}
    &\left\|\widetilde{\bR}^\top\rho(\boldsymbol{\wh B})-\boldsymbol{B}^*\right\|\leq \left\|\widetilde{\bR}^\top\rho(\boldsymbol{\wh B})-\boldsymbol{B}^*\right\|_F \leq \sqrt{K}\sup_{i\in [n]}\left\|\bR^\top\wh\br_i-\br_i^*\right\|_2\lesssim \sqrt{K}\varepsilon_1; \\
    &\left\|\boldsymbol{\Psi}_{\bB}\right\|\leq \sqrt{K}\left\|\boldsymbol{\Psi}_{\bb}\right\|_{2,\infty} \lesssim \sqrt{K}\varepsilon_2.
\end{align*}
Next, by Theorem \ref{mainthmrexpansion},
\begin{align}
    \widetilde{\bR}^\top 
    \begin{bmatrix}
    \wh\br_i\\
    1 
    \end{bmatrix}
-\begin{bmatrix}
    \br_i^*\\
    1 
    \end{bmatrix} = \begin{bmatrix}
    \Delta\br_i\\
    0 
    \end{bmatrix} + \begin{bmatrix}
    [\boldsymbol{\Psi}_{\br}]_{i,\cdot}^\top\\
    0 
    \end{bmatrix}, \label{aexpansioneq3}
\end{align}
where $\|[\boldsymbol{\Psi}_{\br}]_{i,\cdot}\|\lesssim\varepsilon_2$. Plugging \eqref{aexpansioneq2} and \eqref{aexpansioneq3} in \eqref{aexpansioneq1}, we get
\begin{align}
    \bB^*\left(\rho(\wh\ba_i)-\ba_i^*\right) = \begin{bmatrix}
    \Delta\br_i\\
    0 
    \end{bmatrix} - \Delta\bB\ba_i^* +\begin{bmatrix}
    [\boldsymbol{\Psi}_{\br}]_{i,\cdot}^\top\\
    0 
    \end{bmatrix} - \boldsymbol{\Psi}_{\bB}\ba_i^* - \left(\widetilde{\bR}^\top\rho(\boldsymbol{\wh B})-\boldsymbol{B}^*\right) \left(\rho(\wh\ba_i)-\ba^*_i\right).\label{aexpansioneq4}
\end{align}
Since $\bb_1^*,\bb_2^*,\dots, \bb_K^*$ form a simplex and all $\br_i^*,i\in [n]$ are inside it, we know that the entries of $\ba_i$ are within $[0,1]$. As a result, we know that $\|\ba_1^*\|_2^2 = \sum_{j=1}^K(\ba_i^*)_j^2\leq \sum_{j=1}^K(\ba_i^*)_j = 1$. That is to say, we have
\begin{align}
    \left\|\begin{bmatrix}
    [\boldsymbol{\Psi}_{\br}]_{i,\cdot}^\top\\
    0 
    \end{bmatrix} - \boldsymbol{\Psi}_{\bB}\ba_i^*\right\|_2\leq \|[\Psi_{\br}]_{i,\cdot}\|+ \left\|\boldsymbol{\Psi}_{\bB}\right\|\left\|\ba_i^*\right\|_2\lesssim \sqrt{K}\varepsilon_2. \label{aexpansioneq5}
\end{align}
On the other hand, we have 
\begin{align}
    \left\|\left(\widetilde{\bR}^\top\rho(\boldsymbol{\wh B})-\boldsymbol{B}^*\right) \left(\rho(\wh\ba_i)-\ba^*_i\right)\right\|_2\lesssim \sqrt{K}\varepsilon_1 \left\|\rho(\wh\ba_i)-\ba^*_i\right\|_2. \label{aexpansioneq6}
\end{align}
According to \cite[(C.26)]{jin2017estimating}, we know that $\|(\bB^*)^{-1}\|\lesssim 1/\sqrt{K}$. As a result, plugging \eqref{aexpansioneq5} and \eqref{aexpansioneq6} in \eqref{aexpansioneq4} we get 
\begin{align}
    \left\|\rho(\wh\ba_i)-\ba^*_i - (\bB^*)^{-1}\begin{bmatrix}
    \Delta\br_i\\
    0 
    \end{bmatrix}+(\bB^*)^{-1}\Delta\bB\ba_i^*\right\|_2\lesssim \varepsilon_2+\varepsilon_1\left\|\rho(\wh\ba_i)-\ba^*_i\right\|_2.\label{aexpansioneq7}
\end{align}
And, since 
\begin{align}
    \left\|(\bB^*)^{-1}\begin{bmatrix}
    \Delta\br_i\\
    0 
    \end{bmatrix}\right\|_2\leq \left\|(\bB^*)^{-1}\right\|\left\|\Delta\br_i\right\|_2\lesssim \frac{\varepsilon_1}{\sqrt{K}}\label{aexpansioneq8}
\end{align}
and
\begin{align}
    \left\|(\bB^*)^{-1}\Delta\bB^*\ba_i^*\right\|_2\lesssim \frac{\left\|\Delta\bB\right\|_F\left\|\ba^*_i\right\|_2}{\sqrt{K}}\leq \frac{\sqrt{K}\max_{k\in [K]}\left\|\Delta\bb_k\right\|_2}{\sqrt{K}}\lesssim \varepsilon_1.\label{aexpansioneq9}
\end{align}
Combine \eqref{aexpansioneq8}, \eqref{aexpansioneq9} with \eqref{aexpansioneq7} we get
\begin{align*}
    \left\|\rho(\wh\ba_i)-\ba^*_i\right\|_2\lesssim \varepsilon_1 +\varepsilon_2+\varepsilon_1\left\|\rho(\wh\ba_i)-\ba^*_i\right\|_2.
\end{align*}
Since $\varepsilon_2\lesssim \varepsilon_1\leq C$, we know that $\left\|\rho(\wh\ba_i)-\ba^*_i\right\|_2\lesssim \varepsilon_1$, completing the proof.
\end{proof}

\subsection{Proof of Theorem \ref{Piexpansion}}\label{Piexpansionproof}
\begin{proof}
First we focus on $\wh c_k, k\in [K]$. Define $f(x) := \sqrt{x}$ and $c_k^* := (\lambda_1^*+\bb_k^{*\top}\oLambda^*\bb_k^*)^{-1/2}$. Set $x_0 := \lambda_1^*+\bb_k^{*\top}\oLambda^*\bb_k^*$, $x_1 := \wh\lambda_1+\wh\bb_{\rho(k)}^{\top}\oLambda\wh\bb_{\rho(k)}$. By Taylor expansion, we know that there exists some $\tilde{x}$ between $x_0$ and $x_1$, such that
\begin{align*}
    \frac{1}{\wh c_{\rho(k)}}-\frac{1}{c_k^*} = f(x_1)-f(x_0)=f'(x_0)(x_1-x_0)+\frac{f''(\tilde{x})}{2}(x_1-x_0)^2.
\end{align*}
Combining Corollary \ref{membershipreconstructionlem1} and Lemma \ref{membershipreconstructionlem2}, we have
\begin{align*}
    &\left|x_1-x_0-\left(\textbf{Tr}\left[\bW\bu_1^*\bu_1^{*\top}+ 2\bN\bW\bu_1^*\bu_1^{*\top}\right]+2\bb_k^{*\top}\oLambda^*\Delta\bb_k\right)\right| \\
    \leq &\left|\textbf{Tr}\left[\boldsymbol\Delta\right]\right|+\left\|\boldsymbol{\psi}\right\|_\infty\lesssim \frac{\kappa^*K^2}{\beta_n}+K^{1.5} \theta_{\text{max}}\log^{0.5}n + \left(K^{0.5} \varepsilon_2+\varepsilon_1^2\right)\maxsigma^*.
\end{align*}
Note that $f'(x_0) = 0.5x_0^{-1/2} = 0.5 c_k^*$. Since $K^{0.5}\varepsilon_1\maxsigma^*\ll x_0\asymp n\theta_{\text{max}}^2$, we have $f''(\tilde{x})\asymp f''(x_0)\asymp c_k^{*3}$. Hence,
\begin{align}
    &\left|\frac{1}{\wh c_{\rho(k)}}-\frac{1}{c_k^*}-\frac{c_k^*}{2}\left(\textbf{Tr}\left[\bW\bu_1^*\bu_1^{*\top}+ 2\bN\bW\bu_1^*\bu_1^{*\top}\right]+2\bb_k^{*\top}\oLambda^*\Delta\bb_k\right)\right|  \nonumber \\
    \lesssim & \left(\frac{\kappa^*K^2}{\beta_n}+K^{1.5} \theta_{\text{max}}\log^{0.5}n + \left(K^{0.5} \varepsilon_2+\varepsilon_1^2\right)\maxsigma^*\right)c_k^*+K\varepsilon_1^2\maxsigma^{*2}c_k^{*3}. \label{cinverseexpansion}
\end{align}
Again by Taylor expansion, we know that there exists some $\tilde{x}'$ between $x_0$ and $x_1$, such that
\begin{align*}
    \frac{1}{\wh c_{\rho(k)}}-\frac{1}{c_k^*} = f(x_1)-f(x_0)=f'(\tilde{x}')(x_1-x_0).
\end{align*}
Since $\sqrt{K-1}\varepsilon_1\maxsigma^*\ll x_0\asymp n\theta_{\text{max}}^2$, we have $f'(\tilde{x}')\asymp f'(x_0)\asymp c_k^{*}$. So,
\begin{align}
    \left|\frac{1}{\wh c_{\rho(k)}}-\frac{1}{c_k^*}\right|\asymp c_k^{*}\left|x_1-x_0\right|\lesssim K^{0.5}\varepsilon_1\maxsigma^* c_k^*.\label{cinverseestimationeq}
\end{align}
Second, for the permutation $\rho(\cdot)$ from Theorem \ref{SPmainthm} and any $i\in [n]$, we have
\begin{align*}
    \frac{(\wh\ba_i)_{\rho(k)}}{\wh c_{\rho(k)}} - \frac{(\ba_i^*)_k}{c_k^*} &= \frac{(\wh\ba_i)_{\rho(k)} - (\ba_i^*)_k}{c_k^*} + \left(\frac{1}{\wh c_{\rho(k)}}-\frac{1}{c_k^*}\right)(\ba_i^*)_k +\left(\frac{1}{\wh c_{\rho(k)}}-\frac{1}{c_k^*}\right)\left((\wh\ba_i)_{\rho(k)} - (\ba_i^*)_k\right) \\
    &= \frac{(\rho(\wh\ba_i) - \ba_i^*)_k}{c_k^*} + \left(\frac{1}{\wh c_{\rho(k)}}-\frac{1}{c_k^*}\right)(\ba_i^*)_k +\left(\frac{1}{\wh c_{\rho(k)}}-\frac{1}{c_k^*}\right)(\rho(\wh\ba_i) - \ba_i^*)_k.
\end{align*}
Combine Lemma \ref{membershipreconstructionlem3} and \eqref{cinverseexpansion}, we have the following expansion 
\begin{align*}
    \frac{(\wh\ba_i)_{\rho(k)}}{\wh c_{\rho(k)}} - \frac{(\ba_i^*)_k}{c_k^*} = \frac{(\Delta\ba_i)_k}{c_k^*} +\frac{c_k^*}{2}\left(\textbf{Tr}\left[\bW\bu_1^*\bu_1^{*\top}+ 2\bN\bW\bu_1^*\bu_1^{*\top}\right]+2\bb_k^{*\top}\oLambda^*\Delta\bb_k\right)(\ba_i^*)_k +\left[\boldsymbol{\Psi}_{\ba/\bc}\right]_{i, k},
\end{align*}
where 
\begin{align*}
\left|\left[\boldsymbol{\Psi}_{\ba/\bc}\right]_{i, k}\right| \leq & \left|\frac{1}{\wh c_{\rho(k)}}-\frac{1}{c_k^*}-\frac{c_k^*}{2}\left(\textbf{Tr}\left[\bW\bu_1^*\bu_1^{*\top}+ 2\bN\bW\bu_1^*\bu_1^{*\top}\right]+2\bb_k^{*\top}\oLambda^*\Delta\bb_k\right)\right| 
(\ba_i^*)_k \nonumber \\
&+ \frac{\left|(\rho(\wh\ba_i) - \ba_i^*-\Delta\ba_i)_k\right|}{c_k^*} +\left|\frac{1}{\wh c_{\rho(k)}}-\frac{1}{c_k^*}\right| \left|(\rho(\wh\ba_i) - \ba_i^*)_k\right| \nonumber\\
\lesssim & \left(\frac{\kappa^*K^2}{\beta_n}+K^{1.5} \theta_{\text{max}}\log^{0.5}n + \left(K^{0.5} \varepsilon_2+\varepsilon_1^2\right)\maxsigma^*\right)c_k^*+K\varepsilon_1^2\maxsigma^{*2}c_k^{*3} \nonumber \\
&+ \frac{\varepsilon_2+\varepsilon_1^2}{c_k^*}+K^{0.5}\varepsilon_1^2\maxsigma^* c_k^*\\
\overset{(i)}{\lesssim} & \frac{1}{\left\|\btheta\right\|_2}\left(\frac{\kappa^*K^2}{\beta_n}+K^{1.5} \theta_{\text{max}}\log^{0.5}n + \left(K^{0.5} \varepsilon_2+\varepsilon_1^2\right)\maxsigma^*\right) + \left\|\btheta\right\|_2(\varepsilon_2+\varepsilon_1^2). 
\end{align*}
Here $(i)$ holds because $c_k^*\asymp 1/\left\|\btheta\right\|_2$ according to \cite[C.22]{jin2017estimating} and $\maxsigma^*\asymp \beta_n K^{-1}\left\|\btheta\right\|_2^2$ according to Lemma \ref{eigenvaluelemma}. In terms of the estimation error, by Theorem \ref{membershipreconstructionlem3} and \eqref{cinverseestimationeq} we have
\begin{align}
    \left|\frac{(\wh\ba_i)_{\rho(k)}}{\wh c_{\rho(k)}} - \frac{(\ba_i^*)_k}{c_k^*}\right| \leq \left|\frac{(\wh\ba_i)_{\rho(k)}}{\wh c_{\rho(k)}} - \frac{(\ba_i^*)_k}{\wh c_{\rho(k)}}\right| + \left|\frac{(\ba_i^*)_k}{\wh c_{\rho(k)}} - \frac{(\ba_i^*)_k}{c_k^*}\right|\lesssim \frac{\varepsilon_1}{c_k^*}+K^{0.5}\varepsilon_1\maxsigma^* c_k^*\lesssim \varepsilon_1\left\|\btheta\right\|_2. \label{acestimationeq}
\end{align}

Now we are ready to derive the expansion of $\wh\bpi_i$. For any $i\in [n]$ and $k\in [K]$, we have
\begin{align*}
    &\wh\bpi_i(\rho(k))-\bpi_i^*(k) = \frac{(\wh\ba_i)_{\rho(k)}/\wh c_{\rho(k)}}{\sum_{l=1}^K (\wh\ba_i)_{\rho(l)}\wh / c_{\rho(l)}} - \frac{(\ba_i^*)_{k}/ c^*_k}{\sum_{l=1}^K (\ba_i^*)_{l} / c^*_{l}}  \\
    =&\frac{\sum_{l\neq k,l\in [K]}\left((\wh\ba_i)_{\rho(k)}/\wh c_{\rho(k)}\right)\cdot\left((\ba_i^*)_{l} / c^*_{l}\right)-\left((\wh\ba_i)_{\rho(l)}/\wh c_{\rho(l)}\right)\cdot\left((\ba_i^*)_{k} / c^*_{k}\right)}{\left(\sum_{l=1}^K (\wh\ba_i)_{\rho(l)} / \wh c_{\rho(l)}\right)\left(\sum_{l=1}^K (\ba_i^*)_{l} / c^*_{l}\right)} \\
   =&(1+\eta_i)\Delta \bpi_i(k) +\left[\boldsymbol{\Psi}_{\bPi}\right]_{i, k},
\end{align*}
where
\begin{align*}
    \eta_i =& \frac{\sum_{l=1}^K (\ba_i^*)_{l} / c^*_{l} - (\wh\ba_i)_{\rho(l)} / \wh c_{\rho(l)}}{\sum_{l=1}^K (\wh\ba_i)_{\rho(l)} / \wh c_{\rho(l)}},  \\
    \Delta \bpi_i(k) =& \frac{1}{\left(\sum_{l=1}^K (\ba^*_i)_{l} /c^*_{l}\right)^2}\Bigg\{\sum_{l\neq k, l\in [K]} \textbf{Tr}\left[\bW\bu_1^*\bu_1^{*\top}+ 2\bN\bW\bu_1^*\bu_1^{*\top}\right]\left(\frac{c_k^*}{2 c_l^*} - \frac{c_l^*}{2 c_k^*}\right)(\ba_i^*)_k(\ba_i^*)_l \\
    &\quad  \quad +\frac{(\Delta\ba_i)_k(\ba_i^*)_l - (\Delta\ba_i)_l(\ba_i^*)_k}{c_k^*c_l^*}+\left(\frac{\bb_k^{*\top}\oLambda^*\Delta\bb_k c_k^*}{c_l^*} - \frac{\bb_l^{*\top}\oLambda^*\Delta\bb_l c_l^*}{c_k^*}\right)(\ba_i^*)_k(\ba_i^*)_l \Bigg\}, \\
    \left[\boldsymbol{\Psi}_{\bPi}\right]_{i, k} =& \frac{\sum_{l\neq k, l\in [K]}  \left[\boldsymbol{\Psi}_{\ba/\bc}\right]_{i, k}(\ba_i^*)_l/c_l^* - \left[\boldsymbol{\Psi}_{\ba/\bc}\right]_{i, l}(\ba_i^*)_k/c_k^*}{\left(\sum_{l=1}^K (\wh\ba_i)_{\rho(l)} / \wh c_{\rho(l)}\right)\left(\sum_{l=1}^K (\ba_i^*)_{l} / c^*_{l}\right)}.
\end{align*}
Since $\varepsilon_1\leq C$ for some appropriate $C>0$, we have
\begin{align*}
    \left|\frac{c_k^*}{\wh c_{\rho(k)}}-1\right|\lesssim K^{0.5}\varepsilon_1\maxsigma^* c_k^{*2}\lesssim K^{0.5}\varepsilon_1\frac{\maxsigma^*}{\left\|\btheta\right\|_2^2}\lesssim  \frac{\varepsilon_1}{K^{0.5}}\Rightarrow \left|\frac{c_k^*}{\wh c_{\rho(k)}}-1\right|\leq 0.5.
\end{align*}
As a result, we have
\begin{align*}
    \sum_{l=1}^K \frac{(\wh\ba_i)_{\rho(l)}}{\wh c_{\rho(l)}}\geq \sum_{l=1}^K \frac{(\wh\ba_i)_{\rho(l)}}{\max_{t\in [K]}\wh c_{t}}  = \frac{1}{\max_{t\in [K]}\wh c_{t}}\gtrsim \frac{1}{\max_{t\in [K]} c^*_{t}}\asymp  \left\|\btheta\right\|_2.
\end{align*}
Combine this with \eqref{acestimationeq}, for $\eta_i, i\in [n]$ we have
\begin{align*}
    |\eta_i|&\leq \frac{\sum_{l=1}^K \left|(\ba_i^*)_{l} / c^*_{l} - (\wh\ba_i)_{\rho(l)} / \wh c_{\rho(l)}\right|}{\sum_{l=1}^K (\wh\ba_i)_{\rho(l)} / \wh c_{\rho(l)}}\lesssim  \frac{K\varepsilon_1\left\|\btheta\right\|_2}{\left\|\btheta\right\|_2} = K\varepsilon_1.
\end{align*}
Since $\sum_{l=1}^K (\ba_i^*)_{l} / c^*_{l}\geq 1/\max_{t\in [K]}c_t^*\asymp 1/\left\|\btheta\right\|_2$, we have
\begin{align*}
    \left|\left[\boldsymbol{\Psi}_{\bPi}\right]_{i, k}\right|&\lesssim \frac{\sum_{l\neq k, l\in [K]}  \left|\left[\boldsymbol{\Psi}_{\ba/\bc}\right]_{i, k}\right|(\ba_i^*)_l\left\|\btheta\right\|_2 + \left|\left[\boldsymbol{\Psi}_{\ba/\bc}\right]_{i, l}\right|(\ba_i^*)_k\left\|\btheta\right\|_2}{\left\|\btheta\right\|_2^2}\\
    &\leq \frac{K\max_{j\in [n], l\in [K]}\left|\left[\boldsymbol{\Psi}_{\ba/\bc}\right]_{j, l}\right|}{\left\|\btheta\right\|_2} \\
    &\lesssim \frac{K}{\left\|\btheta\right\|_2^2}\left(\frac{\kappa^*K^2}{\beta_n}+K^{1.5} \theta_{\text{max}}\log^{0.5}n + \left(K^{0.5} \varepsilon_2+\varepsilon_1^2\right)\maxsigma^*\right) + K(\varepsilon_2+\varepsilon_1^2).
\end{align*}

\end{proof}

\subsection{Proof of Theorem \ref{distributionthm}}\label{distributionthmproof}
\begin{proof}
Define 
\begin{align*}
    \Delta\bpi_{\mathcal{I}} := \left(\Delta\bpi_{i_1}(k_1), \Delta\bpi_{i_2}(k_2),\dots, \Delta\bpi_{i_r}(k_r)\right)^\top= \sum_{1\leq i\leq j\leq n}W_{ij}\bomega_{ij}.
\end{align*}
By Berry-Esseen theorem \cite{raivc2019multivariate}, for any convex set $\mathcal{D}\subset \mathbb{R}^r$, we have
\begin{align}
    &\left|\mathbb{P}(\Delta\bpi_\mathcal{I} \in \mathcal{D}) - \mathbb{P}(\mathcal{N}(\boldsymbol{0}_r,\bSigma) \in \mathcal{D})\right|\lesssim r^{1/4}\sum_{1\leq i\leq j \leq n} \mathbb{E}\left[\left\|\bSigma^{-1/2}W_{ij}\bomega_{ij}\right\|_2^3\right] \nonumber \\
    \lesssim& r^{1/4}\sum_{1\leq i\leq j \leq n} \left\|\bSigma^{-1/2}\bomega_{ij}\right\|_2^3\mathbb{E}\left[\left|W_{ij}\right|^3\right] \leq r^{1/4}\sum_{1\leq i\leq j \leq n} \left\|\bSigma^{-1/2}\bomega_{ij}\right\|_2^3 H_{ij}(1-H_{ij}) \nonumber \\
    \lesssim& r^{1/4}\max_{1\leq i\leq j\leq n}\left\|\bSigma^{-1/2}\bomega_{ij}\right\|_2 \sum_{1\leq i\leq j \leq n} \left\|\bSigma^{-1/2}\bomega_{ij}\right\|_2^2 H_{ij}(1-H_{ij}).\label{distributionaleq1}
\end{align}
Since $\bSigma$ is the covariance of $\Delta\phi_\mathcal{I}$, we know that
\begin{align}
    &\sum_{1\leq i\leq j \leq n} \left\|\bSigma^{-1/2}\bomega_{ij}\right\|_2^2 H_{ij}(1-H_{ij})  = \sum_{1\leq i\leq j \leq n} \bomega_{ij}^\top \bSigma^{-1}\bomega_{ij} H_{ij}(1-H_{ij}) \nonumber \\
    =& \sum_{1\leq i\leq j \leq n} \textbf{Tr}\left[ \bSigma^{-1}\bomega_{ij}\bomega_{ij}^\top\right]H_{ij}(1-H_{ij}) = \textbf{Tr}\left[  \bSigma^{-1}\sum_{1\leq i\leq j \leq n}\bomega_{ij}\bomega_{ij}^\top H_{ij}(1-H_{ij})\right] \nonumber \\
    =& \textbf{Tr}\left[  \bSigma^{-1}\sum_{1\leq i\leq j \leq n}\mathbb{E}\left[(W_{ij}\bomega_{ij})(W_{ij}\bomega_{ij})^\top\right]\right] = \textbf{Tr}\left[  \bSigma^{-1}\bSigma\right] = r.\label{distributionaleq2}
\end{align}
Combine \eqref{distributionaleq1} and \eqref{distributionaleq2} we know that
\begin{align}
    \left|\mathbb{P}(\Delta\bpi_\mathcal{I} \in \mathcal{D}) - \mathbb{P}(\mathcal{N}(\boldsymbol{0}_r,\bSigma) \in \mathcal{D})\right|\lesssim r^{5/4}\max_{1\leq i\leq j\leq n}\left\|\bSigma^{-1/2}\bomega_{ij}\right\|_2.\label{distributionaleq3}
\end{align}

It remains to control $\left|\mathbb{P}(\wh\bpi_\mathcal{I} - \bpi_\mathcal{I} \in \mathcal{D}) - \mathbb{P}(\Delta\bpi_\mathcal{I} \in \mathcal{D}) \right|$. For any convex set $\mathcal{D}\subset \mathbb{R}^r$ and point $x\in \mathbb{R}^r$, we define
\begin{align*}
    \delta_{\mathcal{D}}(x):= \begin{cases}-\min_{y\in \mathbb{R}^r \backslash \mathcal{D}}\left\|x- y\right\|_2, & \text { if } x \in \mathcal{D} \\ \min_{y\in\mathcal{D} }\left\|x- y\right\|_2, & \text { if } x \notin \mathcal{D}\end{cases} \text{ and } \mathcal{D}^{\varepsilon}:=\left\{x \in \mathbb{R}^r: \delta_{\mathcal{D}}(x) \leq \varepsilon\right\}.
\end{align*}
With this definition, we have 
\begin{align*}
    \mathbb{P}(\bSigma^{-1/2}\Delta\bpi_{\mathcal{I}}\in \mathcal{D}^{-\varepsilon}) = &\mathbb{P}\left(\bSigma^{-1/2}\Delta\bpi_{\mathcal{I}}\in \mathcal{D}^{-\varepsilon}, \left\|\bSigma^{-1/2}\left(\wh\bpi_{\mathcal{I}}- \bpi_{\mathcal{I}}-\Delta_{\mathcal{I}}\right)\right\|_2\leq \varepsilon\right) \\
    & +\mathbb{P}\left(\bSigma^{-1/2}\Delta\bpi_{\mathcal{I}}\in \mathcal{D}^{-\varepsilon}, \left\|\bSigma^{-1/2}\left(\wh\bpi_{\mathcal{I}}- \bpi_{\mathcal{I}}-\Delta_{\mathcal{I}}\right)\right\|_2> \varepsilon\right) \\
    \leq & \mathbb{P}\left(\bSigma^{-1/2}(\wh\bpi_{\mathcal{I}}- \bpi_{\mathcal{I}})\in \mathcal{D}\right) + \mathbb{P}\left( \left\|\bSigma^{-1/2}\left(\wh\bpi_{\mathcal{I}}- \bpi_{\mathcal{I}}-\Delta_{\mathcal{I}}\right)\right\|_2> \varepsilon\right) .
\end{align*}
Taking $\varepsilon = \lambda_{r}^{-1/2}(\bSigma)r^{1/2}\varepsilon_3$, by Theorem \ref{Piexpansion} we know that $\|\bSigma^{-1/2}\left(\wh\bpi_{\mathcal{I}}- \bpi_{\mathcal{I}}-\Delta_{\mathcal{I}}\right)\|_2\leq \varepsilon$ with probability at least $1-O(n^{-9})$. As a result, we have 
\begin{align}
    \mathbb{P}(\bSigma^{-1/2}\Delta\bpi_{\mathcal{I}}\in \mathcal{D}^{-\varepsilon}) \leq \mathbb{P}\left(\bSigma^{-1/2}(\wh\bpi_{\mathcal{I}}- \bpi_{\mathcal{I}})\in \mathcal{D}\right) +O(n^{-9}).\label{distributionaleq4}
\end{align}
On the other hand, one can see that
\begin{align}
    &|\mathbb{P}(\bSigma^{-1/2}\Delta\bpi_{\mathcal{I}}\in \mathcal{D}^{-\varepsilon}) - \mathbb{P}(\bSigma^{-1/2}\Delta\bpi_{\mathcal{I}}\in \mathcal{D})| \leq |\mathbb{P}(\bSigma^{-1/2}\Delta\bpi_{\mathcal{I}}\in \mathcal{D}^{-\varepsilon}) - \mathbb{P}(\mathcal{N}(\boldsymbol{0}_r,\bI_r)\in \mathcal{D}^{-\varepsilon})| \nonumber \\
    & + |\mathbb{P}(\mathcal{N}(\boldsymbol{0}_r,\bI_r)\in \mathcal{D}^{-\varepsilon}) - \mathbb{P}(\mathcal{N}(\boldsymbol{0}_r,\bI_r)\in \mathcal{D})| + |\mathbb{P}(\mathcal{N}(\boldsymbol{0}_r,\bI_r)\in \mathcal{D}) - \mathbb{P}(\bSigma^{-1/2}\Delta\bpi_{\mathcal{I}}\in \mathcal{D}) |. \label{distributionaleq5}
\end{align}
By \cite[Theorem 1.2]{raivc2019multivariate} we know that
\begin{align*}
    |\mathbb{P}(\mathcal{N}(\boldsymbol{0}_r,\bI_r)\in \mathcal{D}^{-\varepsilon}) - \mathbb{P}(\mathcal{N}(\boldsymbol{0}_r,\bI_r)\in \mathcal{D})|\lesssim r^{1/4} \varepsilon  \lesssim \lambda_{r}^{-1/2}(\bSigma)r^{3/4}\varepsilon_3.
\end{align*}
Plugging this and \eqref{distributionaleq3} in \eqref{distributionaleq5}, we have
\begin{align*}
    |\mathbb{P}(\bSigma^{-1/2}\Delta\bpi_{\mathcal{I}}\in \mathcal{D}^{-\varepsilon}) - \mathbb{P}(\bSigma^{-1/2}\Delta\bpi_{\mathcal{I}}\in \mathcal{D})|\lesssim r^{5/4}\max_{1\leq i\leq j\leq n}\left\|\bSigma^{-1/2}\bomega_{ij}\right\|_2 + \lambda_{r}^{-1/2}(\bSigma)r^{3/4}\varepsilon_3.
\end{align*}
Combine this with \eqref{distributionaleq4}, and by the arbitrariness of $\mathcal{D}$, we get 
\begin{align*}
    \mathbb{P}(\Delta\bpi_{\mathcal{I}}\in \mathcal{D}) \leq \mathbb{P}\left((\wh\bpi_{\mathcal{I}}- \bpi_{\mathcal{I}})\in \mathcal{D}\right) +O\left(r^{5/4}\max_{1\leq i\leq j\leq n}\left\|\bSigma^{-1/2}\bomega_{ij}\right\|_2 + \lambda_{r}^{-1/2}(\bSigma)r^{3/4}\varepsilon_3\right).
\end{align*}
Similarly, it can also be shown that
\begin{align*}
    \mathbb{P}\left((\wh\bpi_{\mathcal{I}}- \bpi_{\mathcal{I}})\in \mathcal{D}\right) \leq \mathbb{P}(\Delta\bpi_{\mathcal{I}}\in \mathcal{D}) +O\left(r^{5/4}\max_{1\leq i\leq j\leq n}\left\|\bSigma^{-1/2}\bomega_{ij}\right\|_2 + \lambda_{r}^{-1/2}(\bSigma)r^{3/4}\varepsilon_3\right).
\end{align*}
As a result, we know that 
\begin{align}
    \left|\mathbb{P}(\wh\bpi_\mathcal{I} - \bpi_\mathcal{I} \in \mathcal{D}) - \mathbb{P}(\Delta\bpi_\mathcal{I} \in \mathcal{D}) \right|\lesssim r^{5/4}\max_{1\leq i\leq j\leq n}\left\|\bSigma^{-1/2}\bomega_{ij}\right\|_2 + \lambda_{r}^{-1/2}(\bSigma)r^{3/4}\varepsilon_3.\label{distributionaleq6}
\end{align}
Combine \eqref{distributionaleq3} and \eqref{distributionaleq6}, we get
\begin{align*}
    \left|\mathbb{P}(\wh\bpi_\mathcal{I} - \bpi_\mathcal{I} \in \mathcal{D}) - \mathbb{P}(\mathcal{N}(\boldsymbol{0}_r,\bSigma) \in \mathcal{D})\right| \lesssim r^{5/4}\max_{1\leq i\leq j\leq n}\left\|\bSigma^{-1/2}\bomega_{ij}\right\|_2 + \lambda_{r}^{-1/2}(\bSigma)r^{3/4}\varepsilon_3.
\end{align*}
\end{proof}

\subsection{Auxiliary Lemmas}
\begin{lem}\label{lemmaN1N2}
    For $\bN_1$ and $\bN_2$ defined in ~\eqref{N1N2}, we have
    \begin{align*}
        &\left\|\bN_1-\frac{1}{\lambda_1^*}\bI\right\|_{2,\infty}\lesssim \sqrt{\frac{(K-1)\mu^*}{n\lambda_1^{*2}}}, \quad\; \left\|\bN_1\right\|\lesssim \frac{1}{\lambda_1^*} \\
        &\left\|\bN_2-\frac{1}{\lambda_1^{*2}}\bI\right\|_{2,\infty}\lesssim \sqrt{\frac{(K-1)\mu^*}{n\lambda_1^{*4}}},\quad \left\|\bN_2\right\|\lesssim \frac{1}{\lambda_1^{*2}}.
    \end{align*}
\end{lem}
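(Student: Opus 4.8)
\textbf{Proof proposal for Lemma \ref{lemmaN1N2}.} Both $\bN_1$ and $\bN_2$ are explicit spectral functions of $\bH$, so the whole argument is deterministic bookkeeping built on the eigenvalue estimates of Lemma \ref{eigenvaluelemma} and the incoherence bounds of Definition \ref{incorherenceassumption}. Two facts are used throughout. First, the resolution of identity $\sum_{i=1}^n\bu_i^*\bu_i^{*\top}=\bI$. Second, since $|\lambda_1^*|,\dots,|\lambda_K^*|$ are the $K$ largest magnitudes and $\sigma_{K+1}(\bH)\lesssim\theta_{\text{max}}^2$, one has $|\lambda_i^*|\lesssim\theta_{\text{max}}^2$ for every $i>K$; combined with $\lambda_1^*\gtrsim K^{-1}\|\btheta\|_2^2\gtrsim n\theta_{\text{max}}^2/K$ (Lemma \ref{eigenvaluelemma} and Assumption \ref{assn:theta_order}), this yields $|\lambda_1^*-\lambda_i^*|\asymp\lambda_1^*$ for all $i\ge 2$ (for $2\le i\le K$ it is the second bullet of Lemma \ref{eigenvaluelemma}; for $i>K$ it holds because $|\lambda_i^*|\ll\lambda_1^*$). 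The operator-norm bounds are then immediate, since $\bN_1$ and $\bN_2$ are given in eigen-decomposed form: $\|\bN_1\|=\max_{2\le i\le n}|\lambda_1^*-\lambda_i^*|^{-1}\lesssim 1/\lambda_1^*$ and $\|\bN_2\|=\max_{2\le i\le n}|\lambda_1^*-\lambda_i^*|^{-2}\lesssim 1/\lambda_1^{*2}$.

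For the $\|\cdot\|_{2,\infty}$ bound on $\bN_1-\lambda_1^{*-1}\bI$, the plan is to subtract $\lambda_1^{*-1}\sum_{i=1}^n\bu_i^*\bu_i^{*\top}$, invoke $\frac{1}{\lambda_1^*-\lambda_i^*}-\frac{1}{\lambda_1^*}=\frac{\lambda_i^*}{\lambda_1^*(\lambda_1^*-\lambda_i^*)}$, and split the resulting sum into a rank-$(K-1)$ ``signal'' block and a ``tail'':
\begin{align*}
\bN_1-\frac{1}{\lambda_1^*}\bI=-\frac{1}{\lambda_1^*}\bu_1^*\bu_1^{*\top}+\oU^*\bD_1\oU^{*\top}+\boldsymbol{T}_1,
\end{align*}
where $\bD_1=\textbf{diag}\big(\lambda_i^*/(\lambda_1^*(\lambda_1^*-\lambda_i^*))\big)_{i=2}^{K}$ and $\boldsymbol{T}_1=\sum_{i>K}\frac{\lambda_i^*}{\lambda_1^*(\lambda_1^*-\lambda_i^*)}\bu_i^*\bu_i^{*\top}$. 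Each term is controlled separately via $\|\boldsymbol{A}\boldsymbol{B}\|_{2,\infty}\le\|\boldsymbol{A}\|_{2,\infty}\|\boldsymbol{B}\|$ and $\|\boldsymbol{A}\|_{2,\infty}\le\|\boldsymbol{A}\|$: the first term contributes $\lambda_1^{*-1}\|\bu_1^*\|_\infty\le\lambda_1^{*-1}\sqrt{\mu^*/n}$ by incoherence; the signal block contributes $\|\oU^*\|_{2,\infty}\|\bD_1\|\lesssim\sqrt{(K-1)\mu^*/n}\cdot\lambda_1^{*-1}$, using $\|\oU^*\|_{2,\infty}\le\sqrt{(K-1)\mu^*/n}$ and $\|\bD_1\|=\max_{2\le i\le K}|\lambda_i^*|/(\lambda_1^*|\lambda_1^*-\lambda_i^*|)\lesssim 1/\lambda_1^*$; and the tail contributes $\|\boldsymbol{T}_1\|\lesssim\theta_{\text{max}}^2/\lambda_1^{*2}$. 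Summing the three and absorbing the tail (see below) gives $\|\bN_1-\lambda_1^{*-1}\bI\|_{2,\infty}\lesssim\sqrt{(K-1)\mu^*/(n\lambda_1^{*2})}$. The same computation also shows $\|\bN-\bI\|_{2,\infty}\lesssim\sqrt{(K-1)\mu^*/n}$, since $\bN=\lambda_1^*\bN_1$, a fact used later in Lemmas \ref{lemma3} and \ref{lemma5}.

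The bound for $\bN_2$ is structurally identical: replace the algebraic identity by $\frac{1}{(\lambda_1^*-\lambda_i^*)^2}-\frac{1}{\lambda_1^{*2}}=\frac{\lambda_i^*(2\lambda_1^*-\lambda_i^*)}{\lambda_1^{*2}(\lambda_1^*-\lambda_i^*)^2}$, whose coefficients have magnitude $\lesssim 1/\lambda_1^{*2}$ for $2\le i\le K$ and $\lesssim\theta_{\text{max}}^2/\lambda_1^{*3}$ for $i>K$; running these through the same three-term split turns every factor $1/\lambda_1^*$ into $1/\lambda_1^{*2}$ and produces $\|\bN_2-\lambda_1^{*-2}\bI\|_{2,\infty}\lesssim\sqrt{(K-1)\mu^*/(n\lambda_1^{*4})}$. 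The only step that is not pure algebra is checking that the tail is dominated by the target, i.e.\ $\theta_{\text{max}}^2/\lambda_1^{*2}\lesssim\sqrt{(K-1)\mu^*/(n\lambda_1^{*2})}$; since $\theta_{\text{max}}^2/\lambda_1^*\lesssim K/n$, this reduces to $n\gtrsim K/\mu^*$, which holds under the standing assumptions (in particular $n\ge K$, forced by invertibility of $\boldsymbol{G}$ in Assumption \ref{assn:p_max}, together with $\mu^*\asymp1$). I expect this tail estimate --- vacuous in the self-loop case, where $\bH$ has exact rank $K$ --- to be the only mildly delicate point; everything else is elementary.
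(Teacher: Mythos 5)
Your proposal is correct and follows essentially the same route as the paper's proof: the same resolution-of-identity decomposition into the $\bu_1^*$ term, the rank-$(K-1)$ signal block bounded via $\|\oU^*\|_{2,\infty}\|\bD_1\|$, and the tail $\sum_{i>K}$ bounded in spectral norm by $\theta_{\text{max}}^2/\lambda_1^{*2}$ (respectively $\theta_{\text{max}}^2/\lambda_1^{*3}$) and absorbed using $\sqrt{n}\,\theta_{\text{max}}^2\ll\lambda_1^*$. The only cosmetic difference is that you absorb the tail via $\theta_{\text{max}}^2/\lambda_1^*\lesssim K/n$ and $n\gtrsim K/\mu^*$, whereas the paper uses $\sqrt{n}\,\theta_{\text{max}}^2\ll\lambda_1^*$ directly; both are valid under the standing assumptions.
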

\begin{proof} 
We prove the desired results in the following two settings: with self-loop and without self-loop. When self-loops are allowed, the rank of $\bH$ is exactly $K$. When there is no self-loop, $\bH$ is approximately rank $K$.

\noindent\textbf{With self-loop:} 
The spectral norm bounds follow directly from ~\eqref{N1N2}
\begin{align*}
    \left\|\bN_1\right\|  = \max_{2\leq i\leq n}\frac{1}{\lambda_1^*-\lambda_i^*}\lesssim \frac{1}{\lambda_1^* },\quad \left\|\bN_2\right\|  = \max_{2\leq i\leq n}\frac{1}{(\lambda_1^*-\lambda_i^*)^2}\lesssim \frac{1}{\lambda_1^{*2} }.
\end{align*}
By definition, for $\bN_1$ we have
\begin{align*}
    \bN_1-\frac{1}{\lambda_1^*}\bI = \sum_{i=2}^n\frac{1}{\lambda_1^*-\lambda_i^*}\bu_i^*\bu_i^{*\top}  - \frac{1}{\lambda_1^*}\sum_{i=1}^n\bu_i^*\bu_i^{*\top}= -\frac{1}{\lambda_1^*}\bu_1^*\bu_1^{*\top}+\sum_{i=2}^K\frac{\lambda_i^*}{\lambda_1^*(\lambda_1^*-\lambda_i^*)}\bu_i^*\bu_i^{*\top}.
\end{align*}
On one hand, by \eqref{eq:incoherence} we have
\begin{align*}
    \left\|\frac{1}{\lambda_1^*}\bu_1^*\bu_1^{*\top}\right\|_{2,\infty} = \frac{1}{\lambda_1^*}\left\|\bu_1^*\right\|_\infty\left\|\bu_1^*\right\|_2\leq \sqrt{\frac{\mu^*}{n\lambda_1^{*2}}}.
\end{align*}
On the other hand, defining $$\bC := \textbf{diag}\left(\frac{\lambda_2^*}{\lambda_1^*(\lambda_1^*-\lambda_2^*)}, \frac{\lambda_3^*}{\lambda_1^*(\lambda_1^*-\lambda_3^*)},\dots, \frac{\lambda_K^*}{\lambda_1^*(\lambda_1^*-\lambda_K^*)}\right),$$ 
by \eqref{eq:incoherence} we have,
\begin{align*}
    \left\|\sum_{i=2}^K\frac{\lambda_i^*}{\lambda_1^*(\lambda_1^*-\lambda_i^*)}\bu_i^*\bu_i^{*\top}\right\|_{2,\infty} &= \left\|\oU\bC\oU^\top\right\|_{2,\infty}\leq \left\|\oU\right\|_{2,\infty}\left\|\bC\oU^\top\right\| \lesssim \sqrt{\frac{(K-1)\mu^*}{n\lambda_1^{*2}}}.
\end{align*}
As a result, we get $\left\|\bN_1-\frac{1}{\lambda_1^*}\bI\right\|_{2,\infty}\lesssim\sqrt{\frac{(K-1)\mu^*}{n\lambda_1^{*2}}}$. Similarly, for $\bN_2$ we have 
\begin{align*}
    \bN_2-\frac{1}{\lambda_1^{*2}}\bI = -\frac{1}{\lambda_1^{*2}}\bu_1^*\bu_1^{*\top}+\sum_{i=2}^K\left(\frac{1}{(\lambda_1^*-\lambda_i^*)^2}-\frac{1}{\lambda_1^{*2}}\right)\bu_i^*\bu_i^{*\top}.
\end{align*}
Again, we have $\|\bu_1^*\bu_1^{*\top} / \lambda_1^{*2}\|_{2,\infty}\lesssim \sqrt{\mu^*/(n\lambda_1^{*4})}$. Defining
$$\bC_1 = \textbf{diag}\left(\frac{1}{(\lambda_1^*-\lambda_2^*)^2}-\frac{1}{\lambda_1^{*2}},\frac{1}{(\lambda_1^*-\lambda_3^*)^2}-\frac{1}{\lambda_1^{*2}}, \dots, \frac{1}{(\lambda_1^*-\lambda_K^*)^2}-\frac{1}{\lambda_1^{*2}}\right),$$
we have
\begin{align*}
    \left\|\sum_{i=2}^K\left(\frac{1}{(\lambda_1^*-\lambda_i^*)^2}-\frac{1}{\lambda_1^{*2}}\right)\bu_i^*\bu_i^{*\top}\right\|_{2,\infty} &= \left\|\oU\bC_1\oU^\top\right\|_{2,\infty}\leq \left\|\oU\right\|_{2,\infty}\left\|\bC_1\oU^\top\right\| \lesssim \sqrt{\frac{(K-1)\mu^*}{n\lambda_1^{*4}}}.
\end{align*}

Combining together, we get the desired conclusion
    $\|\bN_2-\frac{1}{\lambda_1^{*2}}\bI\|_{2,\infty}\lesssim\sqrt{\frac{(K-1)\mu^*}{n\lambda_1^{*4}}}$. 

\noindent\textbf{Without self-loop:} 
The spectral norm bounds follow same as the previous case. For the rest, by definition of $\bN_1$,
\begin{align*}
    \bN_1-\frac{1}{\lambda_1^*}\bI = \sum_{i=2}^n\frac{1}{\lambda_1^*-\lambda_i^*}\bu_i^*\bu_i^{*\top}  - \frac{1}{\lambda_1^*}\sum_{i=1}^n\bu_i^*\bu_i^{*\top}=& -\frac{1}{\lambda_1^*}\bu_1^*\bu_1^{*\top}+\sum_{i=2}^K\frac{\lambda_i^*}{\lambda_1^*(\lambda_1^*-\lambda_i^*)}\bu_i^*\bu_i^{*\top}  \\
    &+\sum_{i=K+1}^n\frac{\lambda_i^*}{\lambda_1^*(\lambda_1^*-\lambda_i^*)}\bu_i^*\bu_i^{*\top}
\end{align*}
The bounds on the first two summands are same as before. Hence it remains to control $\|\sum_{i=K+1}^n\frac{\lambda_i^*}{\lambda_1^*(\lambda_1^*-\lambda_i^*)}\bu_i^*\bu_i^{*\top}\|_{2,\infty}$. One can see that
\begin{align*}
    \left\|\sum_{i=K+1}^n\frac{\lambda_i^*}{\lambda_1^*(\lambda_1^*-\lambda_i^*)}\bu_i^*\bu_i^{*\top}\right\|_{2,\infty}&\leq \left\|\sum_{i=K+1}^n\frac{\lambda_i^*}{\lambda_1^*(\lambda_1^*-\lambda_i^*)}\bu_i^*\bu_i^{*\top}\right\| \leq \max_{K+1\leq i\leq n} \left|\frac{\lambda_i^*}{\lambda_1^*(\lambda_1^*-\lambda_i^*)}\right| \\
    &\lesssim \frac{\left\|\textbf{diag}(\bTheta\bPi\bP\bPi^\top\bTheta)\right\|}{\lambda_1^{*2}}\leq \frac{\theta_{\text{max}}^2}{\lambda_1^{*2}}.
\end{align*}
As a result, we get 
\begin{align*}
    \left\|\bN_1-\frac{1}{\lambda_1^*}\bI\right\|_{2,\infty}\lesssim\sqrt{\frac{(K-1)\mu^*}{n\lambda_1^{*2}}} +\frac{\theta_{\text{max}}^2}{\lambda_1^{*2}}\lesssim\sqrt{\frac{(K-1)\mu^*}{n\lambda_1^{*2}}},
\end{align*}
since $\sqrt{n}\theta_{\text{max}}^2\lesssim \sqrt{n}\theta_{\text{max}}\ll \lambda_1^*$. Similarly, for $\bN_2$ we have 
\begin{align*}
    \bN_2-\frac{1}{\lambda_1^{*2}}\bI = -\frac{1}{\lambda_1^{*2}}\bu_1^*\bu_1^{*\top}+\sum_{i=2}^K\left(\frac{1}{(\lambda_1^*-\lambda_i^*)^2}-\frac{1}{\lambda_1^{*2}}\right)\bu_i^*\bu_i^{*\top}+\sum_{i=K+1}^n\left(\frac{1}{(\lambda_1^*-\lambda_i^*)^2}-\frac{1}{\lambda_1^{*2}}\right)\bu_i^*\bu_i^{*\top}.
\end{align*}
We bound the first two summands as before. For the third term,
\begin{align*}
    &\left\|\sum_{i=K+1}^n\left(\frac{1}{(\lambda_1^*-\lambda_i^*)^2}-\frac{1}{\lambda_1^{*2}}\right)\bu_i^*\bu_i^{*\top}\right\|_{2,\infty}\leq \left\|\sum_{i=K+1}^n\left(\frac{1}{(\lambda_1^*-\lambda_i^*)^2}-\frac{1}{\lambda_1^{*2}}\right)\bu_i^*\bu_i^{*\top}\right\| \\
    \lesssim & \max_{K+1\leq i\leq n}\left|\frac{1}{(\lambda_1^*-\lambda_i^*)^2}-\frac{1}{\lambda_1^{*2}}\right|\lesssim \max_{K+1\leq i\leq n}\left|\frac{\lambda_i^*}{\lambda_1^{*3}}\right|\leq \frac{\left\|\textbf{diag}(\bTheta\bPi\bP\bPi^\top\bTheta)\right\|}{\lambda_1^{*3}}\leq \frac{\theta_{\text{max}}^2}{\lambda_1^{*3}}.
\end{align*}
Combine them together, we get
\begin{align*}
    \left\|\bN_2-\frac{1}{\lambda_1^{*2}}\bI\right\|_{2,\infty}\lesssim\sqrt{\frac{(K-1)\mu^*}{n\lambda_1^{*4}}} + \frac{\theta_{\text{max}}^2}{\lambda_1^{*3}}\lesssim \sqrt{\frac{(K-1)\mu^*}{n\lambda_1^{*4}}},
\end{align*} 
since $\sqrt{n}\theta_{\text{max}}^2\lesssim \sqrt{n}\theta_{\text{max}}\ll \lambda_1^*$. This completes the proof.
\end{proof}

 From Lemma \ref{lemmaN1N2}, We immediately have the following corollary.
\begin{cor}\label{corN1N2}
    For $\bN_1$ and $\bN_2$ defined in ~\eqref{N1N2}, we have for all $\bx\in\mathbb{R}^n$
    \begin{align*}
        &\left\|\bN_i\bx\right\|_{\infty}\lesssim \frac{1}{\lambda_1^{*i}}\left\|\bx\right\|_{\infty}+\sqrt{\frac{(K-1)\mu^*}{n\lambda_1^{*2i}}}\left\|\bx\right\|_{2}, \quad i=1,2. 
    \end{align*}
\end{cor}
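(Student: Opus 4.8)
The plan is to decompose each $\bN_i$ ($i=1,2$) into its "leading" scalar part and a small remainder, and then apply the $\ell_{2,\infty}$ bound on the remainder supplied by Lemma \ref{lemmaN1N2}. Concretely, write
\begin{align*}
    \bN_i\bx = \frac{1}{\lambda_1^{*i}}\bx + \left(\bN_i - \frac{1}{\lambda_1^{*i}}\bI\right)\bx,
\end{align*}
so that by the triangle inequality $\|\bN_i\bx\|_\infty \le \frac{1}{\lambda_1^{*i}}\|\bx\|_\infty + \|(\bN_i - \lambda_1^{*-i}\bI)\bx\|_\infty$.

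The key elementary fact I would invoke is that for any matrix $\bM\in\mathbb{R}^{n\times n}$ and any vector $\bx\in\mathbb{R}^n$ one has $\|\bM\bx\|_\infty = \max_{1\le j\le n}|\bM_{j,\cdot}\bx| \le \max_{1\le j\le n}\|\bM_{j,\cdot}\|_2\,\|\bx\|_2 = \|\bM\|_{2,\infty}\|\bx\|_2$, where the middle step is Cauchy--Schwarz applied row by row. Applying this with $\bM = \bN_i - \lambda_1^{*-i}\bI$ gives
\begin{align*}
    \left\|\left(\bN_i - \frac{1}{\lambda_1^{*i}}\bI\right)\bx\right\|_\infty \le \left\|\bN_i - \frac{1}{\lambda_1^{*i}}\bI\right\|_{2,\infty}\|\bx\|_2.
\end{align*}
Then Lemma \ref{lemmaN1N2}, which states $\|\bN_1 - \lambda_1^{*-1}\bI\|_{2,\infty}\lesssim \sqrt{(K-1)\mu^*/(n\lambda_1^{*2})}$ and $\|\bN_2 - \lambda_1^{*-2}\bI\|_{2,\infty}\lesssim \sqrt{(K-1)\mu^*/(n\lambda_1^{*4})}$, immediately yields the stated bound for $i=1$ and $i=2$ respectively. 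Combining the two displayed estimates completes the argument.

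There is essentially no substantive obstacle here: the corollary is a one-line deduction from Lemma \ref{lemmaN1N2} together with the row-wise Cauchy--Schwarz inequality relating $\|\cdot\|_{2,\infty}$ to the action on $\ell_2$. The only point worth a moment's care is making sure the exponents of $\lambda_1^*$ track correctly through the two cases (the remainder term for $\bN_2$ carries $\lambda_1^{*4}$ inside the square root, matching $\lambda_1^{*2i}$ with $i=2$), which is purely bookkeeping.
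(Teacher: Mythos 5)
Your proof is correct and is precisely the deduction the paper intends — the paper states the corollary follows "immediately" from Lemma \ref{lemmaN1N2}, and your decomposition $\bN_i\bx = \lambda_1^{*-i}\bx + (\bN_i - \lambda_1^{*-i}\bI)\bx$ together with the row-wise Cauchy--Schwarz bound $\|\bM\bx\|_\infty \le \|\bM\|_{2,\infty}\|\bx\|_2$ is exactly that one-line argument spelled out. No issues.
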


\begin{lem}\label{Wconcentration1}
    For any $i\in [n]$ and a fixed vector $\bx\in \mathbb{R}^n$, we have
    \begin{align*}
        \left|\bW_{i,\cdot}\bx\right|\lesssim \sqrt{\log n }\theta_{\text{max}} \left\|\bx\right\|_2  + \log n \left\|\bx\right\|_\infty
    \end{align*}
    with probability at least $1-O(n^{-15})$. Here the constant hidden in $\lesssim$ is free of $n$, $\bx$ and $\theta_{\text{max}}$.
\end{lem}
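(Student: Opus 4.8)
The plan is to write $\bW_{i,\cdot}\bx = \sum_{j=1}^n W_{ij}x_j$ and recognize this as a sum of independent, mean-zero random variables: for fixed $i$, the entries $\{W_{ij}\}_{j\in[n]}$ are mutually independent by the model assumption (with $W_{ii}=0$ in the loopless case and $W_{ii}$ a centered Bernoulli variable otherwise), and $\mathbb{E}[W_{ij}] = 0$. The statement is then a direct application of the classical Bernstein inequality, so the work is entirely in assembling the two standard inputs Bernstein requires.

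First I would record the almost-sure bound and the variance proxy. Since $|W_{ij}|\le 1$, each summand satisfies $|W_{ij}x_j|\le |x_j|\le \|\bx\|_\infty$. For the variance, $\mathrm{Var}(W_{ij}x_j) = x_j^2 H_{ij}(1-H_{ij})\le x_j^2 H_{ij}$, and exactly as in the proof of Lemma \ref{Wspectral}, Assumption \ref{assn:p_max} gives $H_{ij} = \theta_i\theta_j\bpi_i^\top\bP\bpi_j \le \theta_{\text{max}}^2\left\|\bP\right\|_{\text{max}}\lesssim \theta_{\text{max}}^2$; hence $\sum_{j=1}^n \mathrm{Var}(W_{ij}x_j)\lesssim \theta_{\text{max}}^2\sum_{j=1}^n x_j^2 = \theta_{\text{max}}^2\left\|\bx\right\|_2^2$. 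Bernstein's inequality then yields, for every $t>0$,
\[
    \mathbb{P}\left(\left|\bW_{i,\cdot}\bx\right| > t\right) \le 2\exp\left(-\frac{t^2/2}{C\theta_{\text{max}}^2\left\|\bx\right\|_2^2 + \left\|\bx\right\|_\infty t/3}\right),
\]
where $C>0$ depends only on the constant in Assumption \ref{assn:p_max}.

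To conclude, I would take $t = C'\big(\sqrt{\log n}\,\theta_{\text{max}}\left\|\bx\right\|_2 + \log n\,\left\|\bx\right\|_\infty\big)$ for a sufficiently large absolute constant $C'$. With this choice the Gaussian term $C\theta_{\text{max}}^2\left\|\bx\right\|_2^2$ and the sub-exponential term $\left\|\bx\right\|_\infty t/3$ in the denominator are each controlled by the corresponding piece of $t^2$, so the exponent is at most $-15\log n$, giving the desired bound with probability at least $1-O(n^{-15})$. Because $C$ originates solely from Assumption \ref{assn:p_max} and $C'$ is an absolute constant, the hidden constant in $\lesssim$ is free of $n$, $\bx$, and $\theta_{\text{max}}$, as required.

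There is essentially no serious obstacle here; the only points needing minor care are (i) verifying that the variance proxy is $\theta_{\text{max}}^2$ and not merely $\theta_{\text{max}}$ — this is what forces a full factor $\theta_{\text{max}}$ in front of the $\left\|\bx\right\|_2$ term — and (ii) splitting the Bernstein bound into the sub-Gaussian regime (the $\sqrt{\log n}\,\theta_{\text{max}}\left\|\bx\right\|_2$ scaling) and the sub-exponential regime (the $\log n\,\left\|\bx\right\|_\infty$ scaling) with a large enough constant to absorb both terms of the denominator simultaneously.
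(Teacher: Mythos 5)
Your proof is correct and follows exactly the route the paper takes: a one-row Bernstein inequality with almost-sure bound $\left\|\bx\right\|_\infty$ and variance proxy $\sum_j \mathbb{E}[W_{ij}^2]x_j^2\lesssim \theta_{\text{max}}^2\left\|\bx\right\|_2^2$ via Assumption \ref{assn:p_max}. Your write-up simply makes explicit the choice of $t$ and the tracking of constants that the paper leaves implicit.
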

\begin{proof}
Since $|W_{ij}|\leq 1$, by Bernstein inequality,with probability at least $1-O(n^{-15})$, 
\begin{align*}
    \left|\bW_{i,\cdot}\bx\right|&\lesssim  \sqrt{\log n \sum_{j=1}^n \mathbb{E}\left[W_{ij}^2\right]x_j^2}  + \log n \left\|\bx\right\|_\infty  \lesssim \sqrt{\log n }\theta_{\text{max}} \left\|\bx\right\|_2  + \log n \left\|\bx\right\|_\infty
\end{align*}
\end{proof}

\begin{lem}\label{Wconcentration2}
    For any $i\in [n]$ and a fixed matrix $\boldsymbol{A}\in \mathbb{R}^{n\times m}$, we have with probability at least $1-O(n^{-15}m)$,
    \begin{align*}
        \left\|\bW_{i,\cdot}\boldsymbol{A}\right\|_{2}\lesssim \sqrt{\log n}\theta_{\text{max}}\left\|\boldsymbol{A}\right\|_F +\log n\left\|\boldsymbol{A}\right\|_{2, \infty}.
    \end{align*}
\end{lem}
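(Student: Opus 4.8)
The plan is to recognize that $\bW_{i,\cdot}\boldsymbol{A}$ is the transpose of $\boldsymbol{A}^\top\bW_{i,\cdot}^\top=\sum_{j=1}^n W_{ij}\,\boldsymbol{A}_{j,\cdot}^\top$, i.e.\ a sum of $n$ \emph{independent}, mean-zero random vectors in $\mathbb{R}^m$ (independence holds because $W_{i1},\dots,W_{in}$ index distinct edges incident to node $i$). I would then apply matrix Bernstein \cite[Theorem 6.1.1]{tropp2015introduction} to the $1\times m$ random matrices $W_{ij}\boldsymbol{A}_{j,\cdot}$, whose spectral norm equals $\ell_2$-norm of the row. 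First I would record the two inputs: the almost-sure bound $\|W_{ij}\boldsymbol{A}_{j,\cdot}\|\le|W_{ij}|\,\|\boldsymbol{A}_{j,\cdot}\|_2\le\|\boldsymbol{A}\|_{2,\infty}$, and the variance proxy, which in both ``directions'' is controlled by $\sum_{j=1}^n\mathbb{E}[W_{ij}^2]\,\|\boldsymbol{A}_{j,\cdot}\|_2^2\le C\theta_{\text{max}}^2\sum_{j=1}^n\|\boldsymbol{A}_{j,\cdot}\|_2^2=C\theta_{\text{max}}^2\|\boldsymbol{A}\|_F^2$ (using $\mathbb{E}[W_{ij}^2]=H_{ij}(1-H_{ij})\le H_{ij}\lesssim\theta_{\text{max}}^2$ exactly as in the proof of Lemma \ref{Wspectral}; in the no--self-loop case the deterministic diagonal contribution of $\bW$ is bounded the same way). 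At deviation level $\asymp\sqrt{\theta_{\text{max}}^2\|\boldsymbol{A}\|_F^2\log n}+\|\boldsymbol{A}\|_{2,\infty}\log n$, matrix Bernstein---whose tail carries the ambient-dimension factor $1+m$---yields failure probability $O(n^{-15}m)$, and absorbing the lower-order mean term $\theta_{\text{max}}\|\boldsymbol{A}\|_F$ into the first term gives exactly $\|\bW_{i,\cdot}\boldsymbol{A}\|_2\lesssim\sqrt{\log n}\,\theta_{\text{max}}\|\boldsymbol{A}\|_F+\log n\,\|\boldsymbol{A}\|_{2,\infty}$.

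An equivalent route, if one wants to stay with scalar tools, is to write $\|\bW_{i,\cdot}\boldsymbol{A}\|_2^2=\bW_{i,\cdot}(\boldsymbol{A}\boldsymbol{A}^\top)\bW_{i,\cdot}^\top$, split it into the diagonal part $\sum_j W_{ij}^2\|\boldsymbol{A}_{j,\cdot}\|_2^2$ (handled by scalar Bernstein via $\mathbb{E}[W_{ij}^2]\lesssim\theta_{\text{max}}^2$, $\|\boldsymbol{A}_{j,\cdot}\|_2\le\|\boldsymbol{A}\|_{2,\infty}$, and an AM--GM step) and the off-diagonal part $\sum_{j\neq l}W_{ij}W_{il}(\boldsymbol{A}\boldsymbol{A}^\top)_{jl}$ (handled by a variance-aware Hanson--Wright-type bound), then take square roots. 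I would present the vector-Bernstein version, since it is shorter and produces the $\theta_{\text{max}}$ factor transparently.

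There is no deep obstacle here; the one point worth stating carefully is that the columns of $\boldsymbol{A}$ must \emph{not} be treated one at a time with the naive estimate $|\bW_{i,\cdot}\boldsymbol{A}_{\cdot,k}|\lesssim\|\boldsymbol{A}_{\cdot,k}\|$ and then summed: doing so loses the second-moment factor and replaces $\theta_{\text{max}}\|\boldsymbol{A}\|_F$ by $\|\boldsymbol{A}\|_F$ in the first term, and also replaces $\|\boldsymbol{A}\|_{2,\infty}$ by $\|\boldsymbol{A}\|_F$ in the second term (since $\sum_k\|\boldsymbol{A}_{\cdot,k}\|_\infty^2$ is in general only bounded by $\|\boldsymbol{A}\|_F^2$, not by $\|\boldsymbol{A}\|_{2,\infty}^2$). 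The sharpness of the stated bound comes precisely from keeping $\mathbb{E}[W_{ij}^2]\lesssim\theta_{\text{max}}^2$ inside the variance term of the vector Bernstein inequality, which is what the plan above does; the scalar bound of Lemma \ref{Wconcentration1} is really only used to motivate the shape of the answer.
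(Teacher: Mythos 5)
Your argument is correct and is essentially the paper's proof: the paper disposes of this lemma in one line by invoking Lemma 5 of \cite{yan2021inference} with $E=\bW_{i,\cdot}$, and that cited lemma is itself established by exactly the matrix Bernstein argument you describe (sum of independent mean-zero $1\times m$ rows $W_{ij}\boldsymbol{A}_{j,\cdot}$, almost-sure bound $\|\boldsymbol{A}\|_{2,\infty}$, variance proxy $\theta_{\text{max}}^2\|\boldsymbol{A}\|_F^2$ from $\mathbb{E}[W_{ij}^2]\lesssim\theta_{\text{max}}^2$, dimension factor $1+m$ giving the $O(n^{-15}m)$ failure probability). Your side remarks — handling the deterministic diagonal entry in the no-self-loop case and the warning against a column-by-column union bound — are accurate but not needed beyond what the citation already covers.
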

\begin{proof}
    Taking $E = \bW_{i,\cdot}$ in \cite[Lemma 5]{yan2021inference} gives us the desired statement.
\end{proof}
\begin{lem}\label{Wconcentration3}
    For any fixed matrix $\boldsymbol{A}\in \mathbb{R}^{n\times m}$, we have with probability at least $1-O(n^{-14}m)$,
    \begin{align*}
        \left\|\bW\boldsymbol{A}\right\|_{2, \infty}\lesssim \sqrt{\log n}\theta_{\text{max}}\left\|\boldsymbol{A}\right\|_F +\sqrt{m}\log n\left\|\boldsymbol{A}\right\|_{\text{max}}.
    \end{align*}
\end{lem}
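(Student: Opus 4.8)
The plan is to reduce the $\|\cdot\|_{2,\infty}$ bound to a union bound over the $n$ rows, and to control each row via Lemma \ref{Wconcentration2}. First, observe that
\[
\left\|\bW\boldsymbol{A}\right\|_{2,\infty} = \max_{1\leq i\leq n}\left\|\bW_{i,\cdot}\boldsymbol{A}\right\|_2 .
\]
For each fixed $i\in[n]$, Lemma \ref{Wconcentration2} (applied with the same matrix $\boldsymbol{A}$) gives
\[
\left\|\bW_{i,\cdot}\boldsymbol{A}\right\|_2 \lesssim \sqrt{\log n}\,\theta_{\text{max}}\left\|\boldsymbol{A}\right\|_F + \log n\left\|\boldsymbol{A}\right\|_{2,\infty}
\]
with probability at least $1-O(n^{-15}m)$. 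Then I would take a union bound over $i\in[n]$: the failure probability becomes $O(n\cdot n^{-15}m) = O(n^{-14}m)$, which matches the claimed probability.

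The only nontrivial step is the last simplification, namely replacing $\|\boldsymbol{A}\|_{2,\infty}$ by $\sqrt{m}\,\|\boldsymbol{A}\|_{\max}$ in the second term. This is elementary: for any row $\boldsymbol{A}_{j,\cdot}\in\mathbb{R}^m$ we have $\|\boldsymbol{A}_{j,\cdot}\|_2 \leq \sqrt{m}\,\|\boldsymbol{A}_{j,\cdot}\|_\infty \leq \sqrt{m}\,\|\boldsymbol{A}\|_{\max}$, hence $\|\boldsymbol{A}\|_{2,\infty}\leq \sqrt{m}\,\|\boldsymbol{A}\|_{\max}$. Substituting this into the bound from the union bound yields
\[
\left\|\bW\boldsymbol{A}\right\|_{2,\infty} \lesssim \sqrt{\log n}\,\theta_{\text{max}}\left\|\boldsymbol{A}\right\|_F + \sqrt{m}\,\log n\left\|\boldsymbol{A}\right\|_{\max},
\]
which is exactly the statement. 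There is essentially no obstacle here — the work is entirely front-loaded into Lemma \ref{Wconcentration2} (which itself reduces to Lemma 5 of \cite{yan2021inference}, a matrix Bernstein-type concentration inequality for a linear functional of the independent entries of $\bW_{i,\cdot}$). The main thing to be careful about is bookkeeping of the union-bound exponent: since Lemma \ref{Wconcentration2} already carries an $m$ factor in its failure probability (from a union bound over the $m$ columns of $\boldsymbol{A}$ inside its proof), taking a further union over $n$ rows multiplies by $n$ and degrades the exponent from $15$ to $14$, consistent with the claim. I would also note that the constant hidden in $\lesssim$ is independent of $n$, $m$, $\boldsymbol{A}$, and $\theta_{\text{max}}$, as this is inherited directly from Lemma \ref{Wconcentration2}.
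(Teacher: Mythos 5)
Your proposal is correct and matches the paper's own proof, which likewise takes a union bound over the $n$ rows and applies Lemma \ref{Wconcentration2} to each; the conversion $\left\|\boldsymbol{A}\right\|_{2,\infty}\leq \sqrt{m}\left\|\boldsymbol{A}\right\|_{\max}$ that you spell out is the only (elementary) step the paper leaves implicit. The bookkeeping of the failure probability from $O(n^{-15}m)$ to $O(n^{-14}m)$ is also exactly as intended.
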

\begin{proof}
    Recall that
    $\|\bW\boldsymbol{A}\|_{2,\infty} = \max_{1\leq i\leq n}\|\bW_{i,\cdot}\boldsymbol{A}\|_2$.
    Applying Lemma \ref{Wconcentration2} for $i\in [n]$, we get the desired conclusion.
\end{proof}

\begin{lem}\label{Wconcentration4}
    For any $i\in [n]$ and a fixed matrix $\boldsymbol{A}\in \mathbb{R}^{n\times m}$, we have  with probability at least $1-O(n^{-15}m)$,
    \begin{align*}
        \left\|\Big(\bW-\bW^{(i)}\Big)\boldsymbol{A}\right\|_{F}\lesssim \sqrt{\log n}\theta_{\text{max}}\left\|\boldsymbol{A}\right\|_F  + (\sqrt{n}\theta_{\text{max}}+\log n)\left\|\boldsymbol{A}\right\|_{2,\infty}.
    \end{align*}
\end{lem}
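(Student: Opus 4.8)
\textbf{Proof plan for Lemma \ref{Wconcentration4}.}
The plan is to decompose $\big(\bW-\bW^{(i)}\big)\boldsymbol{A}$ into a rank-one-in-rows piece coming from the $i$-th row of $\bW$ and a rank-one-in-columns piece coming from the $i$-th column of $\bW$, and bound each contribution separately. Recall that $\bW^{(i)}$ agrees with $\bW$ except that the entire $i$-th row and $i$-th column are zeroed out. Hence $\bW-\bW^{(i)}$ is supported only on the $i$-th row and $i$-th column: writing $\boldsymbol{e}_i$ for the $i$-th canonical basis vector, we have
\begin{align*}
    \bW-\bW^{(i)} = \boldsymbol{e}_i\big(\bW_{i,\cdot}\big) + \big(\bW_{\cdot, i}\big)\boldsymbol{e}_i^\top - W_{ii}\boldsymbol{e}_i\boldsymbol{e}_i^\top,
\end{align*}
where the last term corrects for the double-counting of the $(i,i)$ entry. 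Applying this to $\boldsymbol{A}$ gives
\begin{align*}
    \big(\bW-\bW^{(i)}\big)\boldsymbol{A} = \boldsymbol{e}_i\big(\bW_{i,\cdot}\boldsymbol{A}\big) + \big(\bW_{\cdot, i}\big)\boldsymbol{A}_{i,\cdot} - W_{ii}\boldsymbol{e}_i\boldsymbol{A}_{i,\cdot},
\end{align*}
so by the triangle inequality $\big\|\big(\bW-\bW^{(i)}\big)\boldsymbol{A}\big\|_F \leq \big\|\bW_{i,\cdot}\boldsymbol{A}\big\|_2 + \big\|\bW_{\cdot,i}\big\|_2\big\|\boldsymbol{A}_{i,\cdot}\big\|_2 + |W_{ii}|\big\|\boldsymbol{A}_{i,\cdot}\big\|_2$.

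The first term $\big\|\bW_{i,\cdot}\boldsymbol{A}\big\|_2$ is handled directly by Lemma \ref{Wconcentration2}, which gives $\big\|\bW_{i,\cdot}\boldsymbol{A}\big\|_2 \lesssim \sqrt{\log n}\,\theta_{\text{max}}\|\boldsymbol{A}\|_F + \log n\,\|\boldsymbol{A}\|_{2,\infty}$ with probability at least $1 - O(n^{-15}m)$. For the second term, I would bound $\big\|\boldsymbol{A}_{i,\cdot}\big\|_2 \leq \|\boldsymbol{A}\|_{2,\infty}$ trivially, and control $\big\|\bW_{\cdot,i}\big\|_2$ by a Bernstein-type or sub-Gaussian norm bound: since $|W_{ji}|\leq 1$ and $\mathbb{E}[W_{ji}^2] = H_{ji}(1-H_{ji}) \lesssim \theta_{\text{max}}^2$ (as in the proof of Lemma \ref{Wspectral}), we get $\big\|\bW_{\cdot,i}\big\|_2^2 = \sum_{j}W_{ji}^2 \lesssim n\theta_{\text{max}}^2 + \log n$ with probability at least $1-O(n^{-15})$, hence $\big\|\bW_{\cdot,i}\big\|_2 \lesssim \sqrt{n}\theta_{\text{max}} + \sqrt{\log n} \lesssim \sqrt{n}\theta_{\text{max}}+\log n$. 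The third term is dominated by the second since $|W_{ii}|\leq 1 \lesssim \sqrt{n}\theta_{\text{max}}+\log n$. Combining the three bounds and taking a union bound over the (at most $O(m)$-many) events invoked in Lemma \ref{Wconcentration2} yields
\begin{align*}
    \big\|\big(\bW-\bW^{(i)}\big)\boldsymbol{A}\big\|_F \lesssim \sqrt{\log n}\,\theta_{\text{max}}\|\boldsymbol{A}\|_F + (\sqrt{n}\theta_{\text{max}}+\log n)\|\boldsymbol{A}\|_{2,\infty}
\end{align*}
with probability at least $1-O(n^{-15}m)$, as claimed.

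I do not anticipate a serious obstacle here: the decomposition of $\bW - \bW^{(i)}$ into its row and column parts is exact and elementary, and the two probabilistic inputs (Lemma \ref{Wconcentration2} for the row action, and a scalar concentration bound on $\|\bW_{\cdot,i}\|_2$) are both standard. The one point requiring a little care is to make sure the failure probability is stated uniformly — i.e., that the $O(n^{-15}m)$ bound from Lemma \ref{Wconcentration2} absorbs the $O(n^{-15})$ bound for the column-norm estimate — which is immediate. A secondary bookkeeping point is that $\|\boldsymbol{A}_{i,\cdot}\|_2 \le \|\boldsymbol{A}\|_{2,\infty}$ by definition of the $\|\cdot\|_{2,\infty}$ norm, so no extra factor of $\sqrt{m}$ enters through that route.
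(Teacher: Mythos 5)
Your proposal is correct and follows essentially the same route as the paper: split $(\bW-\bW^{(i)})\boldsymbol{A}$ into the contribution of the $i$-th row (handled by Lemma \ref{Wconcentration2}) and that of the $i$-th column (handled by a Bernstein bound on $\|\bW_{\cdot,i}\|_2^2\lesssim n\theta_{\text{max}}^2$), with the $W_{ii}$ term absorbed. The paper simply computes the Frobenius norm row-by-row exactly instead of using your three-term triangle inequality, but the bounds and probabilistic inputs are identical.
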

\begin{proof}
By definition we have
\begin{align}
    \left\|\Big(\bW-\bW^{(i)}\Big)\boldsymbol{A}\right\|_{F} &= \sqrt{\left\|\bW_{i,\cdot}\boldsymbol{A}\right\|_{2}^2+\sum_{j\in [n],j\neq i}W_{ji}^2\left\|\boldsymbol{A}_{i,\cdot}\right\|_2^2} \nonumber \\
    &\lesssim \left\|\bW_{i,\cdot}\boldsymbol{A}\right\|_{2}+\left\|\bW_{\cdot, i}\right\|_2\left\|\boldsymbol{A}_{i,\cdot}\right\|_2. \label{Wconcentration4eq1}
\end{align}
By Bernstein inequality, we know that
\begin{align}
    \left|\sum_{j\in [n]}W_{ji}^2-\sum_{j\in [n]}\mathbb{E}\left[W_{ji}^2\right]\right|&\lesssim \sqrt{\log n \sum_{j=1}^n \mathbb{E}\left[W_{ji}^4\right]} + \log n \nonumber \\
    &\lesssim \sqrt{n\log n \theta_{\text{max}}^2}+\log n \lesssim \sqrt{n\log n}\theta_{\text{max}}.\label{Wconcentration4eq2}
\end{align}
with probability at least $1-O(n^{-15})$. On the other hand, we know that
\begin{align}
    \left|\sum_{j\in [n]}\mathbb{E}\left[W_{ji}^2\right]\right|\lesssim n \theta_{\text{max}}^2.\label{Wconcentration4eq3}
\end{align}
Combine \eqref{Wconcentration4eq2} and \eqref{Wconcentration4eq3} we know that 
\begin{align*}
    \left|\sum_{j\in [n]}W_{ji}^2\right|\lesssim \sqrt{n\log n}\theta_{\text{max}}+ n \theta_{\text{max}}^2\lesssim n \theta_{\text{max}}^2
\end{align*}
with probability at least $1-O(n^{-15})$. Plugging this as well as Lemma \ref{Wconcentration2} in \eqref{Wconcentration4eq1} we get 
\begin{align*}
    \left\|\Big(\bW-\bW^{(i)}\Big)\boldsymbol{A}\right\|_{F}\lesssim \sqrt{\log n}\theta_{\text{max}}\left\|\boldsymbol{A}\right\|_F  + (\sqrt{n}\theta_{\text{max}}+\log n)\left\|\boldsymbol{A}\right\|_{2,\infty}
\end{align*}
with probability at least $1-O(n^{-15}m)$.
\end{proof}

\begin{lem}\label{Wconcentration5}
    For any fixed $\bu,\bv\in \mathbb{R}^n$ , we have with probability at least $1-O(n^{-15})$,
    \begin{align*}
        \left|\bu^{\top}\bW\bv\right|\lesssim \sqrt{\log n}\|\bu\|_2\|\bv\|_2.
    \end{align*}
    In particular, for any $i, j\in [n]$ , we have with probability at least $1-O(n^{-15})$,
    \begin{align*}
        \left|\bu_i^{*\top}\bW\bu_j^*\right|\lesssim \sqrt{\log n}.
    \end{align*}
\end{lem}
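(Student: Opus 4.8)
The plan is to prove the concentration bound $|\bu^\top \bW \bv| \lesssim \sqrt{\log n}\|\bu\|_2\|\bv\|_2$ via a standard Bernstein-type argument, and then obtain the bound on $|\bu_i^{*\top}\bW\bu_j^*|$ as an immediate corollary. First I would write $\bu^\top\bW\bv = \sum_{k} W_{kk}u_kv_k + \sum_{k<l} W_{kl}(u_kv_l + u_lv_k)$, which expresses the quadratic form as a sum of independent mean-zero random variables indexed by the upper-triangular entries of $\bW$ (including the diagonal in the self-loop case). Each summand is bounded in absolute value: $|W_{kl}(u_kv_l+u_lv_k)| \le |u_kv_l| + |u_lv_k|$, and a crude bound on this is $\|\bu\|_2\|\bv\|_2$ up to constants, or more carefully one bounds the sum of squares of coefficients. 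The key variance computation is $\sum_k u_k^2v_k^2 \mathbb{E}[W_{kk}^2] + \sum_{k<l}(u_kv_l+u_lv_k)^2\mathbb{E}[W_{kl}^2]$; bounding $\mathbb{E}[W_{kl}^2] \le 1$ (or using $\lesssim \theta_{\max}^2$ if a sharper bound were wanted, but here the statement only claims $\sqrt{\log n}$ without the $\theta_{\max}$ factor) and expanding, this is at most $2\sum_{k,l}u_k^2v_l^2 = 2\|\bu\|_2^2\|\bv\|_2^2$.

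Next I would invoke Bernstein's inequality (or equivalently Hoeffding's inequality, since the summands are bounded): with $\sigma^2 \lesssim \|\bu\|_2^2\|\bv\|_2^2$ the variance proxy and $B \lesssim \|\bu\|_2\|\bv\|_2$ a uniform bound on the terms (after normalizing), we get
\begin{align*}
    \mathbb{P}\left(\left|\bu^\top\bW\bv\right| \ge t\right) \lesssim \exp\left(-c\min\left\{\frac{t^2}{\|\bu\|_2^2\|\bv\|_2^2}, \frac{t}{\|\bu\|_2\|\bv\|_2}\right\}\right).
\end{align*}
Choosing $t = C\sqrt{\log n}\,\|\bu\|_2\|\bv\|_2$ with $C$ a sufficiently large absolute constant makes the right-hand side $O(n^{-15})$, which gives the claimed bound. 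A minor care point is whether to use Hoeffding directly (cleaner, since $|W_{kl}|\le 1$ makes the coefficients $\le 2\|\bu\|_2\|\bv\|_2$ in absolute value and one only needs the sum of squares of coefficients, which is $\lesssim \|\bu\|_2^2\|\bv\|_2^2$) — I would opt for Hoeffding as it avoids fussing over the sub-exponential regime.

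Finally, the particular case follows by taking $\bu = \bu_i^*$, $\bv = \bu_j^*$ and using $\|\bu_i^*\|_2 = \|\bu_j^*\|_2 = 1$, so that $|\bu_i^{*\top}\bW\bu_j^*| \lesssim \sqrt{\log n}$ with probability at least $1 - O(n^{-15})$. I do not anticipate any genuine obstacle here; the only thing requiring slight attention is the bookkeeping between the with-self-loop and without-self-loop cases (in the latter $W_{kk}=0$, so the diagonal terms simply drop and the bound only improves), and making sure the constant in the exponent is large enough to beat the $n^{-15}$ rate — both are routine. This lemma is essentially a restatement of Hoeffding's inequality for a bilinear form in the noise matrix, so the proof is short.
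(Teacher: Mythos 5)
Your proposal is correct and follows essentially the same route as the paper: decompose the bilinear form over the upper-triangular entries of $\bW$, bound the sum of squared coefficients by $\lesssim \|\bu\|_2^2\|\bv\|_2^2$, and apply Hoeffding's inequality (the paper also uses Hoeffding rather than Bernstein) with $t \asymp \sqrt{\log n}\,\|\bu\|_2\|\bv\|_2$. The particular case is handled identically by taking unit-norm eigenvectors.
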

\begin{proof}
Since $|W_{kl}|\leq 1$, by Hoeffding's inequality, we have
    \begin{align*}
        \mathbb{P}\left(\left|\bu^{\top}\bW\bv\right|\geq t\right)&\leq 2\exp\left(-\frac{2t^2}{\sum_{k=1}^n\left(2u_kv_k\right)^2+\sum_{1\leq k<l\leq n}\left(2u_kv_l+2u_lv_k\right)^2}\right) \\
        &\leq 2\exp\left(-\frac{t^2}{2\sum_{k=1}^nu_k^2v_k^2+4\sum_{1\leq k<l\leq n}[u_k^2v_l^2+u_l^2v_k^2]}\right) \\
        &\leq 2\exp\left(-\frac{t^2}{4\sum_{k,l=1}^nu_k^2v_l^2}\right) = 2\exp\left(-\frac{t^2}{4\|\bu\|_2^2\|\bv\|_2^2}\right).
    \end{align*}
    As a result, with probability at least $1-O(n^{-15})$, we have $|\bu^{\top}\bW\bv|\lesssim \sqrt{\log n}\|\bu\|_2\|\bv\|_2$.

\end{proof}

\begin{lem}\label{Wconcentration6}
    For any $\bv\in \mathbb{R}^n$ with $\|\bv\|_2 = 1$ , with probability at least $1-O(n^{-15})$,
    \begin{align*}
        \left|\bu_1^{*\top}\bW\bv\right|\lesssim \sqrt{\log n} \theta_{\text{max}}+\log n \sqrt{\mu^*/n}.
    \end{align*}
\end{lem}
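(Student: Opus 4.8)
The statement to prove is Lemma~\ref{Wconcentration6}, a Bernstein-type bound on $\left|\bu_1^{*\top}\bW\bv\right|$ for an arbitrary unit vector $\bv$, sharpening the crude $\sqrt{\log n}$ bound of Lemma~\ref{Wconcentration5} by exploiting the incoherence of $\bu_1^*$.

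My plan is to write $\bu_1^{*\top}\bW\bv = \sum_{i,j} (\bu_1^*)_i W_{ij} v_j$ and organize it as a sum over unordered pairs (including the diagonal), i.e. $\bu_1^{*\top}\bW\bv = \sum_{i\le j} W_{ij} c_{ij}$ where $c_{ij} = (\bu_1^*)_i v_j + (\bu_1^*)_j v_i$ for $i<j$ and $c_{ii} = (\bu_1^*)_i v_i$. These are independent mean-zero bounded random variables, so Bernstein's inequality applies: the deviation is controlled by the variance proxy $\sigma^2 := \sum_{i\le j} c_{ij}^2\, \mathbb{E}[W_{ij}^2]$ and the uniform bound $b := \max_{i\le j} |c_{ij}|$. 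I would first bound $b$: since $\|\bu_1^*\|_\infty \le \sqrt{\mu^*/n}$ by \eqref{eq:incoherence} and $\|\bv\|_2 = 1$ (hence $\|\bv\|_\infty\le 1$), we get $|c_{ij}| \le 2\|\bu_1^*\|_\infty\|\bv\|_\infty \le 2\sqrt{\mu^*/n}$. Next I would bound $\sigma^2$: using $\mathbb{E}[W_{ij}^2] = H_{ij}(1-H_{ij}) \lesssim \theta_{\max}^2$ (as computed in the proof of Lemma~\ref{Wspectral}), we have $\sigma^2 \lesssim \theta_{\max}^2 \sum_{i\le j} c_{ij}^2 \lesssim \theta_{\max}^2 \sum_{i,j} \left((\bu_1^*)_i v_j + (\bu_1^*)_j v_i\right)^2 \lesssim \theta_{\max}^2 \left(\|\bu_1^*\|_2^2\|\bv\|_2^2\right) = \theta_{\max}^2$, since $\|\bu_1^*\|_2 = \|\bv\|_2 = 1$.

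Then Bernstein's inequality gives, with probability at least $1 - O(n^{-15})$,
\begin{align*}
    \left|\bu_1^{*\top}\bW\bv\right| \lesssim \sqrt{\sigma^2 \log n} + b\log n \lesssim \sqrt{\log n}\,\theta_{\max} + \log n\sqrt{\mu^*/n},
\end{align*}
which is exactly the claimed bound. This mirrors the structure of the proof of Lemma~\ref{Wconcentration5}, only now carrying the incoherence factor through both the variance term and the uniform term rather than bounding everything by $\|\bu_1^*\|_2\|\bv\|_2$.

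I do not anticipate a serious obstacle here; this is a routine concentration argument. The only mild care needed is in the variance computation: one must correctly account for the diagonal terms and the symmetrization (the factor $(M_{ij}+M_{ji})$ structure seen in the definition of $V_{\bM}$ earlier), and verify that the cross terms in $\sum_{i,j}\left((\bu_1^*)_i v_j + (\bu_1^*)_j v_i\right)^2$ expand to $2\|\bu_1^*\|_2^2\|\bv\|_2^2 + 2(\bu_1^{*\top}\bv)^2 \le 4$ by Cauchy--Schwarz, so the bound $\sigma^2\lesssim\theta_{\max}^2$ holds uniformly over all unit $\bv$. One should also note $b = O(\sqrt{\mu^*/n}) = o(1)$ so the linear-in-$\log n$ term is genuinely a lower-order correction to the Gaussian-type term when $\theta_{\max}\gg \sqrt{\mu^*}\log^{1/2} n/\sqrt n$, though the stated bound holds regardless. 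Taking a union bound is unnecessary since $\bv$ is fixed.
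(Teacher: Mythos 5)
Your proposal is correct and follows essentially the same route as the paper: decompose $\bu_1^{*\top}\bW\bv$ into a sum over the independent upper-triangular (and diagonal) entries of $\bW$, bound the maximal coefficient by $2\sqrt{\mu^*/n}$ via the incoherence of $\bu_1^*$, bound the variance proxy by $\theta_{\max}^2$ using $\mathbb{E}[W_{ij}^2]\lesssim \theta_{\max}^2$ and $\|\bu_1^*\|_2=\|\bv\|_2=1$, and apply Bernstein's inequality. The extra care you note about the cross terms in the variance computation is handled correctly and matches the paper's bound.
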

\begin{proof}
We write
\begin{align*}
    \bu_1^{*\top}\bW\bv = \sum_{i=1}^n W_{ii}(\bu_1^*)_i v_i +\sum_{1\leq i<j\leq n}W_{ij}((\bu_1^*)_i v_j + (\bu_1^*)_j v_i).
\end{align*}
Since $\|\bv\|_2 = 1$, we know, by \eqref{eq:incoherence},
\begin{align*}
    \max_{i\in [n]}|(\bu_1^*)_i v_i|\leq \sqrt{\mu^*/n}\quad \text{and}\quad\max_{1\leq i<j\leq n}|(\bu_1^*)_i v_j + (\bu_1^*)_j v_i|\leq 2\sqrt{\mu^*/n}.
\end{align*}
As a result, by Bernstein inequality we have, with probability at least $1-O(n^{-15})$,
\begin{align*}
    \left|\bu_1^{*\top}\bW\bv\right|\lesssim &\sqrt{\log n \Bigg(\sum_{i=1}^n ((\bu_1^*)_i v_i)^2\mathbb{E}[W_{ii}^2]+\sum_{1\leq i<j\leq n}((\bu_1^*)_i v_j + (\bu_1^*)_j v_i)^2\mathbb{E}[W_{ij}^2]\Bigg)} \\
    &+\log n \sqrt{\mu^*/n} \\
    \lesssim & \sqrt{\log n \sum_{i,j=1}^n ((\bu_1^*)_i v_j)^2\theta_{\text{max}}^2}+\log n \sqrt{\mu^*/n}
    \lesssim \sqrt{\log n} \theta_{\text{max}}+\log n \sqrt{\mu^*/n}
\end{align*}

\end{proof}

\subsection{The Incoherence Parameter $\mu^*$}\label{IncoherenceExplanation}
In this subsection, we aim to show that the incoherence condition \eqref{eq:incoherence} holds with $\mu^*\asymp 1$. As we pointed out in Remark \ref{rem1}, \cite[Lemma C.3]{jin2017estimating} and Assumption \ref{assn:theta_order} guarantees $\left\|\bu_1^*\right\|_{\infty}
      \lesssim \sqrt{\frac{1}{n}}$. Therefore it remains to show 
\begin{align*}
    \left\|\oU^*\right\|_{2, \infty} \lesssim  \sqrt{\frac{K}{n}}.
\end{align*}
We use the notation $\oU^*_1$ for $\oU^*$ when there exists self-loops, and use the notation $\oU^*_2$ for $\oU^*$ when the self-loops are not allowed. 

\noindent\textbf{With self-loop:} Recall the definition of $\bB^*$ in Lemma \ref{membershipreconstructionlem3}. By \cite[(C.26)]{jin2017estimating}, we know that $\left\|\bB^*\right\|\lesssim \sqrt{K}$. As a result, we have $\|\bb^*_k\|_2\leq \sqrt{K}$ for $1\leq k\leq K$. Since $\br_i^*, i\in [n]$ are convex combinations of $\bb_k^*$s, one can see that $\|\br^*_i\|_2\leq \sqrt{K}$ for $1\leq i\leq n$. Hence, by the definition of $\br_i^*$ in \eqref{eq:define_r_star}, 
\begin{align*}
    \left\|\left(\oU^*_1\right)_{i,\cdot}\right\|_2 = \left\|(\bu_1^*)_i\br_i^*\right\|_2\lesssim \sqrt{\frac{K}{n}}.
\end{align*}

\noindent\textbf{Without self-loop:} Define $\bR_{\oU}$ as the rotation matrix matches $\oU_1^*$ and $\oU_2^*$
\begin{align*}
    \bR_{\oU} := \argmin_{\boldsymbol{O}\in \mathcal{O}^{(K-1)\times (K-1)}}\left\|\oU_1^*\boldsymbol{O}-\oU^*_2\right\|_F.  
\end{align*}
By Wedin's sin$\Theta$ Theorem \cite[Theorem 2.9]{chen2021spectral}, we have
\begin{align*}
    \left\|\oU_1^*\bR_{\oU}-\oU^*_2\right\|\lesssim \frac{\left\|\textbf{diag}(\bTheta\bPi\bP\bPi^\top\bTheta)\right\|}{\minsigma^*}\lesssim \frac{\theta_{\text{max}}^2}{\beta_n K^{-1}n\theta_{\text{max}}^2}\lesssim \frac{K}{\beta_n n}.
\end{align*}
As a result, we have
\begin{align*}
    \left\|\oU^*_2\right\|_{2,\infty}&\leq \left\|\oU^*_1\bR_{\oU}\right\|_{2,\infty} + \left\|\oU_1^*\bR_{\oU}-\oU^*_2\right\|_{2,\infty} \leq\left\|\oU^*_1\bR_{\oU}\right\|_{2,\infty} + \left\|\oU_1^*\bR_{\oU}-\oU^*_2\right\| \\
    &\lesssim \sqrt{\frac{K}{n}}+ \frac{K}{\beta_n n}\lesssim \sqrt{\frac{K}{n}}
\end{align*}
as long as $n\gtrsim K \beta_n^{-2}$, which is a relatively mild assumption and is assumed to be true from Theorem \ref{mainthmmatrixdenoising} to Theorem \ref{distributionthm} (Theorem \ref{mainthmu1} does not involve $\oU^*$).

\subsection{Proof of \eqref{exa2eq1}}\label{exa2eq1proof}
We formally state the theoretical guarantee of the Gaussian multiplier bootstrap method described in Example \ref{exa2} as below.
\begin{thm}
Assume the conditions in Theorem \ref{Piexpansion} hold. As long as 
\begin{align*}
    \max_{j:j\neq i}\frac{\|\boldsymbol{C}^{\bpi}_{j,k} - \boldsymbol{C}^{\bpi}_{i,k}\|_{\text{max}}\log^{5/2}n}{\sqrt{V_{\boldsymbol{C}^{\bpi}_{j,k} - \boldsymbol{C}^{\bpi}_{i,k}}}} = o(1) \quad \text{ and } \quad \max_{j:j\neq i}\frac{\varepsilon_3\sqrt{\log n}}{\sqrt{V_{\boldsymbol{C}^{\bpi}_{j,k} - \boldsymbol{C}^{\bpi}_{i,k}}}} = o(1),
\end{align*}
we have 
\begin{align*}
     \left|\mathbb{P}(\mathcal{T}>c_{1-\alpha})-\alpha\right|\to 0.
\end{align*}
\end{thm}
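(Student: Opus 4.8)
\textbf{Proof proposal for the bootstrap validity \eqref{exa2eq1}.}

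The plan is to reduce the claim to the Gaussian comparison/anti-concentration machinery of \cite{chernozhuokov2022improved} applied to the high-dimensional vector of standardized pairwise differences. First I would set $m = n-1$ and, for each $j \neq i$, write $\sigma_j^2 := V_{\boldsymbol{C}^{\bpi}_{j,k}-\boldsymbol{C}^{\bpi}_{i,k}}$. By the linearization in Theorem \ref{Piexpansion},
\begin{align*}
(\wh\bpi_j(k)-\wh\bpi_i(k)) - (\bpi_j(k)-\bpi_i(k)) = (1+\eta_j)\,\textbf{Tr}\!\left[\left(\boldsymbol{C}^{\bpi}_{j,k}-\boldsymbol{C}^{\bpi}_{i,k}\right)\bW\right] + R_j,
\end{align*}
where $|\eta_j| \lesssim K\varepsilon_1$ and $|R_j| \lesssim \varepsilon_3$ uniformly in $j$ with probability $1-O(n^{-10})$. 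Dividing by $\sigma_j$ and taking the max over $j$, the statistic $\mathcal{T}$ equals $\max_{j\neq i} |Z_j| + (\text{error})$, where $Z_j := \textbf{Tr}[(\boldsymbol{C}^{\bpi}_{j,k}-\boldsymbol{C}^{\bpi}_{i,k})\bW]/\sigma_j$ is a normalized sum of independent mean-zero bounded entries of $\bW$, and the error is controlled by $\max_j (K\varepsilon_1 |Z_j| + \varepsilon_3)/\sigma_j$. The second hypothesis $\max_j \varepsilon_3 \sqrt{\log n}/\sigma_j = o(1)$, together with $|Z_j| = O_p(\sqrt{\log n})$ from the sub-Gaussian tail of $Z_j$, makes this error negligible after multiplying by $\sqrt{\log n}$ (the resolution needed for Gaussian anti-concentration of a max over $m$ coordinates).

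Next I would invoke the Gaussian approximation for the max of the self-normalized sum $\max_j|Z_j|$: by \cite[Theorem 2.2]{chernozhuokov2022improved} (a high-dimensional CLT for maxima of sums of independent vectors), $\sup_t |\mathbb{P}(\max_j |Z_j| \le t) - \mathbb{P}(\max_j |\widetilde Z_j| \le t)| \to 0$, where $\widetilde Z$ is the centered Gaussian vector with the same covariance as $Z$; the relevant Lindeberg-type ratio is exactly $\max_j \|\boldsymbol{C}^{\bpi}_{j,k}-\boldsymbol{C}^{\bpi}_{i,k}\|_{\max}\log^{5/2}n / \sigma_j$, which is assumed $o(1)$. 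Then I would show the bootstrap statistic $\mathcal{G}$ (with the recentering term involving $\sum_{a\le b} G_{ab}$) has conditional covariance, given $\bW$, that concentrates around the covariance of $Z$: the multiplier $\bW \odot \boldsymbol{G}$ produces Gaussian quadratic-in-$\bW$ entries whose conditional second moments are $W_{ab}^2$-weighted versions of the true $H_{ab}(1-H_{ab})$, and by a Bernstein argument these empirical weights are uniformly close to the population ones, so $\sup_t|\mathbb{P}(\mathcal{G}\le t\mid \bW) - \mathbb{P}(\max_j|\widetilde Z_j|\le t)| = o_p(1)$. Finally, combining the Gaussian approximation for $\mathcal{T}$, the conditional Gaussian approximation for $\mathcal{G}$, and Gaussian anti-concentration (to pass from CDF closeness to quantile closeness), I would conclude $\mathbb{P}(\mathcal{T} > c_{1-\alpha}) \to \alpha$.

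The subtlety is that the paper's statement uses the \emph{infeasible} critical value $c_{1-\alpha}$ (defined from $\mathcal{G}$ with the true $V$'s and true $\boldsymbol{C}^{\bpi}$'s), so the estimation error in $\wh V$ and $\widehat{\boldsymbol{C}}^{\bpi}$ does not enter \eqref{exa2eq1} itself — that is deferred to the feasible confidence-interval construction. This simplifies matters: I only need Gaussian approximation plus the conditional covariance concentration of the multiplier bootstrap, both standard once the linearization residual is shown negligible at resolution $\sqrt{\log n}$. The main obstacle I expect is carefully controlling the residual term $\max_j (K\varepsilon_1|Z_j| + |R_j|)/\sigma_j$ uniformly and verifying it is $o(1/\sqrt{\log n})$ — this is where the two hypotheses of the theorem are consumed — and, relatedly, verifying that the diagonal-versus-off-diagonal weighting in $V_{\bM}$ and in the $\bW\odot\boldsymbol{G}$ construction match up so that the bootstrap covariance is a consistent estimate of $\mathrm{Cov}(Z)$ rather than a biased one; the recentering term $\frac{\sum_{a\le b}G_{ab}}{(n^2+n)/2}\,\textbf{Tr}[(\boldsymbol{C}^{\bpi}_{j,k}-\boldsymbol{C}^{\bpi}_{i,k})\bW]$ is precisely what removes the low-rank contamination from $\bW$ having nonzero mean-type structure along the $\boldsymbol{C}$ directions, and checking this cancellation is the delicate bookkeeping step.
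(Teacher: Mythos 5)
Your proposal follows essentially the same route as the paper's proof: linearize $\mathcal{T}$ around the oracle statistic $\mathcal{T}^{\sharp} = \max_{j\neq i}\bigl|\textbf{Tr}[(\boldsymbol{C}^{\bpi}_{j,k}-\boldsymbol{C}^{\bpi}_{i,k})\bW]\bigr|/\sqrt{V_{\boldsymbol{C}^{\bpi}_{j,k}-\boldsymbol{C}^{\bpi}_{i,k}}}$ via Theorem \ref{Piexpansion}, apply \cite[Theorem 2.2]{chernozhuokov2022improved} to get bootstrap validity for $\mathcal{T}^{\sharp}$ (the first hypothesis controls the Lindeberg-type ratio $B_n$), and absorb the residual of size $\varepsilon_3/\sqrt{V_{\boldsymbol{C}^{\bpi}_{j,k}-\boldsymbol{C}^{\bpi}_{i,k}}}$ through a Gaussian comparison plus the anti-concentration bound $\delta\sqrt{\log n}$ of \cite[Theorem 3]{chernozhukov2015comparison}, which is exactly where the second hypothesis is consumed. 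The only differences are cosmetic: you propose to re-verify the conditional covariance consistency of the multiplier bootstrap by hand, whereas the paper delegates that entirely to the cited bootstrap theorem, and the $K\varepsilon_1|Z_j|$ contribution you track separately is already absorbed into $\varepsilon_3$ in the remark following Theorem \ref{Piexpansion}.
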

\begin{proof}
We define 
\begin{align*}
    &\mathcal{T}^{\sharp} = \max_{j:j\neq i}\left|\frac{\textbf{Tr}\left[\left(\boldsymbol{C}^{\bpi}_{j,k} - \boldsymbol{C}^{\bpi}_{i,k}\right)\bW\right]}{\sqrt{V_{\boldsymbol{C}^{\bpi}_{j,k} - \boldsymbol{C}^{\bpi}_{i,k}}}}\right|.
\end{align*}
Since $|W_{ij}|\leq 1$, we know that \cite[Condition E, M]{chernozhuokov2022improved} holds with $b_1=b_2=1$ and $B_n\asymp n \|\boldsymbol{C}^{\bpi}_{j,k} - \boldsymbol{C}^{\bpi}_{i,k}\|_{\text{max}} / \sqrt{V_{\boldsymbol{C}^{\bpi}_{j,k} - \boldsymbol{C}^{\bpi}_{i,k}}}$. As a result, by \cite[Theorem 2.2]{chernozhuokov2022improved} we have
\begin{align}
     \left|\mathbb{P}(\mathcal{T}^{\sharp}>c_{1-\alpha})-\alpha\right|&\lesssim \left(\frac{B_n^2 \log^5((n-1)(n^2+n)/2)}{(n^2+n)/2}\right)^{1/4} \nonumber \\
     &\lesssim \log^{5/4}n \sqrt{\frac{\|\boldsymbol{C}^{\bpi}_{j,k} - \boldsymbol{C}^{\bpi}_{i,k}\|_{\text{max}}}{\sqrt{V_{\boldsymbol{C}^{\bpi}_{j,k} - \boldsymbol{C}^{\bpi}_{i,k}}}}} \to 0. \label{exa2proofeq1}
\end{align}
Therefore, to prove \eqref{exa2eq1}, it is enough to show 
\begin{align}\label{eq:tsharp_close}
    \sup_{x\in \mathbb{R}}\left|\mathbb{P}(\mathcal{T}^{\sharp}\leq x) - \mathbb{P}(\mathcal{T}\leq x)\right|\to 0. 
\end{align}
One can see that
\begin{align}
     \sup_{x\in \mathbb{R}}\left|\mathbb{P}(\mathcal{T}^{\sharp}\leq x) - \mathbb{P}(\mathcal{T}\leq x)\right|\leq \mathbb{P}(|\mathcal{T}^{\sharp}-\mathcal{T}|>\delta)+\sup_{x\in \mathbb{R}}\mathbb{P}(x< \mathcal{T}^{\sharp} \leq x+\delta),\quad \forall \delta >0. \label{exa2proofeq2}
\end{align}
We take $\delta\asymp \max_{j:j\neq i}\varepsilon_3 / \sqrt{V_{\boldsymbol{C}^{\bpi}_{j,k} - \boldsymbol{C}^{\bpi}_{i,k}}}$. Then by Theorem \ref{Piexpansion} we know that 
\begin{align}
    \mathbb{P}(|\mathcal{T}^{\sharp}-\mathcal{T}|>\delta) = O(n^{-10}).\label{exa2proofeq3}
\end{align}
Next we show that $\sup_{x\in \mathbb{R}}\mathbb{P}(x< \mathcal{T}^{\sharp} \leq x+\delta)\to 0$. Let $(Z_1, Z_2,\dots, Z_{n-1})$ be a centered Gaussian random vector with the same covariance structure (thus we know that $Z_k\sim \mathcal{N}(0, 1),\; \forall k\in [n-1]$) as 
\begin{align*}
    \left(\frac{\textbf{Tr}\left[\left(\boldsymbol{C}^{\bpi}_{j,k} - \boldsymbol{C}^{\bpi}_{i,k}\right)\bW\right]}{\sqrt{V_{\boldsymbol{C}^{\bpi}_{j,k} - \boldsymbol{C}^{\bpi}_{i,k}}}}:j\in [n]\backslash i\right)\in \mathbb{R}^{n-1}.
\end{align*}
Then by \cite[Theorem 2.1]{chernozhuokov2022improved} we know that 
\begin{align*}
    \sup_{x\in\mathbb{R}}\left|\mathbb{P}(\mathcal{T}^{\sharp}\leq x) - \mathbb{P}\left(\max_{1\leq k\leq n-1}|Z_k|\leq x\right)\right|&\lesssim \left(\frac{B_n^2 \log^5((n-1)(n^2+n)/2)}{(n^2+n)/2}\right)^{1/4} \\
     &\lesssim \log^{5/4}n \sqrt{\frac{\|\boldsymbol{C}^{\bpi}_{j,k} - \boldsymbol{C}^{\bpi}_{i,k}\|_{\text{max}}}{\sqrt{V_{\boldsymbol{C}^{\bpi}_{j,k} - \boldsymbol{C}^{\bpi}_{i,k}}}}} \to 0.
\end{align*}
As a result, we have
\begin{align}
    &\left|\sup_{x\in\mathbb{R}}\mathbb{P}(x< \mathcal{T}^{\sharp} \leq x+\delta) - \sup_{x\in\mathbb{R}}\mathbb{P}\left(x< \max_{1\leq k\leq n-1}|Z_k| \leq x+\delta\right)\right| \nonumber \\
    \leq &\sup_{x\in\mathbb{R}}\left|\mathbb{P}(x< \mathcal{T}^{\sharp} \leq x+\delta) - \mathbb{P}\left(x< \max_{1\leq k\leq n-1}|Z_k| \leq x+\delta\right)\right|   \nonumber \\
    \leq & 2 \sup_{x\in\mathbb{R}}\left|\mathbb{P}(\mathcal{T}^{\sharp}\leq x) - \mathbb{P}\left(\max_{1\leq k\leq n-1}|Z_k|\leq x\right)\right|\to 0.\label{exa2proofeq4}
\end{align}
On the other hand, by \cite[Theorem 3]{chernozhukov2015comparison} we know that
\begin{align}
    \sup_{x\in\mathbb{R}}\mathbb{P}\left(x< \max_{1\leq k\leq n-1}|Z_k| \leq x+\delta\right)\lesssim \delta\sqrt{\log n} \to 0. \label{exa2proofeq5}
\end{align}
Combine \eqref{exa2proofeq4} with \eqref{exa2proofeq5} we know that
\begin{align}
    \sup_{x\in\mathbb{R}}\mathbb{P}(x< \mathcal{T}^{\sharp} \leq x+\delta)\to 0. \label{exa2proofeq6}
\end{align}
Plugging \eqref{exa2proofeq3} and \eqref{exa2proofeq6} in \eqref{exa2proofeq2} we prove \eqref{eq:tsharp_close}. This, along with 
\eqref{exa2proofeq1} proves that 
\begin{align*}
     \left|\mathbb{P}(\mathcal{T}>c_{1-\alpha})-\alpha\right|\to 0,
\end{align*}
concluding our proof.

\end{proof}

\end{document}